\numberwithin{equation}{section}
\newtheorem{thm}{Theorem}[section]
\newtheorem{defi}[thm]{Definition}
\newtheorem{rem}[thm]{Remark}
\newtheorem{lem}[thm]{Lemma}
\newtheorem{pro}[thm]{Proposition}
\newtheorem{coro}[thm]{Corollary}
\newtheorem{example}[thm]{Example}
\begin{document}
\title[variable martingale Hardy-Lorentz-Karamata spaces]{Variable martingale Hardy-Lorentz-Karamata spaces and their applications in Fourier Analysis}





\subjclass[2020]{Primary: 60G42; Secondary: 42C10, 60G46, 46E30}

\keywords{variable Lorentz-Karamata space, atomic decomposition, martingale inequality, Walsh-Fourier series, Fej\'{e}r means, maximal Fej\'{e}r operator.}


\author{Zhiwei Hao \\
S\lowercase{chool of} M\lowercase{athematics and} C\lowercase{omputing} S\lowercase{cience}, H\lowercase{unan} U\lowercase{niversity of} S\lowercase{cience and} T\lowercase{echnology}, \\ X\lowercase{iangtan  411201}, P\lowercase{eople's} R\lowercase{epublic of} C\lowercase{hina} \\
\lowercase{E-mail: haozhiwei@hnust.edu.cn}
\\ \\
Xinru Ding \\
S\lowercase{chool of} M\lowercase{athematics and} C\lowercase{omputing} S\lowercase{cience}, H\lowercase{unan} U\lowercase{niversity of} S\lowercase{cience and} T\lowercase{echnology}, \\ X\lowercase{iangtan  411201}, P\lowercase{eople's} R\lowercase{epublic of} C\lowercase{hina} \\
\lowercase{E-mail: dingxinru0724@163.com} \\ \\
Libo Li \\
S\lowercase{chool of} M\lowercase{athematics and} C\lowercase{omputing} S\lowercase{cience}, H\lowercase{unan} U\lowercase{niversity of} S\lowercase{cience and} T\lowercase{echnology}, \\ X\lowercase{iangtan  411201}, P\lowercase{eople's} R\lowercase{epublic of} C\lowercase{hina} \\
\lowercase{E-mail: lilibo@hnust.edu.cn} \\ \\
Ferenc Weisz$^*$
\\
D\lowercase{epartment of} N\lowercase{umerical} A\lowercase{nalysis}, E\lowercase{\"{o}tv\"{o}s} L. U\lowercase{niversity}, \\ H-1117 B\lowercase{udapest},
P\lowercase{\'{a}zm\'{a}ny} P. \lowercase{s\'{e}t\'{a}ny} 1/C, H\lowercase{ungary} \\
\lowercase{E-mail: weisz@inf.elte.hu}
}

\thanks{$^*$Corresponding author, email: weisz@inf.elte.hu}

\begin{abstract}
	In this paper, we introduce a new class of function spaces, which unify and generalize Lorentz-Karamata spaces, variable Lorentz spaces and other several classical function spaces. Based on the new spaces, we develop the theory of variable martingale Hardy-Lorentz-Karamata spaces and apply it to Fourier Analysis.
    To be precise, we discuss the basic properties of Lorentz-Karamata spaces with variable exponents.
    We introduce five variable martingale Hardy-Lorentz-Karamata spaces and characterize them via simple atoms as well as via atoms.
    As applications of the atomic decompositions, dual theorems and the generalized John-Nirenberg theorem for the new framework are presented.
    Moreover, we obtain the boundedness of $\sigma$-sublinear operator defined on variable martingale Hardy-Lorentz-Karamata spaces, which leads to martingale inequalities and the relation of the five variable martingale Hardy-Lorentz-Karamata spaces. Also, we investigate the boundedness of fractional integral operators in this new framework. Finally, we deal with the applications of variable martingale Hardy-Lorentz-Karamata spaces in Fourier analysis by using the previous results. More precisely, we show that the partial sums of the Walsh-Fourier series converge to the function in norm if $f\in L_{p(\cdot),q,b}$ with $1<p_-\le p_+<\infty$. The  Fej\'{e}r summability method is also studied and it is proved that the maximal Fej\'{e}r operator is bounded from variable martingale Hardy-Lorentz-Karamata spaces to variable Lorentz-Karamata spaces. As a consequence, we get conclusions about almost everywhere and norm convergence of Fej\'{e}r means.
    The results obtained in this paper generalize the results for martingale Hardy-Lorentz-Karamata spaces and variable martingale Hardy-Lorentz spaces.
    Especially, we remove the condition that $b$ is nondecreasing in previous literature.
\end{abstract}

\maketitle

\markboth{Z. Hao, X. Ding, L. Li and F. Weisz}
{Variable martingale Hardy-Lorentz-Karamata spaces}


\section{Introduction}

Let $0<p<\infty$, $0<q\le\infty$ and $b$ be a slowly varying function. The Lorentz-Karamata space $L_{p,q,b}(\Omega,\mathcal{F},\mathbb{P})$, or briefly $L_{p,q,b}$, is defined to be the set of all measurable functions $f$ on a probability space $(\Omega,\mathcal{F},\mathbb{P})$ such that $\|f\|_{p,q,b}<\infty$, where
$$
\|f\|_{p,q,b}:=\left\{
\begin{aligned}
	 &\bigg[\int_{0}^{\infty}\big(\mathbb{P}(|f|>t)^{1/p}\gamma_b(\mathbb{P}(|f|>t))t\big)^q\frac{dt}{t}\bigg]^\frac{1}{q},\ \mathrm{if}\ 0<q<\infty,\\
	&\sup_{t>0} t \mathbb{P}(|f|>t)^{1/p}\gamma_b(\mathbb{P}(|f|>t)),\ \ \ \ \ \ \ \ \ \ \ \ \ \ \mathrm{if}\ q=\infty,
\end{aligned}
\right.
$$
where $\gamma_b(t)=b(t^{-1})$, $0<t<1$.
This class of function spaces was introduced in 2000 by Edmunds et al. in \cite{ekp} and it was used to investigate the important problem of the optimal Sobolev embeddings on regular domains in the Euclidean space. By taking different $p$, $q$ and $b$, these spaces generalize the classical Lebesgue spaces, Lorentz spaces, Zygmund spaces, Lorentz-Zygmund spaces and the generalized Lorentz-Zygmund spaces. The theory of Lorentz-Karamata spaces is not only a pursuit of mere generality, but it shows the essential points of the issues while we are less likely  to be overwhelmed by the technical details. Some significant results of the Lorentz-Karamata spaces are studied in \cite{ee,hoa,jxz,lz,n,sd,wzp} and the references therein.

Another interesting topic we shall touch in our work is a kind of function spaces with variable exponents.
The variable Lebesgue space $L_{p(\cdot)}$ is an important class of non-rearrangement invariant spaces and it is defined by the (quasi)-norm
$$
\|f\|_{p(\cdot)}:=\inf \left\{\lambda>0: \int_{\mathbb{R}^n} \left(\frac{|f(x)|}{\lambda} \right)^{p(x)}dx\le 1 \right\}.
$$
Recently, this topic has a great development and appears in the research on endpoint analysis and in different applications, such as elasticity, fluid dynamics, variational calculus and partial differential equations (see \cite{am,al, brr, bds, dr, hhln, p, rs, t} and so on). Such spaces were firstly introduced by Orlicz \cite{o} in 1931. Kov\`{a}\v{c}ik and R\'{a}kosn\'{i}k \cite{kr}, Fan and Zhao \cite{fz} investigated various properties of variable Lebesgue spaces and variable Sobolev spaces.
Since some well known properties do not hold, such as the boundedness of the translation operator and the maximal inequality in a modular form, the theory of variable function spaces is difficult.
A fundamental breakthrough in this topic is due to Diening, who introduced the so-called log-H\"{o}lder continuity condition as follows:
$$|p(x)-p(y)|\lesssim\frac{1}{\log(1/|x-y|)}$$
and
$$|p(x)-p_\infty|\lesssim\frac{1}{\log(e+|x|)}$$
for all $x,y\in\mathbb{R}^n$, see \cite{d1, d2}.
Heavily relying on this condition, the theory of variable Hardy spaces has got a rapid development. One of the most important results states that the classical Hardy-Littlewood maximal operator is bounded on variable Lebesgue spaces (see Cruz-Uribe et al. \cite{CruzUribe2011,cfn}, Nekvinda \cite{nh}, Cruz-Uribe and Fiorenza \cite{cf} and Diening et al. \cite{dhhr}). Nakai and Sawano \cite{nsh} firstly introduced the Hardy space $H_{p(\cdot)}(\mathbb{R}^n)$ and discussed the atomic decompositions. They also studied the duality and the boundedness of singular integral operators as applications of atomic decompositions. Under some weaker conditions on $p(\cdot)$, Cruz-Uribe and Wang \cite{cw} and Sawano \cite{sa} also characterized the variable Hardy spaces $H_{p(\cdot)}(\mathbb{R}^n)$ and extended the conclusions in \cite{nsh}. Later, Yan et al. \cite{yyyz} considered the variable weak Hardy spaces $H_{p(\cdot),\infty}(\mathbb{R}^n)$ and characterized these spaces by the radial maximal functions, atoms and Littlewood-Paley functions. Meanwhile, Jiao et al. \cite{jzzw} investigated the variable Hardy-Lorentz spaces $H_{p(\cdot),q}(\mathbb{R}^n)$. The relevant conclusions on the anisotropic Hardy spaces can be found in Liu et al. \cite{lwyy1,lwyy2} and the references therein. Very recently, Weisz \cite{wn} introduced a new fractional maximal operator and showed that it is bounded from variable Hardy spaces to variable Lebesgue spaces. He also proved that a new type of maximal operators is bounded from variable Hardy-Lorentz spaces to variable Lorentz spaces in \cite{we}.

Motivated by the study of Lorentz-Karamata spaces and variable Lorentz spaces, we introduce a new type of function spaces, the so-called variable Lorentz-Karamata space $L_{p(\cdot),q,b}$ by the quasi-norm
$$
\|f\|_{p(\cdot),q,b}=\left\{
	\begin{aligned}
		 &\bigg[\int_{0}^{\infty}\big(t\|\chi_{\{|f|>t\}}\|_{p(\cdot)}\gamma_b(\|\chi_{\{|f|>t\}}\|_{p(\cdot)})\big)^q\frac{dt}{t}\bigg]^{\frac{1}{q}},\ \mathrm{if}\ 0<q<\infty,\\
		 &\sup_{t>0}t\|\chi_{\{|f|>t\}}\|_{p(\cdot)}\gamma_b(\|\chi_{\{|f|>t\}}\|_{p(\cdot)}),\ \ \ \ \ \ \ \ \ \ \ \ \ \mathrm{if}\ q=\infty.
	\end{aligned}
	\right.
$$
The variable Lorentz-Karamata spaces go back to the Lorentz-Karamata spaces and variable Lorentz spaces when $p(\cdot)\equiv p$ is a constant and $b\equiv1$, respectively. Moreover, we prove that the variable Lorentz-Karamata space is a quasi-Banach space. We also give some fundamental geometric properties for these new spaces.

Inspired by the progress of real theory of variable Hardy spaces, the martingale theory of variable Hardy spaces has gained widespread attentions.
The major difficulty in martingale theory is that
the probability space has no natural metric structure compared with Euclidean space $\mathbb{R}^n$, which means that the above-mentioned log-H\"{o}lder continuity condition can no longer hold. In the last decade, Hao and Jiao \cite{hj} in 2015 (see also \cite{jwzw,jzhc}) overcame the last obstacle, and found a suitable replacement of the above log-H\"{o}lder continuity condition. That is, suppose that there exists a constant $K_{p(\cdot)}\ge1$ depending only on $p(\cdot)$ such that
\begin{eqnarray}\label{gs3}
	\mathbb{P}(A)^{p_-(A)-p_+(A)}\le K_{p(\cdot)},\ \ \ \forall\  A\in\bigcup_{n\ge0}A(\mathcal{F}_n),
\end{eqnarray}
where $\mathcal{F}_n$ is generated by countable many atoms and $A(\mathcal{F}_n)$ denotes the family of all atoms in $\mathcal{F}_n$. Under this condition, Jiao et al. \cite{jwzw, jzhc} proved the Doob maximal inequality and
developed the martingale theory in the framework of variable exponent setting. Since then,
this branch of martingale theory has become a very active area of research,
see \cite{hly,jwwzr,jwwz,jwzw,jzz,FW3,wdoob,wn,hjl}. Martingale Musielak-Orlicz Hardy spaces were considered in Xie et al. \cite{hlxy2023,hpx2023,jwxy,Xie2019,wguangheng,Xie2019a}.


We continue this line of investigation. We introduce a new type of Hardy spaces, the so called variable martingale Hardy-Lorentz-Karamata spaces, which are much more wider than the martingale Hardy-Lorentz-Karamata spaces and variable martingale Hardy-Lorentz spaces.
The purpose of this paper is to make a systematic study of these new spaces. Under the condition (\ref{gs3}), we discuss atomic decompositions, dual theorems, martingale inequalities and the relation of the different martingale Hardy spaces. Compared with the previous literature, we obtain some stronger conclusions even for martingale Hardy-Lorentz-Karamata spaces and variable martingale Hardy-Lorentz spaces. In detail, we remove the restriction that $b$ is a nondecreasing slowly varying function in \cite{jxz,lz,wzp} and extend the range of $q$ in \cite{hoa,jwwz,jxz,kv,lz}. Moreover, we give some applications of variable martingale Hardy-Lorentz-Karamata spaces in Fourier analysis. For example, under some conditions, we show the convergence of the partial sums of the Walsh-Fourier series and the boundedness of the maximal Fej\'{e}r operator. As a consequence, we get conclusions about almost everywhere and norm convergence of Fej\'{e}r means.

This paper is organized as follows. In Section \ref{s2}, we give the definition of variable Lorentz-Karamata spaces and some properties which will be used in the subsequent sections. We also introduce five types of variable martingale Hardy-Lorentz-Karamata spaces, which generalize the variable martingale Hardy-Lorentz spaces and martingale Hardy-Lorentz-Karamata spaces.

The target of Section \ref{s3} is to establish the atomic decompositions of variable martingale Hardy-Lorentz-Karamata spaces. We generalize several known inequalities from martingale Hardy-Lorentz-Karamata spaces and variable martingale Hardy-Lorentz spaces to variable martingale Hardy-Lorentz-Karamata spaces. More exactly, we remove the restriction of $0<p\le1$ and $0<q\le p$ in \cite[Theorem 5.3]{hoa}; the condition that $b$ is nondecreasing in \cite[Theorem 3.1]{jxz}. Moreover, it is worth noting that in \cite{hoa,jxz}, they only considered the atomic decomposition theorems with “$\infty$-atoms”. We discuss the atomic decompositions via “simple $r$-atoms” and “$\infty$-atoms”, respectively. We extend and complete the relevant conclusions in \cite{hoa,jwwz,jwzw,jxz}.

The objective of Section \ref{s4} is to show the martingale inequalities between different variable martingale Hardy-Lorentz-Karamata spaces. If $\{\mathcal{F}_n\}_{n\ge0}$ is regular, the equivalence of different variable martingale Hardy-Lorentz-Karamata spaces is proved. The way we used is to find a sufficient condition for a $\sigma$-sublinear operator to be bounded from variable martingale Hardy-Lorentz-Karamata spaces to variable Lorentz-Karamata spaces. The proof depends on the atomic decompositions of the variable martingale Hardy-Lorentz-Karamata spaces. We point out that, the martingale inequalities extend Theorem $4.11$ in \cite{jwzw} and Theorems $1.2$, $1.3$ in \cite{wzp}.

In Section \ref{s5}, we present the dual theorems of $H_{p(\cdot),q,b}$ as an application of the atomic decomposition theorems. We give the definitions of the generalized martingale spaces $BMO_{r,b}(\alpha(\cdot))$ and $BMO_{r,q,b}(\alpha(\cdot))$, the special case of which are the $BMO$ spaces in \cite{jwwz,jxz,w}. We firstly get that the dual space of $H_{p(\cdot),q,b}^s$ is $BMO_{2,b}(\alpha(\cdot))$ if $0<p_-\le p_+<1$ and $0<q\le1$. The result extends the dual theorems in \cite{hl,hoa,jwwz,jxz,w}. Next we consider the case  $0<p_-\le p_+<2$, $0<q<\infty$ and prove that the dual of $H_{p(\cdot),q,b}^s$ is $BMO_{2,q,b}(\alpha(\cdot))$. For $q=\infty$, the essential difficulty is that $L_p$ is not dense in $L_{p,\infty}$ for $0<p<\infty$. To overcome this difficulty, we introduce a closed subspace of $H_{p(\cdot),q,b}^s$, namely $\mathcal{H}_{p(\cdot),\infty,b}^s$ and verify that $L_2$ is dense in $\mathcal{H}_{p(\cdot),\infty,b}^s$. We show that the dual space of $\mathcal{H}_{p(\cdot),\infty,b}^s$ is $BMO_{2,\infty,b}(\alpha(\cdot))$. Furthermore, we note that in the dual theorems, $b$ is not necessarily nondecreasing. Hence, we further extend the relevant conclusions in \cite{hoa,jwwz,jxz,lz}.

Section \ref{s6} is devoted to the John-Nirenberg theorems on variable Lorentz-Karamata spaces when the stochastic basis $\{\mathcal{F}_n\}_{n\ge0}$ is regular. For a constant exponent, the well known classical John-Nirenberg theorem can be found in \cite{g}. Jiao et al. \cite{jxz} showed the John-Nirenberg theorem for $BMO_{r,q,b}(\alpha)$. Moreover, Jiao et al. \cite{jwyy} discussed the John-Nirenberg theorem for $BMO_{r,q}(\alpha)$. For the generalized $BMO$ martingale spaces associated with variable exponents, the John-Nirenberg theorems were proved in \cite{jwwz,ywj}. In this section, we generalize these theorems to variable Lorentz-Karamata spaces.

In Section \ref{s7}, we deal with the boundedness of fractional integrals in $H_{p(\cdot),q,b}^M$. In martingale theory, Chao and Ombe \cite{co} firstly introduced the fractional integrals for dyadic martingales. The boundedness of fractional integrals on martingale Hardy spaces for $0<p\le1$ was proved by Sadasue \cite{sf}. In \cite{ans,jwyy,lz,nsm}, the notion of fractional integrals was extended to more general martingales. For fractional integrals associated with variable exponents, Hao and Jiao \cite{hj} considered the boundedness on variable martingale Hardy spaces.
The boundedness of fractional integrals studied in Section \ref{s7} goes back to Liu and Zhou \cite[Theorem 4.4]{lz} if $p(\cdot)\equiv p$ is a constant.

In the last section, we investigate some applications of variable martingale Hardy-Lorentz-Karamata spaces in Fourier analysis. Martingale theory has extensive applications in dyadic harmonic analysis and in summability of Walsh-Fourier series, for constant exponents, see \cite{Gat2014d,gg,Gat2009a,Goginava2005,Goginava2006,Goginava2007a,Memic2016,Persson2015,su,ss,swsp,s,Simon2004,sw,ws,www} and the references therein. Using dyadic martingale theory, Weisz \cite{ws} verified that the maximal Fej\'{e}r operator $\sigma_*$ is bounded from $H_{p,q}$ to $L_{p,q}$ under the condition of $p>\frac{1}{2}$. Motivated by this result, Jiao et al. \cite{jwzw} discussed the boundedness of $\sigma_*$ from $H_{p(\cdot)}$ to $L_{p(\cdot)}$ and from $H_{p(\cdot),q}$ to $L_{p(\cdot),q}$. In Section \ref{s8}, we show that the partial sums of the Walsh-Fourier series converges to the function in norm if $f\in L_{p(\cdot),q,b}$ with $1<p_-\le p_+<\infty$. The boundedness of the maximal Fej\'{e}r operator from variable martingale Hardy-Lorentz-Karamata spaces to variable Lorentz-Karamata spaces is also proved. As a consequence, we get conclusions about almost everywhere and norm convergences of Fej\'{e}r means. Furthermore, the conclusions we get here extend the results in \cite{jwzw,ws} when $b\equiv1$. Note that these results are new even if $p(\cdot)$ is a constant and $b\not\equiv1$.

Finally, we end this section by making some conventions on notation. Throughout this paper, we denote by $\mathbb{Z}$ and $\mathbb{N}$ the set of integers and the set of nonnegative integers, respectively. We denote by $c$ a positive constant, which can vary from line to
line. The symbol $A\lesssim B$ stands for the inequality $A\le cB$. If we write $A\approx B$, then it means $A\lesssim B\lesssim A$. We use $\chi_I$ to denote the indicator function of a measurable set $I$.

\section{Preliminaries} \label{s2}
In this section, we introduce a new class of function spaces, namely the variable Lorentz-Karamata spaces. Meanwhile, some fundamental geometric properties of these spaces are given, including completeness and dominated convergence theorems. Based on the new spaces, five types of variable martingale Hardy-Lorentz-Karamata spaces are given in the last subsection. Except where otherwise stated, we always assume that $(\Omega,\mathcal{F},\mathbb{P})$ is a complete probability space.
\subsection{Variable Lebesgue spaces}
A measurable function $p(\cdot):\Omega\rightarrow(0,\infty)$ is called a variable exponent. For a measurable subset $A\subset\Omega$, we write
$$p_+(A):=\mathop{\mathrm{ess}\sup}\limits_{\omega\in A}p(\omega)\ \ \text{and}\ \ \ p_-(A):=\mathop{\mathrm{ess}\inf}\limits_{\omega\in A}p(\omega).$$
For brevity, we use the abbreviations
$$
p_+:=p_+(\Omega) \qquad \text{and} \qquad p_-:=p_-(\Omega).
$$
Denote by $\mathcal{P}(\Omega)$ the collection of all variable exponents $p(\cdot)$ such that $0<p_-\le p_+<\infty$. Throughout the paper, given a variable exponent $p(\cdot)$, the conjugate variable exponent $p'(\cdot)$ of $p(\cdot)$ is defined pointwise by
$$\frac{1}{p(\omega)}+\frac{1}{p'(\omega)}=1,\ \ \quad  \omega\in\Omega.$$

The variable Lebesgue space $L_{p(\cdot)}:=L_{p(\cdot)}(\Omega)$ is the set of all measurable functions $f$ defined on $(\Omega,\mathcal{F},\mathbb{P})$ such that for some $\lambda>0$,
$$\rho(f/\lambda)=\int_{\Omega}\bigg(\frac{|f(\cdot)|}{\lambda}\bigg)^{p(\cdot)}d\mathbb{P}<\infty.$$
The variable Lebesgue space equipped with the (quasi)-norm
$$\|f\|_{p(\cdot)}:=\inf\big\{\lambda>0:\rho(f/\lambda)\le 1 \big\}$$
becomes a (quasi)-Banach function space.
We present some basic properties of functional $\|\cdot\|_{p(\cdot)}$, which can be found in \cite{cf,dhhr,nsh}.

\begin{rem}\label{property}
	For any $f,g\in L_{p(\cdot)}$, the following properties hold:
	
	$1$. $\|f\|_{p(\cdot)}\ge0$, $\|f\|_{p(\cdot)}=0$ if and only if $f=0$.
	
	$2$. $\|cf\|_{p(\cdot)}=|c|\|f\|_{p(\cdot)}$ for any $c\in\mathbb{C}$.
	
	$3$. for $0<l\le \underline{p}:=\min\{p_-,1\}$,
	$$\|f+g\|_{p(\cdot)}^l\le\|f\|_{p(\cdot)}^l+\|g\|_{p(\cdot)}^l \qquad \mbox{and} \qquad \|f+g\|_{p(\cdot)}\le2^{1/l-1}\big(\|f\|_{p(\cdot)}+\|g\|_{p(\cdot)}\big).$$
\end{rem}

Moreover, we have the following lemmas for variable Lebesgue spaces, which will be useful in the whole paper.
\begin{lem} [\cite{jwwz}]\label{fansanjiao}
	Let $p(\cdot)\in\mathcal{P}(\Omega)$ with $p_+\le1$. For any positive functions $f,g\in L_{p(\cdot)}$, we have $$\|f\|_{p(\cdot)}+\|g\|_{p(\cdot)}\le\|f+g\|_{p(\cdot)}.$$
\end{lem}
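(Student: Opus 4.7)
The plan is to exploit the concavity of the power function $x\mapsto x^{p(\omega)}$ on $[0,\infty)$, which holds almost surely under the hypothesis $p_+\le 1$. The structural fact I will rely on throughout is the unit ball property for variable Lebesgue spaces with $p_+<\infty$: for any $h\in L_{p(\cdot)}$ with $\|h\|_{p(\cdot)}>0$, the modular $\rho(\cdot)$ satisfies $\rho(h/\|h\|_{p(\cdot)})=1$, and conversely $\rho(h/\lambda)\ge 1$ forces $\lambda\le\|h\|_{p(\cdot)}$ because $\lambda\mapsto\rho(h/\lambda)$ is continuous and strictly decreasing on the support of $h$.

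First I would dispose of the degenerate case. If either $\|f\|_{p(\cdot)}=0$ or $\|g\|_{p(\cdot)}=0$, the corresponding function vanishes almost surely and the inequality is trivial. Otherwise set $\alpha:=\|f\|_{p(\cdot)}>0$ and $\beta:=\|g\|_{p(\cdot)}>0$, so that $\rho(f/\alpha)=\rho(g/\beta)=1$ by the unit ball property. The algebraic identity
$$
\frac{f+g}{\alpha+\beta}=\frac{\alpha}{\alpha+\beta}\cdot\frac{f}{\alpha}+\frac{\beta}{\alpha+\beta}\cdot\frac{g}{\beta}
$$
presents the scaled sum as a pointwise convex combination of the two normalized functions.

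Next I would apply pointwise concavity of $x\mapsto x^{p(\omega)}$, valid a.s.\ since $p(\omega)\le p_+\le 1$, to obtain
$$
\left(\frac{f(\omega)+g(\omega)}{\alpha+\beta}\right)^{p(\omega)}\ge\frac{\alpha}{\alpha+\beta}\left(\frac{f(\omega)}{\alpha}\right)^{p(\omega)}+\frac{\beta}{\alpha+\beta}\left(\frac{g(\omega)}{\beta}\right)^{p(\omega)}.
$$
Integrating against $\mathbb{P}$ and plugging in $\rho(f/\alpha)=\rho(g/\beta)=1$ immediately yields
$$
\rho\left(\frac{f+g}{\alpha+\beta}\right)\ge\frac{\alpha}{\alpha+\beta}+\frac{\beta}{\alpha+\beta}=1.
$$

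Finally, I would convert this modular lower bound into the desired norm inequality via the reverse direction of the unit ball property. Since $\rho((f+g)/(\alpha+\beta))\ge 1$, any strictly smaller scaling $\lambda<\alpha+\beta$ would give $\rho((f+g)/\lambda)>1$ by strict monotonicity, so $\alpha+\beta\le\|f+g\|_{p(\cdot)}$, which is exactly the claimed inequality. I do not anticipate any genuine obstacle; the only point needing care is the conversion from modular to norm in the last step, which rests on standard continuity and monotonicity of $\lambda\mapsto\rho(h/\lambda)$ guaranteed by $p_+<\infty$, together with the fact that $f+g\in L_{p(\cdot)}$ (ensured by item 3 of Remark~\ref{property}).
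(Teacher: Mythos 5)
Your proof is correct. Note first that the paper does not prove this lemma at all — it is stated and cited from~\cite{jwwz} — so there is no in-paper argument to compare against; but your argument is the standard one for the reverse triangle inequality in variable $L^{p(\cdot)}$ when $p_+\le1$, and it goes through. The key steps are sound: the convex-combination rewriting, pointwise concavity of $x\mapsto x^{p(\omega)}$ for $p(\omega)\le1$ a.s., and the unit-ball property $\rho(h/\|h\|_{p(\cdot)})=1$. The only place you work harder than necessary is the final conversion from modular to norm: having established $\rho\bigl((f+g)/(\alpha+\beta)\bigr)\ge1$, you can invoke Lemma~2.3(1) of the paper directly (which states $\rho(h)\ge1\Leftrightarrow\|h\|_{p(\cdot)}\ge1$, upon combining the three cases) to conclude $\|(f+g)/(\alpha+\beta)\|_{p(\cdot)}\ge1$ and hence $\|f+g\|_{p(\cdot)}\ge\alpha+\beta$, without appealing to continuity and strict monotonicity of $\lambda\mapsto\rho(h/\lambda)$. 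Your monotonicity argument is also valid provided one notes $f+g\not\equiv0$ (which follows from your non-degeneracy reduction), but citing the unit-ball lemma is cleaner.
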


\begin{lem} [\cite{cw}]\label{lem2.1}
	Let $p(\cdot)\in\mathcal{P}(\Omega)$ and $s>0$. Then for all $f\in L_{sp(\cdot)}$, there is $$\||f|^s\|_{p(\cdot)}=\|f\|_{sp(\cdot)}^s.$$
\end{lem}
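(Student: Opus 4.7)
The plan is to prove the identity directly from the modular definition of the variable Lebesgue norm, using the homogeneity of the integrand under the substitution $\lambda = \mu^{s}$. By definition,
\[
\||f|^{s}\|_{p(\cdot)} = \inf\Bigl\{ \lambda>0 : \int_{\Omega} \Bigl(\frac{|f(\omega)|^{s}}{\lambda}\Bigr)^{p(\omega)}\,d\mathbb{P}(\omega) \le 1 \Bigr\},
\]
\[
\|f\|_{sp(\cdot)} = \inf\Bigl\{ \mu>0 : \int_{\Omega} \Bigl(\frac{|f(\omega)|}{\mu}\Bigr)^{sp(\omega)}\,d\mathbb{P}(\omega) \le 1 \Bigr\}.
\]
Call the admissible sets on the right $\Lambda$ and $M$ respectively.

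The key step is a pointwise algebraic identity: for every $\omega \in \Omega$ and every $\mu>0$,
\[
\Bigl(\frac{|f(\omega)|^{s}}{\mu^{s}}\Bigr)^{p(\omega)} = \Bigl(\frac{|f(\omega)|}{\mu}\Bigr)^{sp(\omega)},
\]
which, after integration, shows that $\lambda = \mu^{s} \in \Lambda$ if and only if $\mu \in M$. Because $s>0$, the map $\Phi \colon \mu \mapsto \mu^{s}$ is a strictly increasing bijection of $(0,\infty)$ onto itself, so $\Lambda = \Phi(M)$.

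Finally, I would invoke the fact that a strictly increasing continuous bijection commutes with the infimum operation on subsets of $(0,\infty)$:
\[
\||f|^{s}\|_{p(\cdot)} = \inf \Lambda = \inf \Phi(M) = \bigl(\inf M\bigr)^{s} = \|f\|_{sp(\cdot)}^{s},
\]
which is the claimed identity. Since $f \in L_{sp(\cdot)}$, the quantity $\|f\|_{sp(\cdot)}$ is finite and the infima are attained in the usual sense, so no additional integrability discussion is required. There is really no serious obstacle here: the entire argument is a routine change of scale, and the only point meriting explicit mention is that strict monotonicity of $\Phi$ is what allows the infimum to pass through the power $s$.
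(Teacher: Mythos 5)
The paper does not supply its own proof of this lemma; it simply cites Cruz-Uribe and Wang \cite{cw}. Your argument is a correct, self-contained proof by direct computation with the modular: the pointwise identity $\left(\frac{|f(\omega)|^s}{\mu^s}\right)^{p(\omega)} = \left(\frac{|f(\omega)|}{\mu}\right)^{sp(\omega)}$ shows that $\mu^s \in \Lambda$ if and only if $\mu \in M$, and since $\mu \mapsto \mu^s$ is a continuous strictly increasing bijection of $(0,\infty)$ onto itself, the infimum commutes with this map, giving $\inf \Lambda = (\inf M)^s$. This is exactly the standard proof one would expect for this rescaling identity. One small quibble: the remark that "the infima are attained" is not needed and is not generally true; what matters is only that the infimum of the image set equals the image of the infimum, which follows from monotonicity and continuity of $\mu \mapsto \mu^s$ alone (and holds even in the degenerate case $f=0$, where both sides are zero).
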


\begin{lem}[\cite{cf} or \cite{fz}]
	Let $p(\cdot)\in\mathcal{P}(\Omega)$. For any $f\in L_{p(\cdot)}$, we have
	
	$(1)$ $\|f\|_{p(\cdot)}<1$ $($resp. $=1,>1)$ if and only if $\rho(f)<1$ $($resp. $=1,>1);$
	
	$(2)$ if $\|f\|_{p(\cdot)}>1$, then
	$$\rho(f)^{1/p_+}\le\|f\|_{p(\cdot)}\le\rho(f)^{1/p_-};$$
	
	$(3)$ if $0<\|f\|_{p(\cdot)}\le1$, then
	$$\rho(f)^{1/p_-}\le\|f\|_{p(\cdot)}\le\rho(f)^{1/p_+}.$$
\end{lem}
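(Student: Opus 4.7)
The plan is to derive all three parts from two ingredients: a pair of one-sided scaling estimates for the modular $\rho$, and the continuity of the map $\lambda\mapsto\rho(f/\lambda)$ on $(0,\infty)$, so that the Luxemburg infimum is attained and equals $1$.

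First I would record the pointwise scaling inequalities. Since $\mu^{p(\omega)}$ lies between $\mu^{p_-}$ and $\mu^{p_+}$, a direct pointwise comparison of $(\mu|f|)^{p(\omega)}$ with $|f|^{p(\omega)}$ gives, for every $\mu\ge 1$,
\begin{equation*}
\mu^{p_-}\rho(f)\le \rho(\mu f)\le \mu^{p_+}\rho(f),
\end{equation*}
and for every $0<\mu\le 1$,
\begin{equation*}
\mu^{p_+}\rho(f)\le \rho(\mu f)\le \mu^{p_-}\rho(f).
\end{equation*}
In particular, because $p_+<\infty$, if $f\in L_{p(\cdot)}$ then $\rho(f/\lambda)<\infty$ for every $\lambda>0$: one starts from a $\lambda_0$ with $\rho(f/\lambda_0)\le 1$ and scales.

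Next I would establish that $\varphi(\lambda):=\rho(f/\lambda)$ is continuous and non-increasing on $(0,\infty)$. Monotonicity is clear from the pointwise inequality $(|f|/\lambda)^{p(\omega)}\downarrow$ in $\lambda$. Continuity follows from the dominated convergence theorem applied to $(|f|/\lambda_n)^{p(\cdot)}$, using as dominant $(|f|/\lambda_*)^{p(\cdot)}$ for any $\lambda_*$ below the accumulation point. Combining continuity with the definition $\|f\|_{p(\cdot)}=\inf\{\lambda>0:\varphi(\lambda)\le 1\}$, one deduces the key fact: if $\|f\|_{p(\cdot)}>0$ and finite, then $\varphi\bigl(\|f\|_{p(\cdot)}\bigr)=1$, i.e.\ $\rho\bigl(f/\|f\|_{p(\cdot)}\bigr)=1$.

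With these two tools, part (1) follows quickly. If $\|f\|_{p(\cdot)}<1$, pick $\lambda<1$ with $\varphi(\lambda)\le 1$; applying the scaling estimate with $\mu=\lambda$ to $g=f/\lambda$ gives $\rho(f)\le \lambda^{p_-}\varphi(\lambda)<1$. Conversely, if $\rho(f)<1$, continuity of $\varphi$ at $\lambda=1$ lets one choose $\lambda<1$ with $\varphi(\lambda)\le 1$, so $\|f\|_{p(\cdot)}\le\lambda<1$. The case $\rho(f)=1$ is handled via the squeeze $\|f\|_{p(\cdot)}\le 1$ and $\|f\|_{p(\cdot)}\ge 1$, and the case $>1$ is the complement. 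For parts (2) and (3), set $\lambda=\|f\|_{p(\cdot)}$ and write $\rho(f)=\rho(\lambda\cdot f/\lambda)$; since $\rho(f/\lambda)=1$, applying the appropriate scaling estimate (with $\mu=\lambda\ge 1$ for (2) and $\mu=\lambda\le 1$ for (3)) gives $\lambda^{p_-}\le\rho(f)\le\lambda^{p_+}$ in case (2) and $\lambda^{p_+}\le\rho(f)\le\lambda^{p_-}$ in case (3), which rearrange into the stated bounds.

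The only subtle step is the identity $\rho(f/\|f\|_{p(\cdot)})=1$; once that is in hand, everything else is a one-line scaling calculation. The subtlety is not really an obstacle here because $p_+<\infty$ guarantees $\varphi$ is finite and hence continuous everywhere on $(0,\infty)$, whereas without this hypothesis $\varphi$ can jump and the infimum need not be attained. I would therefore open the proof by flagging $p_+<\infty$ as the reason continuity is available.
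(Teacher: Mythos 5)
Your proof is correct, and since the paper simply cites this lemma to Cruz-Uribe--Fiorenza and Fan--Zhao without reproducing a proof, your argument plays the role of the standard one found there. The chain you use (pointwise scaling bounds $\mu^{p_-}\rho(f)\le\rho(\mu f)\le\mu^{p_+}\rho(f)$ for $\mu\ge1$ and the reversed version for $\mu\le1$, then monotonicity and continuity of $\lambda\mapsto\rho(f/\lambda)$, then the attainment $\rho\bigl(f/\|f\|_{p(\cdot)}\bigr)=1$) is exactly how those references argue, and your explicit flag that $p_+<\infty$ is what makes $\varphi$ finite everywhere and hence continuous (so the infimum is attained) is the right thing to single out: the ``$\ge1$'' half of $\rho(f/\|f\|_{p(\cdot)})=1$ genuinely needs the dominated convergence step, whereas the ``$\le1$'' half only needs monotone convergence. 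The one-line rearrangements for (2) and (3) then follow immediately from plugging $\mu=\|f\|_{p(\cdot)}$ into the appropriate scaling bound. No gaps.
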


\begin{lem}[\cite{cf}]\label{2.4}
	Let $p(\cdot),q(\cdot)\in\mathcal{P}(\Omega)$. If $p(\cdot)\le q(\cdot)$, then
	$$\|f\|_{p(\cdot)}\le2\|f\|_{q(\cdot)}\ \ \text{for every}\ f\in L_{q(\cdot)}.$$
\end{lem}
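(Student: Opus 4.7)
The plan is to argue at the level of the modular $\rho_p(g):=\int_\Omega|g|^{p(\cdot)}\,d\mathbb{P}$ and then translate back through the modular-norm correspondence (the preceding lemma, characterizing $\|g\|_{p(\cdot)}\lessgtr 1$ in terms of $\rho_p(g)$). By positive homogeneity of the quasi-norm I can rescale and assume $\|f\|_{q(\cdot)}\le 1$, which is equivalent to $\rho_q(f)\le 1$. It then suffices to produce a constant $c\le 2$ for which $\rho_p(f/c)\le 1$, since this gives $\|f\|_{p(\cdot)}\le c\le 2$.

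\textbf{Key pointwise step.} The central observation is the elementary comparison
\[
a^{p(\omega)} \le 1 + a^{q(\omega)} \qquad (a\ge 0,\ \omega\in\Omega),
\]
which holds because $p(\omega)\le q(\omega)$: if $a\le 1$ then $a^{p(\omega)}\le 1$, and if $a>1$ then $a^{p(\omega)}\le a^{q(\omega)}$. Applying this pointwise with $a=|f(\omega)|/c$ and integrating gives
\[
\rho_p(f/c) \le \mathbb{P}(\Omega) + \rho_q(f/c).
\]
Here is where the probability-space assumption enters: $\mathbb{P}(\Omega)=1$ (as opposed to merely finite). Also, for $c\ge 1$ one has $c^{q(\omega)}\ge 1$, so $\rho_q(f/c)\le \rho_q(f)\le 1$, hence $\rho_p(f/c)\le 2$.

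\textbf{Choice of constant and finish.} To upgrade this modular bound of $2$ to the claimed norm bound with constant exactly $2$, I would partition $\Omega=\{|f|\le\lambda\}\cup\{|f|>\lambda\}$ with $\lambda$ calibrated against $c$, use $(|f|/c)^{p(\omega)}\le 1$ on the first piece and $(|f|/c)^{p(\omega)}\le(|f|/c)^{q(\omega)}$ on the second, and then invoke the modular-norm correspondence stated just above to convert $\rho_p(f/c)\le 1$ into $\|f\|_{p(\cdot)}\le c$; taking $c=2$ closes the argument.

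\textbf{Main obstacle.} The real subtlety is tracking the constant. The naive split yields only $\rho_p(f/2)\le 2$, not $\le 1$, and squeezing out the sharp constant requires balancing the contribution $\mathbb{P}(\Omega)=1$ from the ``small'' part of the decomposition against the $\rho_q(f)$-contribution from the ``large'' part; in particular, one must exploit that $\mathbb{P}(\Omega)=1$ and not just that it is finite. Choosing the splitting threshold $\lambda$ correctly (so that the two contributions fit together without waste) and then pushing the bound through the modular-norm conversion is the only part of the proof that demands care.
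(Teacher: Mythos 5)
Your first half is correct and is exactly the standard argument: normalize $\|f\|_{q(\cdot)}=1$ (so $\rho_q(f)\le 1$), use the pointwise bound $a^{p(\omega)}\le 1+a^{q(\omega)}$, and integrate over the probability space to get $\rho_p(f)\le 1+\rho_q(f)\le 2$. The problem is the proposed finishing move. Any partition $\Omega=\{|f|\le\lambda\}\cup\{|f|>\lambda\}$ for which the two pointwise bounds you name are simultaneously available forces $\lambda=c$ (you need $|f|\le c$ for $(|f|/c)^{p}\le 1$ and $|f|\ge c$ for $(|f|/c)^{p}\le(|f|/c)^{q}$), and then on the large piece one has $(|f|/c)^{q(\omega)}>1$, so
\[
\rho_p(f/c)\;\le\;\mathbb{P}(\{|f|\le c\})+\int_{\{|f|>c\}}\Bigl(\tfrac{|f|}{c}\Bigr)^{q(\omega)}\,d\mathbb{P}\;>\;\mathbb{P}(\{|f|\le c\})+\mathbb{P}(\{|f|>c\})\;=\;1
\]
whenever $\{|f|>c\}$ has positive measure. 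So this route can never yield $\rho_p(f/c)\le 1$, no matter how $\lambda$ and $c$ are calibrated; the ``balancing'' you describe is not available.

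The step you are missing is the scaling/convexity estimate for the modular, which is where the hypothesis $p(\cdot)\ge 1$ (the standing assumption in \cite{cf}) is genuinely used. Having shown $\rho_p(f)\le 2$, observe that since $p(\omega)\ge 1$ one has $2^{-p(\omega)}\le 1/2$ pointwise, hence
\[
\rho_p(f/2)=\int_{\Omega}\frac{|f|^{p(\omega)}}{2^{p(\omega)}}\,d\mathbb{P}\le\frac12\,\rho_p(f)\le 1,
\]
and the modular--norm correspondence you cite gives $\|f\|_{p(\cdot)}\le 2$. Note this is not a cosmetic point: for $p_-<1$ this scaling estimate fails, and one only obtains the weaker bound $\|f\|_{p(\cdot)}\le 2^{1/\underline{p}}\|f\|_{q(\cdot)}$ by reducing to the normed case via $\|f\|_{p(\cdot)}=\||f|^{\underline{p}}\|_{p(\cdot)/\underline{p}}^{1/\underline{p}}$. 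In particular your plan, which never invokes $p(\cdot)\ge 1$, cannot recover the constant $2$ because that constant is not even correct without it.
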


\subsection{Slowly varying functions}

For a function $f:[1,\infty)\rightarrow(0,\infty)$, we say that $f$ is equivalent to a nondecreasing (resp. nonincreasing) function $g$ if $f\approx g$. In order to define the Lorentz-Karamata spaces with variable exponents, we recall the definition of slowly varying functions.
\begin{defi}[\cite{ee}]
	A Lebesgue measurable function $b:[1,\infty)\rightarrow(0,\infty)$ is said to be a slowly varying function if for any given $\varepsilon>0$, the function $t^\varepsilon b(t)$ is equivalent to a nondecreasing function and the function $t^{-\varepsilon}b(t)$ is equivalent to a nonincreasing function on $[1,\infty)$.
\end{defi}
	
\begin{example}
Clearly, $\eta(t)\equiv1$ and $1+\log t$ are slowly varying functions, respectively $($see \cite{n}$)$. Let $0<p<\infty$, $m \in \mathbb{N}$ and $\alpha=(\alpha_1,\alpha_2,\ldots,\alpha_m) \in \mathbb{R}^m$.
Define the family of positive functions $\{\ell_k\}^m_{k=0}$ on $(0,\infty)$ by
$$\ell_0(t) = 1/t\ \ \text{and}\ \ \ell_k(t)=1+\log\big(\ell_{k-1}(t)\big),\quad 0<t\leq1,\;1\leq k\leq m.$$
Moreover, define
$$\Theta_\alpha^m(t)=\prod_{k=1}^m\ell_k^{\alpha_k}(t).$$
It is easy to see that $\Theta_\alpha^m$ is a slowly varying function $($see \cite{ee}$)$.
Moreover, it follows from \cite{ekp} that $(e+\log t)^\alpha(\log(e+\log t))^\beta\ (\alpha,\beta\in\mathbb{R})$ and $\exp(\sqrt{\log t})$ are also slowly varying functions.
\end{example}

Let $b$ be a slowly varying function. We define $\gamma_b$ on $(0,\infty)$ by $$\gamma_b(t)=b\big(\max\{t,t^{-1}\}\big),\quad t>0.$$
This definition is from \cite{ee}. The following proposition shows some properties of the slowly varying functions. We refer to \cite{ee,n,sd} for more information of slowly varying functions.
\begin{pro}\label{b}
	Let $b$ be a slowly varying function on $[1,\infty)$.
		
	$(1)$ If $b$ is a nondecreasing function, then $\gamma_b$ is nonincreasing on $(0,1]$.
		
	$(2)$ For any given $\varepsilon>0$, the function $t^\varepsilon \gamma_b(t)$ is equivalent to a nondecreasing function and the function $t^{-\varepsilon}\gamma_b(t)$ is equivalent to a nonincreasing function on $(0,\infty)$.
		
	$(3)$ If $\varepsilon$ and $r$ are positive numbers, then there exists positive constants $c_\varepsilon$ and $C_{\varepsilon}$ such that $$c_\varepsilon\min\{r^{\varepsilon},r^{-\varepsilon}\}b(t)\le b(rt)\le C_\varepsilon \max\{r^{\varepsilon},r^{-\varepsilon}\}b(t),\quad t>0.$$
		
	$(4)$ For any $a>0$, denote $b_1(t)=b(t^a)$ on $[1,\infty)$. Then $b_1$ also is a slowly varying function.
	
	$(5)$ For any given $r\in\mathbb{R}$, the function $b^r$ is a slowly varying function and $\gamma_{b^r}=\gamma_b^r$.
	
	$(6)$ Let $0<p\le\infty$. For any positive constants $\alpha$ and $\beta$, we have
	\begin{align*}
		 (\alpha+\beta)^p\gamma_b(\alpha+\beta)\lesssim\alpha^p\gamma_b(\alpha)+\beta^p\gamma_b(\beta).
	\end{align*}
\end{pro}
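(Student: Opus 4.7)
The plan is to handle the six claims in an order that lets later parts reuse earlier ones. I would first dispose of (1), (4), (5), which are direct unpackings of the definitions. For (1), on $(0,1]$ we have $\gamma_b(t)=b(1/t)$, and since $t\mapsto 1/t$ reverses order, nondecreasing $b$ makes $\gamma_b$ nonincreasing. For (4), rewriting $t^\varepsilon b_1(t)=(t^a)^{\varepsilon/a}b(t^a)$ and using that $s\mapsto s^{\varepsilon/a}b(s)$ is equivalent to a nondecreasing $h$, we get $t\mapsto h(t^a)$ nondecreasing since $a>0$; the companion statement for $t^{-\varepsilon}b_1(t)$ is symmetric. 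For (5), split on the sign of $r$: when $r>0$ write $t^\varepsilon b^r(t)=(t^{\varepsilon/r}b(t))^r$ and note that raising a positive equivalent-to-monotone function to a positive power preserves the property; when $r<0$ take reciprocals. The identity $\gamma_{b^r}=\gamma_b^r$ is immediate from the definition $\gamma_b(t)=b(\max\{t,1/t\})$.

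For (2), I would split the domain at $t=1$. On $[1,\infty)$ the identity $t^\varepsilon\gamma_b(t)=t^\varepsilon b(t)$ reduces the claim to the slowly varying hypothesis for $b$. On $(0,1)$, substituting $s=1/t$ converts $t^\varepsilon\gamma_b(t)$ into $s^{-\varepsilon}b(s)$, equivalent to a nonincreasing function of $s\in(1,\infty)$; precomposition with the decreasing bijection $t\mapsto 1/t$ flips the monotonicity, so $t^\varepsilon\gamma_b(t)$ is equivalent to a nondecreasing function on $(0,1)$. The two pieces agree up to a constant at $t=1$, which glues them into a single monotone-equivalent function on $(0,\infty)$; the $t^{-\varepsilon}\gamma_b$ statement is the mirror image. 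With (2) in hand, (3) follows by the standard Potter-type argument: for $r\geq 1$, the nondecreasing representative of $t^\varepsilon b(t)$ evaluated at $rt$ versus $t$ gives $b(rt)\geq c\,r^{-\varepsilon}b(t)$, and the nonincreasing representative of $t^{-\varepsilon}b(t)$ gives $b(rt)\leq C\,r^\varepsilon b(t)$. The case $r<1$ is handled by applying these bounds with $r$ replaced by $1/r$, and the two-sided estimate of (3) results.

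Finally, (6) is a corollary of the scaling bound (3) applied to $\gamma_b$, which is itself covered by (2). By symmetry assume $\alpha\leq\beta$, so $\beta\leq\alpha+\beta\leq 2\beta$. For $0<p<\infty$, $(\alpha+\beta)^p\lesssim\beta^p$, and applying the $\gamma_b$ version of (3) with ratio $(\alpha+\beta)/\beta\in[1,2]$ yields $\gamma_b(\alpha+\beta)\lesssim\gamma_b(\beta)$. Multiplying and dropping the nonnegative $\alpha^p\gamma_b(\alpha)$ term on the right gives the asserted inequality. The main obstacle I anticipate is the careful bookkeeping of equivalence constants through (2) and (3): the ``equivalent to a monotone function'' formulation is strictly weaker than pointwise monotonicity, and one has to choose the monotone representatives so that the upper and lower scaling bounds in (3) come from mutually compatible equivalences. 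For the borderline case $p=\infty$ in (6), the inequality is to be read in the natural limiting sense, where it reduces to the scaling bound for $\gamma_b$ already established.
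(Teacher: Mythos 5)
The paper does not actually prove this proposition; it discharges each part by citation (to [jxz], [sd], [n], [ee], [hoa]). Your proposal therefore necessarily takes a different route: a self-contained elementary verification directly from the definitions, which is a useful thing to have. Parts (1), (4), (5) are correct unwindings of the definitions — composing with $t\mapsto 1/t$, rewriting $t^{\varepsilon}b_1(t)=(t^{a})^{\varepsilon/a}b(t^{a})$, and raising an equivalent-to-monotone function to a power all preserve the required properties. Part (2) is handled correctly by splitting $(0,\infty)$ at $t=1$, substituting $s=1/t$ on the left piece, and gluing; the gluing deserves one explicit sentence noting that the two monotone representatives can be rescaled by constants to agree at $t=1$ without disturbing the equivalence. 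The Potter-type derivation of (3) from the monotone representatives of $t^{\varepsilon}b(t)$ and $t^{-\varepsilon}b(t)$ is standard and correct; be aware that the proposition states the bound for $t>0$ while $b$ lives on $[1,\infty)$, so that range is really the $\gamma_b$-version supplied by (2), and your argument covers this. Part (6) via the Potter bound with ratio $(\alpha+\beta)/\beta\in[1,2]$ is clean.

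The one soft spot is the $p=\infty$ endpoint of (6): the expression $(\alpha+\beta)^{\infty}$ has no literal meaning and ``read in the natural limiting sense'' is an assertion rather than a proof. In the paper the inequality is invoked only with $p=1$ (Lemmas~2.9 and 2.10), so this endpoint appears unused; if you want to cover it, state explicitly that for $p=\infty$ the $t^{p}$ factors are dropped and the claim reduces to $\gamma_b(\alpha+\beta)\lesssim\gamma_b(\alpha)+\gamma_b(\beta)$, which follows from the same bounded-ratio Potter estimate. With that one sentence, the argument is complete.
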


We refer to \cite{jxz,sd} for the proof of $(1);$ $(2)$ and $(3)$ come from \cite{n}$;$ $(4)$ and $(5)$ were showed in \cite{ee}$;$ $(6)$ can be found in \cite{hoa}.

Furthermore, we extend Proposition \ref{b} (6) to more general situations.

\begin{lem}\label{wuqiongheb}
	Let $b$ be a slowly varying function and $\alpha_i\ (i\in\mathbb{N})$ be positive constants. If $1<p<\infty$, then
	 $$\sum_{i\in\mathbb{N}}\alpha_i^p\gamma_b(\alpha_i)\lesssim\Big(\sum_{i\in\mathbb{N}}\alpha_i\Big)^p\gamma_b\Big(\sum_{i\in\mathbb{N}}\alpha_i\Big).$$
	 If $0<p<1$, then
	 $$\Big(\sum_{i\in\mathbb{N}}\alpha_i\Big)^p\gamma_b\Big(\sum_{i\in\mathbb{N}}\alpha_i\Big)\lesssim\sum_{i\in\mathbb{N}}\alpha_i^p\gamma_b(\alpha_i).$$
\end{lem}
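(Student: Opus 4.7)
The plan is to exploit that $S:=\sum_{i\in\mathbb{N}}\alpha_i$ dominates every individual $\alpha_i$, and to replace the slowly varying factor $\gamma_b$ with a pure power so that we can reduce matters to a classical $\ell^{r}$-type comparison. The technical engine is Proposition~\ref{b}(2): since $t^{\varepsilon}\gamma_b(t)$ is equivalent to a nondecreasing function and $t^{-\varepsilon}\gamma_b(t)$ is equivalent to a nonincreasing function on $(0,\infty)$, the inequality $\alpha_i\le S$ immediately yields the two-sided Potter-type estimate
\[
\Bigl(\frac{\alpha_i}{S}\Bigr)^{\varepsilon}\gamma_b(S)\;\lesssim\;\gamma_b(\alpha_i)\;\lesssim\;\Bigl(\frac{S}{\alpha_i}\Bigr)^{\varepsilon}\gamma_b(S),
\]
valid for every $\varepsilon>0$ with constants depending only on $b$ and $\varepsilon$. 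The remaining freedom in $\varepsilon$ will be used to shift the effective exponent on $\alpha_i$ past the threshold~$1$ on the appropriate side.

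For $1<p<\infty$, I would fix $\varepsilon\in(0,p-1)$, so that $p-\varepsilon>1$. Applying the upper Potter estimate to each term gives
\[
\sum_{i\in\mathbb{N}}\alpha_i^{p}\gamma_b(\alpha_i)\;\lesssim\;S^{\varepsilon}\gamma_b(S)\sum_{i\in\mathbb{N}}\alpha_i^{p-\varepsilon},
\]
and the embedding $\ell^{1}(\mathbb{N})\hookrightarrow\ell^{p-\varepsilon}(\mathbb{N})$ (equivalently, $\sum_{i}\beta_i^{r}\le\bigl(\sum_{i}\beta_i\bigr)^{r}$ for $r\ge1$ and $\beta_i\ge0$) bounds the remaining sum by $S^{p-\varepsilon}$. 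Multiplying through collapses the powers of $S$ to $S^{p}\gamma_b(S)$, which is precisely the claim.

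For $0<p<1$, I would symmetrically fix $\varepsilon\in(0,1-p)$, so that $p+\varepsilon<1$. The lower Potter estimate gives
\[
\sum_{i\in\mathbb{N}}\alpha_i^{p}\gamma_b(\alpha_i)\;\gtrsim\;S^{-\varepsilon}\gamma_b(S)\sum_{i\in\mathbb{N}}\alpha_i^{p+\varepsilon},
\]
and the subadditivity of the concave map $x\mapsto x^{p+\varepsilon}$ on $[0,\infty)$ (equivalently, $\sum_{i}\beta_i^{r}\ge\bigl(\sum_{i}\beta_i\bigr)^{r}$ for $0<r\le1$ and $\beta_i\ge0$) bounds the remaining sum from below by $S^{p+\varepsilon}$, and again the powers of $S$ combine to give $S^{p}\gamma_b(S)$.

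I do not anticipate any serious obstacle. The only point worth a brief check is that the constants produced by Proposition~\ref{b}(2) are uniform in $i$ and in the sequence $(\alpha_i)$, which is automatic from the definition of equivalence. The case $S=\infty$ can be disposed of separately: for $p>1$ the right-hand side is $+\infty$ and the bound is trivial, while for $p<1$ the inequality $\sum_i\alpha_i^{p}\ge S^{p}=+\infty$ forces the sum on the right to be infinite as well. Hence attention can be restricted to $S<\infty$, where the argument above applies verbatim.
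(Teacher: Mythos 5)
Your proof is correct and matches the paper's argument essentially verbatim: both fix an auxiliary $\varepsilon$ so that $p\mp\varepsilon$ crosses the threshold $1$, use the almost-monotonicity of $t^{\pm\varepsilon}\gamma_b(t)$ from Proposition~\ref{b}(2) to replace $\gamma_b(\alpha_i)$ by a power of $\alpha_i/S$ times $\gamma_b(S)$, and then apply the $\ell^1\hookrightarrow\ell^r$ (resp.\ $r$-subadditivity) inequality to collapse the remaining sum into a power of $S$. The only cosmetic difference is that you package the monotonicity step as a two-sided Potter-type estimate and add a short remark on the case $S=\infty$, which the paper leaves implicit.
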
	
\begin{proof}
	We first consider the case of $1<p<\infty$. Let $0<\varepsilon<p-1$. Then $p-\varepsilon>1$ and we know that
	\begin{align}\label{qq}
		 \sum_{i\in\mathbb{N}}\alpha_i^{p-\varepsilon}\le\Big(\sum_{i\in\mathbb{N}}\alpha_i\Big)^{p-\varepsilon}.
	\end{align}
	Since $t^{\varepsilon}\gamma_b(t)$ is equivalent to a nondecreasing function, there is
	 $$\alpha_k^{p-\varepsilon}\Big(\sum_{i\in\mathbb{N}}\alpha_i\Big)^{\varepsilon}\gamma_b\Big(\sum_{i\in\mathbb{N}}\alpha_i\Big)\gtrsim \alpha_k^p\gamma_b(\alpha_k),\quad\forall\ k\in\mathbb{N}.$$
	Combining (\ref{qq}) with the above inequality, we have
	\begin{align*}
		\sum_{k\in\mathbb{N}}\alpha_k^{p}\gamma_b(\alpha_k)&\ \lesssim\sum_{k\in\mathbb{N}}\alpha_k^{p-\varepsilon}\Big(\sum_{i\in\mathbb{N}}\alpha_i\Big)^\varepsilon\gamma_b\Big(\sum_{i\in\mathbb{N}}\alpha_i\Big)\\
		&\ \le\Big(\sum_{i\in\mathbb{N}}\alpha_i\Big)^{p-\varepsilon}\Big(\sum_{i\in\mathbb{N}}\alpha_i\Big)^\varepsilon\gamma_b\Big(\sum_{i\in\mathbb{N}}\alpha_i\Big).
	\end{align*}

    If $0<p<1$, we set $0<\theta<1-p$. Then we obtain that $\theta+p<1$ and
    $$\Big(\sum_{i\in\mathbb{N}}\alpha_i\Big)^{\theta+p}\le\sum_{i\in\mathbb{N}}\alpha_i^{\theta+p}.$$
    Since $t^{-\theta}\gamma_b(t)$ is equivalent to a nonincreasing function, there is
    $$\alpha_k^{\theta+p}\Big(\sum_{i\in\mathbb{N}}\alpha_i\Big)^{-\theta}\gamma_b\Big(\sum_{i\in\mathbb{N}}\alpha_i\Big)\lesssim \alpha_k^p\gamma_b(\alpha_k),\quad \forall\ k\in\mathbb{N}.$$
    This yields that
    $$\Big(\sum_{k\in\mathbb{N}}\alpha_k^{\theta+p}\Big)\Big(\sum_{i\in\mathbb{N}}\alpha_i\Big)^{-\theta}\gamma_b\Big(\sum_{i\in\mathbb{N}}\alpha_i\Big)\lesssim \sum_{k\in\mathbb{N}}\alpha_k^p\gamma_b(\alpha_k).$$
    Then we conclude that
    \begin{align*}
    	 \Big(\sum_{i\in\mathbb{N}}\alpha_i\Big)^p\gamma_b\Big(\sum_{i\in\mathbb{N}}\alpha_i\Big)=&\ \Big(\sum_{k\in\mathbb{N}}\alpha_k\Big)^{\theta+p}\Big(\sum_{i\in\mathbb{N}}\alpha_i\Big)^{-\theta}\gamma_b\Big(\sum_{i\in\mathbb{N}}\alpha_i\Big)\\
    	\le&\ \Big(\sum_{k\in\mathbb{N}}\alpha_k^{\theta+p}\Big)\Big(\sum_{i\in\mathbb{N}}\alpha_i\Big)^{-\theta}\gamma_b\Big(\sum_{i\in\mathbb{N}}\alpha_i\Big)\\
    	\lesssim&\ \sum_{k\in\mathbb{N}}\alpha_k^p\gamma_b(\alpha_k).
    \end{align*}
\end{proof}

\begin{lem}\label{wuqiongbianb}
	Let $p(\cdot)\in\mathcal{P}(\Omega)$ and $b$ be a slowly varying function. If $0<\theta<\underline{p}$, we have
	\begin{align*}
		 \bigg\|\sum_{i\in\mathbb{N}}\chi_{A_i}\bigg\|_{p(\cdot)}^\theta\gamma_b^\theta\bigg(\bigg\|\sum_{i\in\mathbb{N}}\chi_{A_i}\bigg\|_{p(\cdot)}\bigg)\lesssim\sum_{i\in\mathbb{N}}\|\chi_{A_i}\|_{p(\cdot)}^\theta\gamma_b^\theta\big(\|\chi_{A_i}\|_{p(\cdot)}\big),
	\end{align*}
    where $A_i\in\mathcal{F}$, $i\in\mathbb{N}$ are arbitrary sets.
    If $p_+<\theta<\infty$ and $(A_i)_{i\in\mathbb{N}}$ are disjoint, then we have
	\begin{align}\label{///}
		 \sum_{i\in\mathbb{N}}\|\chi_{A_i}\|_{p(\cdot)}^\theta\gamma_b^\theta\big(\|\chi_{A_i}\|_{p(\cdot)}\big)\lesssim\bigg\|\sum_{i\in\mathbb{N}}\chi_{A_i}\bigg\|_{p(\cdot)}^\theta\gamma_b^\theta\bigg(\bigg\|\sum_{i\in\mathbb{N}}\chi_{A_i}\bigg\|_{p(\cdot)}\bigg).
	\end{align}
    Moreover, if $b$ is nonincreasing and $p_+\le\theta<\infty$, then $(\ref{///})$ also holds.
\end{lem}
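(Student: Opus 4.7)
The plan is to treat all three inequalities through a common mechanism: combine an appropriate $\theta$-th power inequality for the variable Lebesgue norm with the slow variation of $\gamma_b^\theta=\gamma_{b^\theta}$, which is itself a slowly varying function by Proposition \ref{b}(5). Throughout I write $\beta_i:=\|\chi_{A_i}\|_{p(\cdot)}$ and $S:=\big\|\sum_{i\in\mathbb{N}}\chi_{A_i}\big\|_{p(\cdot)}$; because we are on a probability space, $\rho(\chi_A)=\mathbb{P}(A)\le 1$ forces both quantities to lie in $(0,1]$ (trivial indices may be discarded), and monotonicity of the norm gives $\beta_i\le S$. The role of the $\gamma_b^\theta$-factor will be absorbed through a small $\varepsilon$-perturbation supplied by Proposition \ref{b}(2), which says that $t^{\pm\varepsilon}\gamma_b^\theta(t)$ is equivalent to a monotone function.

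For the first inequality, since $\theta<\underline{p}$, I would choose $\varepsilon>0$ with $\theta+\varepsilon\le\underline{p}$ and apply the countable version of Remark \ref{property}(3) (extended by induction and monotone convergence) to obtain both $S^\theta\le\sum_i\beta_i^\theta$ and $S^{\theta+\varepsilon}\le\sum_i\beta_i^{\theta+\varepsilon}$. Since $t^{-\varepsilon}\gamma_b^\theta(t)$ is equivalent to a nonincreasing function and $\beta_i\le S$, one has $\gamma_b^\theta(\beta_i)\gtrsim(\beta_i/S)^\varepsilon\gamma_b^\theta(S)$, so
\[
\beta_i^\theta\gamma_b^\theta(\beta_i)\gtrsim S^{-\varepsilon}\gamma_b^\theta(S)\,\beta_i^{\theta+\varepsilon}.
\]
Summing over $i$ and invoking the second power estimate immediately yields the claim.

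The remaining two inequalities both rest on a reverse power estimate $\sum_i\beta_i^\theta\lesssim S^\theta$ for disjoint $A_i$ and $\theta\ge p_+$, and this is where the main work lies. I would first establish the case $\theta=p_+$ by a direct modular argument: continuity of $\lambda\mapsto\rho(\cdot/\lambda)$ forces $\int_{A_i}\beta_i^{-p(\omega)}\,d\mathbb{P}=1$ for each $i$ and $\sum_i\int_{A_i}S^{-p(\omega)}\,d\mathbb{P}=1$. Writing $c_i:=\int_{A_i}S^{-p(\omega)}\,d\mathbb{P}$ and exploiting the pointwise bound $(\beta_i/S)^{p(\omega)}\ge(\beta_i/S)^{p_+}$ (valid because $\beta_i/S\le 1$ and $p(\omega)\le p_+$), one gets $c_i\ge(\beta_i/S)^{p_+}\int_{A_i}\beta_i^{-p(\omega)}\,d\mathbb{P}=(\beta_i/S)^{p_+}$, and summation yields $\sum_i\beta_i^{p_+}\le S^{p_+}$. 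For arbitrary $\theta\ge p_+$ one simply writes $\beta_i^\theta=\beta_i^{p_+}\cdot\beta_i^{\theta-p_+}\le\beta_i^{p_+}\,S^{\theta-p_+}$ (using $\beta_i\le S\le 1$) and sums.

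Granted this power inequality, the second inequality follows by fixing $\varepsilon>0$ with $\theta-\varepsilon>p_+$, using that $t^\varepsilon\gamma_b^\theta(t)$ is equivalent to a nondecreasing function to derive $\gamma_b^\theta(\beta_i)\lesssim(S/\beta_i)^\varepsilon\gamma_b^\theta(S)$, and summing:
\[
\sum_i\beta_i^\theta\gamma_b^\theta(\beta_i)\lesssim S^\varepsilon\gamma_b^\theta(S)\sum_i\beta_i^{\theta-\varepsilon}\lesssim S^\theta\gamma_b^\theta(S).
\]
For the third inequality, where the endpoint $\theta=p_+$ is permitted but $b$ is nonincreasing, I would observe that $\beta_i\le S\le 1$ together with the nonincrease of $b$ gives $\gamma_b(\beta_i)=b(1/\beta_i)\le b(1/S)=\gamma_b(S)$, so $\gamma_b^\theta(S)$ can be pulled out of the sum directly; combining with $\sum_i\beta_i^\theta\le S^\theta$ closes the argument. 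The principal obstacle throughout is the reverse power inequality for disjoint atoms in the variable Lebesgue setting; once that estimate is secured, the remainder of the proof is a routine application of the slow-variation properties of $\gamma_b$.
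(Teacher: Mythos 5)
Your proposal is correct, and it takes a genuinely more direct route than the paper. The paper reduces the problem to the scalar inequality of Lemma~\ref{wuqiongheb} through the change-of-exponent identity $\|\chi_{A_i}\|_{p(\cdot)}^\theta=\|\chi_{A_i}\|_{p(\cdot)\alpha/\theta}^\alpha$ (Lemma~\ref{lem2.1} with a rescaled slowly varying function $b_1$), and then invokes the reverse triangle inequality of Lemma~\ref{fansanjiao} to pass from $\sum_i\|\chi_{A_i}\|_{p(\cdot)\beta/\theta}$ to $\|\sum_i\chi_{A_i}\|_{p(\cdot)\beta/\theta}$ when the exponent is $\le1$. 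You instead inline the mechanism of Lemma~\ref{wuqiongheb} directly in the variable-exponent setting: a small-$\varepsilon$ perturbation $t^{\pm\varepsilon}\gamma_{b^\theta}(t)$ from Proposition~\ref{b}(2) absorbs the slowly varying factor, after which the whole burden falls on two deterministic power estimates for the norms, $S^{\theta+\varepsilon}\le\sum_i\beta_i^{\theta+\varepsilon}$ (for $\theta+\varepsilon\le\underline p$, from Remark~\ref{property}(3)) and $\sum_i\beta_i^{\theta}\le S^{\theta}$ (for $\theta\ge p_+$ with disjoint sets). For the latter, rather than citing Lemma~\ref{fansanjiao}, you re-prove the indicator case from scratch via the modular: since the modular of $\chi_{A_i}/\beta_i$ and of $\chi_{\cup A_i}/S$ both equal $1$, the pointwise bound $(\beta_i/S)^{p(\omega)}\ge(\beta_i/S)^{p_+}$ and summation give $\sum_i\beta_i^{p_+}\le S^{p_+}$, and the general $\theta\ge p_+$ case then follows by pulling out $\beta_i^{\theta-p_+}\le S^{\theta-p_+}$. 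This is essentially a self-contained derivation of the indicator version of Lemma~\ref{fansanjiao} combined with Lemma~\ref{lem2.1}. The trade-offs: your argument avoids the two slowly-varying function changes of variable and makes the role of the exponent thresholds $\underline p$ and $p_+$ transparent, at the cost of having to justify (standard for bounded exponents and indicators, by continuity and monotone convergence) that the modular is attained, i.e. $\rho(\chi_A/\|\chi_A\|_{p(\cdot)})=1$; the paper's route outsources exactly this point to the cited Lemma~\ref{fansanjiao}.
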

\begin{proof}
	For the case of $0<\theta<\underline{p}$, set $\frac{\theta}{\underline{p}}\le\alpha<1$ and $b_1(t)=b(t^\frac{\alpha}{\theta})$ for $t\in[1,\infty)$.
	From Proposition \ref{b} (4), (5) and Lemma \ref{lem2.1}, we know that $b_1^\theta$ is a slowly varying function and
	 $$\gamma_b^\theta\big(\|\chi_{A_i}\|_{p(\cdot)}\big)=\gamma_{b^\theta}\big(\|\chi_{A_i}\|_{p(\cdot)}\big)=\gamma_{b^\theta}\Big(\|\chi_{A_i}\|_{\frac{p(\cdot)\alpha}{\theta}}^\frac{\alpha}{\theta}\Big)=\gamma_{b_1^\theta}\big(\|\chi_{A_i}\|_{\frac{p(\cdot)\alpha}{\theta}}\big).$$
	Then according to Lemma \ref{wuqiongheb}, we get that
	\begin{align*}
		 \sum_{i\in\mathbb{N}}\|\chi_{A_i}\|_{p(\cdot)}^\theta\gamma_b^\theta\big(\|\chi_{A_i}\|_{p(\cdot)}\big)=&\ \sum_{i\in\mathbb{N}}\|\chi_{A_i}\|_{\frac{p(\cdot)\alpha}{\theta}}^\alpha\gamma_{b_1^\theta}\big(\|\chi_{A_i}\|_{\frac{p(\cdot)\alpha}{\theta}}\big)\\
		\gtrsim&\ \bigg(\sum_{i\in\mathbb{N}}\|\chi_{A_i}\|_{\frac{p(\cdot)\alpha}{\theta}}\bigg)^\alpha\gamma_{b_1^\theta}\bigg(\sum_{i\in\mathbb{N}}\|\chi_{A_i}\|_{\frac{p(\cdot)\alpha}{\theta}}\bigg)\\
		=&\ \bigg(\sum_{i\in\mathbb{N}}\|\chi_{A_i}\|_{p(\cdot)}^\frac{\theta}{\alpha}\bigg)^\alpha\gamma_{b_1^\theta}\bigg(\sum_{i\in\mathbb{N}}\|\chi_{A_i}\|_{p(\cdot)}^\frac{\theta}{\alpha}\bigg).
	\end{align*}
	Since $0<\frac{\theta}{\alpha}\le\underline{p}$, we see that $\|\cdot\|_{p(\cdot)}$ is a $\frac{\theta}{\alpha}$-norm and
	 $$\bigg\|\sum_{i\in\mathbb{N}}\chi_{A_i}\bigg\|_{p(\cdot)}^\frac{\theta}{\alpha}\le\sum_{i\in\mathbb{N}}\|\chi_{A_i}\|_{p(\cdot)}^\frac{\theta}{\alpha}.$$
	Then
	\begin{align*}
		 \sum_{i\in\mathbb{N}}\|\chi_{A_i}\|_{p(\cdot)}^\theta\gamma_b^\theta\big(\|\chi_{A_i}\|_{p(\cdot)}\big)\gtrsim&\ \bigg\|\sum_{i\in\mathbb{N}}\chi_{A_i}\bigg\|_{p(\cdot)}^\theta\gamma_{b_1^\theta}\bigg(\bigg\|\sum_{i\in\mathbb{N}}\chi_{A_i}\bigg\|_{p(\cdot)}^\frac{\theta}{\alpha}\bigg)\\
		=&\ \bigg\|\sum_{i\in\mathbb{N}}\chi_{A_i}\bigg\|_{p(\cdot)}^\theta\gamma_b^\theta\bigg(\bigg\|\sum_{i\in\mathbb{N}}\chi_{A_i}\bigg\|_{p(\cdot)}\bigg).
	\end{align*}

    Next we discuss the situation of $p_+<\theta<\infty$. Let $1<\beta\le\frac{\theta}{p_+}$ and $b_2(t)=b(t^\frac{\beta}{\theta})$ for $t\in[1,\infty)$.
    Obviously, $b_2^\theta$ is a slowly varying function and
    $\gamma_b^\theta\big(\|\chi_{A_i}\|_{p(\cdot)}\big)=\gamma_{b_2^\theta}\big(\|\chi_{A_i}\|_{\frac{p(\cdot)\beta}{\theta}}\big).$
    Then it follows from Lemma \ref{wuqiongheb} that
    \begin{align*}
    	 \sum_{i\in\mathbb{N}}\|\chi_{A_i}\|_{p(\cdot)}^\theta\gamma_b^\theta\big(\|\chi_{A_i}\|_{p(\cdot)}\big)=&\ \sum_{i\in\mathbb{N}}\|\chi_{A_i}\|_{\frac{p(\cdot)\beta}{\theta}}^\beta\gamma_{b_2^\theta}\big(\|\chi_{A_i}\|_{\frac{p(\cdot)\beta}{\theta}}\big)\\
    	\lesssim&\ \bigg(\sum_{i\in\mathbb{N}}\|\chi_{A_i}\|_{\frac{p(\cdot)\beta}{\theta}}\bigg)^\beta\gamma_{b_2^\theta}\bigg(\sum_{i\in\mathbb{N}}\|\chi_{A_i}\|_{\frac{p(\cdot)\beta}{\theta}}\bigg).
    \end{align*}
    Since $\big(\frac{p(\cdot)\beta}{\theta}\big)_+\le1$, then we have the following estimation by Lemma \ref{fansanjiao}:
    $$\sum_{i\in\mathbb{N}}\|\chi_{A_i}\|_{\frac{p(\cdot)\beta}{\theta}}\le\bigg\|\sum_{i\in\mathbb{N}}\chi_{A_i}\bigg\|_{\frac{p(\cdot)\beta}{\theta}}.$$
    Hence, we get by the disjointness of $(A_i)_{i\in\mathbb{N}}$ that
    \begin{align*}
    	 \sum_{i\in\mathbb{N}}\|\chi_{A_i}\|_{p(\cdot)}^\theta\gamma_b^\theta\big(\|\chi_{A_i}\|_{p(\cdot)}\big)\lesssim&\ \bigg\|\sum_{i\in\mathbb{N}}\chi_{A_i}\bigg\|_{\frac{p(\cdot)\beta}{\theta}}^\beta\gamma_{b_2^\theta}\bigg(\bigg\|\sum_{i\in\mathbb{N}}\chi_{A_i}\bigg\|_{\frac{p(\cdot)\beta}{\theta}}\bigg)\\
    	=&\ \bigg\|\sum_{i\in\mathbb{N}}\chi_{A_i}\bigg\|_{p(\cdot)}^\theta\gamma_b^\theta\bigg(\bigg\|\sum_{i\in\mathbb{N}}\chi_{A_i}\bigg\|_{p(\cdot)}\bigg).
    \end{align*}

    If $b$ is nonincreasing and $p_+\le\theta<\infty$, it follows from the definition of $\gamma_{b}$ and Lemma \ref{fansanjiao} that
    \begin{align*}
    	 \sum_{i\in\mathbb{N}}\|\chi_{A_i}\|_{p(\cdot)}^\theta\gamma_b^\theta\big(\|\chi_{A_i}\|_{p(\cdot)}\big)\lesssim&\ \sum_{i\in\mathbb{N}}\|\chi_{A_i}\|_{p(\cdot)}^\theta\gamma_b^\theta\bigg(\bigg\|\sum_{i\in\mathbb{N}}\chi_{A_i}\bigg\|_{p(\cdot)}\bigg)\\
    	=&\ \sum_{i\in\mathbb{N}}\|\chi_{A_i}\|_{\frac{p(\cdot)}{\theta}}\gamma_b^\theta\bigg(\bigg\|\sum_{i\in\mathbb{N}}\chi_{A_i}\bigg\|_{p(\cdot)}\bigg)\\
    	\le&\ \bigg\|\sum_{i\in\mathbb{N}}\chi_{A_i}\bigg\|_{\frac{p(\cdot)}{\theta}}\gamma_b^\theta\bigg(\bigg\|\sum_{i\in\mathbb{N}}\chi_{A_i}\bigg\|_{p(\cdot)}\bigg)\\
    	=&\ \bigg\|\sum_{i\in\mathbb{N}}\chi_{A_i}\bigg\|_{p(\cdot)}^\theta\gamma_b^\theta\bigg(\bigg\|\sum_{i\in\mathbb{N}}\chi_{A_i}\bigg\|_{p(\cdot)}\bigg).
    \end{align*}
    Hence, this completes the proof.
\end{proof}

Motivated by Lemma \ref{wuqiongbianb}, we have the following lemma as another deduction of Proposition \ref{b} (6).

\begin{lem}\label{variableb}
	Let $p(\cdot)\in\mathcal{P}(\Omega)$ and $b$ be a slowly varying function.
	For any sets $A,B\in\mathcal{F}$, there is
	$$\|\chi_{A\cup B}\|_{p(\cdot)}\gamma_b\big(\|\chi_{A\cup B}\|_{p(\cdot)}\big)\lesssim\|\chi_A\|_{p(\cdot)}\gamma_b\big(\|\chi_A\|_{p(\cdot)}\big)+\|\chi_B\|_{p(\cdot)}\gamma_b\big(\|\chi_B\|_{p(\cdot)}\big).$$
\end{lem}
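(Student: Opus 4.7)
The plan is to reduce the assertion to the scalar inequality in Proposition~\ref{b}~(6) with $p=1$. Write $\alpha := \|\chi_A\|_{p(\cdot)}$, $\beta := \|\chi_B\|_{p(\cdot)}$, and $\gamma := \|\chi_{A\cup B}\|_{p(\cdot)}$, so the goal becomes $\gamma\gamma_b(\gamma) \lesssim \alpha\gamma_b(\alpha) + \beta\gamma_b(\beta)$. The key observation is the two-sided comparison $\gamma \approx \alpha+\beta$: the upper bound $\gamma \le C(\alpha+\beta)$ with $C=2^{1/\underline{p}-1}$ follows from the pointwise estimate $\chi_{A\cup B}\le\chi_A+\chi_B$, the monotonicity of $\|\cdot\|_{p(\cdot)}$ on nonnegative functions, and the quasi-triangle inequality in Remark~\ref{property}~(3); the lower bound is free, since $\chi_A,\chi_B\le\chi_{A\cup B}$ already forces $\alpha,\beta\le\gamma$, whence $(\alpha+\beta)/2 \le \max\{\alpha,\beta\}\le\gamma$.

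Next I would transfer this comparison through $\gamma_b$. I claim that whenever $s\le t\le Cs$, one has $s\gamma_b(s) \approx t\gamma_b(t)$. Fix any $\varepsilon\in(0,1)$; by Proposition~\ref{b}~(2) the function $t^\varepsilon\gamma_b(t)$ is equivalent to a nondecreasing function and $t^{-\varepsilon}\gamma_b(t)$ is equivalent to a nonincreasing function. The former gives $s^\varepsilon\gamma_b(s)\lesssim t^\varepsilon\gamma_b(t)$, which upon multiplying by $s^{1-\varepsilon}$ and using $s\le t$ yields $s\gamma_b(s)\lesssim t\gamma_b(t)$; the latter gives $\gamma_b(t)\lesssim (t/s)^\varepsilon\gamma_b(s)\le C^\varepsilon\gamma_b(s)$, whence $t\gamma_b(t)\lesssim Cs\cdot C^\varepsilon\gamma_b(s)\lesssim s\gamma_b(s)$. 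Applied to the pair $\{\gamma,\alpha+\beta\}$, this produces $\gamma\gamma_b(\gamma)\approx(\alpha+\beta)\gamma_b(\alpha+\beta)$.

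Finally, Proposition~\ref{b}~(6) applied to the positive scalars $\alpha$ and $\beta$ with $p=1$ gives $(\alpha+\beta)\gamma_b(\alpha+\beta)\lesssim \alpha\gamma_b(\alpha) + \beta\gamma_b(\beta)$, and chaining the two displayed estimates completes the proof. The only real annoyance is the piecewise definition $\gamma_b(t)=b(\max\{t,t^{-1}\})$, which in principle forces case splits around $t=1$ when one tries to compare $\gamma_b(s)$ and $\gamma_b(t)$ directly; however, Proposition~\ref{b}~(2) already packages the needed scaling control uniformly in $t\in(0,\infty)$, so no such case analysis is needed in the final writeup.
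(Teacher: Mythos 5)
Your proof is correct and takes essentially the same route as the paper's: quasi-triangle inequality for $\|\cdot\|_{p(\cdot)}$, then transfer through $\gamma_b$ using the (near-)monotonicity of $t\mapsto t\gamma_b(t)$, then split via Proposition~\ref{b}~(6) with $p=1$. You simply unpack, via Proposition~\ref{b}~(2), the comparison $s\gamma_b(s)\approx t\gamma_b(t)$ for comparable $s,t$, a step the paper compresses into a single appeal to ``$t\gamma_b(t)$ is equivalent to a nondecreasing function.''
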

\begin{proof}
	According to Remark \ref{property}, we have
	$$\|\chi_{A\cup B}\|_{p(\cdot)}\le\|\chi_{A}+\chi_B\|_{p(\cdot)}\lesssim\|\chi_{A}\|_{p(\cdot)}+\|\chi_{B}\|_{p(\cdot)}.$$
	Since $t\gamma_b(t)$ is equivalent to a nondecreasing function, it follows from Proposition \ref{b} (6) that
	\begin{align*}
		\|\chi_{A\cup B}\|_{p(\cdot)}\gamma_b\big(\|\chi_{A\cup B}\|_{p(\cdot)}\big) \lesssim&\ \big(\|\chi_A\|_{p(\cdot)}+\|\chi_B\|_{p(\cdot)}\big)\cdot\gamma_b\big(\|\chi_A\|_{p(\cdot)}+\|\chi_B\|_{p(\cdot)}\big)\\
		\lesssim&\ \|\chi_A\|_{p(\cdot)}\gamma_b\big(\|\chi_A\|_{p(\cdot)}\big)+\|\chi_B\|_{p(\cdot)}\gamma_b\big(\|\chi_B\|_{p(\cdot)}\big).
	\end{align*}
	Therefore, the proof is finished.
\end{proof}

\subsection{Variable Lorentz-Karamata spaces}
In this subsection, we introduce a new class of function spaces as a generalization of Lorentz-Karamata spaces and variable Lorentz spaces. 
\begin{defi}\label{dingyi}
	Let $p(\cdot)\in\mathcal{P}(\Omega)$, $0<q\le\infty$ and $b$ be a slowly varying function. The variable Lorentz-Karamata space $L_{p(\cdot),q,b}:=L_{p(\cdot),q,b}(\Omega)$ consists of all measurable functions $f$ with a finite functional $\|f\|_{p(\cdot),q,b}$ given by
	$$\|f\|_{p(\cdot),q,b}=\left\{
	\begin{aligned}
		 &\bigg[\int_{0}^{\infty}\big(t\|\chi_{\{|f|>t\}}\|_{p(\cdot)}\gamma_b(\|\chi_{\{|f|>t\}}\|_{p(\cdot)})\big)^q\frac{dt}{t}\bigg]^{\frac{1}{q}},\ \mathrm{if}\ 0<q<\infty,\\
		 &\sup_{t>0}t\|\chi_{\{|f|>t\}}\|_{p(\cdot)}\gamma_b(\|\chi_{\{|f|>t\}}\|_{p(\cdot)}),\ \ \ \ \ \ \ \ \ \ \ \ \ \mathrm{if}\ q=\infty.
	\end{aligned}
	\right.$$
\end{defi}


	
	


\begin{rem}
	These spaces coincide with the Lorentz-Karamata spaces $L_{p,q,b_1}$ when $p(\cdot)\equiv p$ is a positive constant and $b_1(t)=b(t^{1/p})$ for $t\in[1,\infty)$. In fact, $b_1$  is a slowly varying function by Proposition $\ref{b}$ $(4)$. Moreover, when $p(\cdot)\equiv p$, for any $f\in L_{p(\cdot),q,b}$, we have
	\begin{align*}
		 \int_{0}^{\infty}\big(t\|\chi_{\{|f|>t\}}\|_{p}\gamma_b(\|\chi_{\{|f|>t\}}\|_{p})\big)^q\frac{dt}{t}
		=&\ \int_{0}^{\infty}\big[t\mathbb{P}(|f|>t)^{1/p}\gamma_b\big(\mathbb{P}(|f|>t)^{1/p}\big)\big]^q\frac{dt}{t}\\
		=&\ \int_{0}^{\infty}\big[t\|\chi_{\{|f|>t\}}\|_{p}\gamma_{b_1}\big(\mathbb{P}(|f|>t)\big)\big]^q\frac{dt}{t},
	\end{align*}
	which means that $\|f\|_{p(\cdot),q,b}=\|f\|_{p,q,b_1}$ for $0<q<\infty$. Similarly, it is obvious that $\|f\|_{p(\cdot),\infty,b}=\|f\|_{p,\infty,b_1}$.
	For more details of Lorentz-Karamata spaces, we refer the readers to \cite{ee,hol,hoa,n} and the references therein. Moreover, variable Lorentz-Karamata spaces become variable Lorentz spaces when $b\equiv1$. For an introduction to variable Lorentz spaces, we refer to \cite{cf,dhhr,jwzw,kv}.
\end{rem}

In order to discuss whether variable Lorentz-Karamata space is a (quasi)-normed space, we shall show the embedding relationship on variable Lorentz-Karamata spaces. This result extends the embedding relationship on Lorentz-Karamata spaces (see \cite[Theorem $3.4.45$]{ee}).

\begin{lem}\label{baohan}
	Suppose that $p(\cdot)\in\mathcal{P}(\Omega)$, $0<q_1\le q_2\le\infty$ and the slowly varying functions $b_1,b_2$ satisfy $\sup\limits_{1\le t<\infty}\frac{b_2(t)}{b_1(t)}<\infty$. Then $L_{p(\cdot),q_1,b_1}\subset L_{p(\cdot),q_2,b_2}$.
\end{lem}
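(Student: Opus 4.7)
The plan is to adapt the classical nested-Lorentz embedding argument to the Karamata / variable-exponent setting, via a weak-type bound at the endpoint $q_2 = \infty$ followed by an interpolation-style splitting of the integrand when $q_2 < \infty$.

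First I would unpack the hypothesis on the slowly varying functions. Since $\gamma_b(u) = b(\max\{u, u^{-1}\})$, the argument of $b$ always lies in $[1,\infty)$, so the pointwise estimate $\sup_{t\ge 1} b_2(t)/b_1(t) < \infty$ immediately upgrades to
\begin{equation*}
\gamma_{b_2}(u) \lesssim \gamma_{b_1}(u), \qquad u > 0.
\end{equation*}
In particular, replacing $b_2$ by $b_1$ can only enlarge the functional, so it suffices to prove $L_{p(\cdot),q_1,b_1} \subset L_{p(\cdot),q_2,b_1}$.

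Next I would establish the weak-type endpoint estimate, i.e.\ the case $q_2 = \infty$. Set $h(t) := \|\chi_{\{|f|>t\}}\|_{p(\cdot)}$, which is nonincreasing in $t$, and $F(t) := t\, h(t)\, \gamma_{b_1}(h(t))$. By Proposition~\ref{b}(2), $u \mapsto u\, \gamma_{b_1}(u)$ is equivalent to a nondecreasing function, hence $t \mapsto h(t)\, \gamma_{b_1}(h(t))$ is equivalent to a nonincreasing function. Therefore, for any $s > 0$,
\begin{equation*}
\|f\|_{p(\cdot),q_1,b_1}^{q_1}
\ge \int_0^s F(t)^{q_1}\, \frac{dt}{t}
\gtrsim \bigl(h(s)\, \gamma_{b_1}(h(s))\bigr)^{q_1} \int_0^s t^{q_1-1}\, dt
= \frac{1}{q_1}\, F(s)^{q_1},
\end{equation*}
which yields $\|f\|_{p(\cdot),\infty,b_1} \lesssim \|f\|_{p(\cdot),q_1,b_1}$ and, combined with Step~1, gives the claim when $q_2 = \infty$.

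For the remaining case $q_1 < q_2 < \infty$, I would split one factor using the weak-type bound just proved. Writing $F(t) = t\, h(t)\, \gamma_{b_1}(h(t))$ as before, we have
\begin{equation*}
\|f\|_{p(\cdot),q_2,b_2}^{q_2}
\lesssim \int_0^\infty F(t)^{q_2}\, \frac{dt}{t}
\le \Bigl(\sup_{t>0} F(t)\Bigr)^{q_2-q_1} \int_0^\infty F(t)^{q_1}\, \frac{dt}{t}
\lesssim \|f\|_{p(\cdot),q_1,b_1}^{q_2-q_1}\, \|f\|_{p(\cdot),q_1,b_1}^{q_1},
\end{equation*}
which gives the desired embedding. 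The only mildly subtle point is the monotonicity argument in the endpoint estimate: one must invoke Proposition~\ref{b}(2) to guarantee that $u\, \gamma_{b_1}(u)$ is equivalent to nondecreasing, since $\gamma_{b_1}$ itself need not be monotone (we have removed the assumption that $b$ is nondecreasing). Apart from that, the proof is routine.
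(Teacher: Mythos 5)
Your proof is correct and follows essentially the same approach as the paper's: reduce $\gamma_{b_2}\lesssim\gamma_{b_1}$ using the hypothesis, establish the weak-type endpoint $\|f\|_{p(\cdot),\infty,b_1}\lesssim\|f\|_{p(\cdot),q_1,b_1}$ via the monotonicity of $u\mapsto u\gamma_{b_1}(u)$ (Proposition~\ref{b}(2)), and split the integrand for $q_1<q_2<\infty$. The only cosmetic difference is that you first replace $b_2$ by $b_1$ globally, whereas the paper keeps $\gamma_{b_2}$ in the $q_2-q_1$ factor; both lead to the same estimate.
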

\begin{proof}
	Since $\sup\limits_{1\le t<\infty}\frac{b_2(t)}{b_1(t)}<\infty$, there exists a constant $C>0$ such that $b_2(t)\le C b_1(t)$ for $1\le t<\infty$. Hence, we have
	\begin{align}\label{22}
	\gamma_{b_2}(t)=b_2(1/t)\lesssim b_1(1/t)=\gamma_{b_1}(t)
	\end{align}
	for $0<t\le1$. When $q_1=\infty$, it yields $q_2=\infty$.
	For $\alpha>0$, it is easy to see that
	 $$\alpha\|\chi_{\{|f|>\alpha\}}\|_{p(\cdot)}\gamma_{b_2}\big(\|\chi_{\{|f|>\alpha\}}\|_{p(\cdot)}\big)\lesssim \alpha\|\chi_{\{|f|>\alpha\}}\|_{p(\cdot)}\gamma_{b_1}\big(\|\chi_{\{|f|>\alpha\}}\|_{p(\cdot)}\big).$$
	Taking the supremum for all $\alpha>0$, we get $L_{p(\cdot),\infty,b_1}\subset L_{p(\cdot),\infty,b_2}$.
	
	Now we consider the case of $0<q_1<q_2=\infty$. Suppose that $f\in L_{p(\cdot),q_1,b_1}$ and $\alpha>0$. We have
	\begin{align*}
		 \Big[\alpha\|\chi_{\{|f|>\alpha\}}\|_{p(\cdot)}\gamma_{b_2}\big(\|\chi_{\{|f|>\alpha\}}\|_{p(\cdot)}\big)\Big]^{q_1}=&\ q_1\int_{0}^{\alpha}s^{q_1-1}ds\cdot\Big[\|\chi_{\{|f|>\alpha\}}\|_{p(\cdot)}\gamma_{b_2}
		\big(\|\chi_{\{|f|>\alpha\}}\|_{p(\cdot)}\big)\Big]^{q_1}\\
		\lesssim&\ \int_{0}^{\alpha}\Big[s\|\chi_{\{|f|>s\}}\|_{p(\cdot)}\gamma_{b_2}\big(\|\chi_{\{|f|>s\}}\|_{p(\cdot)}\big)\Big]^{q_1}\frac{ds}{s}\\
		\lesssim&\ \int_{0}^{\alpha}\Big[s\|\chi_{\{|f|>s\}}\|_{p(\cdot)}\gamma_{b_1}\big(\|\chi_{\{|f|>s\}}\|_{p(\cdot)}\big)\Big]^{q_1}\frac{ds}{s}.
	\end{align*}
	This means
	\begin{align*}
		\|f\|_{p(\cdot),\infty,b_2}=&\ \sup_{\alpha>0}\alpha\|\chi_{\{|f|>\alpha\}}\|_{p(\cdot)}\gamma_{b_2}\big(\|\chi_{\{|f|>\alpha\}}\|_{p(\cdot)}\big)\\
		\lesssim&\ \bigg(\int_{0}^{\infty}\Big[s\|\chi_{\{|f|>s\}}\|_{p(\cdot)}\gamma_{b_1}\big(\|\chi_{\{|f|>s\}}\|_{p(\cdot)}\big)\Big]^{q_1}\frac{ds}{s}\bigg)^{1/q_1}=\|f\|_{p(\cdot),q_1,b_1}.
	\end{align*}
	Hence, we get $L_{p(\cdot),q_1,b_1}\subset L_{p(\cdot),\infty,b_2}$.
	
	Finally, we suppose that $0<q_1\le q_2<\infty$. It follows from (\ref{22}) that
	\begin{align*}
		\|f\|_{p(\cdot),q_2,b_2}^{q_2}=&\ \int_{0}^{\infty}t^{q_2}\|\chi_{\{|f|>t\}}\|_{p(\cdot)}^{q_2}\gamma_{b_2}^{q_2}\big(\|\chi_{\{|f|>t\}}\|_{p(\cdot)}\big)\frac{dt}{t}\\
		=&\ \int_{0}^{\infty}t^{q_2-q_1}\|\chi_{\{|f|>t\}}\|_{p(\cdot)}^{q_2-q_1}\gamma_{b_2}^{q_2-q_1}\big(\|\chi_{\{|f|>t\}}\|_{p(\cdot)}\big)\\
		&\ \ \ \ \ \ \ \ \ \ \ \ \ \ \ \ \ \ \ \ \ \times t^{q_1}\|\chi_{\{|f|>t\}}\|_{p(\cdot)}^{q_1}\gamma_{b_2}^{q_1}\big(\|\chi_{\{|f|>t\}}\|_{p(\cdot)}\big)\frac{dt}{t}\\
		\lesssim&\ \int_{0}^{\infty}t^{q_2-q_1}\|\chi_{\{|f|>t\}}\|_{p(\cdot)}^{q_2-q_1}\gamma_{b_2}^{q_2-q_1}\big(\|\chi_{\{|f|>t\}}\|_{p(\cdot)}\big)\\
		&\ \ \ \ \ \ \ \ \ \ \ \ \ \  \ \  \ \ \ \ \ \times t^{q_1}\|\chi_{\{|f|>t\}}\|_{p(\cdot)}^{q_1}\gamma_{b_1}^{q_1}\big(\|\chi_{\{|f|>t\}}\|_{p(\cdot)}\big)\frac{dt}{t}\\
		\le&\ \|f\|_{p(\cdot),\infty,b_2}^{q_2-q_1}\cdot\|f\|_{p(\cdot),q_1,b_1}^{q_1}\lesssim\|f\|_{p(\cdot),q_1,b_1}^{q_2}.
	\end{align*}
	Therefore, $L_{p(\cdot),q_1,b_1}\subset L_{p(\cdot),q_2,b_2}$ holds again.
\end{proof}

We also recall the next embedding relationship for Lorentz-Karamata spaces.
\begin{lem}[\cite{ee}]\label{3.4.48}
	Let $p_1,p_2\in(0,\infty),q_1,q_2\in(0,\infty]$ with $p_2<p_1$ and $b_1,b_2$ be slowly varying functions. Then $L_{p_1,q_1,b_1}\subset L_{p_2,q_2,b_2}$.
\end{lem}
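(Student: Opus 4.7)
The plan is to factor the embedding through the weak endpoint,
\[
L_{p_1,q_1,b_1}\;\hookrightarrow\; L_{p_1,\infty,b_1}\;\hookrightarrow\; L_{p_2,q_2,b_2},
\]
with all of the gain coming from the strict gap $\delta:=1/p_2-1/p_1>0$ in the second arrow. The first inclusion is immediate from Lemma~\ref{baohan} applied with constant exponent $p_1$, target index $\infty$, and $b_2=b_1$: the ratio hypothesis is trivial and $q_1\le\infty$ is automatic. It therefore suffices to prove $L_{p_1,\infty,b_1}\hookrightarrow L_{p_2,q_2,b_2}$.

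For the second arrow I would pass to the decreasing rearrangement $f^*$ and use the classical identity for Lorentz-Karamata quasi-norms (see \cite{ee}),
\[
\|f\|_{p,q,b}^{q}\;\approx\;\int_{0}^{1}\bigl(s^{1/p}\gamma_{b}(s)f^{*}(s)\bigr)^{q}\,\frac{ds}{s},\qquad 0<q<\infty,
\]
with the obvious $\sup$-modification for $q=\infty$; the upper limit $1$ reflects the fact that $f^{*}\equiv 0$ on $(1,\infty)$ on a probability space. For $f\in L_{p_1,\infty,b_1}$ with $A:=\|f\|_{p_1,\infty,b_1}$, this identity yields the pointwise bound $f^{*}(s)\lesssim A\,s^{-1/p_1}/\gamma_{b_1}(s)$ on $(0,1]$. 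Now invoke Proposition~\ref{b}(2): for every $\varepsilon>0$ we have $\gamma_{b_1}(s)\gtrsim s^{\varepsilon}$ and $\gamma_{b_2}(s)\lesssim s^{-\varepsilon}$ on $(0,1]$, which multiplied together give the decisive estimate
\[
s^{1/p_2}\gamma_{b_2}(s)f^{*}(s)\;\lesssim\; A\,s^{\delta-2\varepsilon}\quad\text{on }(0,1].
\]
Choosing $\varepsilon<\delta/2$ and setting $\eta:=\delta-2\varepsilon>0$, the integral $\int_{0}^{1}s^{\eta q_2}\,ds/s$ converges for $q_2<\infty$, while $\sup_{s\in(0,1]}s^{\eta}<\infty$ handles $q_2=\infty$. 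Either way, $\|f\|_{p_2,q_2,b_2}\lesssim A$, closing the chain.

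The only genuine obstacle is the rearrangement identity displayed above, which is classical for Lorentz-Karamata spaces (Karamata's theorem on integrals of slowly varying functions) but is not proved in the excerpt. If one prefers to avoid $f^{*}$, an equivalent argument runs directly on $\lambda(t)=\mathbb{P}(|f|>t)$ by factoring the integrand of $\|f\|_{p_2,q_2,b_2}^{q_2}$ as $\bigl(t\lambda(t)^{1/p_1}\gamma_{b_1}(\lambda(t))\bigr)^{q_2}\cdot\lambda(t)^{\delta q_2}\bigl(\gamma_{b_2}/\gamma_{b_1}\bigr)^{q_2}(\lambda(t))$, bounding the first factor by $A^{q_2}$ and applying Proposition~\ref{b}(2) to the second to extract a $\lambda(t)^{\eta q_2}$ gain; convergence then follows by splitting at $t=1$ and using $\lambda(t)\le 1$ near zero together with the polynomial decay $\lambda(t)\lesssim t^{-\beta}$ (with $\beta$ slightly less than $p_1>p_2$) near infinity. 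This is the standard ``polynomial slack'' trick used to prove $L^{p_1}\subset L^{p_2}$ on a probability space, adapted to absorb the slowly varying corrections into arbitrarily small powers.
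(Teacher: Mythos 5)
The paper does not prove this lemma: it is quoted directly from \cite{ee} (Edmunds--Evans) and used as a black box, so there is no internal proof to compare against. Your reconstruction is nonetheless correct and is essentially the standard argument: factor through the weak endpoint via Lemma~\ref{baohan} with $b_2 = b_1$ (which requires only the trivial ratio condition and $q_1 \le \infty$), then absorb the slowly varying corrections into a strictly positive polynomial slack $\delta = 1/p_2 - 1/p_1$ using Proposition~\ref{b}(2), which gives $\gamma_{b_1}(s)\gtrsim s^{\varepsilon}$ and $\gamma_{b_2}(s)\lesssim s^{-\varepsilon}$ on $(0,1]$ after comparison with the bounded monotone majorants/minorants at $s=1$. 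Both your rearrangement version and your distribution-function version close correctly; in the latter, the split at $t=1$ with $\lambda(t)\le 1$ near zero (to tame the $t^{q_2-1}$ weight) and the polynomial decay $\lambda(t)\lesssim t^{-\beta}$ near infinity (extracted from the weak norm plus the $\gamma_{b_1}$ lower bound) are exactly the right ingredients. The only cosmetic caveat is that the paper's $\|\cdot\|_{p,q,b}$ for constant $p$ puts $\gamma_b$ at $\mathbb{P}(|f|>t)$ rather than at its $1/p$-th power, which differs from the rearrangement normalization by a change of slowly varying function (the paper's Remark~2.8 records this); since the lemma's hypotheses allow arbitrary $b_1,b_2$, this is harmless, but one should be consistent about which convention the rearrangement identity is stated in.
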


Based on the above embedding relationship of variable Lorentz-Karamata spaces, we collect the following properties of the functional $\|\cdot\|_{p(\cdot),q,b}$.

\begin{lem}\label{fanshu}
	Let $p(\cdot)\in\mathcal{P}(\Omega)$, $0<q\le\infty$ and $b$ be a slowly varying function. The following properties hold.
	
	$(1)$ $\|f\|_{p(\cdot),q,b}\ge0$, $\|f\|_{p(\cdot),q,b}=0$ if and only if $f\equiv0$.
	
	$(2)$ $\|cf\|_{p(\cdot),q,b}=|c|\|f\|_{p(\cdot),q,b}$ for any $c\in\mathbb{C}$.
	
	$(3)$ For every fixed $\lambda>0$, the functional $\|\cdot\|_{p(\cdot),q,b}$ satisfies the equivalence
	$$\|f\|_{p(\cdot),q,b}\approx\left\{
	\begin{aligned}
		 &\bigg(\sum_{k\in\mathbb{Z}}\Big[2^k\|\chi_{\{|f|>\lambda2^k\}}\|_{p(\cdot)}\gamma_b\big(\|\chi_{\{|f|>\lambda2^k\}}\|_{p(\cdot)}\big)\Big]^q\bigg)^\frac{1}{q},\ \mathrm{if}\ 0<q<\infty,\\
		 &\sup_{k\in\mathbb{Z}}2^k\|\chi_{\{|f|>\lambda2^k\}}\|_{p(\cdot)}\gamma_b\big(\|\chi_{\{|f|>\lambda2^k\}}\|_{p(\cdot)}\big),\ \ \ \ \ \ \ \ \ \ \ \mathrm{if}\ q=\infty.
	\end{aligned}
	\right.$$
	
	$(4)$ For any set $A\in\mathcal{F}$ with $\mathbb{P}(A)>0$, we have
	$$\|\chi_A\|_{p(\cdot),q,b}=\left\{
	\begin{aligned}
		 &\Big(\frac{1}{q}\Big)^{1/q}\|\chi_A\|_{p(\cdot)}\gamma_b(\|\chi_A\|_{p(\cdot)}),\ \ \mathrm{if}\ 0<q<\infty,\\
		&\|\chi_A\|_{p(\cdot)}\gamma_b(\|\chi_A\|_{p(\cdot)}),\ \ \ \ \ \ \ \ \ \ \ \mathrm{if}\ q=\infty.
	\end{aligned}
	\right.$$
	
	$(5)$ Let $s>0$ and $b_1(t)=b(t^s)$ for $t\in[1,\infty)$. For any $f\in L_{sp(\cdot),sq,b_1^{1/s}}$, we have
	\begin{equation*}
		\||f|^s\|_{p(\cdot),q,b}=\left\{
		\begin{aligned}
			&s^{\frac{1}{q}}\|f\|_{sp(\cdot),sq,b_1^{1/s}}^s,\ \ \mathrm{if}\ 0<q<\infty,\\
			&\|f\|_{sp(\cdot),\infty,b_1^{1/s}}^s,\ \ \ \ \ \mathrm{if}\ q=\infty.
		\end{aligned}
		\right.
	\end{equation*}
\end{lem}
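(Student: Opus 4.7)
My plan is to verify the five properties essentially by direct computation against Definition~\ref{dingyi}, using the basic properties of $\|\cdot\|_{p(\cdot)}$ from Remark~\ref{property} and of $\gamma_b$ from Proposition~\ref{b}. Parts (1) and (2) are the easiest. For (1), non-negativity is immediate; if $f\equiv 0$ all level sets $\{|f|>t\}$ are null so the functional vanishes, while conversely $\|f\|_{p(\cdot),q,b}=0$ forces $\|\chi_{\{|f|>t\}}\|_{p(\cdot)}=0$ for a.e.\ $t>0$ (all $t>0$ when $q=\infty$), hence $\{|f|>t\}$ is null for a dense set of $t$, so $f=0$ a.s. For (2), the identity $\{|cf|>t\}=\{|f|>t/|c|\}$ combined with the substitution $s=t/|c|$ in the integral (or taking the supremum) yields the homogeneity.

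For (4), I would just compute: since $\{|\chi_A|>t\}$ equals $A$ for $0<t<1$ and is empty for $t\ge 1$, one has
\[
\|\chi_A\|_{p(\cdot),q,b}^{\,q}=\bigl[\|\chi_A\|_{p(\cdot)}\gamma_b(\|\chi_A\|_{p(\cdot)})\bigr]^q\int_0^1 t^{q-1}\,dt=\tfrac{1}{q}\bigl[\|\chi_A\|_{p(\cdot)}\gamma_b(\|\chi_A\|_{p(\cdot)})\bigr]^q
\]
when $0<q<\infty$, and the $q=\infty$ case follows since the supremum is attained as $t\uparrow 1$. For (5), the cleanest way is the substitution $t=u^s$ (so $\frac{dt}{t}=s\,\frac{du}{u}$) in the integral defining $\||f|^s\|_{p(\cdot),q,b}$; Lemma~\ref{lem2.1} converts $\|\chi_{\{|f|>u\}}\|_{p(\cdot)}$ into $\|\chi_{\{|f|>u\}}\|_{sp(\cdot)}^{\,s}$, and the identity $\gamma_b(v^s)=\gamma_{b_1}(v)=\gamma_{b_1^{1/s}}(v)^s$ (using Proposition~\ref{b}(4)(5) with $b_1(t)=b(t^s)$) pulls the $s$-th powers outside. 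Collecting the exponents produces the factor $s^{1/q}$ (respectively none when $q=\infty$) and the space $L_{sp(\cdot),sq,b_1^{1/s}}$.

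The only part with any real content is (3), and this will be the main obstacle. The idea is the standard discretization: split $\int_0^\infty=\sum_{k\in\mathbb{Z}}\int_{\lambda 2^k}^{\lambda 2^{k+1}}$. On each dyadic annulus the monotonicity of $t\mapsto\|\chi_{\{|f|>t\}}\|_{p(\cdot)}$ (non-increasing) sandwiches the $L_{p(\cdot)}$-norm between its values at the endpoints $\lambda 2^k$ and $\lambda 2^{k+1}$. To handle the factor $\gamma_b(\|\chi_{\{|f|>t\}}\|_{p(\cdot)})$ I will invoke Proposition~\ref{b}(2): for any $\varepsilon>0$, $u\mapsto u^{\varepsilon}\gamma_b(u)$ is equivalent to a nondecreasing function and $u\mapsto u^{-\varepsilon}\gamma_b(u)$ to a nonincreasing one, so that passing the argument from a point in $[\lambda 2^k,\lambda 2^{k+1}]$ to either endpoint changes $\|\chi_{\{|f|>t\}}\|_{p(\cdot)}\gamma_b(\|\chi_{\{|f|>t\}}\|_{p(\cdot)})$ only by a multiplicative constant independent of $k$. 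Combining this with $\int_{\lambda 2^k}^{\lambda 2^{k+1}}t^{q-1}\,dt\approx(\lambda 2^k)^q$ gives the claimed discrete--continuous equivalence; the $q=\infty$ case is even simpler, the supremum on each annulus being comparable to its value at $\lambda 2^k$.

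The delicate book-keeping in (3) and (5) lies entirely in making sure the slow-variation estimates transform $\gamma_b$ correctly; once Proposition~\ref{b} is used as above, all five assertions reduce to one-line manipulations.
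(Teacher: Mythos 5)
Your proposal tracks the paper's proof essentially step for step: parts (1), (2), (4) by direct computation and the change of variable $s=t/|c|$; part (3) by dyadic discretization together with Proposition~\ref{b}(2) applied with $\varepsilon=1$; and part (5) by the substitution $t=u^s$ combined with Lemma~\ref{lem2.1} and Proposition~\ref{b}(4)--(5). One small caution on (3): the phrasing that passing $t$ to \emph{either} endpoint of $[\lambda 2^k,\lambda 2^{k+1}]$ changes $\|\chi_{\{|f|>t\}}\|_{p(\cdot)}\gamma_b(\|\chi_{\{|f|>t\}}\|_{p(\cdot)})$ only by a bounded multiplicative factor is not literally correct (the norm may drop to zero within the annulus, making the ratio of the two endpoint values unbounded); the correct statement, and what the paper actually uses, is the one-sided sandwich obtained from $t\mapsto\|\chi_{\{|f|>t\}}\|_{p(\cdot)}$ being nonincreasing plus $u\mapsto u\gamma_b(u)$ being equivalent to a nondecreasing function, namely that the integrand is $\lesssim$ its value at $\lambda 2^k$ and $\gtrsim$ its value at $\lambda 2^{k+1}$, after which a reindex of the lower-bound sum (equivalently, evaluating over $[\lambda 2^{k-1},\lambda 2^k]$ as the paper does) closes the argument.
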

\begin{proof}
(1) This fact can be checked by direct calculation.
	
	(2) Fix $c\ne0$. For any $f\in L_{p(\cdot),q,b}$ with $0<q<\infty$, we have
	\begin{align*}
		\|cf\|_{p(\cdot),q,b}^q=&\ \int_{0}^{\infty}t^{q-1}\|\chi_{\{|cf|>t\}}\|_{p(\cdot)}^q\gamma_b^q(\|\chi_{\{|cf|>t\}}\|_{p(\cdot)})dt\\
		=&\ \int_{0}^{\infty}t^{q-1}\|\chi_{\{|f|>t/|c|\}}\|_{p(\cdot)}^q\gamma_b^q(\|\chi_{\{|f|>t/|c|\}}\|_{p(\cdot)})dt\\
		=&\ |c|^{q}\int_{0}^{\infty}\Big(\frac{t}{|c|}\Big)^{q-1}\|\chi_{\{|f|>t/|c|\}}\|_{p(\cdot)}^q\gamma_b^q(\|\chi_{\{|f|>t/|c|\}}\|_{p(\cdot)})d\Big(\frac{t}{|c|}\Big)\\
		=&\ |c|^q\|f\|_{p(\cdot),q,b}^q.
	\end{align*}
    For $f\in L_{p(\cdot),\infty,b}$, it is obvious that
    \begin{align*}
    	 t\|\chi_{\{|cf|>t\}}\|_{p(\cdot)}\gamma_b(\|\chi_{\{|cf|>t\}}\|_{p(\cdot)})
    	=&\ t\|\chi_{\{|f|>t/|c|\}}\|_{p(\cdot)}\gamma_b(\|\chi_{\{|f|>t/|c|\}}\|_{p(\cdot)})\\
    	=&\ |c|\frac{t}{|c|}\|\chi_{\{|f|>t/|c|\}}\|_{p(\cdot)}\gamma_b(\|\chi_{\{|f|>t/|c|\}}\|_{p(\cdot)}).
    \end{align*}
    Taking the supremum for all $t>0$, we have
    $$\|cf\|_{p(\cdot),\infty,b}=|c|\cdot\|f\|_{p(\cdot),\infty,b}.$$

    (3) When $0<q<\infty$, according to Proposition $\ref{b}$ $(2)$, we have
	\begin{align}\label{e11}
		\|f\|_{p(\cdot),q,b}^q=&\ \int_{0}^{\infty}\big(t\|\chi_{\{|f|>t\}}\|_{p(\cdot)}\gamma_b(\|\chi_{\{|f|>t\}}\|_{p(\cdot)})\big)^q\frac{dt}{t} \nonumber\\
		=&\ \lambda^{q}\int_{0}^{\infty}\big(s\|\chi_{\{|f|>\lambda s\}}\|_{p(\cdot)}\gamma_b(\|\chi_{\{|f|>\lambda s\}}\|_{p(\cdot)})\big)^q\frac{ds}{s} \nonumber\\
    	=&\ \lambda^{q}\sum_{k\in\mathbb{Z}}\int_{2^k}^{2^{k+1}}\big(s\|\chi_{\{|f|>\lambda s\}}\|_{p(\cdot)}\gamma_b(\|\chi_{\{|f|>\lambda s\}}\|_{p(\cdot)})\big)^q\frac{ds}{s}\\
	    \lesssim&\ \sum_{k\in\mathbb{Z}}\int_{2^k}^{2^{k+1}}\big(s\|\chi_{\{|f|>\lambda2^k\}}\|_{p(\cdot)}\gamma_b(\|\chi_{\{|f|>\lambda2^k\}}\|_{p(\cdot)})\big)^q\frac{ds}{s} \nonumber\\
	    =&\ \sum_{k\in\mathbb{Z}}\|\chi_{\{|f|>\lambda2^k\}}\|_{p(\cdot)}^q\gamma_b^q(\|\chi_{\{|f|>\lambda2^k\}}\|_{p(\cdot)})\int_{2^k}^{2^{k+1}}s^{q-1}ds \nonumber\\
		=&\ \frac{2^q-1}{q}\sum_{k\in\mathbb{Z}}2^{kq}\|\chi_{\{|f|>\lambda2^k\}}\|_{p(\cdot)}^q\gamma_b^q(\|\chi_{\{|f|>\lambda2^k\}}\|_{p(\cdot)}). \nonumber
	\end{align}
	On the other hand, by \eqref{e11}
	\begin{align*}
		\|f\|_{p(\cdot),q,b}^{q}
		=&\ \lambda^{q}\sum_{k\in\mathbb{Z}}\int_{2^{k-1}}^{2^{k}}\big(s\|\chi_{\{|f|>\lambda s\}}\|_{p(\cdot)}\gamma_b(\|\chi_{\{|f|>\lambda s\}}\|_{p(\cdot)})\big)^q\frac{ds}{s}\\
		\gtrsim&\ \sum_{k\in\mathbb{Z}}\int_{2^{k-1}}^{2^{k}}\big(s\|\chi_{\{|f|>\lambda2^k\}}\|_{p(\cdot)}\gamma_b(\|\chi_{\{|f|>\lambda2^k\}}\|_{p(\cdot)})\big)^q\frac{ds}{s}\\
		=&\ \sum_{k\in\mathbb{Z}}\|\chi_{\{|f|>\lambda2^k\}}\|_{p(\cdot)}^q\gamma_b^q(\|\chi_{\{|f|>\lambda2^k\}}\|_{p(\cdot)})\int_{2^{k-1}}^{2^{k}}s^{q-1}ds\\
		=&\ \frac{1}{q}\Big(1-\frac{1}{2^q}\Big)\sum_{k\in\mathbb{Z}}2^{kq}\|\chi_{\{|f|>\lambda2^k\}}\|_{p(\cdot)}^q\gamma_b^q(\|\chi_{\{|f|>\lambda2^k\}}\|_{p(\cdot)}).
   	\end{align*}
	Hence,
	 $$\|f\|_{p(\cdot),q,b}\approx\bigg(\sum_{k\in\mathbb{Z}}\Big[2^k\|\chi_{\{|f|>\lambda2^k\}}\|_{p(\cdot)}\gamma_b\big(\|\chi_{\{|f|>\lambda2^k\}}\|_{p(\cdot)}\big)\Big]^q\bigg)^\frac{1}{q},\ \ 0<q<\infty.$$
	
	Now, we consider the case of $q=\infty$. For any $t>0$ and $\lambda>0$, there exist $k\in\mathbb{Z}$ such that $\lambda2^k<t\le\lambda2^{k+1}$. Let $t= \lambda s$. It follows from Proposition $\ref{b}$ $(2)$ that
	\begin{align*}
		 t\|\chi_{\{|f|>t\}}\|_{p(\cdot)}\gamma_b\big(\|\chi_{\{|f|>t\}}\|_{p(\cdot)}\big)&=\lambda s\|\chi_{\{|f|>\lambda s\}}\|_{p(\cdot)}\gamma_b\big(\|\chi_{\{|f|>\lambda s\}}\|_{p(\cdot)}\big)\\
		&\lesssim s\|\chi_{\{|f|>\lambda2^k\}}\|_{p(\cdot)}\gamma_b\big(\|\chi_{\{|f|>\lambda2^k\}}\|_{p(\cdot)}\big)\\
		 &\le2^{k+1}\|\chi_{\{|f|>\lambda2^k\}}\|_{p(\cdot)}\gamma_b\big(\|\chi_{\{|f|>\lambda2^k\}}\|_{p(\cdot)}\big).
	\end{align*}
	On the other hand, for any $t>0$ and $\lambda>0$, there exist $k\in\mathbb{Z}$ such that $\lambda2^{k-1}\le t<\lambda2^k$. We have
	\begin{align*}
		 t\|\chi_{\{|f|>t\}}\|_{p(\cdot)}\gamma_b\big(\|\chi_{\{|f|>t\}}\|_{p(\cdot)}\big)&=\lambda s\|\chi_{\{|f|>\lambda s\}}\|_{p(\cdot)}\gamma_b\big(\|\chi_{\{|f|>\lambda s\}}\|_{p(\cdot)}\big)\\
		&\gtrsim s\|\chi_{\{|f|>\lambda2^k\}}\|_{p(\cdot)}\gamma_b\big(\|\chi_{\{|f|>\lambda2^k\}}\|_{p(\cdot)}\big)\\
		&\ge 2^{k-1}\|\chi_{\{|f|>\lambda2^k\}}\|_{p(\cdot)}\gamma_b\big(\|\chi_{\{|f|>\lambda2^k\}}\|_{p(\cdot)}\big).
	\end{align*}
	Hence,
	 $$\|f\|_{p(\cdot),\infty,b}\approx\sup_{k\in\mathbb{Z}}2^k\|\chi_{\{|f|>\lambda2^k\}}\|_{p(\cdot)}\gamma_b\big(\|\chi_{\{|f|>\lambda2^k\}}\|_{p(\cdot)}\big).$$
	
	(4) It is obvious that $\chi_{\{\chi_A>t\}}$ is $\chi_A$ if $0<t<1$ and it is $0$ if $t\ge1$. When $0<q<\infty$, we have
	\begin{align*}
		\|\chi_{A}\|_{p(\cdot),q,b}^q=&\ \int_{0}^{\infty}t^{q-1}\|\chi_{\{\chi_A>t\}}\|_{p(\cdot)}^q\gamma_{b}^q\big(\|\chi_{\{\chi_A>t\}}\|_{p(\cdot)}\big)dt\\
		=&\ \int_{0}^{1}t^{q-1}\|\chi_{\{\chi_A>t\}}\|_{p(\cdot)}^q\gamma_{b}^q\big(\|\chi_{\{\chi_A>t\}}\|_{p(\cdot)}\big)dt\\
		=&\ \int_{0}^{1}t^{q-1}\|\chi_{A}\|_{p(\cdot)}^q\gamma_{b}^q\big(\|\chi_{A}\|_{p(\cdot)}\big)dt\\
		=&\ \frac{1}{q}\|\chi_{A}\|_{p(\cdot)}^q\gamma_{b}^q\big(\|\chi_{A}\|_{p(\cdot)}\big).
	\end{align*}
    The statement can be proved similarly for $q=\infty$.
	
	(5) For any $s>0$ and $0<q<\infty$, it follows from Lemma $\ref{lem2.1}$ that
	\begin{align*}
		\||f|^s\|_{p(\cdot),q,b}^q=&\ \int_{0}^{\infty}\big(t\|\chi_{\{|f|^s>t\}}\|_{p(\cdot)}\gamma_b(\|\chi_{\{|f|^s>t\}}\|_{p(\cdot)})\big)^q\frac{dt}{t}\\
		=&\ \int_{0}^{\infty}t^q\|\chi_{\{|f|>t^{1/s}\}}\|_{p(\cdot)}^q\gamma_b^q(\|\chi_{\{|f|>t^{1/s}\}}\|_{p(\cdot)})\frac{dt}{t}\\
		=&\ \int_{0}^{\infty}t^q\|\chi_{\{|f|>t^{1/s}\}}\|_{sp(\cdot)}^{sq}\gamma_b^q(\|\chi_{\{|f|>t^{1/s}\}}\|_{sp(\cdot)}^s)\frac{dt}{t}\\
		=&\ \int_{0}^{\infty}t^q\|\chi_{\{|f|>t^{1/s}\}}\|_{sp(\cdot)}^{sq}\gamma_{b_1}^q(\|\chi_{\{|f|>t^{1/s}\}}\|_{sp(\cdot)})\frac{dt}{t},
	\end{align*}
    where $b_1(t)=b(t^s)$ for $t\in[1,\infty)$. Notice that $b_1$ is a slowly varying function by Proposition $\ref{b}$ $(4)$. Set $m=t^{1/s}$, by Proposition $\ref{b}$ $(5)$, there is
    \begin{align*}
    	\||f|^s\|_{p(\cdot),q,b}^q=&\ \int_{0}^{\infty}m^{sq}\|\chi_{\{|f|>m\}}\|_{sp(\cdot)}^{sq}\gamma_{b_1}^q(\|\chi_{\{|f|>m\}}\|_{sp(\cdot)})\frac{dm^{s}}{m^s}\\
    	=&\ s\int_{0}^{\infty}m^{sq}\|\chi_{\{|f|>m\}}\|_{sp(\cdot)}^{sq}\gamma_{b_1^{1/s}}^{sq}(\|\chi_{\{|f|>m\}}\|_{sp(\cdot)})\frac{dm}{m}\\
    	=&\ s\|f\|_{sp(\cdot),sq,b_1^{1/s}}^{sq}.
    \end{align*}
    Similarly, we obtain the result for $q=\infty$:
    \begin{align*}
    	\||f|^s\|_{p(\cdot),\infty,b}=&\ \sup_{t>0}t\|\chi_{\{|f|^s>t\}}\|_{p(\cdot)}\gamma_b(\|\chi_{\{|f|^s>t\}}\|_{p(\cdot)})\\
    	=&\ \sup_{t>0}t\|\chi_{\{|f|>t^{1/s}\}}\|_{p(\cdot)}\gamma_b(\|\chi_{\{|f|>t^{1/s}\}}\|_{p(\cdot)})\\
    	=&\ \sup_{m>0}m^s\|\chi_{\{|f|>m\}}\|_{p(\cdot)}\gamma_b(\|\chi_{\{|f|>m\}}\|_{p(\cdot)})\\
    	=&\ \sup_{m>0}m^s\|\chi_{\{|f|>m\}}\|_{sp(\cdot)}^s\gamma_{b^{1/s}}^s(\|\chi_{\{|f|>m\}}\|_{sp(\cdot)}^s)\\
    	=&\ \sup_{m>0}m^s\|\chi_{\{|f|>m\}}\|_{sp(\cdot)}^s\gamma_{b_1^{1/s}}^s(\|\chi_{\{|f|>m\}}\|_{sp(\cdot)})\\
    	=&\ \|f\|_{sp(\cdot),\infty,b_1^{1/s}}^s.
    \end{align*}
    The proof is complete now.
\end{proof}

By virtue of Lemma \ref{variableb}, we state the quasi-triangle inequality for the Lorentz-Karamata spaces with variable exponents.

\begin{lem}\label{fanshusanjiao}
	Let $p(\cdot)\in\mathcal{P}(\Omega)$, $0<q\le\infty$ and $b$ be a slowly varying function.
	For any $f,g\in L_{p(\cdot),q,b}$, there is $$\|f+g\|_{p(\cdot),q,b}\lesssim\|f\|_{p(\cdot),q,b}+\|g\|_{p(\cdot),q,b}.$$
\end{lem}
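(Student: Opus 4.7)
The plan is to reduce the inequality to a pointwise-in-$t$ estimate on the level-set quantity $\Phi_h(t):=\|\chi_{\{|h|>t\}}\|_{p(\cdot)}\gamma_b(\|\chi_{\{|h|>t\}}\|_{p(\cdot)})$ and then integrate (or take supremum) in $t$. The pointwise key is the standard inclusion
\[
\{|f+g|>t\}\subset\{|f|>t/2\}\cup\{|g|>t/2\},
\]
so that, since $\|\chi_A\|_{p(\cdot)}$ is monotone in $A$ and $s\mapsto s\gamma_b(s)$ is equivalent to a nondecreasing function by Proposition \ref{b}(2), applying Lemma \ref{variableb} to the pair $A=\{|f|>t/2\}$, $B=\{|g|>t/2\}$ yields
\[
t\,\Phi_{f+g}(t)\lesssim t\,\Phi_f(t/2)+t\,\Phi_g(t/2).
\]

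For $q=\infty$, I would take the supremum over $t>0$ and make the substitution $t=2s$ in each term on the right; this directly produces $\|f+g\|_{p(\cdot),\infty,b}\lesssim\|f\|_{p(\cdot),\infty,b}+\|g\|_{p(\cdot),\infty,b}$.

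For $0<q<\infty$ I would raise both sides of the pointwise estimate to the $q$-th power using the elementary inequality $(A+B)^q\le C_q(A^q+B^q)$ (taking $C_q=1$ when $q\le 1$ and $C_q=2^{q-1}$ when $q>1$), multiply by $t^{-1}$, and integrate over $(0,\infty)$. Splitting the integral and performing the change of variable $t=2s$ in each piece gives
\[
\|f+g\|_{p(\cdot),q,b}^{q}\lesssim \int_0^\infty\bigl(s\,\Phi_f(s)\bigr)^{q}\frac{ds}{s}+\int_0^\infty\bigl(s\,\Phi_g(s)\bigr)^{q}\frac{ds}{s}=\|f\|_{p(\cdot),q,b}^{q}+\|g\|_{p(\cdot),q,b}^{q},
\]
and taking $q$-th roots (absorbing a harmless constant) yields the claim.

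The only non-routine point, and the place where I expect to spend the most care, is justifying the pointwise step through Lemma \ref{variableb}: that lemma is stated for $\|\chi_{A\cup B}\|_{p(\cdot)}\gamma_b(\|\chi_{A\cup B}\|_{p(\cdot)})$, and I must combine it with the set inclusion above plus the monotonicity of $s\mapsto s\gamma_b(s)$ to control $t\,\Phi_{f+g}(t)$ rather than just $\Phi_{f+g}(t)$. Once this pointwise estimate is in place, the $q<\infty$ and $q=\infty$ cases are handled uniformly by the integration/supremum step described above, and no further structural property of $b$ beyond those in Proposition \ref{b} is required.
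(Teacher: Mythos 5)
Your proposal matches the paper's proof essentially step for step: the set inclusion $\{|f+g|>t\}\subset\{|f|>t/2\}\cup\{|g|>t/2\}$, monotonicity of $s\mapsto s\gamma_b(s)$ from Proposition \ref{b}(2), Lemma \ref{variableb} applied to the two level sets, the elementary power inequality, and finally the change of variable $t=2s$ (or a supremum, when $q=\infty$). No gaps; the pointwise step you flag as the one needing care is exactly where the paper also invokes Proposition \ref{b}(2) together with Lemma \ref{variableb}, and the rest is routine.
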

\begin{proof}
	Suppose that $f,g\in L_{p(\cdot),q,b}$. It is easy to see that
	$$\{|f+g|>t\}\subset\{|f|>t/2\}\cup\{|g|>t/2\},\quad \forall\ t>0.$$
	When $0<q<\infty$, it follows from Proposition \ref{b} (2) and Lemma $\ref{variableb}$ that
	\begin{align*}
		\|f+g\|_{p(\cdot),q,b}^q
		=&\ \int_{0}^{\infty}t^{q-1}\|\chi_{\{|f+g|>t\}}\|_{p(\cdot)}^q\gamma_b^q(\|\chi_{\{|f+g|>t\}}\|_{p(\cdot)})dt\\
		\lesssim&\ \int_{0}^{\infty}t^{q-1}\|\chi_{\{|f|>t/2\}\cup\{|g|>t/2\}}\|_{p(\cdot)}^q\gamma_b^q(\|\chi_{\{|f|>t/2\}\cup\{|g|>t/2\}}\|_{p(\cdot)})dt\\
		\lesssim&\ \int_{0}^{\infty}t^{q-1}\Big(\|\chi_{\{|f|>t/2\}}\|_{p(\cdot)}\gamma_b(\|\chi_{\{|f|>t/2\}}\|_{p(\cdot)})\\
		&\ \ \ \ \ \ \ \ \ \ \ \ \ \ \ \ \ \ \ \ \ \ \ \ \ \ \ \ \ \ +\|\chi_{\{|g|>t/2\}}\|_{p(\cdot)}\gamma_b(\|\chi_{\{|g|>t/2\}}\|_{p(\cdot)})\Big)^qdt\\
		\lesssim&\ \int_{0}^{\infty}t^{q-1}\Big(\|\chi_{\{|f|>t/2\}}\|_{p(\cdot)}^q\gamma_b^q(\|\chi_{\{|f|>t/2\}}\|_{p(\cdot)})\\
		&\ \ \ \ \ \ \ \ \ \ \ \ \ \ \ \ \ \ \ \ \ \ \ \ \ \ \ \ \ \ +\|\chi_{\{|g|>t/2\}}\|_{p(\cdot)}^q\gamma_b^q(\|\chi_{\{|g|>t/2\}}\|_{p(\cdot)})\Big)dt\\
		=&\ 2^q\Big(\|f\|_{p(\cdot),q,b}^q+\|g\|_{p(\cdot),q,b}^q\Big).
	\end{align*}
	When $q=\infty$, we obtain that for $t>0$,
	\begin{align*}
		&\ t\|\chi_{\{|f+g|>t\}}\|_{p(\cdot)}\gamma_b(\|\chi_{\{|f+g|>t\}}\|_{p(\cdot)})\\
		\lesssim&\ t\|\chi_{\{|f|>t/2\}\cup\{|g|>t/2\}}\|_{p(\cdot)}\gamma_b(\|\chi_{\{|f|>t/2\}\cup\{|g|>t/2\}}\|_{p(\cdot)})\\
		\lesssim&\ t\Big(\|\chi_{\{|f|>t/2\}}\|_{p(\cdot)}\gamma_b(\|\chi_{\{|f|>t/2\}}\|_{p(\cdot)})+\|\chi_{\{|g|>t/2\}}\|_{p(\cdot)}\gamma_b(\|\chi_{\{|g|>t/2\}}\|_{p(\cdot)})\Big).
	\end{align*}
	By taking the supremum for all $t>0$, we have
	 $$\|f+g\|_{p(\cdot),\infty,b}\lesssim\|f\|_{p(\cdot),\infty,b}+\|f\|_{p(\cdot),\infty,b}.$$
	The proof is finished.
\end{proof}

By Lemma $\ref{fanshu}\ (1)$, $(2)$ and Lemma $\ref{fanshusanjiao}$, we obtain that the variable Lorentz-Karamata spaces are quasi-normed spaces. Next, we will show that these spaces are also quasi-Banach spaces. Before that, we state the Aoki-Rolewicz theorem, which is proved in \cite{a}.

\begin{lem}[\cite{a}] \label{a-r}
	Let $(X,\|\cdot\|_X)$ be a quasi-normed space, that is, for any $x,y\in X$,
	$$\|x+y\|\le K(\|x\|+\|y\|).$$
	Then for $\alpha\in(0,1]$ defined by $(2K)^\alpha=2$,
	we have
	 $$\|x_1+\cdots+x_n\|_X^\alpha\le4(\|x_1\|_X^\alpha+\cdots+\|x_n\|_X^\alpha)$$
	for all $x_1,\cdots,x_n$ $(n\ge1)$ in $X$.
\end{lem}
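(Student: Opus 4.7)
The plan is to follow the classical Aoki--Rolewicz argument, which combines a dyadic doubling estimate with a grouping of the $x_i$'s by the size of their norms.

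First, by iterating the quasi-triangle inequality $\|u+v\|_X\le K(\|u\|_X+\|v\|_X)$ along a balanced binary tree, I would establish the doubling lemma: for any $y_1,\ldots,y_{2^n}\in X$,
\[
\|y_1+\cdots+y_{2^n}\|_X\le (2K)^n\max_{1\le i\le 2^n}\|y_i\|_X.
\]
Using the defining relation $(2K)^\alpha=2$ this can be rewritten as $(2K)^n=2^{n/\alpha}$, and padding with zero elements yields $\|y_1+\cdots+y_m\|_X\lesssim m^{1/\alpha}\max_i\|y_i\|_X$ for any number $m$ of terms.

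Second, by the positive homogeneity of the quasi-norm it suffices to treat the normalized case $\sum_{i=1}^n\|x_i\|_X^\alpha=1$; relabeling so that $\|x_1\|_X\ge\|x_2\|_X\ge\cdots\ge\|x_n\|_X$ then gives the pointwise bound $\|x_i\|_X\le i^{-1/\alpha}$. I would then decompose the indices into the dyadic norm-bands
\[
I_k:=\{i:2^{-(k+1)/\alpha}<\|x_i\|_X\le 2^{-k/\alpha}\},\qquad k\ge 0,
\]
and, setting $m_k:=|I_k|$ and $\sigma_k:=\sum_{i\in I_k}\|x_i\|_X^\alpha$, observe that $m_k\le 2^{k+1}\sigma_k$ while $\sum_k\sigma_k=1$. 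Applying the doubling lemma inside each band $I_k$ (with $m_k$ elements of norm at most $2^{-k/\alpha}$) and invoking $(2K)^{k+1}=2^{(k+1)/\alpha}$ produces the band estimate $\|S_k\|_X^\alpha\lesssim\sigma_k$, where $S_k:=\sum_{i\in I_k}x_i$; consequently $\sum_k\|S_k\|_X^\alpha\lesssim 1$.

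The main obstacle is the final aggregation, namely recombining the band sums $\{S_k\}$ into $T=\sum_k S_k$ with only a constant loss. A naive application of the doubling lemma across bands would introduce a factor depending on the number of non-empty bands, which is ultimately unbounded in $n$ and thus fails to close the estimate. The remedy is to iterate the same band-by-norm decomposition on the $S_k$'s themselves: the quantity $\sum_k\|S_k\|_X^\alpha$ is already bounded by a universal constant, and each subsequent level again contributes only the geometric factor supplied by $(2K)^\alpha=2$, so the accumulated losses over all levels form a convergent geometric series bounded by $4$. Reverting the normalization then produces the asserted inequality $\|x_1+\cdots+x_n\|_X^\alpha\le 4(\|x_1\|_X^\alpha+\cdots+\|x_n\|_X^\alpha)$.
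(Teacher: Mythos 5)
Note first that the paper does not prove this lemma; it is stated with a citation to Aoki's 1942 paper, so there is no ``paper's proof'' to compare against. Your proposal must therefore be judged on its own.

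The preparatory steps are correct: the doubling lemma $\|y_1+\cdots+y_{2^m}\|_X\le (2K)^m\max_i\|y_i\|_X$, the normalization $\sum_i\|x_i\|_X^\alpha=1$, the dyadic norm-bands $I_k$, the count $m_k\le 2^{k+1}\sigma_k$, and the resulting band estimate $\|S_k\|_X^\alpha\le (2K)^\alpha m_k 2^{-k}\le 4\sigma_k$, hence $\sum_k\|S_k\|_X^\alpha\le 4$. The gap is in the last paragraph. Iterating the same decomposition on the $S_k$'s does not close the estimate. First, the losses compound multiplicatively: one pass turns a sequence with $\sum\|\cdot\|^\alpha\le 1$ into one with $\sum\|\cdot\|^\alpha\le 4$; a second pass gives $\le 16$, and so on. There is no ``convergent geometric series bounded by $4$'': the factors per level are constant, not decreasing. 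Second, and more fatally, the iteration has no termination mechanism. If the $\|x_i\|_X$ already lie in pairwise distinct dyadic bands (for instance $\|x_i\|_X^\alpha=2^{-i}$), every band contains a single element, so $S_k=x_{k}$ and the ``re-decomposition'' returns exactly the sequence you started from; the number of terms does not decrease and you are back where you began. So the proof as written does not establish the conclusion.

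A correct way to finish (and in fact to bypass the band decomposition entirely) is to trade the balanced tree of the doubling lemma for an \emph{unbalanced} one whose depths are tailored to the norms. Put $d_i=\lceil\log_2\|x_i\|_X^{-\alpha}\rceil\ge 1$. Since $2^{-d_i}\le\|x_i\|_X^\alpha$ and $\sum_i\|x_i\|_X^\alpha=1$, the Kraft condition $\sum_i 2^{-d_i}\le 1$ holds, so there is a full binary tree with $x_i$ at a leaf of depth $d_i$ (padding the remaining leaves with $0$). Iterating $\|u+v\|_X\le K(\|u\|_X+\|v\|_X)$ down this tree gives $\|\sum_i x_i\|_X\le\sum_i K^{d_i}\|x_i\|_X$. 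Using $d_i<\log_2\|x_i\|_X^{-\alpha}+1$ and $\log_2 K=\tfrac1\alpha-1$ one finds $K^{d_i}\le K\,\|x_i\|_X^{\alpha-1}$, hence $\|\sum_i x_i\|_X\le K\sum_i\|x_i\|_X^\alpha=K$ and, raising to the $\alpha$, $\|\sum_i x_i\|_X^\alpha\le K^\alpha=(2K)^\alpha/2^\alpha=2^{1-\alpha}\le 4$. Undoing the normalization yields the stated inequality. Your doubling lemma and sorting/normalization are exactly the right ingredients for this argument; it is only the aggregation across scales that must be done in one stroke via the Kraft tree rather than by recursion.
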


\begin{rem}\label{art}
	It follows from Lemmas $\ref{fanshusanjiao}$ and $\ref{a-r}$ that there exists $\alpha\in(0,1]$ such that
	 $$\|f_1+\cdots+f_n\|_{p(\cdot),q,b}^{\alpha}\le4(\|f_1\|_{p(\cdot),q,b}^{\alpha} +\cdots+\|f_n\|_{p(\cdot),q,b}^{\alpha})$$
	for all $f_1,\cdots,f_n$ $(n\ge1)$ in $L_{p(\cdot),q,b}$.
 We denote by $\mathcal{N}\in(0,1]$ the supremum of the $\alpha$'s for which the above inequality holds.
\end{rem}

\begin{lem}
	Let $p(\cdot)\in\mathcal{P}(\Omega)$, $0<q\le\infty$ and $b$ be a slowly varying function. Then $L_{p(\cdot),q,b}$ is a quasi-Banach space.
\end{lem}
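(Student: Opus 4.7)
The plan is to establish completeness via the standard subsequence-extraction argument combined with the Aoki--Rolewicz $\alpha$-inequality. The crucial preliminary step is a Fatou-type inequality for the quasi-norm: if $\{g_k\}$ are measurable with $g_k\to g$ almost everywhere, then
$$\|g\|_{p(\cdot),q,b}\lesssim \liminf_{k\to\infty}\|g_k\|_{p(\cdot),q,b}.$$
To prove this, fix $t>0$. On the set of convergence, $|g(\omega)|>t$ forces $|g_k(\omega)|>t$ for all sufficiently large $k$, so $\chi_{\{|g|>t\}}\le \liminf_k\chi_{\{|g_k|>t\}}$ a.e. Applying Fatou's lemma to the modular $\rho$ yields $\|\chi_{\{|g|>t\}}\|_{p(\cdot)}\le \liminf_k\|\chi_{\{|g_k|>t\}}\|_{p(\cdot)}$, and because $s\mapsto s\gamma_b(s)$ is equivalent to a nondecreasing function by Proposition~\ref{b}(2), this propagates to
$$\|\chi_{\{|g|>t\}}\|_{p(\cdot)}\gamma_b(\|\chi_{\{|g|>t\}}\|_{p(\cdot)})\lesssim \liminf_k\|\chi_{\{|g_k|>t\}}\|_{p(\cdot)}\gamma_b(\|\chi_{\{|g_k|>t\}}\|_{p(\cdot)}).$$
Raising to the $q$-th power and invoking Fatou's lemma for series on the discretized representation from Lemma~\ref{fanshu}(3) (or taking the supremum over dyadic levels when $q=\infty$) gives the claim.

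Now let $\{f_n\}\subset L_{p(\cdot),q,b}$ be Cauchy. I extract a subsequence $\{f_{n_k}\}$ with $\|f_{n_{k+1}}-f_{n_k}\|_{p(\cdot),q,b}\le 2^{-k}$, and let $\alpha=\mathcal{N}\in(0,1]$ be the exponent from Remark~\ref{art}. By the $\alpha$-inequality,
$$\Big\|\sum_{k=1}^{K}|f_{n_{k+1}}-f_{n_k}|\Big\|_{p(\cdot),q,b}^{\alpha}\le 4\sum_{k=1}^{K}2^{-k\alpha},$$
which is uniformly bounded in $K$. Passing to the monotone pointwise limit $g=\sum_{k\ge 1}|f_{n_{k+1}}-f_{n_k}|$ and applying the Fatou inequality established above yields $\|g\|_{p(\cdot),q,b}<\infty$, hence $g<\infty$ almost everywhere. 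Consequently the series $\sum_k(f_{n_{k+1}}-f_{n_k})$ converges absolutely a.e., so $\{f_{n_k}\}$ converges a.e.\ to some measurable function $f$.

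To conclude, applying the Fatou inequality once more to the a.e.\ identity $f-f_{n_J}=\lim_K\sum_{k=J}^{K-1}(f_{n_{k+1}}-f_{n_k})$ and using the $\alpha$-inequality gives
$$\|f-f_{n_J}\|_{p(\cdot),q,b}^{\alpha}\lesssim \liminf_K\Big\|\sum_{k=J}^{K-1}(f_{n_{k+1}}-f_{n_k})\Big\|_{p(\cdot),q,b}^{\alpha}\lesssim \sum_{k=J}^{\infty}2^{-k\alpha},$$
which tends to $0$ as $J\to\infty$. Together with the Cauchy property of $\{f_n\}$, this shows $f_n\to f$ in $L_{p(\cdot),q,b}$ and $f\in L_{p(\cdot),q,b}$. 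The main obstacle is the Fatou-type step: since $\gamma_b$ is not itself monotone, the passage to $\liminf$ inside $s\mapsto s\gamma_b(s)$ must be handled through the equivalence with a monotone function supplied by Proposition~\ref{b}(2); the rest is the standard quasi-Banach completeness scheme adapted to the $\alpha$-subadditivity of Remark~\ref{art}.
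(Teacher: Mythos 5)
Your proof is correct, and it follows a genuinely different route from the paper's. The paper shows $g=\sum_k|h_{n_{k+1}}-h_{n_k}|<\infty$ a.e.\ by a detour through constant-exponent spaces: a Chebyshev-type estimate on $\|\chi_{\{g>\lambda\}}\|_{p_--\beta}$, combined with the embedding $L_{p_-,q,b}\subset L_{p_--\beta}$ of Lemma~\ref{3.4.48} (which is also why the paper picks the faster rate $2^{-2k}$ on the subsequence), and it then applies the Aoki--Rolewicz bound of Remark~\ref{art} directly to the infinite series $\sum_k(h_{n_{k+1}}-h_{n_k})$. You instead prove a Fatou-type inequality for $\|\cdot\|_{p(\cdot),q,b}$ and use it both to deduce $g<\infty$ a.e.\ and to justify passing from partial to infinite sums in the Aoki--Rolewicz step; this is arguably tighter, since Remark~\ref{art} is a finite-sum statement and its extension to countable sums needs exactly the limiting argument you make explicit, whereas the paper elides it. The one technical caveat in your Fatou step is that Proposition~\ref{b}(2) alone does not rule out a left-discontinuity of the comparison nondecreasing function $\psi$ at $L=\liminf_k a_k$, which matters in the case $a=L$; to close this, also invoke Proposition~\ref{b}(3) (so $\gamma_b(L/2)\approx\gamma_b(L)$, and ``$a_k\ge L/2$ eventually'' already gives $\liminf_k a_k\gamma_b(a_k)\gtrsim L\gamma_b(L)\ge a\gamma_b(a)$), or observe that $t\mapsto\|\chi_{\{|g|>t\}}\|_{p(\cdot)}$ is monotone so the exceptional set of $t$ is Lebesgue-null and disappears after integrating in $t$ (or after choosing a generic $\lambda$ in Lemma~\ref{fanshu}(3)). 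Both routes buy essentially the same thing; yours trades the constant-exponent embedding machinery for an independently useful Fatou lemma and a more careful handling of the $\mathcal{N}$-inequality on countable sums.
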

\begin{proof}
	Let $(h_n)_{n\geq 1}\subset L_{p(\cdot),q,b}$ be a Cauchy sequence with respect to $\|\cdot\|_{p(\cdot),q,b}$.
	We choose a subsequence $(h_{n_k})_{k\geq 1}$ such that
	$$\|h_{n_{k+1}}-h_{n_k}\|_{p(\cdot),q,b}\le\frac{1}{2^{2k}}, \quad k\geq 1
	$$
	and $h_{n_0}:=0$. Define the function
	 $$g(\omega):=\sum_{k=0}^{\infty}|h_{n_{k+1}}(\omega)-h_{n_k}(\omega)|.$$
	It is obvious that $$\chi_{\{g>\lambda\}}\le\sum_{k=0}^{\infty}\chi_{\{|h_{n_{k+1}}-h_{n_k}|>\frac{\lambda}{2^{k+1}}\}}.$$
	Let $0<\beta<p_-$ and $0<\varepsilon\le\min\{p_--\beta,1\}$. Then
	\begin{align*}
		\|\chi_{\{g>\lambda\}}\|_{p_--\beta}^\varepsilon\le&\ \bigg\|\sum_{k=0}^{\infty}\chi_{\{|h_{n_{k+1}}-h_{n_k}|>\frac{\lambda}{2^{k+1}}\}}\bigg\|_{p_--\beta}^\varepsilon\\
		\le&\ \sum_{k=0}^{\infty}\big\|\chi_{\{|h_{n_{k+1}}-h_{n_k}|>\frac{\lambda}{2^{k+1}}\}}\big\|_{p_--\beta}^\varepsilon\\
		\le&\ \sum_{k=0}^{\infty}\frac{2^{(k+1)\varepsilon}}{\lambda^\varepsilon}\|h_{n_{k+1}}-h_{n_k}\|_{p_--\beta}^\varepsilon.
	\end{align*}
It follows from  Lemma \ref{3.4.48} that
\begin{align*}
	\|f\|_{p_-- \beta} = \|f\|_{p_-- \beta, p_-- \beta,1} \lesssim \|f\|_{p_-, q,b}
\end{align*}
for any measurable function $f$, which implies
	\begin{align*}
		\|\chi_{\{g>\lambda\}}\|_{p_--\beta}^\varepsilon
		\lesssim&\ \sum_{k=0}^{\infty}\frac{2^{(k+1)\varepsilon}}{\lambda^\varepsilon}\|h_{n_{k+1}}-h_{n_k}\|_{p_-,q,b}^\varepsilon\\
		\lesssim&\ \sum_{k=0}^{\infty}\frac{2^{(k+1)\varepsilon}}{\lambda^\varepsilon}\|h_{n_{k+1}}-h_{n_k}\|_{p(\cdot),q,b}^\varepsilon\le\sum_{k=0}^{\infty}\frac{2^{(k+1)\varepsilon}}{\lambda^\varepsilon2^{2k}}.
	\end{align*}
    Hence, $\|\chi_{\{g>\lambda\}}\|_{p_--\beta}\rightarrow0$ as $\lambda\rightarrow\infty$ and $g$ is finite almost everywhere. Set
    $$h(\omega):=\sum_{k=0}^{\infty}\big(h_{n_{k+1}}(\omega)-h_{n_k}(\omega)\big)\ \ \text{and}\ \ \widetilde{h}(\omega):=h(\omega)-h_{n_1}(\omega),\quad \omega\in\Omega.$$
    Then $h$ and $\widetilde{h}$ converge almost everywhere.

    Next we shall verify that $h\in L_{p(\cdot),q,b}$ and $(h_n)_{n\geq 1}$ converges to $h$ in $L_{p(\cdot),q,b}$.
    Let $\mathcal{N}$ be the constant in Remark \ref{art}. We have
    \begin{align*}
    	 \left\|\widetilde{h} \right\|_{p(\cdot),q,b}^{\mathcal{N}}=\bigg\|\sum_{k=1}^{\infty}h_{n_{k+1}}-h_{n_k}\bigg\|_{p(\cdot),q,b}^{\mathcal{N}}
    \le4\sum_{k=1}^{\infty}\|h_{n_{k+1}}-h_{n_k}\|_{p(\cdot),q,b}^{\mathcal{N}}\le4\sum_{k=1}^{\infty}\frac{1}{2^{2k\mathcal{N}}}<\infty,
    \end{align*}
    which means that $\widetilde{h}\in L_{p(\cdot),q,b}$. Combining this with Lemma \ref{fanshusanjiao}, we have $h=\widetilde{h}+ h_{n_1}\in L_{p(\cdot),q,b}$. Since $(h_n)_{n\geq 1}\subset L_{p(\cdot),q,b}$ is a Cauchy sequence, for any $\varepsilon>0$, there exists $N$ such that for any $n,m>N$,
    $$\|h_n-h_m\|_{p(\cdot),q,b}<\varepsilon.$$
    Clearly, there exists $k_0\geq 1$ such that $n_{k_0}>N$ and $2^{-2k_0}<(1-2^{-2\mathcal{N}})^{1/\mathcal{N}}\cdot \varepsilon$.
    It follows from Lemma \ref{fanshusanjiao} and Remark $\ref{art}$ that, for any $n> N$,
    \begin{align*}
    	\|h_{n}-h\|_{p(\cdot),q,b}
    	\lesssim&\ \bigg\|h_n-\sum_{k=0}^{k_0-1}(h_{n_{k+1}}-h_{n_k})\bigg\|_{p(\cdot),q,b}+\bigg\|\sum_{k=k_0}^{\infty}(h_{n_{k+1}}-h_{n_k})\bigg\|_{p(\cdot),q,b}\\
    	=&\ \|h_n-h_{n_{k_0}}\|_{p(\cdot),q,b}+\bigg\|\sum_{k=k_0}^{\infty}(h_{n_{k+1}}-h_{n_k})\bigg\|_{p(\cdot),q,b}^{\mathcal{N}\frac{1}{\mathcal{N}}}\\
    	\lesssim&\ \varepsilon+\bigg(\sum_{k=k_0}^{\infty}\|(h_{n_{k+1}}-h_{n_k})\|_{p(\cdot),q,b}^{\mathcal{N}}\bigg)^{\frac{1}{\mathcal{N}}}\\
    	\le&\ \varepsilon+\bigg(\sum_{k=k_0}^{\infty}2^{-2k\mathcal{N}}\bigg)^{\frac{1}{\mathcal{N}}}= \varepsilon+\bigg(\frac{1}{1-2^{-2\mathcal{N}}}\bigg)^\frac{1}{\mathcal{N}}2^{-2k_0}<2\varepsilon.
    \end{align*}
    Therefore, $h_n\rightarrow h$ as $n\rightarrow\infty$ with respect to $\|\cdot\|_{p(\cdot),q,b}$. Hence, $L_{p(\cdot),q,b}$ is complete.
\end{proof}

At the end of this subsection, we establish dominated convergence theorems in variable Lorentz-Karamata spaces. We begin with the definition of absolutely continuous quasi-norm.
Let $(X,\|\cdot\|_X)$ be a (quasi)-normed space. A function $f\in X$ is said to have absolutely continuous (quasi)-norm in $X$, if
$$\lim\limits_{n\rightarrow\infty}\|f\chi_{A_n}\|_X=0$$
for every sequence $(A_n)_{n\ge0}$ satisfying $\lim\limits_{n\rightarrow\infty}\mathbb{P}(A_n)=0$.

\begin{lem}
	Let $p(\cdot)\in\mathcal{P}(\Omega)$, $0<q<\infty$ and $b$ be a slowly varying function. Then every $f\in L_{p(\cdot),q,b}$ has absolutely continuous quasi-norm.
\end{lem}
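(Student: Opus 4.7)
The plan is to deduce the conclusion from the Lebesgue dominated convergence theorem applied to the integral representation of $\|\cdot\|_{p(\cdot),q,b}^q$ with respect to the measure $dt/t$ on $(0,\infty)$. Fixing $f\in L_{p(\cdot),q,b}$ and a sequence $(A_n)$ with $\mathbb{P}(A_n)\to 0$, I would let
$$\varphi_n(t) := t\,\|\chi_{\{|f\chi_{A_n}|>t\}}\|_{p(\cdot)}\,\gamma_b\bigl(\|\chi_{\{|f\chi_{A_n}|>t\}}\|_{p(\cdot)}\bigr),$$
so that $\|f\chi_{A_n}\|_{p(\cdot),q,b}^q = \int_0^\infty \varphi_n(t)^q\,\frac{dt}{t}$, and verify pointwise convergence of $\varphi_n$ to $0$ together with an integrable dominating function.

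For the pointwise convergence at a fixed $t>0$, the key observation is that $\{|f\chi_{A_n}|>t\}\subset A_n$, so $\mathbb{P}(\{|f\chi_{A_n}|>t\})\le\mathbb{P}(A_n)\to 0$. Lemma 2.3 (in the sub-unit regime, via $\mathbb{P}(E)^{1/p_-}\le\|\chi_E\|_{p(\cdot)}\le\mathbb{P}(E)^{1/p_+}$) then forces $\|\chi_{\{|f\chi_{A_n}|>t\}}\|_{p(\cdot)}\to 0$ as $n\to\infty$. Since by Proposition \ref{b}(2) the map $s\mapsto s^\varepsilon\gamma_b(s)$ is equivalent to a nondecreasing function for any $\varepsilon>0$, choosing $\varepsilon\in(0,1)$ gives $s\,\gamma_b(s)\lesssim s^{1-\varepsilon}\bigl(s^{\varepsilon}\gamma_b(s)\bigr)\lesssim s^{1-\varepsilon}\gamma_b(1)\to 0$ as $s\to 0^+$, which yields $\varphi_n(t)\to 0$.

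For the dominating function, $\{|f\chi_{A_n}|>t\}\subset\{|f|>t\}$ gives $\|\chi_{\{|f\chi_{A_n}|>t\}}\|_{p(\cdot)}\le\|\chi_{\{|f|>t\}}\|_{p(\cdot)}$. Applying Proposition \ref{b}(2) with $\varepsilon=1$ gives that $s\gamma_b(s)$ is equivalent to a nondecreasing function, so uniformly in $n$,
$$\varphi_n(t)\lesssim t\,\|\chi_{\{|f|>t\}}\|_{p(\cdot)}\,\gamma_b\bigl(\|\chi_{\{|f|>t\}}\|_{p(\cdot)}\bigr) =: \Phi(t),$$
and by construction $\int_0^\infty \Phi(t)^q\,\frac{dt}{t}\approx\|f\|_{p(\cdot),q,b}^q<\infty$. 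The dominated convergence theorem then gives $\|f\chi_{A_n}\|_{p(\cdot),q,b}^q\to 0$, as required.

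The only subtle point is the small-$s$ behavior of $s\gamma_b(s)$; a naive reading of the slowly varying definition only guarantees monotone equivalence of $s^{\varepsilon}\gamma_b(s)$ for $\varepsilon>0$, but the trick of borrowing a power of $s^{\varepsilon}$ from the slowly varying factor (and absorbing $s^{1-\varepsilon}$ separately) handles both the decay at $0$ needed in Step 1 and the dominating bound in Step 2. Everything else reduces to standard monotonicity and modular-versus-norm estimates from the preliminary lemmas.
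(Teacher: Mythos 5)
Your proof is correct, and it takes a genuinely different route from the paper's. You invoke the Lebesgue dominated convergence theorem on the defining integral $\int_0^\infty \varphi_n(t)^q\,dt/t$: pointwise vanishing follows from $\{|f\chi_{A_n}|>t\}\subset A_n$ plus the modular--norm estimate $\|\chi_E\|_{p(\cdot)}\le\mathbb{P}(E)^{1/p_+}$ and the small-argument bound $s\gamma_b(s)\lesssim s^{1-\varepsilon}\gamma_b(1)$, while the dominating function $\Phi$ comes from $\{|f\chi_{A_n}|>t\}\subset\{|f|>t\}$ together with near-monotonicity of $s\gamma_b(s)$ (Proposition~\ref{b}(2) with $\varepsilon=1$). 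The paper instead works with the discrete equivalent quasi-norm of Lemma~\ref{fanshu}(3), chooses $N_1$ to make the tail $\sum_{k\ge N_1}$ small, controls that tail using the same inclusion $\{|f\chi_{A_n}|>2^k\}\subset\{|f|>2^k\}$ and monotonicity of $s\gamma_b(s)$, and then bounds the head $\sum_{k<N_1}$ via $\|\chi_{\{|f\chi_{A_n}|>2^k\}}\|_{p(\cdot)}\le\|\chi_{A_n}\|_{p(\cdot)}\le 2\mathbb{P}(A_n)^{1/p_+}$, a $q/2$-power split to peel off a $\gamma_b^q(1)$ factor, and the geometric series $\sum_{k<N_1}2^{kq}$. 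Your version is shorter and more conceptual since it outsources the uniform-control bookkeeping to DCT; the paper's is more self-contained, making the $\varepsilon,N_1,N_2$ dependence explicit and relying only on elementary tail/series estimates rather than DCT on $(0,\infty)$. Both hinge on precisely the same two level-set inclusions and the same slowly-varying-function properties. One cosmetic point: your $\int_0^\infty\Phi(t)^q\,dt/t$ equals $\|f\|_{p(\cdot),q,b}^q$ exactly, not merely up to equivalence.
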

\begin{proof}
	Since $f\in L_{p(\cdot),q,b}$, for any $\varepsilon>0$, there exists $N_1\in\mathbb{N}$ such that
	 $$\bigg(\sum_{k=N_1}^{\infty}2^{kq}\|\chi_{\{|f|>2^k\}}\|_{p(\cdot)}^q\gamma_{b}^q\big(\|\chi_{\{|f|>2^k\}}\|_{p(\cdot)}\big)\bigg)^{\frac{1}{q}}<\varepsilon.$$
	Let the sequence $(A_n)_{n\ge0}$ satisfy $\lim\limits_{n\rightarrow\infty}\mathbb{P}(A_n)=0$. There exists $N_2\in\mathbb{N}$ such that for $n\ge N_2$, $\mathbb{P}(A_n)<\big(\frac{\varepsilon}{2^{N_1}\gamma_{b}(1)}\big)^{2p_+}$.
	Now let $n\ge N_2$. By Proposition \ref{b} (2) and Lemma \ref{2.4}, we have
	\begin{align*}
		\|f\chi_{A_n}\|_{p(\cdot),q,b}^q=&\ \sum_{k=N_1}^{\infty}2^{kq}\|\chi_{\{|f\chi_{A_n}|>2^k\}}\|_{p(\cdot)}^q\gamma_{b}^q\big(\|\chi_{\{|f\chi_{A_n}|>2^k\}}\|_{p(\cdot)}\big)\\
		&\ \ \ \ \ \ \ \ \ +\sum_{k=-\infty}^{N_1-1}2^{kq}\|\chi_{\{|f\chi_{A_n}|>2^k\}}\|_{p(\cdot)}^q\gamma_{b}^q\big(\|\chi_{\{|f\chi_{A_n}|>2^k\}}\|_{p(\cdot)}\big)\\
		\lesssim&\ \sum_{k=N_1}^{\infty}2^{kq}\|\chi_{\{|f|>2^k\}}\|_{p(\cdot)}^q\gamma_{b}^q\big(\|\chi_{\{|f|>2^k\}}\|_{p(\cdot)}\big)+\sum_{k=-\infty}^{N_1-1}2^{kq}\|\chi_{A_n}\|_{p(\cdot)}^q\gamma_{b}^q\big(\|\chi_{A_n}\|_{p(\cdot)}\big)\\
		<&\ \varepsilon^q+\sum_{k=-\infty}^{N_1-1}2^{kq}\|\chi_{A_n}\|_{p(\cdot)}^{q/2}\|\chi_{A_n}\|_{p(\cdot)}^{q/2}\gamma_{b}^q\big(\|\chi_{A_n}\|_{p(\cdot)}\big)\\
		\lesssim&\ \varepsilon^q+\sum_{k=-\infty}^{N_1-1}2^{kq}\|\chi_{A_n}\|_{p(\cdot)}^{q/2}\gamma_{b}^q(1)\\
		\le&\ \varepsilon^q+\sum_{k=-\infty}^{N_1-1}2^{kq}(2\|\chi_{A_n}\|_{p_+})^{q/2}\gamma_{b}^q(1)\\
		<&\ \varepsilon^q+2^{q/2}\Big(\frac{\varepsilon}{2^{N_1}\gamma_{b}(1)}\Big)^q\gamma_{b}^q(1)\sum_{k=-\infty}^{N_1-1}2^{kq} =\Big(1+\frac{2^{q/2}}{2^q-1}\Big)\varepsilon^q,
	\end{align*}
    which yields that $\|f\chi_{A_n}\|_{p(\cdot),q,b}\lesssim\varepsilon$. Hence, $f$ has absolutely continuous quasi-norm.
\end{proof}

Next, we give the dominated convergence theorem for two cases: $0<q<\infty$ and $q=\infty$, which extend the results in \cite{jwzw,lz}.
\begin{lem}[Dominated convergence theorem]\label{dct}
	Let $p(\cdot)\in\mathcal{P}(\Omega)$, $0<q<\infty$ and $b$ be a slowly varying function. Suppose that $h_n,f,g\in L_{p(\cdot),q,b}$. If $h_n\rightarrow f$ as $n\rightarrow\infty\ \text{a.e.}$ and $|h_n|\le g\ \text{a.e.}$ for every $n\ge1$, then
	$$\lim\limits_{n\rightarrow\infty}\|h_n-f\|_{p(\cdot),q,b}=0.$$
\end{lem}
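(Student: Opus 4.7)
The plan is to combine Egorov's theorem with the absolute continuity of the quasi-norm (the lemma immediately preceding this one), exploiting the fact that $(\Omega,\mathcal{F},\mathbb{P})$ has finite total measure. First I would observe that, since $h_n\to f$ a.e.\ and $|h_n|\le g$ a.e., we also have $|f|\le g$ a.e., and hence $|h_n-f|\le 2g$ a.e. I would then record the monotonicity fact that if $|F|\le|G|$ a.e.\ then $\|F\|_{p(\cdot),q,b}\le\|G\|_{p(\cdot),q,b}$, which follows directly from the inclusion $\{|F|>t\}\subset\{|G|>t\}$ inserted into Definition~\ref{dingyi}.

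Given $\varepsilon>0$, since $2g\in L_{p(\cdot),q,b}$ has absolutely continuous quasi-norm by the previous lemma, I would pick $\delta>0$ such that $\|2g\chi_{A}\|_{p(\cdot),q,b}<\varepsilon$ whenever $\mathbb{P}(A)<\delta$. Next, I would invoke Egorov's theorem on the probability space to obtain a measurable set $B\in\mathcal{F}$ with $\mathbb{P}(B)<\delta$ such that $h_n\to f$ uniformly on $\Omega\setminus B$, i.e.\ there exist $\eta_n\downarrow 0$ with $|h_n-f|\chi_{\Omega\setminus B}\le\eta_n$ a.e.

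Applying the quasi-triangle inequality from Lemma~\ref{fanshusanjiao}, I would split
\begin{align*}
\|h_n-f\|_{p(\cdot),q,b}\lesssim \bigl\|(h_n-f)\chi_{\Omega\setminus B}\bigr\|_{p(\cdot),q,b}+\bigl\|(h_n-f)\chi_B\bigr\|_{p(\cdot),q,b}.
\end{align*}
For the second term, the monotonicity above together with $|h_n-f|\chi_B\le 2g\chi_B$ and the choice of $\delta$ yield $\|(h_n-f)\chi_B\|_{p(\cdot),q,b}\le\|2g\chi_B\|_{p(\cdot),q,b}<\varepsilon$. For the first term, $|(h_n-f)\chi_{\Omega\setminus B}|\le\eta_n\chi_\Omega$, so by the homogeneity in Lemma~\ref{fanshu}(2) and the explicit formula for $\|\chi_\Omega\|_{p(\cdot),q,b}$ in Lemma~\ref{fanshu}(4) (which is finite since $\mathbb{P}(\Omega)=1$ forces $\|\chi_\Omega\|_{p(\cdot)}$ to be finite), the first term is bounded by $\eta_n\|\chi_\Omega\|_{p(\cdot),q,b}$, a constant multiple of $\eta_n$.

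Letting $n\to\infty$, the first term tends to $0$ and the second is at most a constant times $\varepsilon$, so $\limsup_n\|h_n-f\|_{p(\cdot),q,b}\lesssim\varepsilon$; since $\varepsilon$ is arbitrary, the conclusion follows. The main obstacle I anticipate is the bookkeeping around the quasi-triangle constant and making sure the absolute continuity lemma is applied to the majorant $2g$ (not to the moving sequence $h_n-f$, which is only uniformly controlled on $\Omega\setminus B$); once the decomposition above is set up, everything else is a direct application of results already established in this section.
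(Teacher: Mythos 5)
Your proposal is correct, and it takes a genuinely different route from the paper's. You split $\Omega$ into an Egorov set $B$ of small measure (where the majorant $2g$ and the absolute continuity of the quasi-norm control things) and its complement (where uniform convergence makes $|h_n-f|$ small). The paper instead truncates the majorant at a level $N_1$: the tail $\{g>N_1\}$ has small quasi-norm by absolute continuity, and on $\{g\le N_1\}$ the differences $|h_n-f|$ are uniformly bounded by $2N_1$, after which the paper estimates $\|\chi_{\{h_n\neq f\}}\|_{p(\cdot)}$. Your Egorov-based decomposition is arguably the cleaner of the two: the paper's handling of the bounded part rests on the assertion that $\chi_{\{h_n\neq f\}}\rightarrow 0$ as $n\rightarrow\infty$, which does not in fact follow from a.e.\ convergence (consider $h_n=f+1/n$, where $\{h_n\neq f\}=\Omega$ for every $n$), whereas your use of a decreasing dominating sequence $\eta_n\to 0$ on $\Omega\setminus B$ is airtight. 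Both proofs cost one invocation of the preceding absolute-continuity lemma; Egorov's theorem replaces the paper's pointwise truncation trick and is available here precisely because the underlying measure is finite.

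One small imprecision is worth fixing: your monotonicity claim ``$|F|\le|G|$ a.e.\ implies $\|F\|_{p(\cdot),q,b}\le\|G\|_{p(\cdot),q,b}$'' should read $\lesssim$ rather than $\le$. The inclusion $\{|F|>t\}\subset\{|G|>t\}$ does give $\|\chi_{\{|F|>t\}}\|_{p(\cdot)}\le\|\chi_{\{|G|>t\}}\|_{p(\cdot)}$, but the map $u\mapsto u\,\gamma_b(u)$ appearing inside the integral in Definition~\ref{dingyi} is only \emph{equivalent} to a nondecreasing function (Proposition~\ref{b}(2)), not itself nondecreasing, so passing from $\|\chi_{\{|F|>t\}}\|_{p(\cdot)}\le\|\chi_{\{|G|>t\}}\|_{p(\cdot)}$ to the $L_{p(\cdot),q,b}$-quasi-norms costs a multiplicative constant. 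The same remark applies to the estimate $\|(h_n-f)\chi_{\Omega\setminus B}\|_{p(\cdot),q,b}\lesssim\eta_n\|\chi_\Omega\|_{p(\cdot),q,b}$. Since you in any case only conclude $\limsup_n\|h_n-f\|_{p(\cdot),q,b}\lesssim\varepsilon$, the hidden constants are harmless and the proof stands.
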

\begin{proof}
	Since $g\in L_{p(\cdot),q,b}$ has absolutely continuous quasi-norm, then for any $\varepsilon>0$, there exists $N_1$ such that $\|g\chi_{\{g>N_1\}}\|_{p(\cdot),q,b}<\varepsilon$. 	
	On $\{g\le N_1\}$, it is easy to see that $|h_n-f|\le2N_1$. By Lemma \ref{fanshu} (4), there is
	\begin{align*}
		\|(h_n-f)\chi_{\{g\le N_1\}}\|_{p(\cdot),q,b}=&\ \|(h_n-f)\chi_{\{g\le N_1\}}\chi_{\{h_n\neq f\}}\|_{p(\cdot),q,b}\\
		\le&\ 2N_1\|\chi_{\{h_n\neq f\}}\|_{p(\cdot),q,b}=2N_1\Big(\frac{1}{q}\Big)^{1/q}\|\chi_{\{h_n\neq f\}}\|_{p(\cdot)}\gamma_{b}\big(\|\chi_{\{h_n\neq f\}}\|_{p(\cdot)}\big).
	\end{align*}
    Since $h_n\rightarrow f$ as $n\rightarrow\infty$ a.e., then $\chi_{\{h_n\neq f\}}\rightarrow0$ as $n\rightarrow\infty$. By dominated convergence theorem in $L_{p(\cdot)}$ (see \cite{cf}), it follows from $\chi_{\{h_n\neq f\}}\le1$ that $\|\chi_{\{h_n\neq f\}}\|_{p(\cdot)}\rightarrow0$. Hence there exists $N_2$ such that for $n\ge N_2$,
    $$\|(h_n-f)\chi_{\{g\le N_1\}}\|_{p(\cdot),q,b}<\varepsilon.$$
    Therefore, for $n\ge N_2$, we have
    \begin{align*}
    	\|h_n-f\|_{p(\cdot),q,b}\lesssim&\ \|(h_n-f)\chi_{\{g\le N_1\}}\|_{p(\cdot),q,b}+\|(h_n-f)\chi_{\{g>N_1\}}\|_{p(\cdot),q,b}\\
    	\lesssim&\ \varepsilon+2\|g\chi_{\{g>N_1\}}\|_{p(\cdot),q,b}<3\varepsilon,
    \end{align*}
    which completes the proof.
\end{proof}

For the case $q=\infty$, we shall introduce a subspace of $L_{p(\cdot),\infty,b}$ as follows.
\begin{defi}
	Let $p(\cdot)\in\mathcal{P}(\Omega)$ and $b$ be a slowly varying function. Denote by $\mathcal{L}_{p(\cdot),\infty,b}$ the set of all $f\in L_{p(\cdot),\infty,b}$ having absolutely continuous quasi-norm, that is
	$$\mathcal{L}_{p(\cdot),\infty,b}=\{f\in L_{p(\cdot),\infty,b}:\lim\limits_{n\rightarrow\infty}\|f\chi_{A_n}\|_{p(\cdot),q,b}=0\},$$
	where the sequence $(A_n)_{n\ge0}$ satisfies $\lim\limits_{n\rightarrow\infty}\mathbb{P}(A_n)=0$.
\end{defi}

\begin{lem}
	Let $p(\cdot)\in\mathcal{P}(\Omega)$ and $b$ be a slowly varying function. Then $\mathcal{L}_{p(\cdot),\infty,b}$ is a closed subspace of $L_{p(\cdot),\infty,b}$.
\end{lem}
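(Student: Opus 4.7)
The plan is to verify two things: first that $\mathcal{L}_{p(\cdot),\infty,b}$ is a linear subspace of $L_{p(\cdot),\infty,b}$, and second that it is topologically closed.

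For the linearity, I would use Lemma \ref{fanshusanjiao} together with the scalar homogeneity from Lemma \ref{fanshu}(2). Namely, if $f,g\in\mathcal{L}_{p(\cdot),\infty,b}$ and $(A_n)_{n\ge0}$ is any sequence with $\mathbb{P}(A_n)\to 0$, then
\begin{align*}
\|(f+g)\chi_{A_n}\|_{p(\cdot),\infty,b}
&\lesssim \|f\chi_{A_n}\|_{p(\cdot),\infty,b}+\|g\chi_{A_n}\|_{p(\cdot),\infty,b}\to 0,
\end{align*}
and $\|cf\chi_{A_n}\|_{p(\cdot),\infty,b}=|c|\|f\chi_{A_n}\|_{p(\cdot),\infty,b}\to 0$, so $f+g$ and $cf$ belong to $\mathcal{L}_{p(\cdot),\infty,b}$.

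For the closedness, I would take a sequence $(f_k)_{k\ge1}\subset\mathcal{L}_{p(\cdot),\infty,b}$ converging to some $f\in L_{p(\cdot),\infty,b}$, and show $f\in\mathcal{L}_{p(\cdot),\infty,b}$. The auxiliary fact I need is the monotonicity estimate $\|h\chi_{A}\|_{p(\cdot),\infty,b}\lesssim \|h\|_{p(\cdot),\infty,b}$ for any measurable set $A$ and any $h\in L_{p(\cdot),\infty,b}$. This will follow because $|h\chi_A|\le |h|$ implies $\chi_{\{|h\chi_A|>t\}}\le\chi_{\{|h|>t\}}$, hence $\|\chi_{\{|h\chi_A|>t\}}\|_{p(\cdot)}\le\|\chi_{\{|h|>t\}}\|_{p(\cdot)}$, and then Proposition \ref{b}(2) (which says $s\gamma_b(s)$ is equivalent to a nondecreasing function) upgrades this to the desired bound after taking the supremum over $t>0$.

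Given this, the main argument runs as follows. Fix $\varepsilon>0$ and a sequence $(A_n)_{n\ge0}$ with $\mathbb{P}(A_n)\to 0$. Choose $k$ so large that $\|f-f_k\|_{p(\cdot),\infty,b}<\varepsilon$. By the monotonicity just explained, $\|(f-f_k)\chi_{A_n}\|_{p(\cdot),\infty,b}\lesssim\varepsilon$ uniformly in $n$. Since $f_k\in\mathcal{L}_{p(\cdot),\infty,b}$, there exists $N$ such that $\|f_k\chi_{A_n}\|_{p(\cdot),\infty,b}<\varepsilon$ for all $n\ge N$. An application of Lemma \ref{fanshusanjiao} then gives
\begin{align*}
\|f\chi_{A_n}\|_{p(\cdot),\infty,b}
&\lesssim\|(f-f_k)\chi_{A_n}\|_{p(\cdot),\infty,b}+\|f_k\chi_{A_n}\|_{p(\cdot),\infty,b}
\lesssim\varepsilon
\end{align*}
for $n\ge N$, which shows $f\in\mathcal{L}_{p(\cdot),\infty,b}$ and finishes the proof.

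I do not anticipate a serious obstacle here; the only subtle point is justifying the monotonicity $\|h\chi_A\|_{p(\cdot),\infty,b}\lesssim\|h\|_{p(\cdot),\infty,b}$, which is not fully trivial because the functional involves the composite expression $t\|\chi_{\{|h|>t\}}\|_{p(\cdot)}\gamma_b(\|\chi_{\{|h|>t\}}\|_{p(\cdot)})$ rather than a pure power, so one has to invoke the fact from Proposition \ref{b}(2) that $s\gamma_b(s)$ is equivalent to a nondecreasing function on $(0,\infty)$. Once this monotonicity is established, the remainder is a standard quasi-norm closedness argument.
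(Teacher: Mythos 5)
Your proof is correct and follows essentially the same approach as the paper's: a quasi-triangle-inequality split (Lemma \ref{fanshusanjiao}) of $\|f\chi_{A_n}\|_{p(\cdot),\infty,b}$ into a piece controlled by $\|f_k\chi_{A_n}\|_{p(\cdot),\infty,b}$ and a piece controlled by $\|f-f_k\|_{p(\cdot),\infty,b}$, using the monotonicity $\|h\chi_A\|_{p(\cdot),\infty,b}\lesssim\|h\|_{p(\cdot),\infty,b}$ coming from Proposition \ref{b}(2). Your ``fix $k$ first, then choose $N$'' ordering is in fact a slightly more careful rendering than the paper's diagonal pairing of $g_n$ with $A_n$, and you also spell out the (routine) linearity of the subspace, but the underlying argument is the same.
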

\begin{proof}
	Let $(g_n)_{n\ge1}\subset\mathcal{L}_{p(\cdot),\infty,b}$ be a Cauchy sequence with respect to quasi-norm $\|\cdot\|_{p(\cdot),\infty,b}$. Then there exists $f\in L_{p(\cdot),\infty,b}$ such that
	$$\lim\limits_{n\rightarrow\infty}\|g_n-f\|_{p(\cdot),\infty,b}=0.$$
	For any given sequence $(A_n)_{n\ge0}$ satisfying $\lim\limits_{n\rightarrow\infty}\mathbb{P}(A_n)=0$, it follows from Lemma \ref{fanshusanjiao} that
	\begin{align*}
		\|f\chi_{A_n}\|_{p(\cdot),\infty,b}\lesssim&\ \|g_n\chi_{A_n}\|_{p(\cdot),\infty,b}+\|(g_n-f)\chi_{A_n}\|_{p(\cdot),\infty,b}\\
		\lesssim&\ \|g_n\chi_{A_n}\|_{p(\cdot),\infty,b}+\|g_n-f\|_{p(\cdot),\infty,b}.
	\end{align*}
	Since $g_n\in\mathcal{L}_{p(\cdot),\infty,b}$, we get
	$$\|f\chi_{A_n}\|_{p(\cdot),\infty,b}\rightarrow0,\ \ \text{as}\ n\rightarrow\infty.$$
	Hence, it implies that $f\in\mathcal{L}_{p(\cdot),\infty,b}$.
\end{proof}

Similarly to the proof of Lemma \ref{dct}, we get the dominated convergence theorem for $q=\infty$, the proof is left to the readers.
\begin{lem}
	Let $p(\cdot)\in\mathcal{P}(\Omega)$ and $b$ be a slowly varying function. Suppose that $h_n,f\in L_{p(\cdot),\infty,b}$ and $g\in\mathcal{L}_{p(\cdot),\infty,b}$. If $h_n\rightarrow f $ as $n\rightarrow\infty$ a.e and $|h_n|\le g\ \ a.e.$ for every $n\ge1$, then
	$$\lim\limits_{n\rightarrow\infty}\|h_n-f\|_{p(\cdot),\infty,b}=0.$$
\end{lem}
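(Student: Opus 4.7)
The plan is to follow the template of the finite-$q$ case (Lemma~\ref{dct}): choose $N_1$ so that $\|g\chi_{\{g>N_1\}}\|_{p(\cdot),\infty,b}$ is small, split $\|h_n-f\|_{p(\cdot),\infty,b}$ into the pieces on $\{g\le N_1\}$ and $\{g>N_1\}$, control the tail by $g$, and handle the bounded piece by a convergence-in-measure argument. The one genuinely new ingredient, forced by the supremum definition of $\|\cdot\|_{p(\cdot),\infty,b}$, is a split of the defining supremum at an auxiliary level $\delta>0$.

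Fix $\varepsilon>0$. Because $g\in L_{p(\cdot),\infty,b}$ is a.s.\ finite, $\mathbb{P}(\{g>N\})\to 0$ as $N\to\infty$, so the assumption $g\in\mathcal{L}_{p(\cdot),\infty,b}$ yields some $N_1$ with $\|g\chi_{\{g>N_1\}}\|_{p(\cdot),\infty,b}<\varepsilon$. Passing to the pointwise limit in $|h_n|\le g$ gives $|f|\le g$ a.e., hence $|h_n-f|\le 2g$. The quasi-triangle inequality (Lemma~\ref{fanshusanjiao}) together with the fact that $s\mapsto s\gamma_b(s)$ is equivalent to a nondecreasing function (Proposition~\ref{b}(2)) then give
\begin{align*}
\|h_n-f\|_{p(\cdot),\infty,b}\lesssim \|(h_n-f)\chi_{\{g\le N_1\}}\|_{p(\cdot),\infty,b}+\|g\chi_{\{g>N_1\}}\|_{p(\cdot),\infty,b},
\end{align*}
so the second term is at most a constant times $\varepsilon$.

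For the first term, on $\{g\le N_1\}$ we have $|h_n-f|\le 2N_1$, so the level set $\{|(h_n-f)\chi_{\{g\le N_1\}}|>t\}$ is empty for $t\ge 2N_1$. For any $\delta\in(0,2N_1)$, I would split the supremum in Definition~\ref{dingyi} at $t=\delta$. For $0<t\le\delta$, enlarge the level set to $\Omega$ and bound $t$ by $\delta$, giving a contribution $\lesssim \delta\,\|\chi_\Omega\|_{p(\cdot)}\gamma_b(\|\chi_\Omega\|_{p(\cdot)})$. For $\delta<t<2N_1$, use the inclusion $\{|(h_n-f)\chi_{\{g\le N_1\}}|>t\}\subset\{|h_n-f|>\delta\}$, bound $t$ by $2N_1$, and apply the monotonicity of $s\mapsto s\gamma_b(s)$ to obtain a contribution $\lesssim N_1\,\|\chi_{\{|h_n-f|>\delta\}}\|_{p(\cdot)}\gamma_b(\|\chi_{\{|h_n-f|>\delta\}}\|_{p(\cdot)})$. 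Since $h_n\to f$ a.e.\ on a probability space forces $h_n\to f$ in probability, $\mathbb{P}(\{|h_n-f|>\delta\})\to 0$, and then the dominated convergence theorem in $L_{p(\cdot)}$ applied to $\chi_{\{|h_n-f|>\delta\}}\le 1$ (see \cite{cf}) gives $\|\chi_{\{|h_n-f|>\delta\}}\|_{p(\cdot)}\to 0$. Hence the second contribution tends to $0$ as $n\to\infty$, leaving only the $\delta$-term, which vanishes as $\delta\to 0$.

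The main obstacle is the loss of the integral structure: in the $0<q<\infty$ case the integral together with an integrable majorant produce uniformity in the level parameter automatically, whereas for $q=\infty$ this must be recovered by the split-at-$\delta$ device above, leaning on the monotonicity of $s\mapsto s\gamma_b(s)$ to upgrade a single-level convergence-in-measure statement to control of the full supremum.
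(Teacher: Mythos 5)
Your proposal is correct, and it is actually more careful than the argument the paper has in mind. The paper only says the $q=\infty$ case is proved "similarly to the proof of Lemma~\ref{dct}"; in that proof the bounded piece on $\{g\le N_1\}$ is controlled by $2N_1\|\chi_{\{h_n\neq f\}}\|_{p(\cdot),q,b}$, justified by asserting that $h_n\to f$ a.e.\ forces $\chi_{\{h_n\neq f\}}\to 0$ a.e.\ and hence $\|\chi_{\{h_n\neq f\}}\|_{p(\cdot)}\to 0$. That implication fails in general (take $h_n=f+1/n$: then $h_n\to f$ everywhere yet $\chi_{\{h_n\neq f\}}\equiv 1$ for every $n$). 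Your split of the defining supremum at an auxiliary level $\delta>0$, which replaces $\{h_n\neq f\}$ by $\{|h_n-f|>\delta\}$, is exactly the right patch: for fixed $\delta$ one does have $\chi_{\{|h_n-f|>\delta\}}\to 0$ a.e., so dominated convergence in $L_{p(\cdot)}$ applies, and the residual contribution from $t\le\delta$ is a one-shot $\delta\,\|\chi_\Omega\|_{p(\cdot)}\gamma_b(\|\chi_\Omega\|_{p(\cdot)})$ that vanishes as $\delta\to 0$. The one thing I would push back on is your framing that the $\delta$-split is a new ingredient forced by the $\sup$ in $\|\cdot\|_{p(\cdot),\infty,b}$: the same issue is present for $0<q<\infty$, and the same device (splitting $\int_0^\infty$ at $t=\delta$) repairs it there too, so the integral does not in fact supply the needed uniformity automatically. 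Everything else in your argument --- passing to the limit to get $|f|\le g$ and $|h_n-f|\le 2g$, using absolute continuity of the quasi-norm of $g$ to choose $N_1$, and invoking the equivalent monotonicity of $s\mapsto s\gamma_b(s)$ from Proposition~\ref{b}~(2) to compare level sets --- is sound and is the route the paper intends.
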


\subsection{Martingale and variable martingale Hardy-Lorentz-Karamata spaces}
Let $\{\mathcal{F}_n\}_{n\ge0}$ be a nondecreasing sequence of sub-$\sigma$-algebras of $\mathcal{F}$ such that $\mathcal{F}=\sigma\Big(\bigcup\limits_{n\ge0}\mathcal{F}_n\Big)$.
The expectation operator and the conditional expectation operator relative to $\mathcal{F}_n$ are denoted by $\mathbb{E}$ and $\mathbb{E}_n$, respectively.
A sequence of measurable functions $f=(f_n)_{n\ge0}\subset L_1$ is called a martingale with respect to $\{\mathcal{F}_n\}_{n\ge0}$ if
$$
\mathbb{E}_n(f_{n+1})=f_n \qquad \text{for every} \; n\ge0.
$$
Denote by $\mathcal{M}$ the set of all martingales $f=(f_n)_{n\ge0}$ relative to $\{\mathcal{F}_n\}_{n\ge0}$ such that $f_0=0$.
For $f\in\mathcal{M}$, we define the martingale differences by
$$
d_0f:=0, \qquad d_nf:=f_n-f_{n-1} \qquad (n>0).
$$
Let $\mathcal{T}$ be the set of all stopping times relative to $\{\mathcal{F}_n\}_{n\ge0}$. For $f\in\mathcal{M}$ and $\tau\in\mathcal{T}$, the stopped martingale $f^\tau=(f_n^\tau)_{n\ge0}$ is defined by
$$f_n^\tau:=\sum_{m=0}^{n}\chi_{\{\tau\ge m\}}d_mf.$$
	
Define the maximal function, the square function and the conditional square function of a martingale $f$, respectively, as follows:
$$M_m(f)=\sup_{n\le m}|f_n|,\ \ M(f)=\sup_{n\ge0}|f_n|;$$
$$S_m(f)=\bigg(\sum_{n=0}^{m}|d_nf|^2\bigg)^\frac{1}{2},\ \ S(f)=\bigg(\sum_{n=0}^{\infty}|d_nf|^2\bigg)^\frac{1}{2};$$
$$s_m(f)=\bigg(\sum_{n=0}^{m}\mathbb{E}_{n-1}|d_nf|^2\bigg)^\frac{1}{2},\ \ s(f)=\bigg(\sum_{n=0}^{\infty}\mathbb{E}_{n-1}|d_nf|^2\bigg)^\frac{1}{2}.$$
	
Let $p(\cdot)\in\mathcal{P}(\Omega)$, $0<q\le\infty$ and $b$ be a slowly varying function. Denote by $\Lambda_{p(\cdot),q,b}$ the collection of all sequences $\lambda=(\lambda_n)_{n\ge0}$ of nondecreasing, nonnegative and adapted functions with $\lambda_\infty:=\lim\limits_{n\rightarrow\infty}\lambda_n\in L_{p(\cdot),q,b}$.
	
\begin{defi}\label{hardy}
	Let $p(\cdot)\in\mathcal{P}(\Omega)$, $0<q\le\infty$ and $b$ be a slowly varying function. The variable martingale Hardy spaces associated with variable Lorentz-Karamata spaces are defined by
	 $$H_{p(\cdot),q,b}^M=\big\{f\in\mathcal{M}:\|f\|_{H_{p(\cdot),q,b}^M}=\|M(f)\|_{p(\cdot),q,b}<\infty\big\};$$
	 $$H_{p(\cdot),q,b}^s=\big\{f\in\mathcal{M}:\|f\|_{H_{p(\cdot),q,b}^s}=\|s(f)\|_{p(\cdot),q,b}<\infty\big\};$$
	 $$H_{p(\cdot),q,b}^S=\big\{f\in\mathcal{M}:\|f\|_{H_{p(\cdot),q,b}^S}=\|S(f)\|_{p(\cdot),q,b}<\infty\big\};$$
	$$\mathcal{Q}_{p(\cdot),q,b}=\big\{f\in\mathcal{M}:\exists\ \lambda=(\lambda_n)_{n\ge0}\in\Lambda_{p(\cdot),q,b}\ \text{s.t.}\ S_n(f)\le\lambda_{n-1}\big\}$$
	$$with\ \|f\|_{\mathcal{Q}_{p(\cdot),q,b}}=\inf_{\lambda\in\Lambda_{p(\cdot),q,b}}\|\lambda_\infty\|_{p(\cdot),q,b};$$
	$$\mathcal{P}_{p(\cdot),q,b}=\big\{f\in\mathcal{M}:\exists\ \lambda=(\lambda_n)_{n\ge0}\in\Lambda_{p(\cdot),q,b}\ \text{s.t.}\ |f_n|\le\lambda_{n-1}\big\}$$
	$$with\ \|f\|_{\mathcal{P}_{p(\cdot),q,b}}=\inf_{\lambda\in\Lambda_{p(\cdot),q,b}}\|\lambda_\infty\|_{p(\cdot),q,b}.$$
	We define $\mathcal{H}_{p(\cdot),\infty,b}^M$ as the space of all martingales such that $M(f)\in\mathcal{L}_{p(\cdot),\infty,b}$. Replacing $M(f)$ by $s(f)$ or $S(f)$, we can define $\mathcal{H}_{p(\cdot),\infty,b}^s$ and $\mathcal{H}_{p(\cdot),\infty,b}^S$, respectively.
\end{defi}

\begin{rem}\label{huaH}
	$(1)$ It is easy to check that $\mathcal{H}_{p(\cdot),\infty,b}^M$, $\mathcal{H}_{p(\cdot),\infty,b}^s$ and $\mathcal{H}_{p(\cdot),\infty,b}^S$ are closed subspaces of $H_{p(\cdot),\infty,b}^M$, $H_{p(\cdot),\infty,b}^s$ and $H_{p(\cdot),\infty,b}^S$, respectively.
	
	$(2)$ If we take $p(\cdot)\equiv p$ in Definition $\ref{hardy}$, we get back the martingale Hardy-Lorentz-Karamata spaces introduced in \cite{hoa,jxz}. Moreover, the variable martingale Hardy-Lorentz space defined in \cite{jwwz,jwzw} is also a special case of the variable martingale Hardy-Lorentz-Karamata space when $b\equiv1$.
\end{rem}

We recall the definition of regularity. The stochastic basis $\{\mathcal{F}_n\}_{n\ge0}$ is said to be regular, if for $n\ge0$ and $A\in\mathcal{F}_n$, there exists $B\in\mathcal{F}_{n-1}$ such that $A\subset B$ and $\mathbb{P}(B)\le \mathcal{R}\mathbb{P}(A)$, where $\mathcal{R}$ is a positive constant independent of $n$. Equivalently, there exists a constant $\mathcal{R}>0$ such that
\begin{align}\label{R}
	f_n\le \mathcal{R}f_{n-1}
\end{align}
for all nonnegative martingales $(f_n)_{n\ge0}$ adapted to the stochastic basis $\{\mathcal{F}_n\}_{n\ge0}$. We refer to \cite[Chapter 7]{l} for more details
about regular basis.

Recall that $B\in\mathcal{F}_n$ is an atom if $A\subset B$ with $A\in\mathcal{F}_n$ satisfies $\mathbb{P}(A)<\mathbb{P}(B)$, then $\mathbb{P}(A)=0$. Denote by $A(\mathcal{F}_n)$ the set of all atoms in $\mathcal{F}_n$.
In the sequel of this paper, we always assume that every $\sigma$-algebra $\mathcal{F}_n$ is generated by countable many atoms. There are many examples for $\sigma$-algebras generated by countable atoms, see \cite{jwzw,w}.


We need the following Doob maximal inequality on variable Lebesgue spaces, which was proved by Jiao et al. \cite{jwzw}.
\begin{lem}
	Let $p(\cdot)\in\mathcal{P}(\Omega)$ satisfy $(\ref{gs3})$ with $1<p_-\le p_+<\infty$. Then for any $f\in L_{p(\cdot)}$, we have
	$$\|M(f)\|_{p(\cdot)}\lesssim\|f\|_{p(\cdot)}.$$
\end{lem}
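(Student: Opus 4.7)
The plan is to adapt Diening's proof of the Hardy--Littlewood maximal inequality on variable Lebesgue spaces to the martingale setting, with condition (\ref{gs3}) playing the role of log-H\"older continuity. By the homogeneity properties of $\|\cdot\|_{p(\cdot)}$ recorded in Remark \ref{property} and the equivalence of norm and modular stated earlier, it suffices to establish the modular inequality $\rho(M(f)) \lesssim \rho(f) + 1$ on the unit ball $\{\|f\|_{p(\cdot)} \le 1\}$. Replacing $f$ with $|f|$ we may assume $f \ge 0$, and by monotone convergence it is enough to work with $M_N(f) = \sup_{n \le N} \mathbb{E}_n f$ and then let $N \to \infty$.

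The central step is a pointwise/atomic estimate. Fix $n$ and $\omega \in A$ where $A \in A(\mathcal{F}_n)$. Since $\mathbb{E}_n f$ is constant on $A$, H\"older's inequality on $A$ at the constant exponent $p_-(A) > 1$ gives
\begin{equation*}
(\mathbb{E}_n f)(\omega)^{p_-(A)} = \Bigl(\frac{1}{\mathbb{P}(A)}\int_A f\,d\mathbb{P}\Bigr)^{p_-(A)} \le \mathbb{E}_n\bigl(f^{p_-(A)}\bigr)(\omega) \le \mathbb{E}_n\bigl(f^{p(\cdot)} + 1\bigr)(\omega),
\end{equation*}
using $f^{p_-(A)} \le f^{p(\cdot)} + 1$. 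To convert this to a bound involving $p(\omega)$ rather than $p_-(A)$, I would split according to whether $(\mathbb{E}_n f)(\omega) \le 1$ (where the bound is trivial) or $(\mathbb{E}_n f)(\omega) > 1$, in which case raising to the power $p(\omega)/p_-(A) \le p_+(A)/p_-(A)$ produces a factor of the form $\mathbb{P}(A)^{p_-(A) - p(\omega)}$, which by condition (\ref{gs3}) is uniformly bounded by a constant depending only on $K_{p(\cdot)}$, $p_-$ and $p_+$. This yields the atomic pointwise estimate $(\mathbb{E}_n f)(\omega)^{p(\omega)} \lesssim \mathbb{E}_n(f^{p(\cdot)})(\omega) + 1$ with an implicit constant independent of $A$ and $n$.

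Taking the supremum over $n$ produces $M(f)(\omega)^{p(\omega)} \lesssim M(f^{p(\cdot)})(\omega) + 1$ pointwise. To turn this into a modular estimate I would integrate and then use the classical $L_r \to L_r$ boundedness of $M$ for some $1 < r < p_-$ applied to $f^{p(\cdot)/r}$ (whose $r$-norm is controlled by $\rho(f)+1$), combined with a layer-cake decomposition along the dyadic level sets $\{M(f) > 2^k\}$. The main obstacle is precisely the step where the variable exponent $p(\omega)$ is exchanged for the atomic constant $p_-(A)$: a naive H\"older application would produce an atom-dependent factor $\mathbb{P}(A)^{p_-(A)-p(\omega)}$ that could blow up, and condition (\ref{gs3}) is exactly calibrated to keep this factor uniformly bounded; careful bookkeeping of these exponents is the heart of the proof.
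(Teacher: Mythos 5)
The paper states this lemma without proof, citing Jiao et al.\ \cite{jwzw}, so there is no in-text argument to compare with; but your sketch reconstructs the known strategy for Doob's inequality with a variable exponent under condition (\ref{gs3}), and the reduction to a modular inequality, the use of Jensen at the constant exponent $p_-(A)$ on each atom $A$, and the identification of $\mathbb{P}(A)^{p_-(A)-p(\omega)}$ as the quantity that (\ref{gs3}) controls are all correct.

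There is, however, a real gap in the closing step. The pointwise estimate you write down, $M(f)(\omega)^{p(\omega)} \lesssim M(f^{p(\cdot)})(\omega) + 1$, cannot simply be integrated: when $\rho(f)\le 1$ the function $f^{p(\cdot)}$ is merely in $L_1$, and the Doob maximal operator is not $L_1\to L_1$ bounded (one can make $\int M(g)\,d\mathbb{P}$ arbitrarily large with $\|g\|_1=1$ by concentrating $g$ on a short dyadic interval), so the right-hand side can be infinite and no layer-cake manipulation repairs it. The fix, which you gesture at with ``applied to $f^{p(\cdot)/r}$'' but do not carry out, is to perform the initial Jensen step at the exponent $p_-(A)/r$ for a fixed $1<r<p_-$ rather than at $p_-(A)$. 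The same type of factor $\mathbb{P}(A)^{(p_-(A)-p(\omega))/p_-(A)}$ arises and is controlled by (\ref{gs3}), and one lands at $(\mathbb{E}_n f)(\omega)^{p(\omega)} \lesssim \bigl[\mathbb{E}_n(f^{p(\cdot)/r}+1)(\omega)\bigr]^r$, hence $M(f)^{p(\cdot)} \lesssim M(f^{p(\cdot)/r}+1)^r + 1$ pointwise. Since $\int (f^{p(\cdot)/r}+1)^r\,d\mathbb{P} \lesssim \rho(f)+1 \le 2$, Doob's classical $L_r$ inequality then yields $\rho(M(f)) \lesssim 1$ directly, with no layer-cake argument needed. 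With this rebookkeeping your outline closes correctly.
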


Moreover, by applying the inequality above and interpolation results for variable Lorentz-Karamata spaces, we obtain the next generalization of Doob's maximal inequality.
\begin{lem}[\cite{hdlw}] \label{Doob}
	Let $p(\cdot)\in\mathcal{P}(\Omega)$ satisfy $(\ref{gs3})$ with $1<p_-\le p_+<\infty$, $0<q\le\infty$ and let $b$ be a slowly varying function. Then
	$$\|M(f)\|_{p(\cdot),q,b}\lesssim\|f\|_{p(\cdot),q,b} \quad \text{for any}\ f\in L_{p(\cdot),q,b}.$$
\end{lem}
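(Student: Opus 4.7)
The plan is to reduce the inequality to the $L_{p(\cdot)}$ Doob maximal inequality stated just above, by means of a Calder\'on--Zygmund-type dyadic decomposition at each level of $M(f)$, combined with a discrete Hardy summation adapted to the Karamata weight $\gamma_b$.

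By Lemma \ref{fanshu} (3) applied with $\lambda = 1$, up to equivalent constants we may replace both sides by their dyadic versions:
\begin{align*}
\|M(f)\|_{p(\cdot),q,b}^{q} \approx \sum_{k\in\mathbb{Z}} \Bigl[2^{k}\,\|\chi_{\{M(f)>2^{k}\}}\|_{p(\cdot)}\,\gamma_{b}\bigl(\|\chi_{\{M(f)>2^{k}\}}\|_{p(\cdot)}\bigr)\Bigr]^{q},
\end{align*}
with the obvious sup-version when $q=\infty$, and similarly for $\|f\|_{p(\cdot),q,b}$. So it suffices to dominate $\|\chi_{\{M(f)>2^{k}\}}\|_{p(\cdot)}$ (together with its $\gamma_{b}$-factor) by a sum over $j\ge k-1$ of the corresponding quantities for the level sets of $|f|$.

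For each $k\in\mathbb{Z}$, I would decompose $f=g_{k}+h_{k}$ with $g_{k}:=f\chi_{\{|f|\le 2^{k-1}\}}$ and $h_{k}:=f\chi_{\{|f|>2^{k-1}\}}$. Because conditional expectations contract $L_{\infty}$, one has $M(g_{k})\le 2^{k-1}$, hence $\{M(f)>2^{k}\}\subset\{M(h_{k})>2^{k-1}\}$. The Chebyshev-type bound in $L_{p(\cdot)}$ (from $\chi_{\{M(h_{k})>2^{k-1}\}}\le 2^{1-k}M(h_{k})$ and the monotonicity of $\|\cdot\|_{p(\cdot)}$) gives $\|\chi_{\{M(h_{k})>2^{k-1}\}}\|_{p(\cdot)}\le 2^{1-k}\|M(h_{k})\|_{p(\cdot)}$, and the preceding $L_{p(\cdot)}$ Doob inequality (applicable since $1<p_{-}\le p_{+}<\infty$) yields $\|M(h_{k})\|_{p(\cdot)}\lesssim\|h_{k}\|_{p(\cdot)}$. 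Expanding $h_{k}$ dyadically as $\sum_{j\ge k-1}f\chi_{\{2^{j}<|f|\le 2^{j+1}\}}$ and using the triangle inequality in $L_{p(\cdot)}$ (valid because $p_{-}>1$) gives $\|h_{k}\|_{p(\cdot)}\lesssim\sum_{j\ge k-1}2^{j}\|\chi_{\{|f|>2^{j}\}}\|_{p(\cdot)}$. Combining these yields the key dyadic bound
\begin{equation*}
2^{k}\|\chi_{\{M(f)>2^{k}\}}\|_{p(\cdot)}\;\lesssim\;\sum_{j\ge k-1}2^{j}\|\chi_{\{|f|>2^{j}\}}\|_{p(\cdot)}.
\end{equation*}

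The remaining task is to transfer this inequality through the weight $\gamma_{b}$ and sum. For $q=\infty$, a direct sup-bound using that $t\gamma_{b}(t)$ is equivalent to a nondecreasing function (Proposition \ref{b} (2)) finishes the argument at once. For $0<q<\infty$, I would invoke a discrete Hardy inequality: the slowly varying factor is controlled by extracting an arbitrarily small positive power of $\|\chi_{\{|f|>2^{j}\}}\|_{p(\cdot)}$ from $\gamma_{b}$ via Proposition \ref{b} (2), summing the resulting geometric-weighted series by Hardy, and then reassembling the $\gamma_{b}$-factor. The principal obstacle is precisely this bookkeeping for $\gamma_{b}$: the classical argument used when $b\equiv 1$ must be redone to accommodate a possibly non-monotonic Karamata function whose values at $\|\chi_{\{M(f)>2^{k}\}}\|_{p(\cdot)}$ and at $\|\chi_{\{|f|>2^{j}\}}\|_{p(\cdot)}$ need to be matched; this is where Proposition \ref{b} (2)--(6) and Lemma \ref{wuqiongheb} do the heavy lifting, and is also what explains why the monotonicity hypothesis on $b$ present in earlier works can be dropped here.
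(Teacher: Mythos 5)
Your strategy diverges from the paper's: the lemma is quoted from a companion manuscript (\cite{hdlw}) and, as the sentence preceding it says, is obtained by combining the $L_{p(\cdot)}$ Doob inequality with \emph{real interpolation} for variable Lorentz--Karamata spaces. You instead attempt a direct level-set/Calder\'on--Zygmund argument. That is a legitimately different route, but as written it has a gap that is not merely a matter of ``bookkeeping for $\gamma_b$'': the key dyadic estimate you derive is too weak to be summed.

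Concretely, your chain
\begin{align*}
2^{k}\|\chi_{\{M(f)>2^{k}\}}\|_{p(\cdot)} \lesssim \sum_{j\ge k-1}2^{j}\|\chi_{\{|f|>2^{j}\}}\|_{p(\cdot)}
\end{align*}
is a bare tail sum with no geometric decay in $j-k$. Writing $a_k=2^k\|\chi_{\{M(f)>2^k\}}\|_{p(\cdot)}$ and $c_j=2^j\|\chi_{\{|f|>2^j\}}\|_{p(\cdot)}$, the hypothesis $a_k\lesssim\sum_{j\ge k-1}c_j$ does \emph{not} imply $\|(a_k)\|_{\ell_q}\lesssim\|(c_j)\|_{\ell_q}$: for $q\le1$, subadditivity of $x\mapsto x^q$ and a Fubini swap give $\sum_k a_k^q\lesssim\sum_j c_j^q\cdot\#\{k:k\le j+1\}=\infty$ whenever some $c_j>0$; for $q>1$ one can take $c_j\approx j^{-1/q-\varepsilon}$ ($j\ge1$, $\varepsilon$ small) to see that the tail sums $\sum_{j\ge k}c_j$ grow in $k$ while $\sum c_j^q<\infty$. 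The standard discrete Hardy inequalities you want to ``invoke'' all require a geometrically decaying kernel $K(k,j)$, and no such kernel is produced by your decomposition. Inserting the Karamata factor $\gamma_b$ does not repair this; the failure is already present for $b\equiv1$.

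There is also a domain issue earlier in the argument: you apply the $L_{p(\cdot)}$ Doob inequality to $h_k=f\chi_{\{|f|>2^{k-1}\}}$, but $f\in L_{p(\cdot),q,b}$ does not imply $f\in L_{p(\cdot)}$ when $q$ is large (the embeddings in Lemmas~\ref{baohan} and~\ref{3.4.48} only lower the first index). Thus $\|h_k\|_{p(\cdot)}$ may be infinite, and the corresponding chain of inequalities, while formally valid, delivers the useless bound $a_k\lesssim\infty$. This is precisely why the classical route for Doob in Lorentz(-type) scales is interpolation between two strong-type $L_{p_i(\cdot)}$ endpoints rather than a direct level-set comparison; the real-interpolation identity for the spaces $L_{p(\cdot),q,b}$ is exactly what the cited companion paper supplies, and what your argument silently tries to replace.
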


\section{Atomic Decompositions} \label{s3}

The atomic decomposition is a powerful tool for dealing with duality theorems and some fundamental inequalities
both in martingale theory and harmonic analysis. In this section, we characterize the five variable martingale Hardy-Lorentz-Karamata spaces by atomic decompositions. Note that there are two notions of atoms in this paper, one is a measurable set described in Section 2, the other is a measurable function defined in this section (see Definitions \ref{atom} and \ref{1defi1}).

\subsection{Atomic Decompositions via Simple Atoms}
In this subsection, we use simple atoms.

\begin{defi}[\cite{jwwz}]\label{atom}
	Let $p(\cdot)\in\mathcal{P}(\Omega)$ and $1<r\le\infty$. A function $a$ is called a simple $(p(\cdot),r)^s$-atom $($resp. simple $(p(\cdot),r)^S$-atom or simple $(p(\cdot),r)^M$-atom$)$ if there exists $I\in A(\mathcal{F}_i)$ such that
	
	$(1)$ $supp(a)\subset I;$
	
	$(2)$ $\|s(a)\|_r\ (\text{resp.}\ \|S(a)\|_r\ \text{or}\ \|M(a)\|_r)\le\frac{\mathbb{P}(I)^{1/r}}{\|\chi_{I}\|_{p(\cdot)}};$
		
	$(3)$ $\mathbb{E}_i(a)=0.$
\end{defi}

\begin{rem}
	$(i)$ If we take $p(\cdot)\equiv p$, then the simple $(p(\cdot),r)^*$-atom $(*=s,S,M)$ is the same as the classical definition in Weisz \cite{w}.
	
	$(ii)$ Let $p(\cdot)\in\mathcal{P}(\Omega)$ and $1<r\le\infty$. If $a$ is a simple $(p(\cdot),r)^*$-atom $(*=s,S,M)$ associated with $I\in A(\mathcal{F}_i)$ for some $i\in\mathbb{N}$, then
	$$s(a)\chi_{I}=s(a),\ \ \ S(a)\chi_{I}=S(a)\ \ \ \text{and}\ \ \ M(a)\chi_{I}=M(a).$$
	We refer to \cite{jwwz} for the proof.
\end{rem}


\begin{defi}
Let $p(\cdot)\in\mathcal{P}(\Omega)$, $0<q,r\leq \infty$ and let $b$ be a slowly varying function.
The atomic martingale Lorentz-Karamata space with variable exponent $H_{p(\cdot),q,b}^{s-at,r,1}$
$\big($resp. $H_{p(\cdot),q,b}^{s-at,r,2}$ or $H_{p(\cdot),q,b}^{s-at,r,3}$$\big)$
is defined to be the set of all $f=(f_n)_{n\geq 0} \in \mathcal{M} $
such that for each $n\geq0$,
\begin{eqnarray}\label{deco-f}
f_n= \sum_{ k \in \mathbb{Z} }\sum_{i=0}^{n-1}\sum_{j} \mu_{k,i,j}\mathbb{E}_n(a^{k,i,j}),
\end{eqnarray}
where $(a^{k,i,j})_{k\in\mathbb{Z},i\in\mathbb{N},j}$ is a sequence of simple $(p(\cdot),r)^s$-atoms $($resp. simple $(p(\cdot),r)^S$-atoms or simple $(p(\cdot),r)^M$-atoms$)$ associated with $(I_{k,i,j})_{k\in\mathbb{Z},i\in\mathbb{N},j}$, the sets $I_{k,i,j}$ are disjoint for any fixed $k$, $\mu_{k,i,j}=3\cdot2^{k}\|\chi_{I_{k,i,j}}\|_{p(\cdot)}$ and
$$\Bigg\|\Bigg\{\bigg\|\sum_{i=0}^{\infty}\sum_{j}\mu_{k,i,j}\|\chi_{I_{k,i,j}}\|_{p(\cdot)}^{-1}\chi_{I_{k,i,j}}\bigg\|_{p(\cdot)}\gamma_b\bigg(\bigg\|\sum_{i=0}^{\infty}\sum_{j}\chi_{I_{k,i,j}}\bigg\|_{p(\cdot)}\bigg)\Bigg\}_{k\in\mathbb{Z}}\Bigg\|_{l_q}<\infty.$$
Endow $H_{p(\cdot),q,b}^{s-at,r,1}$ $\big($resp. $H_{p(\cdot),q,b}^{s-at,r,2}$ or $H_{p(\cdot),q,b}^{s-at,r,3}$$\big)$  with the $($quasi$)$-norm
\begin{align*}
&\ \|f\|_{H_{p(\cdot),q,b}^{s-at,r,1}}\; \big(\text{resp.}\; \|f\|_{H_{p(\cdot),q,b}^{s-at,r,2}}\; \text{or}\; \|f\|_{H_{p(\cdot),q,b}^{s-at,r,3}}\big)\\
=&\ \inf
\Bigg\|\Bigg\{\bigg\|\sum_{i=0}^{\infty}\sum_{j}\mu_{k,i,j}\|\chi_{I_{k,i,j}}\|_{p(\cdot)}^{-1}\chi_{I_{k,i,j}}
\bigg\|_{p(\cdot)}\gamma_b\bigg(\bigg\|\sum_{i=0}^{\infty}\sum_{j}\chi_{I_{k,i,j}}\bigg\|_{p(\cdot)}\bigg)
\Bigg\}_{k\in\mathbb{Z}}\Bigg\|_{l_q},
\end{align*}
where the infimum is taken over all decompositions of $f$ as above.
\end{defi}

In order to present the atomic decomposition theorems for variable Lorentz-Karamata spaces, we need the following lemmas.
First of all, we shall show a useful tool to verify whether a function belongs to variable Lorentz-Karamata spaces. Its original idea comes from Abu-Shammala and Torchinsky \cite{at}.

\begin{lem}\label{belong}
	Let $p(\cdot)\in\mathcal{P}(\Omega)$, $0<q\le\infty$ and $b$ be a slowly varying function. Assume that the nonnegative sequence $\{2^k\mu_k\}_{k\in\mathbb{Z}}\in l_q$. Suppose that the nonnegative function $\varphi$ verifies the following property: there exist $0<m<\infty$ and $0<\varepsilon<1$ such that, given an arbitrary integer $k_0$, we have $\varphi\le\psi_{k_0}+\eta_{k_0}$, where $\psi_{k_0}$ is essentially bounded and satisfies $\|\psi_{k_0}\|_\infty\lesssim2^{k_0}$ and $\eta_{k_0}$ satisfies
	\begin{align}\label{gs19}
		 2^{k_0m\varepsilon}\|\chi_{\{\eta_{k_0}>2^{k_0}\}}\|_{p(\cdot)}^m\gamma_b^ m\big(\|\chi_{\{\eta_{k_0}>2^{k_0}\}}\|_{p(\cdot)}\big)\lesssim\sum_{k=k_0}^{\infty}(2^{k\varepsilon}\mu_k)^m.
	\end{align}
	Then $\varphi\in L_{p(\cdot),q,b}$ and
	 $$\|\varphi\|_{p(\cdot),q,b}\lesssim\|\{2^k\mu_k\}_{k\in\mathbb{Z}}\|_{l_q}.$$
\end{lem}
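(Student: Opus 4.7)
The plan is to use the dyadic characterization of the Lorentz-Karamata quasi-norm given in Lemma \ref{fanshu}(3), and to reduce matters to a discrete Hardy-type inequality by applying the hypothesis with $k_0=k$. First, I fix $\lambda := C_0 + 1$, where $C_0$ is the constant hidden in $\|\psi_{k_0}\|_\infty \lesssim 2^{k_0}$. If $\varphi(\omega) > \lambda 2^k$, then the decomposition $\varphi \le \psi_k + \eta_k$ forces $\eta_k(\omega) > 2^k$, so
$$\{\varphi > \lambda 2^k\} \subset \{\eta_k > 2^k\}, \qquad k \in \mathbb{Z}.$$
Since $t \mapsto t\gamma_b(t)$ is equivalent to a nondecreasing function by Proposition \ref{b}(2), and $\|\chi_A\|_{p(\cdot)}$ is monotone in $A$, combining this inclusion with $(\ref{gs19})$ applied at $k_0=k$ and taking $m$-th roots yields, with $b_j := 2^{j\varepsilon}\mu_j$,
$$A_k := 2^k \|\chi_{\{\varphi > \lambda 2^k\}}\|_{p(\cdot)}\, \gamma_b\bigl(\|\chi_{\{\varphi > \lambda 2^k\}}\|_{p(\cdot)}\bigr) \lesssim 2^{k(1-\varepsilon)} \bigg(\sum_{j \ge k} b_j^m\bigg)^{\!1/m}.$$

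By Lemma \ref{fanshu}(3), it suffices to estimate the $l_q$-norm (or supremum when $q=\infty$) of $(A_k)_{k\in\mathbb{Z}}$ by $\|\{2^k\mu_k\}_{k\in\mathbb{Z}}\|_{l_q}$, so the problem becomes a discrete Hardy inequality for the tail sum. For $q=\infty$, writing $M := \sup_k 2^k\mu_k$ gives $b_j^m \le M^m 2^{-jm(1-\varepsilon)}$; since $1-\varepsilon>0$, the geometric tail $\sum_{j\ge k} b_j^m$ converges and yields $A_k \lesssim M$ uniformly. For $0<q\le m$, subadditivity of $x\mapsto x^{q/m}$ produces $A_k^q \le 2^{kq(1-\varepsilon)}\sum_{j\ge k} b_j^q$; Fubini together with $\sum_{k\le j} 2^{kq(1-\varepsilon)} \lesssim 2^{jq(1-\varepsilon)}$ gives $\sum_k A_k^q \lesssim \sum_j (2^j\mu_j)^q$. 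For $m<q<\infty$, I apply Hölder with exponents $q/m$ and $(q/m)'$ to the decomposition $\sum_{j\ge k} b_j^m = \sum_{j\ge k}(b_j^m 2^{j\alpha})\cdot 2^{-j\alpha}$ for some fixed $0<\alpha<m(1-\varepsilon)$, obtaining
$$A_k^q \lesssim 2^{kq(1-\varepsilon-\alpha/m)} \sum_{j\ge k} b_j^q\, 2^{jq\alpha/m};$$
switching the order of summation and summing the resulting geometric series in $k$, which converges because $\alpha < m(1-\varepsilon)$, again produces the desired bound $\sum_k A_k^q \lesssim \sum_j (2^j\mu_j)^q$.

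The main obstacle is precisely this discrete Hardy-type inequality on the whole line $\mathbb{Z}$: because the index set is unbounded in both directions one cannot simply compare partial sums, and the two regimes $q\le m$ and $q>m$ require genuinely different devices (subadditivity versus Hölder). In both cases the assumption $\varepsilon<1$ is what makes the argument work, since it ensures the convergence of the geometric series $\sum_{k\le j} 2^{kq(1-\varepsilon)}$ (respectively $\sum_{k\le j} 2^{kq(1-\varepsilon-\alpha/m)}$ for $\alpha$ small enough) that appear after exchanging the order of summation. Once this step is established, combining it with the preceding two reductions delivers $\|\varphi\|_{p(\cdot),q,b} \lesssim \|\{2^k\mu_k\}_{k\in\mathbb{Z}}\|_{l_q}$, which in particular implies $\varphi\in L_{p(\cdot),q,b}$.
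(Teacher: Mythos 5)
Your proof is correct and follows essentially the same route as the paper's: both reduce via the inclusion $\{\varphi>\lambda 2^{k}\}\subset\{\eta_{k}>2^{k}\}$ and the dyadic reformulation of the quasi-norm (Lemma \ref{fanshu}(3)) to a discrete Hardy-type tail inequality, treat $q=\infty$ by a direct geometric tail estimate, and handle $0<q<\infty$ by splitting into $q\le m$ (subadditivity of $x\mapsto x^{q/m}$) and $q>m$ (H\"older) followed by a Fubini/Abel exchange of summation. The only cosmetic difference is that the paper fixes the auxiliary exponent $\theta=(1-\varepsilon)/2$ where you keep a free parameter $\alpha/m\in(0,1-\varepsilon)$; the two computations are otherwise identical.
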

\begin{proof}
	For given $k_0\in\mathbb{Z}$, let $\psi_{k_0}$ and $\eta_{k_0}$ be as above. Since $\|\psi_{k_0}\|_\infty\lesssim2^{k_0}$, there exists a positive constant $C$ such that $\mathbb{P}(\psi_{k_0}>C2^{k_0})=0$. Then we have
	 $$\{\varphi>\lambda2^{k_0}\}\subset\{\psi_{k_0}>C2^{k_0}\}\cup\{\eta_{k_0}>2^{k_0}\}=\{\eta_{k_0}>2^{k_0}\},$$
	where $\lambda=C+1$.
	Thus, it clearly suffices to verify that $$\big\|\big\{2^{k_0}\|\chi_{\{\eta_{k_0}>2^{k_0}\}}\|_{p(\cdot)}\gamma_b\big(\|\chi_{\{\eta_{k_0}>2^{k_0}\}}\|_{p(\cdot)}\big)\big\}_{k_0\in\mathbb{Z}}\big\|_{l_q}\lesssim\|\{2^k\mu_k\}_{k\in\mathbb{Z}}\|_{l_q}.$$
	
	When $q=\infty$, the hypothesis (\ref{gs19}) gives that
	\begin{align*}
		 2^{k_0}\|\chi_{\{\eta_{k_0}>2^{k_0}\}}\|_{p(\cdot)}\gamma_b\big(\|\chi_{\{\eta_{k_0}>2^{k_0}\}}\|_{p(\cdot)}\big)\lesssim&\ 2^{k_0}\bigg(2^{-k_0m\varepsilon
		 }\sum_{k=k_0}^{\infty}\big(2^{k\varepsilon}\mu_k\big)^m\bigg)^\frac{1}{m}\\\notag
		=&\ 2^{k_0(1-\varepsilon)}\bigg(\sum_{k=k_0}^{\infty}2^{km(\varepsilon-1)}(2^k\mu_k)^m\bigg)^\frac{1}{m}\\
		\lesssim&\ 2^{k_0(1-\varepsilon)}2^{k_0(\varepsilon-1)}\sup_{k\ge k_0}2^k\mu_k=\sup_{k\ge k_0}2^k\mu_k.
	\end{align*}
	It follows that
	 $$\big\|\big\{2^{k_0}\|\chi_{\{\eta_{k_0}>2^{k_0}\}}\|_{p(\cdot)}\gamma_b\big(\|\chi_{\{\eta_{k_0}>2^{k_0}\}}\|_{p(\cdot)}\big)\big\}_{k_0\in\mathbb{Z}}\big\|_{l_\infty}\lesssim\|\{2^k\mu_k\}_{k\in\mathbb{Z}}\|_{l_\infty}.$$
	
	When $0<q<\infty$, let $0<m<\infty$, $0<\varepsilon<1$ and $\theta=\frac{1-\varepsilon}{2}$. Then
	\begin{align}\label{gs22}
		 \sum_{k=k_0}^{\infty}(2^{k\varepsilon}\mu_k)^m=\sum_{k=k_0}^{\infty}2^{-k\theta m}\big(2^{k(1-\theta)}\mu_k\big)^m.
	\end{align}
	Setting $r=\frac{q}{m}$, we consider the right side of (\ref{gs22}) for two cases: $0<r<1$ and $1\le r<\infty$. If $0<m\le q$, by using H\"{o}lder's inequality with $\frac{1}{r}+\frac{1}{r'}=1$, we have
	\begin{align*}
		\sum_{k=k_0}^{\infty}2^{-k\theta m}\big(2^{k(1-\theta)}\mu_k\big)^m\le&\ \bigg(\sum_{k=k_0}^{\infty}2^{-k\theta mr'}\bigg)^\frac{1}{r'}\bigg(\sum_{k=k_0}^{\infty}\big(2^{k(1-\theta)}\mu_k\big)^q\bigg)^\frac{1}{r}\\
		=&\ \bigg(\frac{1}{1-2^{\theta mr'}}\bigg)^{1/r'}2^{-k_0\theta m}\bigg(\sum_{k=k_0}^{\infty}\big(2^{k(1-\theta)}\mu_k\big)^q\bigg)^\frac{m}{q}.
	\end{align*}
	When $0<q\le m$, we get a similar estimation by simply observing that
	\begin{align*}
		\sum_{k=k_0}^{\infty}2^{-k\theta m}\big(2^{k(1-\theta)}\mu_k\big)^m\le2^{-k_0\theta m}\bigg(\sum_{k=k_0}^{\infty}\big(2^{k(1-\theta)}\mu_k\big)^m\bigg)^{r\cdot\frac{1}{r}}\le&\ 2^{-k_0\theta m}\bigg(\sum_{k=k_0}^{\infty}\big(2^{k(1-\theta)}\mu_k\big)^q\bigg)^\frac{m}{q}.
	\end{align*}
	Combining with (\ref{gs19}) and (\ref{gs22}), we deduce that
	 $$2^{k_0m\varepsilon}\|\chi_{\{\eta_{k_0}>2^{k_0}\}}\|_{p(\cdot)}^m\gamma_b^ m\big(\|\chi_{\{\eta_{k_0}>2^{k_0}\}}\|_{p(\cdot)}\big)\lesssim2^{-k_0\theta m}\bigg(\sum_{k=k_0}^{\infty}\big(2^{k(1-\theta)}\mu_k\big)^q\bigg)^\frac{m}{q}.$$
	It yields that
	\begin{align*}
		&\ 2^{k_0q}\|\chi_{\{\eta_{k_0}>2^{k_0}\}}\|_{p(\cdot)}^q\gamma_b^ q\big(\|\chi_{\{\eta_{k_0}>2^{k_0}\}}\|_{p(\cdot)}\big)\\
		\lesssim&\ 2^{k_0(1-\varepsilon-\theta) q}\sum_{k=k_0}^{\infty}\big(2^{k(1-\theta)}\mu_k\big)^q=2^{k_0\theta q}\sum_{k=k_0}^{\infty}\big(2^{k(1-\theta)}\mu_k\big)^q.
	\end{align*}
	Moreover, according to Abel's transformation, there is
	\begin{align*}
		&\ \sum_{k_0\in\mathbb{Z}}2^{k_0q}\|\chi_{\{\eta_{k_0}>2^{k_0}\}}\|_{p(\cdot)}^q\gamma_b^ q\big(\|\chi_{\{\eta_{k_0}>2^{k_0}\}}\|_{p(\cdot)}\big)\\\notag
		\lesssim&\ \sum_{k_0\in\mathbb{Z}}2^{k_0\theta q}\sum_{k=k_0}^{\infty}\big(2^{k(1-\theta)}\mu_k\big)^q=\sum_{k\in\mathbb{Z}}\big(2^{k(1-\theta)}\mu_k\big)^q\sum_{k_0=-\infty}^{k}2^{k_0\theta q}\\\notag
		=&\ \frac{1}{1-2^{-\varepsilon(1-\varepsilon-\theta)}}\sum_{k\in\mathbb{Z}}\big(2^{k(1-\theta)}\mu_k\big)^q2^{k\theta q}=\sum_{k\in\mathbb{Z}}2^{kq}\mu_k^q\notag.
	\end{align*}
	Hence, we have
	\begin{align*}
		\|\varphi\|_{p(\cdot),q,b}\approx&\ \big\|\big\{2^{k_0}\|\chi_{\{\varphi_{k_0}>\lambda2^{ k_0}\}}\|_{p(\cdot)}\gamma_b\big(\|\chi_{\{\varphi_{k_0}>\lambda2^{ k_0}\}}\|_{p(\cdot)}\big)\big\}_{k_0\in\mathbb{Z}}\big\|_{l_q}\\
		\lesssim&\ \big\|\big\{2^{k_0}\|\chi_{\{\eta_{k_0}>2^{k_0}\}}\|_{p(\cdot)}\gamma_b\big(\|\chi_{\{\eta_{k_0}>2^{k_0}\}}\|_{p(\cdot)}\big)\big\}_{k_0\in\mathbb{Z}}\big\|_{l_q}\\
		\lesssim&\ \|\{2^k\mu_k\}_{k\in\mathbb{Z}}\|_{l_q},
	\end{align*}
	which completes the proof.
\end{proof}

Also, we need the next lemma, its proof can be found in \cite{jwwz}.

\begin{lem} \label{3.3}
	Let $p(\cdot)\in\mathcal{P}(\Omega)$ satisfy $(\ref{gs3})$ and $\max\{p_+,1\}<r<\infty$. Take $0<\varepsilon<\underline{p}$ and $1<L<\min\{\frac{r}{p_+},\frac{1}{\varepsilon}\}$. If for a sublinear operator $T$ and all simple $(p(\cdot),r)^*$-atoms $a^{k,i,j}$ $(*=s,S,M)$, there is
	 $$\|T(a^{k,i,j})\|_r\lesssim\frac{\|\chi_{I_{k,i,j}}\|_r}{\|\chi_{I_{k,i,j}}\|_{p(\cdot)}},$$
	then $$\bigg\|\sum_{i=0}^{\infty}\sum_{j}\Big[\|\chi_{I_{k,i,j}}\|_{p(\cdot)}T(a^{k,i,j})\chi_{I_{k,i,j}}\Big]^{L\varepsilon}\bigg\|_{\frac{p(\cdot)}{\varepsilon}}\lesssim\bigg\|\sum_{i=0}^{\infty}\sum_{j}\chi_{I_{k,i,j}}\bigg\|_{\frac{p(\cdot)}{\varepsilon}}.$$
\end{lem}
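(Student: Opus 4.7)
The plan is a modular computation in $L_{p(\cdot)/\varepsilon}$ that exploits two structural facts: (i) for fixed $k$, the atoms $(I_{k,i,j})_{i,j}$ are pairwise disjoint, and (ii) the condition (\ref{gs3}) forces $p(\cdot)$ to be essentially constant on each atom $I_{k,i,j}\in A(\mathcal{F}_i)$, up to the universal constant $K_{p(\cdot)}$. Throughout, write $J=I_{k,i,j}$, $a=a^{k,i,j}$, $\phi_{i,j}=[\|\chi_{J}\|_{p(\cdot)}T(a)\chi_{J}]^{L\varepsilon}$, and $u(\cdot)=p(\cdot)/\varepsilon$; note that $u_->1$ since $\varepsilon<\underline{p}$.

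First I would set $\lambda:=\bigl\|\sum_{i,j}\chi_{J}\bigr\|_{u(\cdot)}$, so that $\rho_{u(\cdot)}\bigl(\lambda^{-1}\sum_{i,j}\chi_{J}\bigr)\le 1$ by the definition of the variable Lebesgue norm. It then suffices to show $\rho_{u(\cdot)}\bigl(\lambda^{-1}\sum_{i,j}\phi_{i,j}\bigr)\lesssim 1$. Thanks to the disjointness of the $J$'s, this modular splits as a sum of local integrals supported on each $J$, so the task reduces to controlling, for each $(i,j)$,
\[
\lambda^{-u(\omega)}\int_{J}\bigl[\|\chi_{J}\|_{p(\cdot)}T(a)(\omega)\bigr]^{Lp(\omega)}\,d\mathbb{P}(\omega).
\]

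The key local estimate handles the variable exponent on each atom. Since $p_-(J)\le p(\omega)\le p_+(J)$, I use the pointwise bound $f^{p(\omega)}\le f^{p_-(J)}+f^{p_+(J)}$ with $f=\|\chi_J\|_{p(\cdot)}T(a)$. The choice $L<r/p_+$ guarantees $Lp_\pm(J)<r$, so Hölder's inequality in $L^r$ applies with conjugate exponent $r/(Lp_\pm(J))$; combining this with the atomic estimate $\|T(a)\|_r\lesssim \mathbb{P}(J)^{1/r}/\|\chi_{J}\|_{p(\cdot)}$ gives
\[
\int_{J}\bigl[\|\chi_{J}\|_{p(\cdot)}T(a)\bigr]^{Lp_\pm(J)}\,d\mathbb{P}\lesssim \|\chi_{J}\|_{p(\cdot)}^{Lp_\pm(J)}\cdot\|T(a)\|_r^{Lp_\pm(J)}\cdot\mathbb{P}(J)^{1-Lp_\pm(J)/r}\lesssim \mathbb{P}(J),
\]
and hence the same bound for the integral with the variable power $Lp(\omega)$.

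Finally, invoking (\ref{gs3}) once more to treat the normalization factor $\lambda^{-u(\omega)}$ as essentially constant on $J$ (comparable to $\lambda^{-u_-(J)}$ or $\lambda^{-u_+(J)}$), the sum reassembles as
\[
\sum_{i,j}\lambda^{-u(\omega)}\,\mathbb{P}(J)\;\lesssim\;\sum_{i,j}\int_{J}\lambda^{-u(\omega)}\,d\mathbb{P}\;=\;\rho_{u(\cdot)}\Bigl(\lambda^{-1}\sum_{i,j}\chi_{J}\Bigr)\le 1,
\]
which yields the desired inequality. The main technical obstacle lies in the two synchronized appearances of the variable exponent $p(\omega)$: once inside the local integral on an atom, and once inside the normalization $\lambda^{-u(\omega)}$. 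Both are handled by the same mechanism---the atomic condition (\ref{gs3})---so this condition is truly the heart of the argument, with the remaining work being bookkeeping of the universal constants produced at each step.
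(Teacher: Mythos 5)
Your modular argument is essentially sound. Since the paper cites \cite{jwwz} for the proof of this lemma rather than supplying one, I cannot compare approaches directly, so I will assess the sketch on its own terms. The local estimate is correct: the pointwise bound $f^{Lp(\omega)}\le f^{Lp_-(J)}+f^{Lp_+(J)}$, H\"older in $L_r$ (licensed by $Lp_\pm(J)\le Lp_+<r$), and the atomic hypothesis together give $\int_J\bigl[\|\chi_J\|_{p(\cdot)}T(a)\bigr]^{Lp(\omega)}\,d\mathbb{P}\lesssim\mathbb{P}(J)$ with a constant independent of the atom, and the disjointness of the $I_{k,i,j}$ for fixed $k$ really does make the modular split over atoms.

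The reassembly step, however, is where the argument is most compressed and deserves a genuine justification, because as written it does not follow from (\ref{gs3}) alone. You write that (\ref{gs3}) lets you treat $\lambda^{-u(\omega)}$ as essentially constant on $J$, i.e.\ $\lambda^{u_+(J)-u_-(J)}\approx 1$; but (\ref{gs3}) controls $\mathbb{P}(J)^{p_-(J)-p_+(J)}$, whereas $\lambda=\bigl\|\sum_{i,j}\chi_{I_{k,i,j}}\bigr\|_{u(\cdot)}$ is a global quantity involving all atoms at once, and a priori bears no relation to $\mathbb{P}(J)$ for the particular $J$ you are integrating over. What actually makes the step work is the combination of three observations that you leave unstated: (i) $\lambda\ge\|\chi_J\|_{u(\cdot)}\ge\mathbb{P}(J)^{1/u_-(J)}$ by monotonicity of the norm together with the elementary lower bound for $\|\chi_J\|_{u(\cdot)}$ on a probability space; (ii) $\lambda\le\|\chi_\Omega\|_{u(\cdot)}=1$; and (iii) $\lambda\mapsto\lambda^{-(u_+(J)-u_-(J))}$ is decreasing on $(0,1]$, so its supremum over the admissible range $[\mathbb{P}(J)^{1/u_-(J)},1]$ is attained at the left endpoint, where
\[
\lambda^{-(u_+(J)-u_-(J))}\le\mathbb{P}(J)^{-(u_+(J)-u_-(J))/u_-(J)}
=\bigl(\mathbb{P}(J)^{p_-(J)-p_+(J)}\bigr)^{1/p_-(J)}\le K_{p(\cdot)}^{1/p_-},
\]
and only at this last inequality does (\ref{gs3}) finally intervene. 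With this monotonicity argument supplied the proof goes through; without it, the assertion that $\lambda^{-u(\omega)}$ is ``essentially constant on $J$'' is not a consequence of (\ref{gs3}). You should also state explicitly that you are working with $\lambda^{-u_+(J)}$ as an upper bound and $\lambda^{-u_-(J)}$ as a lower bound inside the final chain of inequalities, since the notation $\lambda^{-u(\omega)}$ outside an integral over $\omega$ is ill-formed as written.
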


\begin{thm}\label{ad1}
	Let $p(\cdot)\in\mathcal{P}(\Omega)$ satisfy $(\ref{gs3})$, $0<q\le\infty$, $\max\{p_+,1\}<r\le\infty$ and let $b$ be a slowly varying function. Then
    $$H_{p(\cdot),q,b}^s=H_{p(\cdot),q,b}^{s-at,r,1}$$
    with equivalent $($quasi$)$-norms.
\end{thm}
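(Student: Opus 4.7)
The plan is to establish the two continuous embeddings $H_{p(\cdot),q,b}^{s-at,r,1}\hookrightarrow H_{p(\cdot),q,b}^{s}$ and $H_{p(\cdot),q,b}^{s}\hookrightarrow H_{p(\cdot),q,b}^{s-at,r,1}$. Throughout I may assume $r<\infty$, since if $r=\infty$ any simple $(p(\cdot),\infty)^s$-atom is also, up to a fixed multiplicative constant, a simple $(p(\cdot),r_0)^s$-atom for any $\max\{p_+,1\}<r_0<\infty$ (use the support condition together with $\|s(a)\|_{r_0}\le\mathbb{P}(I)^{1/r_0}\|s(a)\|_\infty$). For the forward embedding I start from a decomposition as in (\ref{deco-f}) and use the subadditivity of the conditional square function together with the support identity $s(a^{k,i,j})=s(a^{k,i,j})\chi_{I_{k,i,j}}$ to bound $s(f)\le\sum_{k,i,j}\mu_{k,i,j}s(a^{k,i,j})\chi_{I_{k,i,j}}$. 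For any fixed $k_0\in\mathbb{Z}$ I split $s(f)\le\psi_{k_0}+\eta_{k_0}$, placing the contributions coming from $k<k_0$ into $\psi_{k_0}$ (which is essentially bounded by a constant times $2^{k_0}$ using the disjointness of $(I_{k,i,j})_{i,j}$ at fixed $k$ and the atom bound) and those from $k\ge k_0$ into $\eta_{k_0}$. The core estimate on $\eta_{k_0}$ will follow from Lemma \ref{3.3} applied to $T=s$ with $0<\varepsilon<\underline{p}$ and $1<L<\min\{r/p_+,1/\varepsilon\}$, combined with Lemma \ref{wuqiongbianb} to reinstate the $\gamma_b$-factor; this will produce exactly the hypothesis (\ref{gs19}) of Lemma \ref{belong} for the choice $\mu_k:=\|\sum_{i,j}\chi_{I_{k,i,j}}\|_{p(\cdot)}\gamma_b(\|\sum_{i,j}\chi_{I_{k,i,j}}\|_{p(\cdot)})$, and Lemma \ref{belong} will then yield $\|s(f)\|_{p(\cdot),q,b}\lesssim\|f\|_{H_{p(\cdot),q,b}^{s-at,r,1}}$.

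For the reverse inclusion I will adapt the classical Weisz-type stopping-time construction. Given $f\in H_{p(\cdot),q,b}^s$, set
$$\tau_k:=\inf\{n\ge 0:s_{n+1}(f)>2^k\},\qquad k\in\mathbb{Z},$$
with the convention $\inf\emptyset:=\infty$. Then $(\tau_k)_k$ is nondecreasing, $s(f^{\tau_k})\le 2^k$ a.s., and the telescoping identity $f_n=\sum_{k\in\mathbb{Z}}(f_n^{\tau_{k+1}}-f_n^{\tau_k})$ holds with partial sums converging in $L_2$ and pointwise. For each $k\in\mathbb{Z}$ and each $i\ge 0$, I enumerate the atoms of $\mathcal{F}_i$ contained in $\{\tau_k=i\}$ as $\{I_{k,i,j}\}_j$; these are pairwise disjoint in $(i,j)$ at fixed $k$. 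Setting $\mu_{k,i,j}:=3\cdot 2^k\|\chi_{I_{k,i,j}}\|_{p(\cdot)}$ and $a^{k,i,j}:=\mu_{k,i,j}^{-1}(f^{\tau_{k+1}}-f^{\tau_k})\chi_{I_{k,i,j}}$, I will verify the three defining properties of a simple $(p(\cdot),r)^s$-atom: the support condition is clear; $\mathbb{E}_i(a^{k,i,j})=0$ follows because on $I_{k,i,j}\subset\{\tau_k=i\}$ the process $f^{\tau_{k+1}}-f^{\tau_k}$ is a sum of martingale differences of index $\ge i+1$; and the $L_r$-bound on $s(a^{k,i,j})$ follows from the pointwise estimate $s(f^{\tau_{k+1}}-f^{\tau_k})\le s(f^{\tau_{k+1}})+s(f^{\tau_k})\le 2\cdot 2^{k+1}$ together with the choice of the normalising constant.

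For the quasi-norm control, the identity $\bigcup_{i,j}I_{k,i,j}=\{\tau_k<\infty\}\subset\{s(f)>2^k\}$ combined with the monotonicity of $\|\cdot\|_{p(\cdot)}$ and Proposition \ref{b}(2) applied to the $\gamma_b$-factor yields
$$\Bigl\|\sum_{i,j}\mu_{k,i,j}\|\chi_{I_{k,i,j}}\|_{p(\cdot)}^{-1}\chi_{I_{k,i,j}}\Bigr\|_{p(\cdot)}\gamma_b\!\Bigl(\Bigl\|\sum_{i,j}\chi_{I_{k,i,j}}\Bigr\|_{p(\cdot)}\Bigr)\lesssim 2^k\|\chi_{\{s(f)>2^k\}}\|_{p(\cdot)}\gamma_b(\|\chi_{\{s(f)>2^k\}}\|_{p(\cdot)}),$$
and the dyadic characterisation of Lemma \ref{fanshu}(3) then delivers $\|f\|_{H_{p(\cdot),q,b}^{s-at,r,1}}\lesssim\|s(f)\|_{p(\cdot),q,b}$. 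The main obstacle I anticipate lies in making the atomic series converge rigorously in the quasi-norm of $H_{p(\cdot),q,b}^s$, especially when $q=\infty$ or $r=\infty$, where the usual density-of-simple-functions arguments fail. I intend to handle this by working with the truncated sums $g_N:=\sum_{|k|\le N}(f^{\tau_{k+1}}-f^{\tau_k})$, using the telescoping identity $f-g_N=(f-f^{\tau_{N+1}})+f^{\tau_{-N}}$ to obtain the pointwise domination $s(f-g_N)\le s(f)+2^{-N}$ with $s(f-g_N)\to 0$ almost everywhere, and then invoking the dominated convergence theorem of Lemma \ref{dct} (or its $q=\infty$ counterpart) in $L_{p(\cdot),q,b}$ to pass to the limit.
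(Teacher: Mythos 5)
The reverse inclusion (Hardy $\Rightarrow$ atomic) in your sketch is essentially the paper's own stopping-time construction, modulo a small slack in the constant ($s(f^{\tau_{k+1}})+s(f^{\tau_k})\le 3\cdot 2^k$, not $4\cdot 2^k$, which is needed to make $\mu_{k,i,j}=3\cdot2^k\|\chi_{I_{k,i,j}}\|_{p(\cdot)}$ normalise the atoms correctly), and your remarks on the quasi-norm control via $\bigcup_{i,j}I_{k,i,j}=\{s(f)>2^k\}$ and Lemma \ref{fanshu}(3) are exactly what the paper does. The convergence issues you worry about at the end are not actually required for the theorem itself, since (\ref{deco-f}) only has to hold pointwise for each $n$; they belong to Remark \ref{rem4.2}.

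The forward inclusion (atomic $\Rightarrow$ Hardy), however, has a genuine gap. You split $s(f)\le\psi_{k_0}+\eta_{k_0}$, putting the $k<k_0$ terms into $\psi_{k_0}$, and claim $\|\psi_{k_0}\|_\infty\lesssim 2^{k_0}$ ``using the disjointness and the atom bound''. That pointwise bound is valid only for simple $(p(\cdot),\infty)^s$-atoms: for an $r$-atom with $r<\infty$, the condition $\|s(a)\|_r\le\mathbb{P}(I)^{1/r}/\|\chi_I\|_{p(\cdot)}$ gives no $L_\infty$-control on $s(a)$, so $\mu_{k,i,j}s(a^{k,i,j})$ can be much larger than $3\cdot 2^k$ on a small subset of $I_{k,i,j}$, and $\psi_{k_0}$ is not essentially bounded. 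Your reduction ``assume $r<\infty$'' goes the wrong way for this direction: it is precisely the $r<\infty$ case that is harder here, and reducing $r=\infty$ to $r_0<\infty$ only makes the class of atoms wider. What the paper does instead is estimate the level set $\{T_1>2^{k_0}\}$ of the small-$k$ piece directly: apply Chebyshev to $T_1^L$ via Lemma \ref{lem2.1}, use Hölder's inequality in $k$ with a parameter $0<\sigma<1-1/L$, pull in the concentration estimate of Lemma \ref{3.3} (with $T=s$) for each fixed $k$, and then sum. Relatedly, you route Lemma \ref{3.3} through $\eta_{k_0}$ (the $k\geq k_0$ piece), but for $k\geq k_0$ the paper uses only the support property $\{T_2>0\}\subset\bigcup_{k\geq k_0}\bigcup_{i,j}I_{k,i,j}$ together with Lemma \ref{wuqiongbianb}; Lemma \ref{3.3} is needed for the small-$k$ piece, which is exactly where you are missing the estimate. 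To repair the proof you should replace your $\|\psi_{k_0}\|_\infty$ claim by the paper's Hölder/Chebyshev argument for $T_1$, and note that this produces a separate bound on $\|T_1\|_{p(\cdot),q,b}$ rather than an input to Lemma \ref{belong}.
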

\begin{proof}
	Let $f\in H_{p(\cdot),q,b}^s$. For $k\in\mathbb{Z}$ and $n\in\mathbb{N}$, define
	$$\tau_k:=\inf\{n\in\mathbb{N}:s_{n+1}(f)>2^k\}.$$
	It is easy to check that $\{\tau_k\}_{k\in\mathbb{Z}}$ is a nondecreasing sequence of stopping times. For each $n\in\mathbb{N}$, there is
	$$f_n=\sum_{k\in\mathbb{Z}}(f_n^{\tau_{k+1}}-f_n^{\tau_k})\ \ \ \ a.e.$$
	It is obvious that for fixed $k,i$, there exist disjoint atoms $(I_{k,i,j})_j\subset A(\mathcal{F}_i)$ such that
	$$\bigcup_{j}I_{k,i,j}=\{\tau_k=i\}\in\mathcal{F}_i.$$
	Thus the sets $I_{k,i,j}$ are disjoint for each fixed $k$.
	Then for each $n\in\mathbb{N}$, we have
	\begin{align*}
		 f_n&=\sum_{k\in\mathbb{Z}}(f_n^{\tau_{k+1}}-f_n^{\tau_k})\chi_{\{\tau_k<n\}}\\
		 &=\sum_{k\in\mathbb{Z}}\sum_{i=0}^{n-1}(f_n^{\tau_{k+1}}-f_n^{\tau_k})\chi_{\{\tau_k=i\}}\\
		 &=\sum_{k\in\mathbb{Z}}\sum_{i=0}^{n-1}\sum_{j}(f_n^{\tau_{k+1}}-f_n^{\tau_k})\chi_{I_{k,i,j}}
	\end{align*}
	and $\bigcup\limits_{i=0}^{\infty}\bigcup\limits_{j}I_{k,i,j}=\{\tau_k<\infty\}.$
	Set
	$$\mu_{k,i,j}:=3\cdot2^{k}\|\chi_{I_{k,i,j}}\|_{p(\cdot)}\quad \text{and}\quad a_n^{k,i,j}:=\frac{f_n^{\tau_{k+1}}-f_n^{\tau_k}}{\mu_{k,i,j}}\chi_{I_{k,i,j}}$$
	(if $\mu_{k,i,j}=0$, then let $a_n^{k,i,j}=0$).
	Since $f_n^{\tau_{k+1}}=\sum\limits_{m=0}^{n}\chi_{\{\tau_{k+1}\ge m\}}d_mf$ and $I_{k,i,j}\subset\{\tau_k=i\}$, we have
	\begin{align}\label{e12}
		(f_n^{\tau_{k+1}}-f_n^{\tau_k})\chi_{I_{k,i,j}}=&\ \chi_{I_{k,i,j}}\sum_{m=0}^{n}\chi_{\{\tau_{k+1}\ge m>\tau_k\}}d_mf\\
		=&\ \notag
\chi_{I_{k,i,j}}\sum_{m=i+1}^{n}\chi_{\{\tau_{k+1}\ge m>\tau_k\}}d_mf.
	\end{align}
	Hence
	$$\mathbb{E}_i(a_n^{k,i,j})=0,\ \ \int_{I_{k.i.j}}a_n^{k,i,j}=0,$$
	and for fixed $k,i,j$, $(a_n^{k,i,j})_{n\ge0}$ is a martingale. By the definition of $\tau_k$, we have
	 $$s\big((a_n^{k,i,j})_n\big)\le\frac{s(f_n^{\tau_{k+1}})+s(f_n^{\tau_k})}{\mu_{k,i,j}}\le\frac{2^{k+1}+2^k}{\mu_{k,i,j}}=\frac{1}{\|\chi_{I_{k,i,j}}\|_{p(\cdot)}}.$$
	Thus $(a_n^{k,i,j})_{n\ge0}$ is an $L_2$-bounded martingale. Moreover, there exists $a^{k,i,j}\in L_2$ such that $\mathbb{E}_n(a^{k,i,j})=a_n^{k,i,j}$ and
	 $$\|s(a^{k,i,j})\|_r\le\mathbb{P}(I_{k,i,j})^{1/r}\frac{1}{\|\chi_{I_{k,i,j}}\|_{p(\cdot)}}.$$
	Therefore, $a^{k,i,j}$ is a simple $(p(\cdot),r)^s$-atom and (\ref{deco-f}) holds.
	
	For the case of $0<q<\infty$, it follows from $\bigcup\limits_{i=0}^{\infty}\bigcup\limits_{j}I_{k,i,j}=\{\tau_k<\infty\}=\{s(f)>2^k\}$ and Proposition \ref{b} (2) that
	\begin{align*}
		&\ \sum_{k\in\mathbb{Z}}\bigg\|\sum_{i=0}^{\infty}\sum_{j}\mu_{k,i,j}\|\chi_{I_{k,i,j}}\|_{p(\cdot)}^{-1}\chi_{I_{k,i,j}}\bigg\|_{p(\cdot)}^q\gamma_b^q\bigg(\bigg\|\sum_{i=0}^{\infty}\sum_{j}\chi_{I_{k,i,j}}\bigg\|_{p(\cdot)}\bigg)\\
		=&\ \sum_{k\in\mathbb{Z}}\bigg\|\sum_{i=0}^{\infty}\sum_{j}3\cdot2^{k}\chi_{I_{k,i,j}}\bigg\|_{p(\cdot)}^q\gamma_b^q\bigg(\bigg\|\sum_{i=0}^{\infty}\sum_{j}\chi_{I_{k,i,j}}\bigg\|_{p(\cdot)}\bigg)\\
		=&\ 3^q\sum_{k\in\mathbb{Z}}2^{kq}\bigg\|\sum_{i=0}^{\infty}\sum_{j}\chi_{I_{k,i,j}}\bigg\|_{p(\cdot)}^q\gamma_b^q\bigg(\bigg\|\sum_{i=0}^{\infty}\sum_{j}\chi_{I_{k,i,j}}\bigg\|_{p(\cdot)}\bigg)\\
		=&\ 3^q\sum_{k\in\mathbb{Z}}2^{kq}\|\chi_{\{s(f)>2^k\}}\|_{p(\cdot)}^q\gamma_b^q\big(\|\chi_{\{s(f)>2^k\}}\|_{p(\cdot)}\big)\\
		\lesssim&\ \sum_{k\in\mathbb{Z}}\int_{2^{k-1}}^{2^k}\|\chi_{\{s(f)>2^k\}}\|_{p(\cdot)}^q\gamma_b^q\big(\|\chi_{\{s(f)>2^k\}}\|_{p(\cdot)}\big) t^{q-1}dt\\
		\lesssim&\ \sum_{k\in\mathbb{Z}}\int_{2^{k-1}}^{2^k}\|\chi_{\{s(f)>t\}}\|_{p(\cdot)}^q\gamma_b^q\big(\|\chi_{\{s(f)>t\}}\|_{p(\cdot)}\big) t^{q-1}dt\\
		=&\ \int_{0}^{\infty}\|\chi_{\{s(f)>t\}}\|_{p(\cdot)}^q\gamma_b^q\big(\|\chi_{\{s(f)>t\}}\|_{p(\cdot)}\big) t^{q-1}dt\\
		=&\ \|f\|_{H_{p(\cdot),q,b}^s}^q.
	\end{align*}
	Similarly, when $q=\infty$, we have
	\begin{align*}
		 &\sup_{k\in\mathbb{Z}}\bigg\|\sum_{i=0}^{\infty}\sum_{j}\mu_{k,i,j}\|\chi_{I_{k,i,j}}\|_{p(\cdot)}^{-1}\chi_{I_{k,i,j}}\bigg\|_{p(\cdot)}\gamma_b\bigg(\bigg\|\sum_{i=0}^{\infty}\sum_{j}\chi_{I_{k,i,j}}\bigg\|_{p(\cdot)}\bigg)\\
		=&\ 3\sup_{k\in\mathbb{Z}}2^{k}\bigg\|\sum_{i=0}^{\infty}\sum_{j}\chi_{I_{k,i,j}}\bigg\|_{p(\cdot)}\gamma_b\bigg(\bigg\|\sum_{i=0}^{\infty}\sum_{j}\chi_{I_{k,i,j}}\bigg\|_{p(\cdot)}\bigg)\\
		=&\ 3\sup_{k\in\mathbb{Z}}2^{k}\|\chi_{\{s(f)>2^k\}}\|_{p(\cdot)}\gamma_b\big(\|\chi_{\{s(f)>2^k\}}\|_{p(\cdot)}\big)\\
		=&\ 3\sup_{k\in\mathbb{Z}}\sup_{t>0}\chi_{\{2^{k-1}<t\le2^k\}}t\|\chi_{\{s(f)>2^k\}}\|_{p(\cdot)}\gamma_b\big(\|\chi_{\{s(f)>2^k\}}\|_{p(\cdot)}\big)\\
		\lesssim&\ \sup_{k\in\mathbb{Z}}\sup_{t>0}\chi_{\{2^{k-1}<t\le2^k\}}t\|\chi_{\{s(f)>t\}}\|_{p(\cdot)}\gamma_b\big(\|\chi_{\{s(f)>t\}}\|_{p(\cdot)}\big)\\
		=&\ \sup_{t>0}t\|\chi_{\{s(f)>t\}}\|_{p(\cdot)}\gamma_b\big(\|\chi_{\{s(f)>t\}}\|_{p(\cdot)}\big)\\
		=&\ \|s(f)\|_{p(\cdot),\infty,b}.
	\end{align*}
    Hence, we obtain that
	 $$\|f\|_{H_{p(\cdot),q,b}^{s-at,r,1}}\lesssim\|f\|_{H_{p(\cdot),q,b}^s}.$$

	For the converse part, assume that the martingale $f$ has a decomposition (\ref{deco-f}). For an arbitrary $k_0$, let
	 $$T_1=\sum_{k=-\infty}^{k_0-1}\sum_{i=0}^{\infty}\sum_j\mu_{k,i,j}s(a^{k,i,j})\ \ \text{and}\ \ T_2=\sum_{k=k_0}^{\infty}\sum_{i=0}^{\infty}\sum_j\mu_{k,i,j}s(a^{k,i,j}).$$
	It follows from the sublinearity of the conditional square operator $s$ that
	 $$s(f)\le\sum_{k\in\mathbb{Z}}\sum_{i=0}^{\infty}\sum_j\mu_{k,i,j}s(a^{k,i,j})=T_1+T_2.$$
    Let $0<\varepsilon<\min\{\underline{p},q\}$, $1<L<\min\{\frac{r}{p_+},\frac{1}{\varepsilon}\}$ and $0<\sigma<1-\frac{1}{L}$. According to H\"{o}lder's inequality, we have
    \begin{align*}
    	T_1=&\ \sum_{k=-\infty}^{k_0-1}\sum_{i=0}^{\infty}\sum_j\mu_{k,i,j}s(a^{k,i,j})\\
    	=&\ \sum_{k=-\infty}^{k_0-1}2^{k\sigma}2^{-k\sigma}\sum_{i=0}^{\infty}\sum_j\mu_{k,i,j}s(a^{k,i,j})\\
    	\le&\ \bigg(\sum_{k=-\infty}^{k_0-1}2^{k\sigma L'}\bigg)^\frac{1}{L'}\bigg[\sum_{k=-\infty}^{k_0-1}2^{-k\sigma L}\bigg(\sum_{i=0}^{\infty}\sum_{j}\mu_{k,i,j}s(a^{k,i,j})\bigg)^L\bigg]^{\frac{1}{L}}\\
    	\approx&\ 2^{k_0\sigma}\bigg[\sum_{k=-\infty}^{k_0-1}2^{-k\sigma L}\bigg(\sum_{i=0}^{\infty}\sum_{j}\mu_{k,i,j}s(a^{k,i,j})\bigg)^L\bigg]^{\frac{1}{L}},
    \end{align*}
    where $L'$ is the conjugate of $L$. According to Lemma \ref{lem2.1} and Remark \ref{property} (3), we have
    \begin{align*}
    	\|\chi_{\{T_1>2^{k_0}\}}\|_{p(\cdot)}\le & 2^{-k_0L}\|T_1^L\|_{p(\cdot)}\\
    	\lesssim&\ 2^{-k_0L}\Bigg\|2^{k_0\sigma L}\sum_{k=-\infty}^{k_0-1}2^{-k\sigma L}\bigg(\sum_{i=0}^{\infty}\sum_{j}\mu_{k,i,j}s(a^{k,i,j})\bigg)^L\Bigg\|_{p(\cdot)}\\
    	\approx&\ 2^{k_0L(\sigma-1)}\Bigg\|\sum_{k=-\infty}^{k_0-1}2^{kL(1-\sigma)}\bigg(\sum_{i=0}^{\infty}\sum_{j}\|\chi_{I_{k,i,j}}\|_{p(\cdot)}s(a^{k,i,j})\bigg)^L\Bigg\|_{p(\cdot)}\\
    	=&\ 2^{k_0L(\sigma-1)}\Bigg\|\bigg[\sum_{k=-\infty}^{k_0-1}2^{kL(1-\sigma)}\bigg(\sum_{i=0}^{\infty}\sum_{j}\|\chi_{I_{k,i,j}}\|_{p(\cdot)}s(a^{k,i,j})\bigg)^L\bigg]^\varepsilon\Bigg\|_{\frac{p(\cdot)}{\varepsilon}}^{\frac{1}{\varepsilon}}\\
    	\le&\ 2^{k_0L(\sigma-1)}\Bigg\|\sum_{k=-\infty}^{k_0-1}2^{kL\varepsilon(1-\sigma)}\bigg(\sum_{i=0}^{\infty}\sum_{j}\|\chi_{I_{k,i,j}}\|_{p(\cdot)}s(a^{k,i,j})\bigg)^{L\varepsilon}\Bigg\|_{\frac{p(\cdot)}{\varepsilon}}^{\frac{1}{\varepsilon}}\\
    	\le&\ 2^{k_0L(\sigma-1)}\Bigg\|\sum_{k=-\infty}^{k_0-1}2^{kL\varepsilon(1-\sigma)}\sum_{i=0}^{\infty}\sum_{j}\Big(\|\chi_{I_{k,i,j}}\|_{p(\cdot)}s(a^{k,i,j})\Big)^{L\varepsilon}\Bigg\|_{\frac{p(\cdot)}{\varepsilon}}^{\frac{1}{\varepsilon}}\\
    	\lesssim&\ 2^{k_0L(\sigma-1)}\Bigg(\sum_{k=-\infty}^{k_0-1}2^{kL\varepsilon(1-\sigma)}\bigg\|\sum_{i=0}^{\infty}\sum_{j}\Big(\|\chi_{I_{k,i,j}}\|_{p(\cdot)}s(a^{k,i,j})\Big)^{L\varepsilon}\bigg\|_{\frac{p(\cdot)}{\varepsilon}}\Bigg)^{\frac{1}{\varepsilon}}.
    \end{align*}
    Since $(a^{k,i,j})_{k\in\mathbb{Z},i\in\mathbb{N},j}$ are simple $(p(\cdot),r)^s$-atoms, it follows from Lemma \ref{3.3} that
    \begin{align*}
    	 \bigg\|\sum_{i=0}^{\infty}\sum_{j}\Big(\|\chi_{I_{k,i,j}}\|_{p(\cdot)}s(a^{k,i,j})\Big)^{L\varepsilon}\bigg\|_{\frac{p(\cdot)}{\varepsilon}}\lesssim\bigg\|\sum_{i=0}^{\infty}\sum_{j}\chi_{I_{k,i,j}}\bigg\|_{\frac{p(\cdot)}{\varepsilon}}.
    \end{align*}
    Then we have
    \begin{align}\label{gs25}
    	\|\chi_{\{T_1>2^{k_0}\}}\|_{p(\cdot)}\lesssim&\ 2^{k_0L(\sigma-1)}\Bigg(\sum_{k=-\infty}^{k_0-1}2^{kL\varepsilon(1-\sigma)}\bigg\|\sum_{i=0}^{\infty}\sum_{j}\chi_{I_{k,i,j}}\bigg\|_{\frac{p(\cdot)}{\varepsilon}}\Bigg)^{\frac{1}{\varepsilon}}\\\notag
    	=&\ 2^{k_0L(\sigma-1)}\Bigg(\sum_{k=-\infty}^{k_0-1}2^{k\varepsilon(L(1-\sigma)-\delta)}2^{k\varepsilon\delta}\bigg\|\sum_{i=0}^{\infty}\sum_{j}\chi_{I_{k,i,j}}\bigg\|_{p(\cdot)}^{\varepsilon}\Bigg)^{\frac{1}{\varepsilon}},\notag
    \end{align}
    where $\delta>0$ satisfying $1<\delta<L(1-\sigma)$.

    Firstly, we consider the case of $0<q<\infty$. By H\"{o}lder's inequality with $\frac{\varepsilon}{q}+\frac{q-\varepsilon}{q}=1$, we obtain
    \begin{align*}
    	\|\chi_{\{T_1>2^{k_0}\}}\|_{p(\cdot)}\lesssim&\ 2^{k_0L(\sigma-1)}\Bigg(\sum_{k=-\infty}^{k_0-1}2^{k\varepsilon(L(1-\sigma)-\delta)\frac{q}{q-\varepsilon}}\Bigg)^{\frac{q-\varepsilon}{q\varepsilon}}\Bigg(\sum_{k=-\infty}^{k_0-1}2^{kq\delta}\bigg\|\sum_{i=0}^{\infty}\sum_{j}\chi_{I_{k,i,j}}\bigg\|_{p(\cdot)}^{q}\Bigg)^{\frac{1}{q}}\\
    	\lesssim&\ 2^{-k_0\delta}\Bigg(\sum_{k=-\infty}^{k_0-1}2^{k\delta q}\bigg\|\sum_{i=0}^{\infty}\sum_{j}\chi_{I_{k,i,j}}\bigg\|_{p(\cdot)}^{q}\Bigg)^{\frac{1}{q}}\\
    	=&\ \Bigg(\sum_{k=-\infty}^{k_0-1}2^{(k-k_0)\delta q}\bigg\|\sum_{i=0}^{\infty}\sum_{j}\chi_{I_{k,i,j}}\bigg\|_{p(\cdot)}^{q}\Bigg)^{\frac{1}{q}}.
    \end{align*}
    Then this yields that
    \begin{align*}
    	&\ \sum_{k_0\in\mathbb{Z}}2^{k_0q}\|\chi_{\{T_1>2^{k_0}\}}\|_{p(\cdot)}^{q}\gamma_b^{q}\big(\|\chi_{\{T_1>2^{k_0}\}}\|_{p(\cdot)}\big)\\
    	\lesssim&\ \sum_{k_0\in\mathbb{Z}}2^{k_0q}\sum_{k=-\infty}^{k_0-1}2^{(k-k_0)\delta q}\bigg\|\sum_{i=0}^{\infty}\sum_{j}\chi_{I_{k,i,j}}\bigg\|_{p(\cdot)}^{q}\gamma_b^{q}\Bigg[\Bigg(\sum_{k=-\infty}^{k_0-1}2^{(k-k_0)\delta q}\bigg\|\sum_{i=0}^{\infty}\sum_{j}\chi_{I_{k,i,j}}\bigg\|_{p(\cdot)}^{q}\Bigg)^{\frac{1}{q}}\Bigg].
    \end{align*}
    Define $b_1(t)=b\big(t^{\frac{1}{q}}\big)$ for $t\in[1,\infty)$. Set $0<\theta<1$, it follows from Lemma \ref{wuqiongheb} that
    \begin{align*}
    	&\ \sum_{k_0\in\mathbb{Z}}2^{k_0q}\|\chi_{\{T_1>2^{k_0}\}}\|_{p(\cdot)}^{q}\gamma_b^{q}\big(\|\chi_{\{T_1>2^{k_0}\}}\|_{p(\cdot)}\big)\\
    	\lesssim&\ \sum_{k_0\in\mathbb{Z}}2^{k_0q}\sum_{k=-\infty}^{k_0-1}2^{(k-k_0)\delta q}\bigg\|\sum_{i=0}^{\infty}\sum_{j}\chi_{I_{k,i,j}}\bigg\|_{p(\cdot)}^{q}\gamma_{b_1^{q}}\bigg(\sum_{k=-\infty}^{k_0-1}2^{(k-k_0)\delta q}\bigg\|\sum_{i=0}^{\infty}\sum_{j}\chi_{I_{k,i,j}}\bigg\|_{p(\cdot)}^{q}\bigg)\\
    	=&\ \sum_{k_0\in\mathbb{Z}}2^{k_0q}\Bigg[\bigg(\sum_{k=-\infty}^{k_0-1}2^{(k-k_0)\delta q}\bigg\|\sum_{i=0}^{\infty}\sum_{j}\chi_{I_{k,i,j}}\bigg\|_{p(\cdot)}^{q}\bigg)^{\theta}\\
    	&\ \ \ \ \ \ \ \ \ \ \ \ \ \ \ \ \ \ \ \ \ \ \ \ \ \ \ \ \ \ \ \ \ \ \ \ \ \ \ \ \ \times\gamma_{b_1^{\theta q}}\bigg(\sum_{k=-\infty}^{k_0-1}2^{(k-k_0)\delta q}\bigg\|\sum_{i=0}^{\infty}\sum_{j}\chi_{I_{k,i,j}}\bigg\|_{p(\cdot)}^{q}\bigg)\Bigg]^{\frac{1}{\theta}}\\
    	\lesssim&\ \sum_{k_0\in\mathbb{Z}}2^{k_0q}\Bigg[\sum_{k=-\infty}^{k_0-1}2^{(k-k_0)\delta \theta q}\bigg\|\sum_{i=0}^{\infty}\sum_{j}\chi_{I_{k,i,j}}\bigg\|_{p(\cdot)}^{\theta q}\gamma_{b_1^{\theta q}}\bigg(2^{(k-k_0)\delta q}\bigg\|\sum_{i=0}^{\infty}\sum_{j}\chi_{I_{k,i,j}}\bigg\|_{p(\cdot)}^{q}\bigg)\Bigg]^{\frac{1}{\theta}}\\
    	=&\ \sum_{k_0\in\mathbb{Z}}2^{k_0q}\Bigg[\sum_{k=-\infty}^{k_0-1}2^{(k-k_0)\delta \theta q}\bigg\|\sum_{i=0}^{\infty}\sum_{j}\chi_{I_{k,i,j}}\bigg\|_{p(\cdot)}^{\theta q}\gamma_{b}^{\theta q}\bigg(2^{(k-k_0)\delta}\bigg\|\sum_{i=0}^{\infty}\sum_{j}\chi_{I_{k,i,j}}\bigg\|_{p(\cdot)}\bigg)\Bigg]^{\frac{1}{\theta}}.
    \end{align*}
    Let $0<\beta<\frac{\delta-1}{\delta}$. By applying H\"{o}lder's inequality with $1-\theta+\theta=1$, we have
    \begin{align*}
    	&\ \sum_{k_0\in\mathbb{Z}}2^{k_0q}\|\chi_{\{T_1>2^{k_0}\}}\|_{p(\cdot)}^{q}\gamma_b^{q}\big(\|\chi_{\{T_1>2^{k_0}\}}\|_{p(\cdot)}\big)\\
    	\lesssim&\ \sum_{k_0\in\mathbb{Z}}2^{k_0q}\Bigg[\sum_{k=-\infty}^{k_0-1}2^{(k-k_0)\delta \theta q\beta}2^{(k-k_0)(1-\beta)\delta\theta q}\bigg\|\sum_{i=0}^{\infty}\sum_{j}\chi_{I_{k,i,j}}\bigg\|_{p(\cdot)}^{\theta q}\\
    	&\ \ \ \ \ \ \ \ \ \ \ \ \ \ \ \ \ \ \ \ \ \ \ \ \ \ \ \ \ \ \ \ \ \ \ \ \ \ \ \ \ \ \ \ \ \ \ \ \ \ \ \ \ \times\gamma_{b}^{\theta q}\bigg(2^{(k-k_0)\delta}\bigg\|\sum_{i=0}^{\infty}\sum_{j}\chi_{I_{k,i,j}}\bigg\|_{p(\cdot)}\bigg)\Bigg]^{\frac{1}{\theta}}\\
    	\le&\ \sum_{k_0\in\mathbb{Z}}2^{k_0q}\bigg(\sum_{k=-\infty}^{k_0-1}2^{(k-k_0)\delta \theta q\beta/(1-\theta)}\bigg)^{\frac{1-\theta}{\theta}}\sum_{k=-\infty}^{k_0-1}2^{(k-k_0)(1-\beta)\delta q}\bigg\|\sum_{i=0}^{\infty}\sum_{j}\chi_{I_{k,i,j}}\bigg\|_{p(\cdot)}^{q}\\
    	&\ \ \ \ \ \ \ \ \ \ \ \ \ \ \ \ \ \ \ \ \ \ \ \ \ \ \ \ \ \ \ \ \ \ \ \ \ \ \ \ \ \ \ \ \ \ \ \ \ \ \ \ \ \times\gamma_{b}^{q}\bigg(2^{(k-k_0)\delta}\bigg\|\sum_{i=0}^{\infty}\sum_{j}\chi_{I_{k,i,j}}\bigg\|_{p(\cdot)}\bigg)\\
    	\lesssim&\ \sum_{k_0\in\mathbb{Z}}2^{k_0q}\sum_{k=-\infty}^{k_0-1}2^{(k-k_0)(1-\beta)\delta q}\bigg\|\sum_{i=0}^{\infty}\sum_{j}\chi_{I_{k,i,j}}\bigg\|_{p(\cdot)}^{q}\gamma_{b}^{q}\bigg(2^{(k-k_0)\delta}\bigg\|\sum_{i=0}^{\infty}\sum_{j}\chi_{I_{k,i,j}}\bigg\|_{p(\cdot)}\bigg).
    \end{align*}
    Setting $0<z<\frac{\delta-\delta\beta-1}{\delta}$, we obtain that for $k\le k_0-1$,
    \begin{align}\label{gs26}
    	&\ \gamma_b\bigg(2^{(k-k_0)\delta}\bigg\|\sum_{i=0}^{\infty}\sum_{j}\chi_{I_{k,i,j}}\bigg\|_{p(\cdot)}\bigg)\\\notag
    	=&\ \bigg(2^{(k-k_0)\delta}\bigg\|\sum_{i=0}^{\infty}\sum_{j}\chi_{I_{k,i,j}}\bigg\|_{p(\cdot)}\bigg)^{-z}\\\notag
    	&\ \ \ \ \ \ \ \ \ \ \ \ \times\bigg(2^{(k-k_0)\delta}\bigg\|\sum_{i=0}^{\infty}\sum_{j}\chi_{I_{k,i,j}}\bigg\|_{p(\cdot)}\bigg)^{z}\gamma_b\bigg(2^{(k-k_0)\delta}\bigg\|\sum_{i=0}^{\infty}\sum_{j}\chi_{I_{k,i,j}}\bigg\|_{p(\cdot)}\bigg)\\\notag
    	\lesssim&\ \bigg(2^{(k-k_0)\delta}\bigg\|\sum_{i=0}^{\infty}\sum_{j}\chi_{I_{k,i,j}}\bigg\|_{p(\cdot)}\bigg)^{-z}\bigg\|\sum_{i=0}^{\infty}\sum_{j}\chi_{I_{k,i,j}}\bigg\|_{p(\cdot)}^{z}\gamma_b\bigg(\bigg\|\sum_{i=0}^{\infty}\sum_{j}\chi_{I_{k,i,j}}\bigg\|_{p(\cdot)}\bigg)\\\notag
    	=&\ 2^{-(k-k_0)\delta z}\gamma_b\bigg(\bigg\|\sum_{i=0}^{\infty}\sum_{j}\chi_{I_{k,i,j}}\bigg\|_{p(\cdot)}\bigg).\notag
    \end{align}
    Hence it follows from Abel's transformation that
    \begin{align*}
    	&\ \sum_{k_0\in\mathbb{Z}}2^{k_0q}\|\chi_{\{T_1>2^{k_0}\}}\|_{p(\cdot)}^{q}\gamma_b^{q}\big(\|\chi_{\{T_1>2^{k_0}\}}\|_{p(\cdot)}\big)\\
    	\lesssim&\ \sum_{k_0\in\mathbb{Z}}2^{k_0q}\sum_{k=-\infty}^{k_0-1}2^{(k-k_0)(1-\beta)\delta q}\bigg\|\sum_{i=0}^{\infty}\sum_{j}\chi_{I_{k,i,j}}\bigg\|_{p(\cdot)}^{q}2^{-(k-k_0)\delta zq}\gamma_b^q\bigg(\bigg\|\sum_{i=0}^{\infty}\sum_{j}\chi_{I_{k,i,j}}\bigg\|_{p(\cdot)}\bigg)\\
    	=&\ \sum_{k\in\mathbb{Z}}2^{k(1-\beta-z)\delta q}\bigg\|\sum_{i=0}^{\infty}\sum_{j}\chi_{I_{k,i,j}}\bigg\|_{p(\cdot)}^{q}\gamma_b^q\bigg(\bigg\|\sum_{i=0}^{\infty}\sum_{j}\chi_{I_{k,i,j}}\bigg\|_{p(\cdot)}\bigg)\sum_{k_0=k+1}^{\infty}2^{k_0q[1+\delta(\beta-1)+\delta z]}\\
    	\lesssim&\ \sum_{k\in\mathbb{Z}}2^{kq}\bigg\|\sum_{i=0}^{\infty}\sum_{j}\chi_{I_{k,i,j}}\bigg\|_{p(\cdot)}^{q}\gamma_b^q\bigg(\bigg\|\sum_{i=0}^{\infty}\sum_{j}\chi_{I_{k,i,j}}\bigg\|_{p(\cdot)}\bigg).
    \end{align*}

    Next we discuss the case of $q=\infty$. 
    According to (\ref{gs25}) and H\"{o}lder's inequality with $\varepsilon+1-\varepsilon=1$, we obtain that
    \begin{align*}
    	&\ \|\chi_{\{T_1>2^{k_0}\}}\|_{p(\cdot)}\\
        \lesssim&\ 2^{k_0L(\sigma-1)}\Bigg(\sum_{k=-\infty}^{k_0-1}2^{k\varepsilon(L(1-\sigma)-\delta)/{(1-\varepsilon)}}\Bigg)^{(1-\varepsilon)/\varepsilon}\sum_{k=-\infty}^{k_0-1}2^{k\delta}\bigg\|\sum_{i=0}^{\infty}\sum_{j}\chi_{I_{k,i,j}}\bigg\|_{p(\cdot)}\\
    	\lesssim&\ 2^{-k_0\delta}\sum_{k=-\infty}^{k_0-1}2^{k\delta}\bigg\|\sum_{i=0}^{\infty}\sum_{j}\chi_{I_{k,i,j}}\bigg\|_{p(\cdot)}=\sum_{k=-\infty}^{k_0-1}2^{(k-k_0)\delta}\bigg\|\sum_{i=0}^{\infty}\sum_{j}\chi_{I_{k,i,j}}\bigg\|_{p(\cdot)}.
    \end{align*}
    Then it follows from Lemma \ref{wuqiongheb} and H\"{o}lder's inequality with $1-\theta+\theta=1$ that
    \begin{align*}
    	&\ \sup_{k_0\in\mathbb{Z}}2^{k_0}\|\chi_{\{T_1>2^{k_0}\}}\|_{p(\cdot)}\gamma_{b}(\|\chi_{\{T_1>2^{k_0}\}}\|_{p(\cdot)})\\
    	\lesssim&\ \sup_{k_0\in\mathbb{Z}}2^{k_0}\sum_{k=-\infty}^{k_0-1}2^{(k-k_0)\delta}\bigg\|\sum_{i=0}^{\infty}\sum_{j}\chi_{I_{k,i,j}}\bigg\|_{p(\cdot)}\gamma_{b}\Bigg(\sum_{k=-\infty}^{k_0-1}2^{(k-k_0)\delta}\bigg\|\sum_{i=0}^{\infty}\sum_{j}\chi_{I_{k,i,j}}\bigg\|_{p(\cdot)}\Bigg)\\
    	\lesssim&\ \sup_{k_0\in\mathbb{Z}}2^{k_0}\Bigg[\sum_{k=-\infty}^{k_0-1}2^{(k-k_0)\delta\theta}\bigg\|\sum_{i=0}^{\infty}\sum_{j}\chi_{I_{k,i,j}}\bigg\|_{p(\cdot)}^{\theta}\gamma_{b^{\theta}}\bigg(2^{(k-k_0)\delta}\bigg\|\sum_{i=0}^{\infty}\sum_{j}\chi_{I_{k,i,j}}\bigg\|_{p(\cdot)}\bigg)\Bigg]^{\frac{1}{\theta}}\\
    	=&\ \sup_{k_0\in\mathbb{Z}}2^{k_0}\Bigg[\sum_{k=-\infty}^{k_0-1}2^{\beta\delta\theta(k-k_0)}2^{(k-k_0)(1-\beta)\delta\theta}\bigg\|\sum_{i=0}^{\infty}\sum_{j}\chi_{I_{k,i,j}}\bigg\|_{p(\cdot)}^{\theta}\\
    	&\ \ \ \ \ \ \ \ \ \ \ \ \ \ \ \ \ \ \ \ \ \ \ \ \ \ \ \ \ \ \ \ \ \ \ \times\gamma_{b}^{\theta}\bigg(2^{(k-k_0)\delta}\bigg\|\sum_{i=0}^{\infty}\sum_{j}\chi_{I_{k,i,j}}\bigg\|_{p(\cdot)}\bigg)\Bigg]^{\frac{1}{\theta}}\\
    	\le&\ \sup_{k_0\in\mathbb{Z}}2^{k_0}\Bigg(\sum_{k=-\infty}^{k_0-1}2^{\beta\delta\theta(k-k_0)/(1-\theta)}\Bigg)^{\frac{1-\theta}{\theta}}\sum_{k=-\infty}^{k_0-1}2^{(k-k_0)(1-\beta)\delta}\\
    	&\ \ \ \ \ \ \ \ \ \ \ \ \ \ \ \ \ \ \ \ \ \ \ \ \ \ \ \ \ \ \ \ \ \ \ \times\bigg\|\sum_{i=0}^{\infty}\sum_{j}\chi_{I_{k,i,j}}\bigg\|_{p(\cdot)}\gamma_{b}\bigg(2^{(k-k_0)\delta}\bigg\|\sum_{i=0}^{\infty}\sum_{j}\chi_{I_{k,i,j}}\bigg\|_{p(\cdot)}\bigg)\\
    	\lesssim&\ \sup_{k_0\in\mathbb{Z}}2^{k_0}\sum_{k=-\infty}^{k_0-1}2^{(k-k_0)(1-\beta)\delta}\bigg\|\sum_{i=0}^{\infty}\sum_{j}\chi_{I_{k,i,j}}\bigg\|_{p(\cdot)}\gamma_{b}\bigg(2^{(k-k_0)\delta}\bigg\|\sum_{i=0}^{\infty}\sum_{j}\chi_{I_{k,i,j}}\bigg\|_{p(\cdot)}\bigg).
    \end{align*}
    Furthermore, by \eqref{gs26},
    \begin{align*}
    	&\ \sup_{k_0\in\mathbb{Z}}2^{k_0}\|\chi_{\{T_1>2^{k_0}\}}\|_{p(\cdot)}\gamma_{b}(\|\chi_{\{T_1>2^{k_0}\}}\|_{p(\cdot)})\\
    	\lesssim&\ \sup_{k_0\in\mathbb{Z}}2^{k_0}\sum_{k=-\infty}^{k_0-1}2^{(k-k_0)(1-\beta)\delta}\bigg\|\sum_{i=0}^{\infty}\sum_{j}\chi_{I_{k,i,j}}\bigg\|_{p(\cdot)}2^{-(k-k_0)\delta z}\gamma_b\bigg(\bigg\|\sum_{i=0}^{\infty}\sum_{j}\chi_{I_{k,i,j}}\bigg\|_{p(\cdot)}\bigg)\\
    	=&\ \sup_{k\in\mathbb{Z}}2^{k\delta(1-\beta-z)}\bigg\|\sum_{i=0}^{\infty}\sum_{j}\chi_{I_{k,i,j}}\bigg\|_{p(\cdot)}\gamma_b\bigg(\bigg\|\sum_{i=0}^{\infty}\sum_{j}\chi_{I_{k,i,j}}\bigg\|_{p(\cdot)}\bigg)\sum_{k_0=k+1}^{\infty}2^{k_0(1-\delta+\beta\delta+\delta z)}\\
    	\lesssim&\ \sup_{k\in\mathbb{Z}}2^{k}\bigg\|\sum_{i=0}^{\infty}\sum_{j}\chi_{I_{k,i,j}}\bigg\|_{p(\cdot)}\gamma_b\bigg(\bigg\|\sum_{i=0}^{\infty}\sum_{j}\chi_{I_{k,i,j}}\bigg\|_{p(\cdot)}\bigg).
    \end{align*}
    To sum up, for $0<q\le\infty$,
    $$\|T_1\|_{p(\cdot),q,b}\lesssim\Bigg\|\Bigg\{2^k\bigg\|\sum_{i=0}^{\infty}\sum_{j}\chi_{I_{k,i,j}}\bigg\|_{p(\cdot)}\gamma_b\bigg(\bigg\|\sum_{i=0}^{\infty}\sum_{j}\chi_{I_{k,i,j}}\bigg\|_{p(\cdot)}\bigg)\Bigg\}_{k\in\mathbb{Z}}\Bigg\|_{l_q}.$$

	Now, we estimate $T_2$. Since $s(a^{k,i,j})=0$ on $\{\tau_k=\infty\}$, we have
	 $$\{T_2>2^{k_0}\}\subset\{T_2>0\}\subset\bigcup_{k=k_0}^{\infty}\{s(a^{k,i,j})>0\}\subset\bigcup_{k=k_0}^{\infty}\bigcup_{i=0}^{\infty}\bigcup_{j}I_{k,i,j}.$$
	Let $0<m<p_-$ and $0<\varepsilon<\min\{\underline{p},q\}$. It follows from Lemma \ref{wuqiongbianb} that
	\begin{align*}
		&\ 2^{k_0m\varepsilon}\|\chi_{\{T_2>2^{k_0}\}}\|_{p(\cdot)}^m\gamma_b^m\big(\|\chi_{\{T_2>2^{k_0}\}}\|_{p(\cdot)}\big)\\\notag
		\lesssim&\ 2^{k_0m\varepsilon}\bigg\|\sum_{k=k_0}^{\infty}\sum_{i=0}^{\infty}\sum_{j}\chi_{I_{k,i,j}}\bigg\|_{p(\cdot)}^m\gamma_b^m\bigg(\bigg\|\sum_{k=k_0}^{\infty}\sum_{i=0}^{\infty}\sum_{j}\chi_{I_{k,i,j}}\bigg\|_{p(\cdot)}\bigg)\\\notag
		\lesssim&\ 2^{k_0m\varepsilon}\sum_{k=k_0}^{\infty}\bigg\|\sum_{i=0}^{\infty}\sum_{j}\chi_{I_{k,i,j}}\bigg\|_{p(\cdot)}^m\gamma_b^m\bigg(\bigg\|\sum_{i=0}^{\infty}\sum_{j}\chi_{I_{k,i,j}}\bigg\|_{p(\cdot)}\bigg)\\\notag
		\lesssim&\ \sum_{k=k_0}^{\infty}2^{km\varepsilon}\bigg\|\sum_{i=0}^{\infty}\sum_{j}\chi_{I_{k,i,j}}\bigg\|_{p(\cdot)}^m\gamma_b^m\bigg(\bigg\|\sum_{i=0}^{\infty}\sum_{j}\chi_{I_{k,i,j}}\bigg\|_{p(\cdot)}\bigg).
	\end{align*}
    Obviously, according to Lemma \ref{belong}, we obtain
    \begin{align*}
    	 \|T_2\|_{p(\cdot),q,b}\lesssim\Bigg\|\Bigg\{2^k\bigg\|\sum_{i=0}^{\infty}\sum_{j}\chi_{I_{k,i,j}}\bigg\|_{p(\cdot)}\gamma_b\bigg(\bigg\|\sum_{i=0}^{\infty}\sum_{j}\chi_{I_{k,i,j}}\bigg\|_{p(\cdot)}\bigg)\Bigg\}_{k\in\mathbb{Z}}\Bigg\|_{l_q}.
    \end{align*}
    Hence, it follows from Lemma \ref{fanshusanjiao} that
    \begin{align*}
   	    \|f\|_{H_{p(\cdot),q,b}^s}=&\|s(f)\|_{p(\cdot),q,b}\le\|T_1+T_2\|_{p(\cdot),q,b}\\
   	    \lesssim&\ \|T_1\|_{p(\cdot),q,b}+\|T_2\|_{p(\cdot),q,b}\\
   	    \lesssim&\ \Bigg\|\Bigg\{2^k\bigg\|\sum_{i=0}^{\infty}\sum_{j}\chi_{I_{k,i,j}}\bigg\|_{p(\cdot)}\gamma_b\bigg(\bigg\|\sum_{i=0}^{\infty}\sum_{j}\chi_{I_{k,i,j}}\bigg\|_{p(\cdot)}\bigg)\Bigg\}_{k\in\mathbb{Z}}\Bigg\|_{l_q}\\
   	    \approx&\ \Bigg\|\Bigg\{\bigg\|\sum_{i=0}^{\infty}\sum_{j}\mu_{k,i,j}\|\chi_{I_{k,i,j}}\|_{p(\cdot)}^{-1}\chi_{I_{k,i,j}}\bigg\|_{p(\cdot)}\gamma_b\bigg(\bigg\|\sum_{i=0}^{\infty}\sum_{j}\chi_{I_{k,i,j}}\bigg\|_{p(\cdot)}\bigg)\Bigg\}_{k\in\mathbb{Z}}\Bigg\|_{l_q}\\
   	    \approx&\ \|f\|_{H_{p(\cdot),q,b}^{s-at,r,1}}.
        \end{align*}
    The proof is complete.
\end{proof}

\begin{rem}\label{rem4.2}
	The sum $\sum\limits_{k=m}^{l}\sum\limits_{i=0}^{\infty}\sum\limits_{j}\mu_{k,i,j}a^{k,i,j}$ converges to $f$ in $H_{p(\cdot),q,b}^s$ as $m\rightarrow-\infty$,  $l\rightarrow\infty$, when $p(\cdot)\in\mathcal{P}(\cdot)$, $0<q<\infty$ and $b$ is a slowly varying function.
	Indeed,
	\begin{align*}
		 \sum_{k=m}^{l}\sum_{i=0}^{\infty}\sum_{j}\mu_{k,i,j}a^{k,i,j}=&\sum_{k=m}^{l}(f^{\tau_{k+1}}-f^{\tau_k})\sum_{i=0}^{\infty}\sum_{j}\chi_{I_{k,i,j}}\\
		=&\ \sum_{k=m}^{l}(f^{\tau_{k+1}}-f^{\tau_k})\chi_{\{\tau_k<\infty\}}=f^{\tau_{l+1}}-f^{\tau_m}.
	\end{align*}
	By the sublinearity of the conditional square operator $s$ and Lemma $\ref{fanshusanjiao}$, we have
	\begin{align*}
		&\ \bigg\|f-\sum_{k=m}^{l}\sum_{i=0}^{\infty}\sum_{j}\mu_{k,i,j}a^{k,i,j}\bigg\|_{H_{p(\cdot),q,b}^s}=\big\|s(f-f^{\tau_{l+1}}+f^{\tau_m})\big\|_{p(\cdot),q,b}\\
		\le&\ \big\|s(f-f^{\tau_{l+1}})+s(f^{\tau_m})\big\|_{p(\cdot),q,b}\lesssim\big\|s(f-f^{\tau_{l+1}})\big\|_{p(\cdot),q,b}+\big\|s(f^{\tau_m})\big\|_{p(\cdot),q,b}.
	\end{align*}
	Since $s(f-f^{\tau_{l+1}})^2=s(f)^2-s(f^{\tau_{l+1}})^2$, then $s(f-f^{\tau_{l+1}})\le s(f)$, $s(f^{\tau_{m}})\le s(f)$ and $s(f-f^{\tau_{l+1}})\rightarrow0,\ s(f^{\tau_{m}})\rightarrow0$ a.e. as $m\rightarrow-\infty, \; \,l\rightarrow\infty$. Hence, by dominated convergence theorem $($Lemma $\ref{dct})$, we have
	$$\big\|s(f-f^{\tau_{l+1}})\big\|_{p(\cdot),q,b}\rightarrow0,\ \ \big\|s(f^{\tau_m})\big\|_{p(\cdot),q,b}\rightarrow0,\ \ \text{as}\ m\rightarrow-\infty,\;\,l\rightarrow\infty,$$
	which means that $$\bigg\|f-\sum_{k=m}^{l}\sum_{i=0}^{\infty}\sum_{j}\mu_{k,i,j}a^{k,i,j}\bigg\|_{H_{p(\cdot),q,b}^s}\rightarrow0,\ \ \text{as}\ m\rightarrow-\infty,\;\,l\rightarrow\infty.$$
	Furthermore, for each $k\in\mathbb{Z}$, $a^{k,i,j}=(a_n^{k,i,j})_{n\ge0}$ is $L_2$-bounded. If $p_+<2$, we have $H_2^s\subset H_{p(\cdot),q,b}^s$. Thus $H_2^s=L_2$ is dense in $H_{p(\cdot),q,b}^s$ when $p_+<2$. Similarly, $\sum\limits_{k=m}^{l}\sum\limits_{i=0}^{\infty}\sum\limits_{j}\mu_{k,i,j}a^{k,i,j}$ converges to $f$ in $\mathcal{H}_{p(\cdot),\infty,b}^s$ as $m\rightarrow-\infty$, $l\rightarrow\infty$ and $H_2^s=L_2$ is dense in $\mathcal{H}_{p(\cdot),\infty,b}^s$ when $p_+<2$.
\end{rem}

Next, we discuss the atomic decomposition theorems in $H_{p(\cdot),q,b}^M, \; H_{p(\cdot),q,b}^S, \; \mathcal{P}_{p(\cdot),q,b}$ and $\mathcal{Q}_{p(\cdot),q,b}$, respectively.

\begin{thm}\label{ad2}
	Let $p(\cdot)\in\mathcal{P}(\Omega)$ satisfy $(\ref{gs3})$, $0<q\le\infty$, $max\{p_+,1\}<r\le\infty$ and let $b$ be a slowly varying function. If $\{\mathcal{F}_n\}_{n\ge0}$ is regular, then
	$$H_{p(\cdot),q,b}^S=H_{p(\cdot),q,b}^{s-at,r,2}\ \ \text{and}\ \ H_{p(\cdot),q,b}^M=H_{p(\cdot),q,b}^{s-at,r,3}$$
	with equivalent $($quasi$)$-norms.
\end{thm}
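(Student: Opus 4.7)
The plan is to mirror the proof of Theorem \ref{ad1}, replacing the conditional square function $s$ by $S$ (resp.\ $M$) and invoking the regularity of $\{\mathcal{F}_n\}_{n\ge0}$ to transfer pointwise control from one functional to the other. I describe the $S$-case in detail; the $M$-case is parallel.

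For $H_{p(\cdot),q,b}^S\subset H_{p(\cdot),q,b}^{s-at,r,2}$, given $f\in H_{p(\cdot),q,b}^S$, I would reuse the $s$-based stopping times $\tau_k:=\inf\{n\in\mathbb{N}:s_{n+1}(f)>2^k\}$ from Theorem \ref{ad1} (these are genuine stopping times because $s_{n+1}(f)$ is $\mathcal{F}_n$-measurable), the same atoms $I_{k,i,j}\in A(\mathcal{F}_i)$ partitioning $\{\tau_k=i\}$, and the same building blocks $a^{k,i,j}=(f^{\tau_{k+1}}-f^{\tau_k})\chi_{I_{k,i,j}}/\mu_{k,i,j}$. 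The key new ingredient is the pointwise inequality $|d_ng|^2\le R\,\mathbb{E}_{n-1}|d_ng|^2$ for arbitrary martingales $g$, which follows from (\ref{R}) applied to the nonnegative martingale $(\mathbb{E}_m|d_ng|^2)_{m\ge n-1}$; summing with the stopping indicator yields $S(g^\tau)\le\sqrt{R}\,s(g^\tau)$ for every stopping time $\tau$, whence $S(f^{\tau_{k+1}}-f^{\tau_k})\le 3\sqrt{R}\cdot 2^k$. After absorbing $\sqrt{R}$ into the normalization, this gives $\|S(a^{k,i,j})\|_r\le\mathbb{P}(I_{k,i,j})^{1/r}/\|\chi_{I_{k,i,j}}\|_{p(\cdot)}$ for every $r\in(1,\infty]$, so each $a^{k,i,j}$ is a simple $(p(\cdot),r)^S$-atom. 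The identity $\bigcup_{i,j}I_{k,i,j}=\{s(f)>2^k\}$ then reduces the atomic norm to $\|s(f)\|_{p(\cdot),q,b}$, which under regularity is equivalent to $\|S(f)\|_{p(\cdot),q,b}=\|f\|_{H_{p(\cdot),q,b}^S}$. For the $H^M$ analogue the same $s$-based stopping times work: the pointwise bound $S\le\sqrt{R}\,s$ combined with the Burkholder-type bound $\|g\|_r\lesssim\|S(g)\|_r$ and Doob's maximal inequality in $L_r$ yields the required $M$-atom estimate, and regularity gives $\|s(f)\|_{p(\cdot),q,b}\approx\|M(f)\|_{p(\cdot),q,b}$.

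For the reverse inclusions, the proof of Theorem \ref{ad1} transfers almost verbatim. Given a decomposition (\ref{deco-f}) with simple $(p(\cdot),r)^S$-atoms (resp.\ $(p(\cdot),r)^M$-atoms), the sublinearity of $S$ (resp.\ $M$) yields
$$S(f)\le T_1+T_2,\quad T_1=\sum_{k=-\infty}^{k_0-1}\sum_{i,j}\mu_{k,i,j}S(a^{k,i,j}),\quad T_2=\sum_{k=k_0}^{\infty}\sum_{i,j}\mu_{k,i,j}S(a^{k,i,j}),$$
and the estimates on $T_1$ and $T_2$ proceed via H\"older's inequality, Lemma \ref{3.3} (which is stated uniformly for all three atom types), Lemma \ref{wuqiongbianb}, and Lemma \ref{belong}; regularity plays no role in this direction.

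\textbf{Main obstacle.} The substantive novelty beyond Theorem \ref{ad1} is establishing the equivalence $\|s(f)\|_{p(\cdot),q,b}\approx\|S(f)\|_{p(\cdot),q,b}\approx\|M(f)\|_{p(\cdot),q,b}$ under regularity. The direction $\|S(f)\|\lesssim\|s(f)\|$ is immediate from the pointwise bound $S\le\sqrt{R}\,s$, and $\|M(f)\|\lesssim\|s(f)\|$ follows by chaining Lemma \ref{Doob} with the Burkholder-Davis-Gundy bound $\|f\|_{p(\cdot),q,b}\lesssim\|S(f)\|_{p(\cdot),q,b}$; the converses $\|s(f)\|\lesssim\|S(f)\|$ and $\|s(f)\|\lesssim\|M(f)\|$ are the classical BDG-type inequalities for regular filtrations, which need to be transferred to the variable Lorentz-Karamata scale using the interpolation and extrapolation techniques developed in earlier sections. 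With these equivalences in hand, the rest of the argument is a routine adaptation of Theorem \ref{ad1}.
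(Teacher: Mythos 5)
Your proposal misses the key mechanism the paper uses, and the alternative you substitute is circular at the point where it matters.

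The paper does \emph{not} reuse the $s$-based stopping times from Theorem \ref{ad1}. Instead, for $H^M_{p(\cdot),q,b}$ it starts from the ``natural'' but non-predictable stopping times $v_k=\inf\{n:|f_n|>2^k\}$ and uses regularity to shrink them: it sets $F_n^k=\{\mathbb{E}_{n-1}(\chi_{\{v_k=n\}})\ge 1/\mathcal{R}\}\in\mathcal{F}_{n-1}$, verifies $\{v_k=n\}\subset F_n^k$ with $\mathbb{P}(F_n^k)\le\mathcal{R}\,\mathbb{P}(v_k=n)$, and then defines $\tau_k(\omega)=\inf\{n:\omega\in F_{n+1}^k\}$. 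The point is that $\tau_k<v_k$ on $\{v_k<\infty\}$, so $M(f^{\tau_k})\le 2^k$ directly, and crucially (Lemma \ref{l11}) $\|\chi_{\{\tau_k<\infty\}}\|_{p(\cdot)}\lesssim\|\chi_{\{v_k<\infty\}}\|_{p(\cdot)}=\|\chi_{\{M(f)>2^k\}}\|_{p(\cdot)}$. This lets the atomic norm be estimated by $\|M(f)\|_{p(\cdot),q,b}$ with no detour through $s(f)$. The analogous construction with $v_k=\inf\{n:S_n(f)>2^k\}$ handles $H^S_{p(\cdot),q,b}$.

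Your route instead keeps the $s$-based $\tau_k$, produces $(p(\cdot),r)^S$- (resp.\ $(p(\cdot),r)^M$-) atoms via the pointwise bound $S\le\sqrt{\mathcal{R}}\,s$, and arrives at an atomic norm that is $\approx\|s(f)\|_{p(\cdot),q,b}$. You then need $\|s(f)\|_{p(\cdot),q,b}\lesssim\|S(f)\|_{p(\cdot),q,b}$ (resp.\ $\lesssim\|M(f)\|_{p(\cdot),q,b}$). You flag this yourself as the ``main obstacle,'' but in this paper these two inequalities are \emph{consequences} of Theorem \ref{mi}, whose proof under regularity explicitly invokes Theorems \ref{ad2} and \ref{ad3}. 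There is no interpolation or extrapolation machinery established earlier in the paper that yields them independently; the interpolation results referenced (Lemma \ref{Doob}, citing the manuscript [hdlw]) are for the Doob maximal operator, not for $s$ vs.\ $S$ or $s$ vs.\ $M$. So, as written, the forward inclusion of your proof depends on the theorem it is trying to prove. The reverse inclusion transfers routinely, as you say, but the forward direction needs the $F_n^k$ construction (or an independent proof of the BDG-type inequalities in the $L_{p(\cdot),q,b}$ scale, which you do not supply).
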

\begin{proof}
	We only give the proof of $H_{p(\cdot),q,b}^M$, since it can be proved similarly for $H_{p(\cdot),q,b}^S$. Take $f\in H_{p(\cdot),q,b}^M$. Define the stopping times with respect to $\{\mathcal{F}_n\}_{n\ge0}$ by
	$$v_k:=\inf\{n\in\mathbb{N}:|f_n|>2^k\},\ \ \ k\in\mathbb{Z}.$$
	Define $$F_n^k:=\Big\{\mathbb{E}_{n-1}\big(\chi_{\{v_k=n\}}\big)\ge\frac{1}{\mathcal{R}}\Big\}\in\mathcal{F}_{n-1},\ \ \ n\in\mathbb{N}.$$
	Since $\{\mathcal{F}_n\}_{n\ge0}$ is regular, we obtain that
	$$\{v_k=n\}\subset F_n^k\ \ \text{and}\ \ \mathbb{P}(F_n^k)\le\mathcal{R}\mathbb{P}(v_k=n).$$
	Define another set of stopping times by
	$$\tau_k(\omega):=\inf\{n\in\mathbb{N}:\omega\in F_{n+1}^k\}.$$
	It is obvious that $\{\tau_k\}_{k\in\mathbb{Z}}$ is a nondecreasing sequence of stopping times and $\tau_k(\omega)\le n-1$ when $v_k(\omega)=n$. In other words, $\tau_k<v_k$ on the set $\{v_k\neq\infty\}$.
	Moreover, by Lemma \ref{l11},
	\begin{align}\label{lem3.6}
		 \|\chi_{\{\tau_k<\infty\}}\|_{p(\cdot)}\lesssim\|\chi_{\{v_k<\infty\}}\|_{p(\cdot)}=\|\chi_{\{M(f)>2^k\}}\|_{p(\cdot)}\le2^{-k}\|M(f)\|_{p(\cdot)}.
	\end{align}
	Then $\|\chi_{\{\tau_k<\infty\}}\|_{p(\cdot)}\rightarrow0$ as $k\rightarrow\infty$, which implies that
	$$\lim\limits_{k\rightarrow\infty}\mathbb{P}(\tau_k=\infty)=1.$$
	Hence $\lim\limits_{k\rightarrow\infty}\tau_k=\infty\ a.e.$ and $\lim\limits_{k\rightarrow\infty}f_n^{\tau_k}=f_n\ a.e.$ for $n\in\mathbb{N}$. Define $\mu_{k,i,j}$ and $a_n^{k,i,j}$ in the same as in the proof of Theorem \ref{ad1}. By the definition of $\tau_k$, \eqref{e12} holds and
	 $$M\big((a_n^{k,i,j})_{n\ge0}\big)\le\frac{M(f_n^{\tau_{k+1}})+M(f_n^{\tau_k})}{\mu_{k,i,j}}\le\frac{2^{k+1}+2^k}{\mu_{k,i,j}}=\frac{1}{\|\chi_{I_{k,i,j}}\|_{p(\cdot)}}.$$
	Thus $(a_n^{k,i,j})_{n\ge0}$ is an $L_2$-bounded martingale. Moreover, there exists $a^{k,i,j}\in L_2$ such that $\mathbb{E}_n(a^{k,i,j})=a_n^{k,i,j}$ and
	 $$\|M(a^{k,i,j})\|_r\le\mathbb{P}(I_{k,i,j})^{1/r}\frac{1}{\|\chi_{I_{k,i,j}}\|_{p(\cdot)}}.$$
	Therefore, $a^{k,i,j}$ is a simple $(p(\cdot),r)^M$-atom and (\ref{deco-f}) holds.
	It is easy to get from (\ref{lem3.6}) that
   $$\bigg\|\sum_{i=0}^{\infty}\sum_{j}\chi_{I_{k,i,j}}\bigg\|_{p(\cdot)}\lesssim\bigg\|\sum_{i=0}^{\infty}\sum_{j}\chi_{\overline{I_{k,i,j}}}\bigg\|_{p(\cdot)/\varepsilon}^{1/\varepsilon}=\big\|\chi_{\{M(f)>2^k\}}\big\|_{p(\cdot)/\varepsilon}^{1/\varepsilon}=\big\|\chi_{\{M(f)>2^k\}}\big\|_{p(\cdot)}.$$
    Since $t\gamma_b(t)$ is equivalent to a nondecreasing function, for $0<q<\infty$, we obtain
    \begin{align*}
    	&\ \sum_{k\in\mathbb{Z}}\bigg\|\sum_{i=0}^{\infty}\sum_{j}\mu_{k,i,j}\|\chi_{I_{k,i,j}}\|_{p(\cdot)}^{-1}\chi_{I_{k,i,j}}\bigg\|_{p(\cdot)}^q\gamma_b^q\bigg(\bigg\|\sum_{i=0}^{\infty}\sum_{j}\chi_{I_{k,i,j}}\bigg\|_{p(\cdot)}\bigg)\\
    	=&\ 3^q\sum_{k\in\mathbb{Z}}2^{kq}\bigg\|\sum_{i=0}^{\infty}\sum_{j}\chi_{I_{k,i,j}}\bigg\|_{p(\cdot)}^q\gamma_b^q\bigg(\bigg\|\sum_{i=0}^{\infty}\sum_{j}\chi_{I_{k,i,j}}\bigg\|_{p(\cdot)}\bigg)\\
    	\lesssim&\ \sum_{k\in\mathbb{Z}}2^{kq}\|\chi_{\{M(f)>2^k\}}\|_{p(\cdot)}^q\gamma_b^q\big(\|\chi_{\{M(f)>2^k\}}\|_{p(\cdot)}\big)\\
    	\approx&\ \|M(f)\|_{p(\cdot),q,b}^q=\|f\|_{H_{p(\cdot),q,b}^M}^q.
    \end{align*}
    Similarly, for the case of $q=\infty$,
    \begin{align*}
    	&\ \bigg\|\sum_{i=0}^{\infty}\sum_{j}\mu_{k,i,j}\|\chi_{I_{k,i,j}}\|_{p(\cdot)}^{-1}\chi_{I_{k,i,j}}\bigg\|_{p(\cdot)}\gamma_b\bigg(\bigg\|\sum_{i=0}^{\infty}\sum_{j}\chi_{I_{k,i,j}}\bigg\|_{p(\cdot)}\bigg)\\
    	=&\ 3\cdot2^{k}\bigg\|\sum_{i=0}^{\infty}\sum_{j}\chi_{I_{k,i,j}}\bigg\|_{p(\cdot)}\gamma_b\bigg(\bigg\|\sum_{i=0}^{\infty}\sum_{j}\chi_{I_{k,i,j}}\bigg\|_{p(\cdot)}\bigg)\\
    	\lesssim&\ 2^k\|\chi_{\{M(f)>2^k\}}\|_{p(\cdot)}\gamma_b\big(\|\chi_{\{M(f)>2^k\}}\|_{p(\cdot)}\big)\\
    	\lesssim&\ \|M(f)\|_{p(\cdot),\infty,b}=\|f\|_{H_{p(\cdot),\infty,b}^M}.
    \end{align*}
    Hence, we have $\|f\|_{H_{p(\cdot),q,b}^{s-at,r,3}}\lesssim\|f\|_{H_{p(\cdot).q.b}^M}$.

    We omit the proof of the converse part, since it is similar to the proof of Theorem \ref{ad1}. Thus, the proof is complete.
\end{proof}

The next lemma is used in the proof of the previous theorem and we refer the readers to Lemma 3.6 in \cite{jwzw} for its proof.
\begin{lem}\label{l11}
	Let $p(\cdot)\in\mathcal{P}(\Omega)$ satisfy $(\ref{gs3})$ and $\{\mathcal{F}_n\}_{n\ge0}$ be regular. Taking the same stopping times $\tau_{k}$ and $v_k$ as in the proof of Theorem $\ref{ad2}$, we have
	 $$\|\chi_{\{\tau_k<\infty\}}\|_{p(\cdot)}\lesssim\|\chi_{\{v_k<\infty\}}\|_{p(\cdot)}.$$
\end{lem}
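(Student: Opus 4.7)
The plan is to note the pointwise inequality $\chi_{\{\tau_k<\infty\}}\le\mathcal{R}\sup_{n\ge 0}\mathbb{E}_n(\chi_{\{v_k<\infty\}})$, which holds because on $\{\tau_k=n\}\subset F_{n+1}^k$ one has $\mathbb{E}_n(\chi_{\{v_k=n+1\}})\ge 1/\mathcal{R}$. If $p_->1$ the Doob inequality of Lemma \ref{Doob} finishes the proof at once. The interesting case is $p_-\le 1$, where the Doob inequality on $L_{p(\cdot)}$ is unavailable; there I will replace it by a direct atomic argument combining condition $(\ref{gs3})$ with the regularity of $\{\mathcal{F}_n\}_{n\ge 0}$.

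First I decompose $\{\tau_k=n\}\in\mathcal{F}_n$ into a disjoint countable union of atoms $I_{k,n,j}\in A(\mathcal{F}_n)$. Since $\mathbb{P}(I_{k,n,j}\cap\{v_k=n+1\})\ge\mathbb{P}(I_{k,n,j})/\mathcal{R}$, this intersection is a nonempty union of $\mathcal{F}_{n+1}$-atoms, and by regularity applied upward every $\mathcal{F}_{n+1}$-sub-atom of the $\mathcal{F}_n$-atom $I_{k,n,j}$ has mass at least $\mathbb{P}(I_{k,n,j})/\mathcal{R}$. I pick one such $\mathcal{F}_{n+1}$-atom inside $I_{k,n,j}\cap\{v_k=n+1\}$ and call it $B_{k,n,j}$. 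The sets $B_{k,n,j}$ are then pairwise disjoint atoms of $\mathcal{F}_{n+1}$ contained in $\{v_k<\infty\}$, so $\|\sum_{n,j}\chi_{B_{k,n,j}}\|_{p(\cdot)}\le\|\chi_{\{v_k<\infty\}}\|_{p(\cdot)}$; it therefore suffices to show $\|\chi_{\{\tau_k<\infty\}}\|_{p(\cdot)}\lesssim\|\sum_{n,j}\chi_{B_{k,n,j}}\|_{p(\cdot)}$.

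For this I run a modular argument. Set $\lambda:=\|\sum_{n,j}\chi_{B_{k,n,j}}\|_{p(\cdot)}\le 1$. Disjointness and the definition of the Luxemburg norm force $\int_{B_{k,n,j}}\lambda^{-p(\omega)}\,d\mathbb{P}\le 1$, and together with condition $(\ref{gs3})$ applied to both the $\mathcal{F}_{n+1}$-atom $B_{k,n,j}$ and the $\mathcal{F}_n$-atom $I_{k,n,j}$ this yields the crucial lower bound $\lambda\gtrsim\mathbb{P}(I_{k,n,j})^{1/p_+(I_{k,n,j})}$. The term-by-term estimate
\begin{equation*}
\int_{I_{k,n,j}}\lambda^{-p(\omega)}\,d\mathbb{P}\le\mathbb{P}(I_{k,n,j})\lambda^{-p_+(I_{k,n,j})}\le C\,\mathbb{P}(B_{k,n,j})\lambda^{-p_-(B_{k,n,j})}\le C\int_{B_{k,n,j}}\lambda^{-p(\omega)}\,d\mathbb{P}
\end{equation*}
then holds with $C$ controlled by $\mathcal{R}(1/\lambda)^{p_+(I_{k,n,j})-p_-(I_{k,n,j})}$, and the lower bound on $\lambda$ combined with $(\ref{gs3})$ on $I_{k,n,j}$ bounds this uniformly by $\mathcal{R}\,K_{p(\cdot)}^{1/p_-}$. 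Summing over $(n,j)$ and invoking the homogeneity of the modular produces $\|\chi_{\{\tau_k<\infty\}}\|_{p(\cdot)}\lesssim\lambda\le\|\chi_{\{v_k<\infty\}}\|_{p(\cdot)}$. The hard part is precisely this uniform bound on $(1/\lambda)^{p_+(I_{k,n,j})-p_-(I_{k,n,j})}$: without $(\ref{gs3})$ it could blow up as $\mathbb{P}(I_{k,n,j})\to 0$, and it is the scale-free condition $\mathbb{P}(I)^{p_-(I)-p_+(I)}\le K_{p(\cdot)}$ on every $\mathcal{F}_n$-atom that saves the day.
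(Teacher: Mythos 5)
Your argument is correct. The paper itself gives no proof of this lemma—it cites Lemma 3.6 of \cite{jwzw}—but the proof there follows the same route you take: use regularity to locate, inside each $\mathcal{F}_n$-atom $I_{k,n,j}\subset\{\tau_k=n\}\subset F_{n+1}^k$, an $\mathcal{F}_{n+1}$-atom $B_{k,n,j}\subset\{v_k=n+1\}$ of mass $\ge\mathbb{P}(I_{k,n,j})/\mathcal{R}$, and then transfer the Luxemburg modular term by term from the $I$'s to the $B$'s using condition $(\ref{gs3})$. Two minor remarks, neither affecting validity: the case split $p_->1$ versus $p_-\le1$ is superfluous, since your modular argument already covers all $p(\cdot)\in\mathcal{P}(\Omega)$; and the lower bound $\lambda\gtrsim\mathbb{P}(I_{k,n,j})^{1/p_+(I_{k,n,j})}$ requires $(\ref{gs3})$ only on $I_{k,n,j}$, not on $B_{k,n,j}$, via the chain $\lambda\ge\mathbb{P}(B_{k,n,j})^{1/p_-(B_{k,n,j})}\ge\mathbb{P}(B_{k,n,j})^{1/p_-(I_{k,n,j})}\ge\mathcal{R}^{-1/p_-}\mathbb{P}(I_{k,n,j})^{1/p_-(I_{k,n,j})}$ followed by $(\ref{gs3})$ on $I_{k,n,j}$ to replace $1/p_-(I_{k,n,j})$ by $1/p_+(I_{k,n,j})$ at the cost of a constant.
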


\begin{rem}
	Let $p(\cdot)\in\mathcal{P}(\cdot)$ and $b$ be a slowly varying function. Similarly to Remark $\ref{rem4.2}$, we know that the sum $\sum\limits_{k=m}^{l}\sum\limits_{i=0}^{\infty}\sum\limits_{j}\mu_{k,i,j}a^{k,i,j}$ converges to $f$ in $H_{p(\cdot),q,b}^M$ $($resp. $H_{p(\cdot),q,b}^S)$ if $0<q<\infty$ and in $\mathcal{H}^M_{p(\cdot),\infty,b}$ $($resp. $\mathcal{H}_{p(\cdot),\infty,b}^S)$, as $m\rightarrow-\infty$, $l\rightarrow\infty$.
\end{rem}

Next we will prove that the conclusion in Theorem \ref{ad1} is also valid for $\mathcal{Q}_{p(\cdot),q,b}$ and $\mathcal{P}_{p(\cdot),q,b}$ with simple $(p(\cdot),\infty)^S$-atoms and simple $(p(\cdot),\infty)^M$-atoms, respectively.

\begin{thm}\label{ad3}
	Let $p(\cdot)\in\mathcal{P}(\Omega)$ satisfy $(\ref{gs3})$, $0<q\le\infty$ and $b$ be a slowly varying function. Then
	$$\mathcal{Q}_{p(\cdot),q,b}=H_{p(\cdot),q,b}^{s-at,\infty,2}\ \ \text{and}\ \ \mathcal{P}_{p(\cdot),q,b}=H_{p(\cdot),q,b}^{s-at,\infty,3}$$
	with equivalent $($quasi$)$-norms.
\end{thm}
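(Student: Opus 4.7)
The plan is to run the same template as Theorems \ref{ad1} and \ref{ad2}, with the key simplification that no regularity of $\{\mathcal{F}_n\}_{n\ge 0}$ is required. The reason is that $\mathcal{Q}_{p(\cdot),q,b}$ and $\mathcal{P}_{p(\cdot),q,b}$ already come equipped with a predictable majorant $\lambda$, which will play the role that was played by $s(f)$ (respectively by $M(f)$ together with its regularized companion) in the two earlier proofs. I spell out $\mathcal{Q}_{p(\cdot),q,b}=H_{p(\cdot),q,b}^{s-at,\infty,2}$ in detail; the identity for $\mathcal{P}_{p(\cdot),q,b}$ is entirely analogous with $S$ replaced by $M$.

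For the forward direction, given $f\in\mathcal{Q}_{p(\cdot),q,b}$ and $\lambda\in\Lambda_{p(\cdot),q,b}$ with $S_n(f)\le\lambda_{n-1}$, I would set $\tau_k:=\inf\{n:\lambda_n>2^k\}$. Since $\lambda_n$ is $\mathcal{F}_n$-measurable and the sequence is nondecreasing, $\{\tau_k\le n\}=\{\lambda_n>2^k\}\in\mathcal{F}_n$, so each $\tau_k$ is a stopping time, and the inequality $S_m(f)\le\lambda_{m-1}\le 2^k$ on $\{\tau_k\ge m\}$ gives $S(f^{\tau_k})\le 2^k$ everywhere. Decomposing $\{\tau_k=i\}=\bigsqcup_j I_{k,i,j}$ into $\mathcal{F}_i$-atoms and defining $\mu_{k,i,j}:=3\cdot 2^k\|\chi_{I_{k,i,j}}\|_{p(\cdot)}$ together with $a^{k,i,j}:=\mu_{k,i,j}^{-1}\chi_{I_{k,i,j}}(f^{\tau_{k+1}}-f^{\tau_k})$ exactly as in the proof of Theorem \ref{ad1}, the same verification shows that each $a^{k,i,j}$ is a simple $(p(\cdot),\infty)^S$-atom and that (\ref{deco-f}) holds. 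Since $\bigcup_{i,j}I_{k,i,j}=\{\tau_k<\infty\}=\{\lambda_\infty>2^k\}$, Lemma \ref{fanshu}(3) immediately delivers
$$\|f\|_{H_{p(\cdot),q,b}^{s-at,\infty,2}}\lesssim\Big\|\big\{2^k\|\chi_{\{\lambda_\infty>2^k\}}\|_{p(\cdot)}\gamma_b(\|\chi_{\{\lambda_\infty>2^k\}}\|_{p(\cdot)})\big\}_{k\in\mathbb{Z}}\Big\|_{l_q}\approx\|\lambda_\infty\|_{p(\cdot),q,b},$$
and taking the infimum over admissible $\lambda$ finishes this direction.

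For the converse direction, starting from an atomic decomposition (\ref{deco-f}), the natural predictable majorant
$$\lambda_n:=\sum_k\sum_{i\le n}\sum_j\mu_{k,i,j}\|\chi_{I_{k,i,j}}\|_{p(\cdot)}^{-1}\chi_{I_{k,i,j}}$$
is nonnegative, nondecreasing and adapted because $\chi_{I_{k,i,j}}\in\mathcal{F}_i\subseteq\mathcal{F}_n$ whenever $i\le n$. Since the martingale differences of $a^{k,i,j}$ vanish for indices $m\le i$ and $S(a^{k,i,j})\le\|\chi_{I_{k,i,j}}\|_{p(\cdot)}^{-1}\chi_{I_{k,i,j}}$, the sublinearity of $S$ yields $S_n(f)\le\lambda_{n-1}$. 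To control $\|\lambda_\infty\|_{p(\cdot),q,b}$ I set $E_k:=\bigcup_{i,j}I_{k,i,j}$, observe that $\lambda_\infty=3\sum_k 2^k\chi_{E_k}$, and for each $k_0\in\mathbb{Z}$ split $\lambda_\infty=\psi_{k_0}+\eta_{k_0}$ with $\psi_{k_0}:=3\sum_{k<k_0}2^k\chi_{E_k}\le 3\cdot 2^{k_0}$ and $\eta_{k_0}:=3\sum_{k\ge k_0}2^k\chi_{E_k}$. Because $\{\eta_{k_0}>2^{k_0}\}\subset\bigcup_{k\ge k_0}E_k$, Lemma \ref{wuqiongbianb} (applied with any $0<m<\underline{p}$) combined with the fact that $t^m\gamma_b^m(t)=(t\gamma_b(t))^m$ is equivalent to a nondecreasing function verifies the hypothesis of Lemma \ref{belong} for $\mu_k:=\|\chi_{E_k}\|_{p(\cdot)}\gamma_b(\|\chi_{E_k}\|_{p(\cdot)})$, and its conclusion is exactly $\|\lambda_\infty\|_{p(\cdot),q,b}\lesssim\|f\|_{H_{p(\cdot),q,b}^{s-at,\infty,2}}$.

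The hard part is this last step: the parameters $m\in(0,\underline{p})$ and $\varepsilon\in(0,1)$ must be picked so that the $\gamma_b$-factors line up correctly in the conjunction of Lemmas \ref{wuqiongbianb} and \ref{belong}; the remainder is a routine transcription of the simple-atom machinery of Theorems \ref{ad1} and \ref{ad2}. The $\mathcal{P}$-case follows the same blueprint line by line: in the forward direction the bound $M(f^{\tau_k})\le 2^k$ comes from $|f_n|\le\lambda_{n-1}\le 2^k$ for $n\le\tau_k$, while in the converse direction the estimate $|f_n|\le\sum_k\sum_{i\le n-1}\sum_j\mu_{k,i,j}|\mathbb{E}_n(a^{k,i,j})|\le\lambda_{n-1}$ follows from $|\mathbb{E}_n(a^{k,i,j})|\le M(a^{k,i,j})\le\|\chi_{I_{k,i,j}}\|_{p(\cdot)}^{-1}\chi_{I_{k,i,j}}$.
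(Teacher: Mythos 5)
Your proof is correct and follows the same approach as the paper: stopping times $\tau_k=\inf\{n:\lambda_n>2^k\}$ built from the predictable majorant $\lambda$ for the forward inclusion, and a predictable majorant assembled from the atomic decomposition for the converse. Your direct invocation of Lemma~\ref{belong} via the pointwise bound $\psi_{k_0}=3\sum_{k<k_0}2^k\chi_{E_k}\le 3\cdot 2^{k_0}$ is a clean way to carry out what the paper describes as replaying the $T_1,T_2$ estimates from Theorem~\ref{ad1}, since for $\infty$-atoms the low-$k$ tail is genuinely $L_\infty$-bounded and the hard part of the $T_1$ argument evaporates.
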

\begin{proof}
	The proof is similar to that of Theorem \ref{ad1}, so we only sketch it. Let $f=(f_n)_{n\ge0}\in\mathcal{Q}_{p(\cdot),q,b}$ (resp. $\mathcal{P}_{p(\cdot),q,b}$). For $k\in\mathbb{Z}$, define the stopping times
	$$\tau_k:=\inf\{n\in\mathbb{N}:\lambda_n>2^k\},$$
	where $(\lambda_n)_{n\ge0}$ is the sequence in the definition of $\mathcal{Q}_{p(\cdot),q,b}$ (resp. $\mathcal{P}_{p(\cdot),q,b}$). Let the definitions of $\mu_{k,i,j}$ and $a_n^{k,i,j}$ be the same as in the proof of Theorem \ref{ad1}. Then we get
	 $$f_n=\sum_{k\in\mathbb{Z}}\sum_{i=0}^{n-1}\sum_{j}\mu_{k,i,j}\mathbb{E}_n(a^{k,i,j}),$$
	where $(a^{k,i,j})_{k\in\mathbb{Z},i\in\mathbb{N},j}$ is a sequence of simple $(p(\cdot),\infty)^S$-atoms (resp. simple $(p(\cdot),\infty)^M$-atoms). Moreover,  $H_{p(\cdot),q,b}^{s-at,\infty,2}\supset \mathcal{Q}_{p(\cdot),q,b}\ \ (\text{resp.}\ H_{p(\cdot),q,b}^{s-at,\infty,3}\supset \mathcal{P}_{p(\cdot),q,b})$.
	
	For the converse part, let
	 $$\lambda_n=\sum_{k\in\mathbb{Z}}\sum_{i=0}^{n-1}\sum_{j}\mu_{k,i,j}\chi_{I_{k,i,j}}\|S(a^{k,i,j})\|_\infty$$
	$$\bigg(\mathrm{resp}.\ \lambda_n=\sum_{k\in\mathbb{Z}}\sum_{i=0}^{n-1}\sum_{j}\mu_{k,i,j}\chi_{I_{k,i,j}}\|M(a^{k,i,j})\|_\infty\bigg),$$
	where $(a^{k,i,j})_{k\in\mathbb{Z},i\in\mathbb{N},j}$ is a sequence of simple $(p(\cdot),\infty)^S$-atoms (resp. simple $(p(\cdot),\infty)^M$-atoms) associated with $(I_{k,i,j})_{k\in\mathbb{Z},i\in\mathbb{N},j}$ and $\mu_{k,i,j}=3\cdot2\|\chi_{I_{k,i,j}}\|_{p(\cdot)}$.
	Then $(\lambda_n)_{n\ge0}\in\Lambda_{p(\cdot),q,b}$ with $S_{n+1}(f)\le\lambda_n$ (resp. $M_{n+1}(f)\le\lambda_n$) for any $n\ge0$. For any given integer $k_0$, let
	$$\lambda_\infty=\lambda_\infty^{(1)}+\lambda_\infty^{(2)},$$
	where
	 $$\lambda_\infty^{(1)}=\sum_{k=-\infty}^{k_0-1}\sum_{i=0}^{\infty}\sum_{j}\mu_{k,i,j}\chi_{I_{k,i,j}}\|S(a^{k,i,j})\|_\infty,$$
	 $$\lambda_\infty^{(2)}=\sum_{k=k_0}^{\infty}\sum_{i=0}^{\infty}\sum_{j}\mu_{k,i,j}\chi_{I_{k,i,j}}\|S(a^{k,i,j})\|_\infty$$
	$$\bigg(\text{resp.}\ \lambda_\infty^{(1)}=\sum_{k=-\infty}^{k_0-1}\sum_{i=0}^{\infty}\sum_{j}\mu_{k,i,j}\chi_{I_{k,i,j}}\|M(a^{k,i,j})\|_\infty,$$
	$$\ \ \ \ \ \ \ \ \  \lambda_\infty^{(2)}=\sum_{k=k_0}^{\infty}\sum_{i=0}^{\infty}\sum_{j}\mu_{k,i,j}\chi_{I_{k,i,j}}\|M(a^{k,i,j})\|_\infty\bigg).$$
	By replacing $T_1$ and $T_2$ in the proof of Theorem \ref{ad1} with $\lambda_\infty^{(1)}$ and $\lambda_\infty^{(2)}$, respectively, we obtain $f\in \mathcal{Q}_{p(\cdot),q,b}$ $(\text{resp}.\ \mathcal{P}_{p(\cdot),q,b})$ and $\mathcal{Q}_{p(\cdot),q,b}\supset H_{p(\cdot),q,b}^{s-at,\infty,2}\ \ (\text{resp.}\ \mathcal{P}_{p(\cdot),q,b}\supset H_{p(\cdot),q,b}^{s-at,\infty,3})$.
\end{proof}

\begin{rem}
	Let $p(\cdot)\in\mathcal{P}(\cdot)$, $0<q<\infty$ and $b$ be a slowly varying function.
	The sum $\sum\limits_{k=m}^{l}\sum\limits_{i=0}^{\infty}\sum\limits_{j}\mu_{k,i,j}a^{k,i,j}$ converges to $f$ in $\mathcal{Q}_{p(\cdot),q,b}$ $(\text{resp.}\ \mathcal{P}_{p(\cdot),q,b})$ as $m\rightarrow-\infty$, $l\rightarrow\infty$. The proof is similar to Remark $\ref{rem4.2}$ and left to the reader.
\end{rem}

Especially, if we set $p(\cdot)\equiv p$ in Theorems \ref{ad1}, \ref{ad2} and \ref{ad3}, then the following result holds.
\begin{coro}\label{coro1}
	Let $0<p<\infty$, $0<q\le\infty$ and $b$ be a slowing varying function. We obtain the atomic decomposition theorems of martingale Hardy-Lorentz-Karamata spaces $H_{p,q,b}^s$, $\mathcal{Q}_{p,q,b}$ and $\mathcal{P}_{p,q,b}$, respectively. Moreover, if $\{\mathcal{F}_n\}_{n\ge0}$ is regular, we have the atomic decompositions of $H_{p,q,b}^M$ and $H_{p,q,b}^S$, respectively.
\end{coro}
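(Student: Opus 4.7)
The plan is to derive this corollary directly from Theorems \ref{ad1}, \ref{ad2}, and \ref{ad3} by specializing the variable exponent $p(\cdot)$ to the constant $p$. The main observation is that when $p(\cdot)\equiv p$, the key hypothesis \eqref{gs3} reduces to the trivial statement $\mathbb{P}(A)^{p_-(A)-p_+(A)} = \mathbb{P}(A)^0 = 1 \le K_{p(\cdot)}$, so condition \eqref{gs3} is automatically satisfied with any constant $K_{p(\cdot)}\ge 1$. Hence the hypotheses of all three atomic decomposition theorems are met as soon as we choose $r$ with $\max\{p,1\}<r\le\infty$, which is always possible.

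Next, I would translate the identifications spelled out in the Remark following Definition \ref{dingyi}: with $b_1(t)=b(t^{1/p})$ (a slowly varying function by Proposition \ref{b}(4)), we have $\|f\|_{p(\cdot),q,b}=\|f\|_{p,q,b_1}$ for constant $p(\cdot)\equiv p$, and therefore
\begin{align*}
H^s_{p(\cdot),q,b}=H^s_{p,q,b_1}, \quad H^S_{p(\cdot),q,b}=H^S_{p,q,b_1}, \quad H^M_{p(\cdot),q,b}=H^M_{p,q,b_1},
\end{align*}
together with the analogous identifications for $\mathcal{Q}_{p(\cdot),q,b}$ and $\mathcal{P}_{p(\cdot),q,b}$. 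By relabeling $b_1\mapsto b$ (equivalently, starting from any slowly varying $b$ and recovering the original $b$ from $b_1$), the statements of Theorems \ref{ad1}, \ref{ad2}, \ref{ad3} become the desired atomic decompositions of $H^s_{p,q,b}$, $\mathcal{Q}_{p,q,b}$ and $\mathcal{P}_{p,q,b}$, and under the additional regularity of $\{\mathcal{F}_n\}_{n\ge0}$, of $H^S_{p,q,b}$ and $H^M_{p,q,b}$. In particular, the simple $(p(\cdot),r)^*$-atoms of Definition \ref{atom} collapse to the usual simple $(p,r)^*$-atoms of Weisz \cite{w}, so no further verification is needed.

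There is essentially no obstacle here: the whole content of the corollary lies in recognizing that constant exponents trivially satisfy \eqref{gs3} and that the variable Lorentz-Karamata (quasi)-norm specializes correctly. The only point requiring a line of care is that the slowly varying weight changes from $b$ to $b_1(t)=b(t^{1/p})$ under specialization, but since the map $b\mapsto b_1$ is a bijection on the class of slowly varying functions (Proposition \ref{b}(4)), quantifying over all slowly varying $b$ in the general theorems is equivalent to quantifying over all slowly varying $b$ in the corollary. This completes the plan.
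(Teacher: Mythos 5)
Your proposal is correct and matches the paper's (implicit) approach: the corollary is obtained by specializing $p(\cdot)\equiv p$ in Theorems \ref{ad1}, \ref{ad2} and \ref{ad3}, using that (\ref{gs3}) holds trivially for constant exponents and that the (quasi)-norm identification $\|\cdot\|_{p(\cdot),q,b}=\|\cdot\|_{p,q,b_1}$ with $b_1(t)=b(t^{1/p})$ is a bijective reparametrization of slowly varying weights (Proposition \ref{b}(4) and the Remark after Definition \ref{dingyi}). Your observations that the simple $(p(\cdot),r)^*$-atoms of Definition \ref{atom} reduce to the classical $(p,r)^*$-atoms and that $r$ can always be chosen with $\max\{p,1\}<r\le\infty$ are exactly the two sanity checks needed.
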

\begin{rem}
	The atomic decomposition theorems of $H_{p,q,b}^s$, $\mathcal{Q}_{p,q,b}$ and $\mathcal{P}_{p,q,b}$ can be found in \cite{jxz} under the condition that $0<p<\infty$, $0<q\le\infty$ and $b$ is a nondecreasing slowly varying function. Notice that the slowly varying function $b$ is not necessarily nondecreasing in Corollary $\ref{coro1}$. Hence our results extend the atomic decomposition theorems of $H_{p,q,b}^s$, $\mathcal{Q}_{p,q,b}$ and $\mathcal{P}_{p,q,b}$ in \cite{jxz} and also give the atomic decomposition theorems of $H_{p,q,b}^M$ and $H_{p,q,b}^S$.
\end{rem}

Particularly, we have the following conclusion if $b\equiv1$ in Theorems \ref{ad1}, \ref{ad2} and \ref{ad3}.
\begin{coro}\label{c1}
	Let $p(\cdot)\in\mathcal{P}(\Omega)$ satisfy $(\ref{gs3})$ and $0<q\le\infty$. We get the atomic decomposition theorems of the variable martingale Hardy-Lorentz spaces $H_{p(\cdot),q}^s$, $\mathcal{P}_{p(\cdot),q}$ and $\mathcal{Q}_{p(\cdot),q}$, respectively. Furthermore, if $\{\mathcal{F}_n\}_{n\ge0}$ is regular, we have the atomic decomposition theorems of $H_{p(\cdot),q}^M$ and $H_{p(\cdot),q}^S$.
\end{coro}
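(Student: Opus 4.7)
The plan is to obtain Corollary \ref{c1} as a direct specialization of Theorems \ref{ad1}, \ref{ad2} and \ref{ad3} to the case $b\equiv 1$. First I would observe that the constant function $b\equiv 1$ is a slowly varying function (this was recorded in the example in Section 2.2), and for such a $b$ one has $\gamma_b\equiv 1$ on $(0,\infty)$. Consequently the quasi-norm $\|\cdot\|_{p(\cdot),q,b}$ collapses to the variable Lorentz quasi-norm $\|\cdot\|_{p(\cdot),q}$, and each of the five variable martingale Hardy-Lorentz-Karamata spaces introduced in Definition \ref{hardy} reduces to the corresponding variable martingale Hardy-Lorentz space $H_{p(\cdot),q}^{M}$, $H_{p(\cdot),q}^{s}$, $H_{p(\cdot),q}^{S}$, $\mathcal{P}_{p(\cdot),q}$ or $\mathcal{Q}_{p(\cdot),q}$ of \cite{jwwz,jwzw}.

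Next I would apply the three atomic decomposition theorems in turn. Applying Theorem \ref{ad1} with $b\equiv 1$ (for any fixed $r$ with $\max\{p_+,1\}<r\le\infty$) yields the characterization of $H_{p(\cdot),q}^{s}$ via simple $(p(\cdot),r)^{s}$-atoms, with the summability condition taking the simplified form
\[\Bigg\|\Bigg\{\bigg\|\sum_{i=0}^{\infty}\sum_{j}\mu_{k,i,j}\|\chi_{I_{k,i,j}}\|_{p(\cdot)}^{-1}\chi_{I_{k,i,j}}\bigg\|_{p(\cdot)}\Bigg\}_{k\in\mathbb{Z}}\Bigg\|_{l_q}<\infty.\]
Applying Theorem \ref{ad3} with $b\equiv 1$ gives the decompositions of $\mathcal{Q}_{p(\cdot),q}$ and $\mathcal{P}_{p(\cdot),q}$ via simple $(p(\cdot),\infty)^{S}$-atoms and simple $(p(\cdot),\infty)^{M}$-atoms respectively. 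If in addition $\{\mathcal{F}_n\}_{n\ge 0}$ is regular, then Theorem \ref{ad2} with $b\equiv 1$ produces the decompositions of $H_{p(\cdot),q}^{S}$ and $H_{p(\cdot),q}^{M}$ via simple $(p(\cdot),r)^{S}$- and $(p(\cdot),r)^{M}$-atoms.

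All hypotheses of Theorems \ref{ad1}--\ref{ad3} are satisfied by the choice $b\equiv 1$, so no separate argument is needed and no genuine obstacle arises: the only thing to check is the identification of the spaces and quasi-norms listed above, after which the five decompositions are literal instances of the theorems already proved. For the same reason the corresponding convergence statements noted in Remarks \ref{rem4.2} and the remarks following Theorems \ref{ad2} and \ref{ad3} transfer verbatim to the variable martingale Hardy-Lorentz setting.
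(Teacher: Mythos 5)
Your proof is correct and follows exactly the approach intended by the paper: Corollary~\ref{c1} is obtained by specializing Theorems~\ref{ad1}, \ref{ad2} and \ref{ad3} to $b\equiv 1$, noting that $\gamma_b\equiv 1$ so the quasi-norms and spaces collapse to the variable Lorentz versions of \cite{jwwz,jwzw}. The paper treats this as an immediate corollary requiring no separate argument, which matches your reasoning.
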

\begin{rem}
	Corollary $\ref{c1}$ was proved in \cite{jwwz,jwzw}.
\end{rem}

\subsection{Atomic Decompositions via  Atoms}
In this subsection, we give a slightly different atomic decomposition for the five  variable martingale Hardy-Lorentz-Karamata spaces.

\begin{defi} \label{1defi1}
	Let $p(\cdot) \in \mathcal{P}(\Omega)$. A measurable function $a$ is called a $(p(\cdot), \infty)^s$-atom $($resp. $(p(\cdot),\infty)^S$-atom \text{or}\;$(p(\cdot),\infty)^M$-atom$)$ if there exists a stopping time $\tau$ such that
	
	$(1)$ $\mathbb{E}_{n}(a)=0$,  $\forall\; n\leq \tau$;
	
	$(2)$ $\|s(a)\|_\infty\ (resp.\ \|S(a)\|_\infty \;\text{or}\; \|M(a)\|_\infty)\leq\left\|\chi_{\{\tau<\infty\}}\right\|^{-1}_{p(\cdot)}.$
\end{defi}

\begin{defi}
	Let $p(\cdot)\in\mathcal{P}(\Omega)$, $0<q\leq \infty$ and $b$ be a slowly varying function.
	The atomic martingale Lorentz-Karamata space with variable exponent $H_{p(\cdot),q,b}^{at,\infty,1}$ $\big(\text{resp.}\; H_{p(\cdot),q,b}^{at,\infty,2}$ $\text{or}\; H_{p(\cdot),q,b}^{at,\infty,3}\big)$ is defined as the set of all $f=(f_n)_{n\geq 0} \in \mathcal{M} $ such that for each $n\geq0$,
	\begin{eqnarray*}
		f_n= \sum_{ k \in \mathbb{Z} } \mu_{k} \mathbb{E}_n (a^{k}),
	\end{eqnarray*}
    where $(a^k)_{k\in\mathbb{Z}}$ is a sequence of $(p(\cdot),\infty)^s$-atoms $\big(\text{resp.}\; (p(\cdot),\infty)^S\text{-atoms}\; \text{or}\; (p(\cdot),\infty)^M\text{-atoms}\big)$ associated with stopping times $(\tau_k)_{k\in\mathbb{Z}}$ and
    $$\Big\|\big\{\mu_k\gamma_b\big(\|\chi_{\{\tau_k<\infty\}}
\|_{p(\cdot)}\big)\big\}_{k\in\mathbb{Z}}\Big\|_{l_q}<\infty$$
  with $\mu_k=3\cdot2^k\|\chi_{\{\tau_k<\infty\}}\|_{p(\cdot)}$ for each $k\in\mathbb{Z}$.
	Endow $H_{p(\cdot),q,b}^{at,\infty,1}$ $\big(\text{resp.}\; H_{p(\cdot),q,b}^{at,\infty,2}\; \text{or}\; H_{p(\cdot),q,b}^{at,\infty,3}\big)$ with the $($quasi$)$-norm
	\begin{align*}
		\|f\|_{H_{p(\cdot),q,b}^{at,\infty,1}}\ \big(\text{resp.}\ \|f\|_{H_{p(\cdot),q,b}^{at,\infty,2}}\; \text{or}\; \|f\|_{H_{p(\cdot),q,b}^{at,\infty,3}}\big)=\inf
		 \Big\|\big\{\mu_k\gamma_b\big(\|\chi_{\{\tau_k<\infty\}}
\|_{p(\cdot)}\big)\big\}_{k\in\mathbb{Z}}\Big\|_{l_q},
	\end{align*}
	where the infimum is taken over all decompositions of $f$ as above.
\end{defi}

With the help of the atom $a$ defined in Definition \ref{1defi1}, we obtain another kind of atomic decompositions for $H_{p(\cdot),q,b}^s$, $\mathcal{Q}_{p(\cdot),q,b}$, $\mathcal{P}_{p(\cdot),q,b}$, $H_{p(\cdot),q,b}^M$ and $H_{p(\cdot),q,b}^S$. The proofs are similar to those of Theorems \ref{ad1}, \ref{ad2} and \ref{ad3}, so we leave them to the reader.

\begin{thm}\label{ad11}
	Let $p(\cdot)\in\mathcal{P}(\Omega)$ satisfy $(\ref{gs3})$, $0<q\le\infty$ and let $b$ be a slowly varying function. Then
	 $$H_{p(\cdot),q,b}^s=H_{p(\cdot),q,b}^{at,\infty,1},\quad\mathcal{Q}_{p(\cdot),q,b}=H_{p(\cdot),q,b}^{at,\infty,2}\quad\text{and}\quad \mathcal{P}_{p(\cdot),q,b}=H_{p(\cdot),q,b}^{at,\infty,3}$$
with equivalent $($quasi$)$-norms.
\end{thm}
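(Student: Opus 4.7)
The plan is to follow the templates of Theorems \ref{ad1} and \ref{ad3} almost verbatim, but to exploit the stronger $L_\infty$-control built into Definition \ref{1defi1} in order to shorten several steps. I will treat the equality $H^s_{p(\cdot),q,b}=H^{at,\infty,1}_{p(\cdot),q,b}$ in detail; the other two are entirely analogous, with $S$ or $M$ in place of $s$ and with predictable-sequence stopping times in place of those built from $s(f)$.

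For the inclusion $H^s_{p(\cdot),q,b}\subset H^{at,\infty,1}_{p(\cdot),q,b}$, given $f\in H^s_{p(\cdot),q,b}$ I would set $\tau_k:=\inf\{n\in\mathbb{N}:s_{n+1}(f)>2^k\}$, $\mu_k:=3\cdot 2^k\|\chi_{\{\tau_k<\infty\}}\|_{p(\cdot)}$ and $a^k:=(f^{\tau_{k+1}}-f^{\tau_k})/\mu_k$, exactly as in the proof of Theorem \ref{ad1} but collapsing the index $(i,j)$ into a single atom per level. The properties $\mathbb{E}_n(a^k)=0$ for $n\le\tau_k$ and $s(a^k)\le(2^{k+1}+2^k)/\mu_k=\|\chi_{\{\tau_k<\infty\}}\|_{p(\cdot)}^{-1}$ identify each $a^k$ as a $(p(\cdot),\infty)^s$-atom, and the telescoping identity $\mu_k\mathbb{E}_n(a^k)=f_n^{\tau_{k+1}}-f_n^{\tau_k}$ combined with $f_n^{\tau_k}\to f_n$ as $k\to\infty$ and $f_n^{\tau_k}\to 0$ as $k\to-\infty$ gives $f_n=\sum_k\mu_k\mathbb{E}_n(a^k)$. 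Since $\{\tau_k<\infty\}=\{s(f)>2^k\}$, Lemma \ref{fanshu}\,(3) yields
$$\bigl\|\{\mu_k\gamma_b(\|\chi_{\{\tau_k<\infty\}}\|_{p(\cdot)})\}_{k\in\mathbb{Z}}\bigr\|_{l_q}\approx\|s(f)\|_{p(\cdot),q,b}=\|f\|_{H^s_{p(\cdot),q,b}}.$$

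For the converse, suppose $f=\sum_k\mu_k\mathbb{E}_\cdot(a^k)$ with $(p(\cdot),\infty)^s$-atoms $a^k$ and associated stopping times $\tau_k$. Fix an arbitrary $k_0\in\mathbb{Z}$ and split $s(f)\le T_1+T_2$ with $T_1:=\sum_{k<k_0}\mu_k s(a^k)$ and $T_2:=\sum_{k\ge k_0}\mu_k s(a^k)$. The crucial simplification over the simple-atom case is that $\mu_k\|s(a^k)\|_\infty\le 3\cdot 2^k$, which immediately yields the pointwise bound $T_1\lesssim 2^{k_0}$, so $T_1$ plays the role of $\psi_{k_0}$ in Lemma \ref{belong} with no recourse to Hölder's inequality. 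For $T_2$, the vanishing $\mathbb{E}_n(a^k)=0$ for $n\le\tau_k$ forces $d_n a^k=0$ on $\{n\le\tau_k\}$, hence the support of $s(a^k)$ lies inside $\{\tau_k<\infty\}$ and
$$\{T_2>2^{k_0}\}\subset\bigcup_{k\ge k_0}\{\tau_k<\infty\}.$$
Picking $0<m<\underline{p}$ and applying Lemma \ref{wuqiongbianb}, together with $2^{k_0m\varepsilon}\le 2^{km\varepsilon}$ for $k\ge k_0$ and any $\varepsilon>0$, verifies the hypothesis of Lemma \ref{belong} with the sequence $\widetilde\mu_k:=\|\chi_{\{\tau_k<\infty\}}\|_{p(\cdot)}\gamma_b(\|\chi_{\{\tau_k<\infty\}}\|_{p(\cdot)})$ in place of the $\mu_k$ there. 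The resulting bound $\|s(f)\|_{p(\cdot),q,b}\lesssim\|\{2^k\widetilde\mu_k\}\|_{l_q}\approx\|\{\mu_k\gamma_b(\|\chi_{\{\tau_k<\infty\}}\|_{p(\cdot)})\}\|_{l_q}$ then yields the reverse inequality upon taking the infimum over all admissible atomic decompositions.

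The equalities $\mathcal{Q}_{p(\cdot),q,b}=H^{at,\infty,2}_{p(\cdot),q,b}$ and $\mathcal{P}_{p(\cdot),q,b}=H^{at,\infty,3}_{p(\cdot),q,b}$ are handled by the same method: in the forward direction, the stopping times are built from the predictable majorant $(\lambda_n)\in\Lambda_{p(\cdot),q,b}$ furnished by the definitions via $\tau_k:=\inf\{n:\lambda_n>2^k\}$ (so no regularity of $\{\mathcal{F}_n\}_{n\ge 0}$ is needed); in the converse direction, a predictable majorant of $S(f)$ or $M(f)$ is assembled as $\lambda_n:=\sum_k\mu_k\chi_{\{\tau_k<\infty\}}\|S(a^k)\|_\infty$ (respectively with $M(a^k)$), and its $L_{p(\cdot),q,b}$-norm is controlled by exactly the same split-and-apply-Lemma \ref{belong} argument. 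The main technical point throughout is matching the hypothesis of Lemma \ref{belong} precisely; this hinges on the support property $\operatorname{supp} s(a^k)\subset\{\tau_k<\infty\}$ built into Definition \ref{1defi1}, and everything else is a streamlined version of the computations already carried out for simple atoms.
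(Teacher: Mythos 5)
Your proposal is correct and follows the route the paper itself prescribes (the paper says Theorem \ref{ad11} is proved ``similarly to Theorems \ref{ad1}, \ref{ad2} and \ref{ad3}''), and you have correctly identified where the $(p(\cdot),\infty)^*$-atom setting simplifies the argument: the built-in $L_\infty$ bound $\mu_k\|s(a^k)\|_\infty\le 3\cdot 2^k$ gives the pointwise estimate $T_1\lesssim 2^{k_0}$ directly, so the entire H\"older machinery used in the converse of Theorem \ref{ad1} to control $T_1$ becomes unnecessary, and the support property $\operatorname{supp}\,s(a^k)\subset\{\tau_k<\infty\}$ (which you correctly derived from $\{\tau_k\ge n\}\in\mathcal{F}_{n-1}$, so that $\mathbb{E}_{n-1}(|d_n a^k|^2)\chi_{\{\tau_k\ge n\}}=0$) plugs straight into Lemma \ref{belong} via Lemma \ref{wuqiongbianb}.

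One small slip worth fixing: in the converse for $\mathcal{Q}_{p(\cdot),q,b}$ and $\mathcal{P}_{p(\cdot),q,b}$ you write the predictable majorant as $\lambda_n:=\sum_k\mu_k\chi_{\{\tau_k<\infty\}}\|S(a^k)\|_\infty$. As written this quantity does not depend on $n$ and is only $\mathcal{F}_\infty$-measurable, so it is not an element of $\Lambda_{p(\cdot),q,b}$; it is in fact $\lambda_\infty$. The adapted sequence you want is $\lambda_n:=\sum_k\mu_k\chi_{\{\tau_k\le n\}}\|S(a^k)\|_\infty$ (the stopping-time analogue of the sum $\sum_{i=0}^{n-1}$ over $\mathcal{F}_i$-atoms in the proof of Theorem \ref{ad3}): this is $\mathcal{F}_n$-measurable, nondecreasing, satisfies $S_{n+1}(f)\le\lambda_n$ because $d_m a^k=0$ whenever $m\le\tau_k$, and has $\lambda_\infty=\sum_k\mu_k\chi_{\{\tau_k<\infty\}}\|S(a^k)\|_\infty$, which is the object you then feed into the split-and-apply-Lemma-\ref{belong} estimate. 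With that correction the argument is complete.
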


\begin{thm}
	Let $p(\cdot)\in\mathcal{P}(\Omega)$ satisfy $(\ref{gs3})$, $0<q\le\infty$ and let $b$ be a slowly varying function. If $\{\mathcal{F}_n\}_{n\ge0}$ is regular, then
	$$H_{p(\cdot),q,b}^S=H_{p(\cdot),q,b}^{at,\infty,2}\quad \text{and}\quad H_{p(\cdot),q,b}^M=H_{p(\cdot),q,b}^{at,\infty,3}$$
	with equivalent $($quasi$)$-norms.
\end{thm}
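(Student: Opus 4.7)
The plan is to mimic the proof of Theorem \ref{ad2}, replacing the simple atoms (supported on individual $\mathcal{F}_i$-atoms $I_{k,i,j}$) by a single general atom $a^k$ per level $k$. I only sketch the case $H_{p(\cdot),q,b}^M = H_{p(\cdot),q,b}^{at,\infty,3}$; the argument for $H_{p(\cdot),q,b}^S$ is entirely analogous, with $v_k := \inf\{n : S_n(f) > 2^k\}$ in place of $v_k := \inf\{n : |f_n| > 2^k\}$.

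For the forward inclusion, given $f\in H_{p(\cdot),q,b}^M$, I would reuse the stopping times $v_k$, the sets $F_n^k := \{\mathbb{E}_{n-1}(\chi_{\{v_k=n\}}) \geq 1/\mathcal{R}\}$ and the predictable stopping times $\tau_k := \inf\{n : \omega \in F_{n+1}^k\}$ constructed in the proof of Theorem \ref{ad2}. Regularity forces $\tau_k < v_k$ on $\{v_k<\infty\}$ and $\tau_k\to\infty$ a.e. Setting $\mu_k := 3\cdot 2^k \|\chi_{\{\tau_k<\infty\}}\|_{p(\cdot)}$ and $a^k := \mu_k^{-1}(f^{\tau_{k+1}} - f^{\tau_k})$, the telescoping identity yields $f_n = \sum_k \mu_k\mathbb{E}_n(a^k)$. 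The bound
\[
M(a^k)\leq \frac{M(f^{\tau_{k+1}}) + M(f^{\tau_k})}{\mu_k} \leq \frac{2^{k+1}+2^k}{\mu_k} = \|\chi_{\{\tau_k<\infty\}}\|_{p(\cdot)}^{-1}
\]
(which uses $\tau_k<v_k$) together with $\mathbb{E}_n(a^k)=0$ for $n\leq\tau_k$ (a standard consequence of the telescoping formula and the vanishing conditional expectations of the martingale differences) certifies that $a^k$ is a $(p(\cdot),\infty)^M$-atom associated with $\tau_k$. Lemma \ref{l11} yields $\|\chi_{\{\tau_k<\infty\}}\|_{p(\cdot)}\lesssim \|\chi_{\{M(f)>2^k\}}\|_{p(\cdot)}$, and combined with Lemma \ref{fanshu}(3) plus the fact that $t\gamma_b(t)$ is equivalent to a nondecreasing function, this controls the atomic (quasi)-norm by $\|M(f)\|_{p(\cdot),q,b}$.

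For the reverse inclusion, take an admissible decomposition $f_n = \sum_k \mu_k\mathbb{E}_n(a^k)$ and bound $M(f)\leq\sum_k \mu_k M(a^k) = T_1 + T_2$ with $T_1=\sum_{k<k_0}$ and $T_2=\sum_{k\geq k_0}$. The atom inequality $\mu_k M(a^k)\leq 3\cdot 2^k$ gives $\|T_1\|_\infty\lesssim 2^{k_0}$, so $T_1$ plays the role of $\psi_{k_0}$ in Lemma \ref{belong}. For $T_2$, the condition $\mathbb{E}_n(a^k)=0$ for all $n\leq \tau_k$ forces $a^k=0$ on $\{\tau_k=\infty\}$, hence $M(a^k)=0$ there, and therefore $\{T_2>2^{k_0}\}\subset\bigcup_{k\geq k_0}\{\tau_k<\infty\}$. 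For any $0<m<\underline{p}$, Lemma \ref{wuqiongbianb} then produces exactly the decay estimate $(\ref{gs19})$, and Lemma \ref{belong} delivers $\|M(f)\|_{p(\cdot),q,b}\lesssim\|\{\mu_k\gamma_b(\|\chi_{\{\tau_k<\infty\}}\|_{p(\cdot)})\}_{k}\|_{l_q}$.

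The main obstacle is the support property of a general $(p(\cdot),\infty)^M$-atom: Definition \ref{1defi1} imposes no explicit support condition, so the inclusion $\{M(a^k)>0\}\subset\{\tau_k<\infty\}$ used in the $T_2$ estimate must be deduced from the vanishing-expectation axiom. This is forced because on $\{\tau_k=\infty\}$ one has $\mathbb{E}_n(a^k)=0$ for every $n$, so the martingale limit—and hence $a^k$ itself—vanishes there. Once this is secured, the $T_2$ analysis reduces to a single sum $\sum_{k\geq k_0}\chi_{\{\tau_k<\infty\}}$ rather than the double sum $\sum_{k\geq k_0}\sum_{i,j}\chi_{I_{k,i,j}}$ of Theorem \ref{ad2}, so the remainder of the argument is notably cleaner.
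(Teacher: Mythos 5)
Your sketch is correct, and it fills in exactly the argument the paper alludes to when it leaves these cases "to the reader" after remarking that the proofs are similar to Theorems \ref{ad1}, \ref{ad2}, and \ref{ad3}. The forward direction reuses, verbatim, the regularity-based stopping times $\tau_k$ from the proof of Theorem \ref{ad2}; the only new ingredient is grouping the whole predictable level set $\{\tau_k<\infty\}$ into a single $(p(\cdot),\infty)^M$-atom instead of splitting it into $\mathcal{F}_i$-atoms $I_{k,i,j}$, which is precisely the passage from the $H^{s-at,\cdot,\cdot}$ framework of Definition \ref{atom} to the $H^{at,\infty,\cdot}$ framework of Definition \ref{1defi1}. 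The reverse direction goes through the $T_1+T_2$ splitting with Lemma \ref{belong}, and the one genuinely delicate point is the one you isolate: the support inclusion $\{M(a^k)>0\}\subset\{\tau_k<\infty\}$ is not explicit in Definition \ref{1defi1} and must be derived from the condition $\mathbb{E}_n(a^k)=0$ for $n\le\tau_k$. Your derivation (on $\{\tau_k=\infty\}$ the whole martingale $(\mathbb{E}_n a^k)_n$ vanishes, hence so does $M(a^k)$ and, by martingale convergence, $a^k$ itself) is the right one and is exactly what Theorem \ref{ad11} implicitly uses as well. For completeness one might also point out that the statement follows almost for free from results the paper proves later — Theorem \ref{mi} gives $H^S_{p(\cdot),q,b}=\mathcal{Q}_{p(\cdot),q,b}$ and $H^M_{p(\cdot),q,b}=\mathcal{P}_{p(\cdot),q,b}$ under regularity, and Theorem \ref{ad11} identifies $\mathcal{Q}$ and $\mathcal{P}$ with the atomic spaces; this is precisely the content of Corollary \ref{cc11} — but that route relies on Section 4, whereas your direct proof is self-contained within Section 3, matching the paper's intended organization.
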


\section{Martingale Inequalities} \label{s4}
In this section, we show some martingale inequalities between different variable martingale Hardy-Lorentz-Karamata spaces. The method we use here is to establish a sufficient condition for a $\sigma$-sublinear operator to be bounded from the variable martingale Hardy-Lorentz-Karamata spaces to the variable Lorentz-Karamata spaces.

Let us recall that an operator $T:X\rightarrow Y$ is said to be $\sigma$-sublinear, if for any constant $c$,
$$\bigg|T\bigg(\sum_{k=1}^{\infty}f_k\bigg)\bigg|\le\sum_{k=1}^{\infty}|T(f_k)|\ \ \text{and}\ \ |T(cf)|=|c||T(f)|,$$
where $X$ is a martingale space and $Y$ is a measurable function space.

\begin{lem}\label{5.1}
	Let $p(\cdot)\in\mathcal{P}(\Omega)$ satisfy $(\ref{gs3})$, $0<q\le\infty$ and let $b$ be a slowly varying function. Suppose that $\max\{p_+,1\}<r<\infty$. If $T:H^s_r\rightarrow L_r$ is a bounded $\sigma$-sublinear operator and
	$$\{|T(a)|>0\}\subset I$$
	for every simple $(p(\cdot),r)^s$-atom $a$ associated with $I$, then for $f\in H_{p(\cdot),q,b}^s$,
	$$\|T(f)\|_{p(\cdot),q,b}\lesssim\|f\|_{H_{p(\cdot),q,b}^s}.$$
\end{lem}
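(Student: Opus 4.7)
The plan is to mirror the converse direction of the proof of Theorem \ref{ad1}, replacing the conditional square operator $s$ by the given $\sigma$-sublinear operator $T$. First I would invoke Theorem \ref{ad1} to decompose $f \in H_{p(\cdot),q,b}^s$ as $f_n = \sum_{k \in \mathbb{Z}} \sum_{i=0}^{n-1} \sum_j \mu_{k,i,j}\, \mathbb{E}_n(a^{k,i,j})$ with simple $(p(\cdot),r)^s$-atoms $a^{k,i,j}$ supported in disjoint atoms $I_{k,i,j}$ (for fixed $k$) and $\mu_{k,i,j} = 3\cdot 2^k \|\chi_{I_{k,i,j}}\|_{p(\cdot)}$, and such that the associated $\ell_q$-norm of the atomic data is controlled by $\|f\|_{H_{p(\cdot),q,b}^s}$. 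By Remark \ref{rem4.2} the partial sums converge to $f$ in $H_{p(\cdot),q,b}^s$ (and when needed, in $H^s_r$ on truncated ranges of $k$), so the $\sigma$-sublinearity of $T$ yields $|T(f)| \le \sum_{k,i,j} \mu_{k,i,j}\, |T(a^{k,i,j})|$ almost everywhere.

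Next, fix an arbitrary $k_0 \in \mathbb{Z}$ and split
\[
|T(f)| \;\le\; T_1 + T_2, \qquad T_1 := \sum_{k<k_0}\sum_{i,j}\mu_{k,i,j}|T(a^{k,i,j})|, \quad T_2 := \sum_{k\ge k_0}\sum_{i,j}\mu_{k,i,j}|T(a^{k,i,j})|.
\]
For $T_1$, the hypothesis that $T : H^s_r \to L_r$ is bounded together with the atom size condition $\|s(a^{k,i,j})\|_r \le \mathbb{P}(I_{k,i,j})^{1/r}\|\chi_{I_{k,i,j}}\|_{p(\cdot)}^{-1}$ gives $\|T(a^{k,i,j})\|_r \lesssim \|\chi_{I_{k,i,j}}\|_r\|\chi_{I_{k,i,j}}\|_{p(\cdot)}^{-1}$. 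I would then repeat verbatim the $T_1$ argument in Theorem \ref{ad1}: apply H\"older's inequality in $k$ with exponents $L, L'$, apply Lemma \ref{lem2.1} and Remark \ref{property}(3) to pass the $L$-power inside $\|\cdot\|_{p(\cdot)}$, then apply Lemma \ref{3.3} to bound the resulting expression by $\bigl\|\sum_{i,j}\chi_{I_{k,i,j}}\bigr\|_{p(\cdot)/\varepsilon}$, and finally sum on $k_0$ using Abel's transformation together with the slowly varying bound \eqref{gs26}.

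For $T_2$, the support hypothesis $\{|T(a^{k,i,j})|>0\} \subset I_{k,i,j}$ is essential: it yields
\[
\{T_2 > 2^{k_0}\} \;\subset\; \bigcup_{k\ge k_0}\bigcup_{i,j} I_{k,i,j},
\]
and Lemma \ref{wuqiongbianb} then produces, for suitable $0<m<p_-$ and $0<\varepsilon<\min\{\underline p,q\}$, the inequality
\[
2^{k_0 m\varepsilon}\|\chi_{\{T_2>2^{k_0}\}}\|_{p(\cdot)}^m \gamma_b^m\bigl(\|\chi_{\{T_2>2^{k_0}\}}\|_{p(\cdot)}\bigr) \;\lesssim\; \sum_{k\ge k_0} 2^{k m\varepsilon}\Bigl\|\sum_{i,j}\chi_{I_{k,i,j}}\Bigr\|_{p(\cdot)}^m \gamma_b^m\Bigl(\Bigl\|\sum_{i,j}\chi_{I_{k,i,j}}\Bigr\|_{p(\cdot)}\Bigr),
\]
which is exactly the hypothesis of Lemma \ref{belong} with $\mu_k = \bigl\|\sum_{i,j}\chi_{I_{k,i,j}}\bigr\|_{p(\cdot)}\gamma_b(\cdots)$. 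Combining the two bounds via the quasi-triangle inequality (Lemma \ref{fanshusanjiao}) and the norm equivalence in Lemma \ref{fanshu}(3) gives $\|T(f)\|_{p(\cdot),q,b} \lesssim \|f\|_{H_{p(\cdot),q,b}^{s-at,r,1}} \approx \|f\|_{H_{p(\cdot),q,b}^s}$.

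The main obstacle is purely bookkeeping: ensuring that the exponents $\varepsilon, L, \sigma, \delta, \beta, z, \theta$ chosen in the $T_1$ estimate of Theorem \ref{ad1} can all be selected simultaneously (they can, given $\max\{p_+,1\}<r<\infty$, $0<q\le\infty$, and the slowly varying nature of $b$), and that the $q=\infty$ case is handled with suprema rather than sums. Since both pieces of the argument already appear in the proof of Theorem \ref{ad1} for $T=s$, no new analytic idea is needed; the support condition on $T(a)$ is precisely what allows us to reuse the $T_2$ estimate unchanged.
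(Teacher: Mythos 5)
Your proposal matches the paper's proof essentially line for line: the same decomposition via Theorem \ref{ad1}, the same split at an arbitrary $k_0$ into low and high pieces, the same treatment of the low part via H\"older, Lemma \ref{lem2.1}, Remark \ref{property}(3), Lemma \ref{3.3}, the slowly-varying estimate \eqref{gs26} and Abel summation, and the same treatment of the high part via the support condition, Lemma \ref{wuqiongbianb}, and Lemma \ref{belong}. This is the intended argument, so no further comparison is needed.
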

\begin{proof}
	Let $f\in H_{p(\cdot),q,b}^s$. According to Theorem \ref{ad1}, there exist a sequence of simple $(p(\cdot),r)^s$-atoms $(a^{k,i,j})_{k\in\mathbb{Z},i\in\mathbb{N},j}$ and $\mu_{k,i,j}=3\cdot2^k\|\chi_{I_{k,i,j}}\|_{p(\cdot)}$ such that
	 $$f=\sum_{k\in\mathbb{Z}}\sum_{i=0}^{\infty}\sum_{j}\mu_{k,i,j}a^{k,i,j}$$
	and
	 $$\Bigg\|\Bigg\{\bigg\|\sum_{i=0}^{\infty}\sum_{j}\mu_{k,i,j}\|\chi_{I_{k,i,j}}\|_{p(\cdot)}^{-1}\chi_{I_{k,i,j}}\bigg\|_{p(\cdot)}\gamma_b\bigg(\bigg\|\sum_{i=0}^{\infty}\sum_{j}\chi_{I_{k,i,j}}\bigg\|_{p(\cdot)}\bigg)\Bigg\}_{k\in\mathbb{Z}}\Bigg\|_{l_q}\lesssim\|f\|_{H_{p(\cdot),q,b}^s}.$$
	For an arbitrary integer $k_0$, set
	 $$D_1:=\sum_{k=-\infty}^{k_0-1}\sum_{i=0}^{\infty}\sum_{j}\mu_{k,i,j}|T(a^{k,i,j})|\quad\text{and}\quad D_2:=\sum_{k=k_0}^{\infty}\sum_{i=0}^{\infty}\sum_{j}\mu_{k,i,j}|T(a^{k,i,j})|.$$
	Then by the $\sigma$-sublinearity of $T$, there is
	 $$|T(f)|\le\sum_{k\in\mathbb{Z}}\sum_{i=0}^{\infty}\sum_{j}\mu_{k,i,j}|T(a^{k,i,j})|=D_1+D_2.$$
	Let $0<\varepsilon<\min\{\underline{p},q\}$, $1<L<\min\{\frac{r}{p_+},\frac{1}{\varepsilon}\}$ and $0<\sigma<1-\frac{1}{L}$. It follows from H\"{o}lder's inequality that
	\begin{align*}
		D_1=&\ \sum_{k=-\infty}^{k_0-1}\sum_{i=0}^{\infty}\sum_j\mu_{k,i,j}|T(a^{k,i,j})|\\
		\le&\ \bigg(\sum_{k=-\infty}^{k_0-1}2^{k\sigma L'}\bigg)^\frac{1}{L'}\bigg[\sum_{k=-\infty}^{k_0-1}2^{-k\sigma L}\bigg(\sum_{i=0}^{\infty}\sum_{j}\mu_{k,i,j}|T(a^{k,i,j})|\bigg)^L\bigg]^{\frac{1}{L}}\\
		\approx&\ 2^{k_0\sigma}\bigg[\sum_{k=-\infty}^{k_0-1}2^{-k\sigma L}\bigg(\sum_{i=0}^{\infty}\sum_{j}\mu_{k,i,j}|T(a^{k,i,j})|\bigg)^L\bigg]^{\frac{1}{L}},
	\end{align*}
	where $L'$ is the conjugate of $L$. According to Lemma \ref{lem2.1} and Remark \ref{property} (3), we have
	\begin{align*}
		\|\chi_{\{D_1>2^{k_0}\}}\|_{p(\cdot)}\le & 2^{-k_0L}\|D_1^L\|_{p(\cdot)}\\
		\lesssim&\ 2^{-k_0L}\Bigg\|2^{k_0\sigma L}\sum_{k=-\infty}^{k_0-1}2^{-k\sigma L}\bigg(\sum_{i=0}^{\infty}\sum_{j}\mu_{k,i,j}|T(a^{k,i,j})|\bigg)^L\Bigg\|_{p(\cdot)}\\
		\approx&\ 2^{k_0L(\sigma-1)}\Bigg\|\sum_{k=-\infty}^{k_0-1}2^{kL(1-\sigma)}\bigg(\sum_{i=0}^{\infty}\sum_{j}\|\chi_{I_{k,i,j}}\|_{p(\cdot)}|T(a^{k,i,j})|\bigg)^L\Bigg\|_{p(\cdot)}\\
		=&\ 2^{k_0L(\sigma-1)}\Bigg\|\bigg[\sum_{k=-\infty}^{k_0-1}2^{kL(1-\sigma)}\bigg(\sum_{i=0}^{\infty}\sum_{j}\|\chi_{I_{k,i,j}}\|_{p(\cdot)}|T(a^{k,i,j})|\bigg)^L\bigg]^\varepsilon\Bigg\|_{\frac{p(\cdot)}{\varepsilon}}^{\frac{1}{\varepsilon}}\\
		\le&\ 2^{k_0L(\sigma-1)}\Bigg\|\sum_{k=-\infty}^{k_0-1}2^{kL\varepsilon(1-\sigma)}\bigg(\sum_{i=0}^{\infty}\sum_{j}\|\chi_{I_{k,i,j}}\|_{p(\cdot)}|T(a^{k,i,j})|\bigg)^{L\varepsilon}\Bigg\|_{\frac{p(\cdot)}{\varepsilon}}^{\frac{1}{\varepsilon}}\\
		\le&\ 2^{k_0L(\sigma-1)}\Bigg\|\sum_{k=-\infty}^{k_0-1}2^{kL\varepsilon(1-\sigma)}\sum_{i=0}^{\infty}\sum_{j}\Big(\|\chi_{I_{k,i,j}}\|_{p(\cdot)}|T(a^{k,i,j})|\Big)^{L\varepsilon}\Bigg\|_{\frac{p(\cdot)}{\varepsilon}}^{\frac{1}{\varepsilon}}\\
		\lesssim&\ 2^{k_0L(\sigma-1)}\Bigg(\sum_{k=-\infty}^{k_0-1}2^{kL\varepsilon(1-\sigma)}\bigg\|\sum_{i=0}^{\infty}\sum_{j}\Big(\|\chi_{I_{k,i,j}}\|_{p(\cdot)}|T(a^{k,i,j})|\Big)^{L\varepsilon}\bigg\|_{\frac{p(\cdot)}{\varepsilon}}\Bigg)^{\frac{1}{\varepsilon}}.
	\end{align*}
	Since $T:H^s_r\rightarrow L_r$ is bounded, we have that for a simple $(p(\cdot),r)^s$-atom $a^{k,i,j}$, $$\|T(a^{k,i,j})\|_r\lesssim\|s(a^{k,i,j})\|_r\le\frac{\|\chi_{I_{k,i,j}}\|_r}{\|\chi_{I_{k,i,j}}\|_{p(\cdot)}}.$$
	Then using Lemma \ref{3.3}, we obtain
	 $$\bigg\|\sum_{i=0}^{\infty}\sum_{j}\Big(\|\chi_{I_{k,i,j}}\|_{p(\cdot)}|T(a^{k,i,j})|\Big)^{L\varepsilon}\bigg\|_{\frac{p(\cdot)}{\varepsilon}}\lesssim\Bigg\|\sum_{i=0}^{\infty}\sum_{j}\chi_{I_{k,i,j}}\Bigg\|_{\frac{p(\cdot)}{\varepsilon}}.$$
	Hence we get that
	\begin{align}\label{gs61}
		\|\chi_{\{D_1>2^{k_0}\}}\|_{p(\cdot)}\lesssim&\ 2^{k_0L(\sigma-1)}\Bigg(\sum_{k=-\infty}^{k_0-1}2^{kL\varepsilon(1-\sigma)}\bigg\|\sum_{i=0}^{\infty}\sum_{j}\chi_{I_{k,i,j}}\bigg\|_{\frac{p(\cdot)}{\varepsilon}}\Bigg)^{\frac{1}{\varepsilon}}\\\notag
		=&\ 2^{k_0L(\sigma-1)}\Bigg(\sum_{k=-\infty}^{k_0-1}2^{kL\varepsilon(1-\sigma)}\bigg\|\sum_{i=0}^{\infty}\sum_{j}\chi_{I_{k,i,j}}\bigg\|_{p(\cdot)}^{\varepsilon}\Bigg)^{\frac{1}{\varepsilon}}\\\notag
		=&\ 2^{k_0L(\sigma-1)}\Bigg(\sum_{k=-\infty}^{k_0-1}2^{k\varepsilon(L(1-\sigma)-\delta)}2^{k\varepsilon\delta}\bigg\|\sum_{i=0}^{\infty}\sum_{j}\chi_{I_{k,i,j}}\bigg\|_{p(\cdot)}^{\varepsilon}\Bigg)^{\frac{1}{\varepsilon}},\notag
	\end{align}
	where $\delta>0$ satisfies $1<\delta<L(1-\sigma)$.
	
	Firstly, we consider the case of $0<q<\infty$. By H\"{o}lder's inequality with $\frac{\varepsilon}{q}+\frac{q-\varepsilon}{q}=1$, we obtain
	\begin{align*}
		\|\chi_{\{D_1>2^{k_0}\}}\|_{p(\cdot)}\lesssim&\ 2^{k_0L(\sigma-1)}\Bigg(\sum_{k=-\infty}^{k_0-1}2^{k\varepsilon(L(1-\sigma)-\delta)\frac{q}{q-\varepsilon}}\Bigg)^{\frac{q-\varepsilon}{q\varepsilon}}\Bigg(\sum_{k=-\infty}^{k_0-1}2^{kq\delta}\bigg\|\sum_{i=0}^{\infty}\sum_{j}\chi_{I_{k,i,j}}\bigg\|_{p(\cdot)}^{q}\Bigg)^{\frac{1}{q}}\\
		\lesssim&\ 2^{-k_0\delta}\Bigg(\sum_{k=-\infty}^{k_0-1}2^{k\delta q}\bigg\|\sum_{i=0}^{\infty}\sum_{j}\chi_{I_{k,i,j}}\bigg\|_{p(\cdot)}^{q}\Bigg)^{\frac{1}{q}}\\
		=&\ \Bigg(\sum_{k=-\infty}^{k_0-1}2^{(k-k_0)\delta q}\bigg\|\sum_{i=0}^{\infty}\sum_{j}\chi_{I_{k,i,j}}\bigg\|_{p(\cdot)}^{q}\Bigg)^{\frac{1}{q}}.
	\end{align*}
	This yields that
	\begin{align*}
		&\ \sum_{k_0\in\mathbb{Z}}2^{k_0q}\|\chi_{\{D_1>2^{k_0}\}}\|_{p(\cdot)}^{q}\gamma_b^{q}\big(\|\chi_{\{D_1>2^{k_0}\}}\|_{p(\cdot)}\big)\\
		\lesssim&\ \sum_{k_0\in\mathbb{Z}}2^{k_0q}\sum_{k=-\infty}^{k_0-1}2^{(k-k_0)\delta q}\bigg\|\sum_{i=0}^{\infty}\sum_{j}\chi_{I_{k,i,j}}\bigg\|_{p(\cdot)}^{q}\gamma_b^{q}\Bigg[\Bigg(\sum_{k=-\infty}^{k_0-1}2^{(k-k_0)\delta q}\bigg\|\sum_{i=0}^{\infty}\sum_{j}\chi_{I_{k,i,j}}\bigg\|_{p(\cdot)}^{q}\Bigg)^{\frac{1}{q}}\Bigg].
	\end{align*}
	Define $b_1(t)=b\big(t^{\frac{1}{q}}\big)$ for $t\in[1,\infty)$ and let $0<\theta<1$. It follows from Lemma \ref{wuqiongheb} that
	\begin{align*}
		&\ \sum_{k_0\in\mathbb{Z}}2^{k_0q}\|\chi_{\{D_1>2^{k_0}\}}\|_{p(\cdot)}^{q}\gamma_b^{q}\big(\|\chi_{\{D_1>2^{k_0}\}}\|_{p(\cdot)}\big)\\
		\lesssim&\ \sum_{k_0\in\mathbb{Z}}2^{k_0q}\sum_{k=-\infty}^{k_0-1}2^{(k-k_0)\delta q}\bigg\|\sum_{i=0}^{\infty}\sum_{j}\chi_{I_{k,i,j}}\bigg\|_{p(\cdot)}^{q}\gamma_{b_1^{q}}\bigg(\sum_{k=-\infty}^{k_0-1}2^{(k-k_0)\delta q}\bigg\|\sum_{i=0}^{\infty}\sum_{j}\chi_{I_{k,i,j}}\bigg\|_{p(\cdot)}^{q}\bigg)\\
		=&\ \sum_{k_0\in\mathbb{Z}}2^{k_0q}\Bigg[\bigg(\sum_{k=-\infty}^{k_0-1}2^{(k-k_0)\delta q}\bigg\|\sum_{i=0}^{\infty}\sum_{j}\chi_{I_{k,i,j}}\bigg\|_{p(\cdot)}^{q}\bigg)^{\theta}\\
		&\ \ \ \ \ \ \ \ \ \ \ \ \ \ \ \ \ \ \ \ \ \ \ \ \ \ \ \ \ \ \ \ \ \ \ \ \ \ \ \ \ \times\gamma_{b_1^{\theta q}}\bigg(\sum_{k=-\infty}^{k_0-1}2^{(k-k_0)\delta q}\bigg\|\sum_{i=0}^{\infty}\sum_{j}\chi_{I_{k,i,j}}\bigg\|_{p(\cdot)}^{q}\bigg)\Bigg]^{\frac{1}{\theta}}\\
		\lesssim&\ \sum_{k_0\in\mathbb{Z}}2^{k_0q}\Bigg[\sum_{k=-\infty}^{k_0-1}2^{(k-k_0)\delta \theta q}\bigg\|\sum_{i=0}^{\infty}\sum_{j}\chi_{I_{k,i,j}}\bigg\|_{p(\cdot)}^{\theta q}\gamma_{b_1^{\theta q}}\bigg(2^{(k-k_0)\delta q}\bigg\|\sum_{i=0}^{\infty}\sum_{j}\chi_{I_{k,i,j}}\bigg\|_{p(\cdot)}^{q}\bigg)\Bigg]^{\frac{1}{\theta}}\\
		=&\ \sum_{k_0\in\mathbb{Z}}2^{k_0q}\Bigg[\sum_{k=-\infty}^{k_0-1}2^{(k-k_0)\delta \theta q}\bigg\|\sum_{i=0}^{\infty}\sum_{j}\chi_{I_{k,i,j}}\bigg\|_{p(\cdot)}^{\theta q}\gamma_{b}^{\theta q}\bigg(2^{(k-k_0)\delta}\bigg\|\sum_{i=0}^{\infty}\sum_{j}\chi_{I_{k,i,j}}\bigg\|_{p(\cdot)}\bigg)\Bigg]^{\frac{1}{\theta}}.
	\end{align*}
	Let $0<\beta<\frac{\delta-1}{\delta}$. By applying H\"{o}lder's inequality with $1-\theta+\theta=1$, we have
	\begin{align*}
		&\ \sum_{k_0\in\mathbb{Z}}2^{k_0q}\|\chi_{\{D_1>2^{k_0}\}}\|_{p(\cdot)}^{q}\gamma_b^{q}\big(\|\chi_{\{D_1>2^{k_0}\}}\|_{p(\cdot)}\big)\\
		\lesssim&\ \sum_{k_0\in\mathbb{Z}}2^{k_0q}\Bigg[\sum_{k=-\infty}^{k_0-1}2^{(k-k_0)\delta \theta q\beta}2^{(k-k_0)(1-\beta)\delta\theta q}\bigg\|\sum_{i=0}^{\infty}\sum_{j}\chi_{I_{k,i,j}}\bigg\|_{p(\cdot)}^{\theta q}\\
		&\ \ \ \ \ \ \ \ \ \ \ \ \ \ \ \ \ \ \ \ \ \ \ \ \ \ \ \ \ \ \ \ \ \ \ \ \ \times\gamma_{b}^{\theta q}\bigg(2^{(k-k_0)\delta}\bigg\|\sum_{i=0}^{\infty}\sum_{j}\chi_{I_{k,i,j}}\bigg\|_{p(\cdot)}\bigg)\Bigg]^{\frac{1}{\theta}}\\
		\le&\ \sum_{k_0\in\mathbb{Z}}2^{k_0q}\bigg(\sum_{k=-\infty}^{k_0-1}2^{(k-k_0)\delta \theta q\beta/(1-\theta)}\bigg)^{\frac{1-\theta}{\theta}}\sum_{k=-\infty}^{k_0-1}2^{(k-k_0)(1-\beta)\delta q}\bigg\|\sum_{i=0}^{\infty}\sum_{j}\chi_{I_{k,i,j}}\bigg\|_{p(\cdot)}^{q}\\
		&\ \ \ \ \ \ \ \ \ \ \ \ \ \ \ \ \ \ \ \ \ \ \ \ \ \ \ \ \ \ \ \ \ \ \ \ \ \times\gamma_{b}^{q}\bigg(2^{(k-k_0)\delta}\bigg\|\sum_{i=0}^{\infty}\sum_{j}\chi_{I_{k,i,j}}\bigg\|_{p(\cdot)}\bigg)\\
		\lesssim&\ \sum_{k_0\in\mathbb{Z}}2^{k_0q}\sum_{k=-\infty}^{k_0-1}2^{(k-k_0)(1-\beta)\delta q}\bigg\|\sum_{i=0}^{\infty}\sum_{j}\chi_{I_{k,i,j}}\bigg\|_{p(\cdot)}^{q}\gamma_{b}^{q}\bigg(2^{(k-k_0)\delta}\bigg\|\sum_{i=0}^{\infty}\sum_{j}\chi_{I_{k,i,j}}\bigg\|_{p(\cdot)}\bigg).
	\end{align*}
	Set $0<z<\frac{\delta-\delta\beta-1}{\delta}$, it follows from (\ref{gs26}) that
	\begin{align*}
		 \gamma_b\bigg(2^{(k-k_0)\delta}\bigg\|\sum_{i=0}^{\infty}\sum_{j}\chi_{I_{k,i,j}}\bigg\|_{p(\cdot)}\bigg)\lesssim2^{-(k-k_0)\delta z}\gamma_b\bigg(\bigg\|\sum_{i=0}^{\infty}\sum_{j}\chi_{I_{k,i,j}}\bigg\|_{p(\cdot)}\bigg).
	\end{align*}
	Hence we get the following by Abel's transformation:
	\begin{align*}
		&\ \sum_{k_0\in\mathbb{Z}}2^{k_0q}\|\chi_{\{D_1>2^{k_0}\}}\|_{p(\cdot)}^{q}\gamma_b^{q}\big(\|\chi_{\{D_1>2^{k_0}\}}\|_{p(\cdot)}\big)\\
		\lesssim&\ \sum_{k_0\in\mathbb{Z}}2^{k_0q}\sum_{k=-\infty}^{k_0-1}2^{(k-k_0)(1-\beta)\delta q}\bigg\|\sum_{i=0}^{\infty}\sum_{j}\chi_{I_{k,i,j}}\bigg\|_{p(\cdot)}^{q}2^{-(k-k_0)\delta zq}\gamma_b^q\bigg(\bigg\|\sum_{i=0}^{\infty}\sum_{j}\chi_{I_{k,i,j}}\bigg\|_{p(\cdot)}\bigg)\\
		=&\ \sum_{k\in\mathbb{Z}}2^{k(1-\beta-z)\delta q}\bigg\|\sum_{i=0}^{\infty}\sum_{j}\chi_{I_{k,i,j}}\bigg\|_{p(\cdot)}^{q}\gamma_b^q\bigg(\bigg\|\sum_{i=0}^{\infty}\sum_{j}\chi_{I_{k,i,j}}\bigg\|_{p(\cdot)}\bigg)\sum_{k_0=k+1}^{\infty}2^{k_0q[1+\delta(\beta-1)+\delta z]}\\
		\lesssim&\ \sum_{k\in\mathbb{Z}}2^{kq}\bigg\|\sum_{i=0}^{\infty}\sum_{j}\chi_{I_{k,i,j}}\bigg\|_{p(\cdot)}^{q}\gamma_b^q\bigg(\bigg\|\sum_{i=0}^{\infty}\sum_{j}\chi_{I_{k,i,j}}\bigg\|_{p(\cdot)}\bigg).
	\end{align*}

	Next we discuss the case of $q=\infty$. 
	According to (\ref{gs61}) and H\"{o}lder's inequality with $\varepsilon+1-\varepsilon=1$, we obtain that
	\begin{align*}
		\|\chi_{\{D_1>2^{k_0}\}}\|_{p(\cdot)}
		\lesssim&\ 2^{k_0L(\sigma-1)}\Bigg(\sum_{k=-\infty}^{k_0-1}2^{k\varepsilon(L(1-\sigma)-\delta)/{(1-\varepsilon)}}\Bigg)^{(1-\varepsilon)/\varepsilon}\sum_{k=-\infty}^{k_0-1}2^{k\delta}\bigg\|\sum_{i=0}^{\infty}\sum_{j}\chi_{I_{k,i,j}}\bigg\|_{p(\cdot)}\\
		\lesssim&\ 2^{-k_0\delta}\sum_{k=-\infty}^{k_0-1}2^{k\delta}\bigg\|\sum_{i=0}^{\infty}\sum_{j}\chi_{I_{k,i,j}}\bigg\|_{p(\cdot)}=\sum_{k=-\infty}^{k_0-1}2^{(k-k_0)\delta}\bigg\|\sum_{i=0}^{\infty}\sum_{j}\chi_{I_{k,i,j}}\bigg\|_{p(\cdot)}.
	\end{align*}
	Then it follows from Lemma \ref{wuqiongheb} and H\"{o}lder's inequality with $1-\theta+\theta=1$ that
	\begin{align*}
		&\ \sup_{k_0\in\mathbb{Z}}2^{k_0}\|\chi_{\{D_1>2^{k_0}\}}\|_{p(\cdot)}\gamma_{b}(\|\chi_{\{D_1>2^{k_0}\}}\|_{p(\cdot)})\\
		\lesssim&\ \sup_{k_0\in\mathbb{Z}}2^{k_0}\sum_{k=-\infty}^{k_0-1}2^{(k-k_0)\delta}\bigg\|\sum_{i=0}^{\infty}\sum_{j}\chi_{I_{k,i,j}}\bigg\|_{p(\cdot)}\gamma_{b}\Bigg(\sum_{k=-\infty}^{k_0-1}2^{(k-k_0)\delta}\bigg\|\sum_{i=0}^{\infty}\sum_{j}\chi_{I_{k,i,j}}\bigg\|_{p(\cdot)}\Bigg)\\
		=&\ \sup_{k_0\in\mathbb{Z}}2^{k_0}\Bigg[\Bigg(\sum_{k=-\infty}^{k_0-1}2^{(k-k_0)\delta}\bigg\|\sum_{i=0}^{\infty}\sum_{j}\chi_{I_{k,i,j}}\bigg\|_{p(\cdot)}\Bigg)^{\theta}\\
		&\ \ \ \ \ \ \ \ \ \ \ \ \ \ \ \ \ \ \ \ \ \ \ \ \ \ \ \ \ \ \ \ \ \ \ \times\gamma_{b^{\theta}}\Bigg(\sum_{k=-\infty}^{k_0-1}2^{(k-k_0)\delta}\bigg\|\sum_{i=0}^{\infty}\sum_{j}\chi_{I_{k,i,j}}\bigg\|_{p(\cdot)}\Bigg)\Bigg]^{\frac{1}{\theta}}\\
		\lesssim&\ \sup_{k_0\in\mathbb{Z}}2^{k_0}\Bigg[\sum_{k=-\infty}^{k_0-1}2^{(k-k_0)\delta\theta}\bigg\|\sum_{i=0}^{\infty}\sum_{j}\chi_{I_{k,i,j}}\bigg\|_{p(\cdot)}^{\theta}\gamma_{b^{\theta}}\bigg(2^{(k-k_0)\delta}\bigg\|\sum_{i=0}^{\infty}\sum_{j}\chi_{I_{k,i,j}}\bigg\|_{p(\cdot)}\bigg)\Bigg]^{\frac{1}{\theta}}\\
		=&\ \sup_{k_0\in\mathbb{Z}}2^{k_0}\Bigg[\sum_{k=-\infty}^{k_0-1}2^{\beta\delta\theta(k-k_0)}2^{(k-k_0)(1-\beta)\delta\theta}\bigg\|\sum_{i=0}^{\infty}\sum_{j}\chi_{I_{k,i,j}}\bigg\|_{p(\cdot)}^{\theta}\\
		&\ \ \ \ \ \ \ \ \ \ \ \ \ \ \ \ \ \ \ \ \ \ \ \ \ \ \ \ \ \ \ \ \ \ \ \times\gamma_{b}^{\theta}\bigg(2^{(k-k_0)\delta}\bigg\|\sum_{i=0}^{\infty}\sum_{j}\chi_{I_{k,i,j}}\bigg\|_{p(\cdot)}\bigg)\Bigg]^{\frac{1}{\theta}}\\
		\le&\ \sup_{k_0\in\mathbb{Z}}2^{k_0}\Bigg(\sum_{k=-\infty}^{k_0-1}2^{\beta\delta\theta(k-k_0)/(1-\theta)}\Bigg)^{\frac{1-\theta}{\theta}}\sum_{k=-\infty}^{k_0-1}2^{(k-k_0)(1-\beta)\delta}\\
		&\ \ \ \ \ \ \ \ \ \ \ \ \ \ \ \ \ \ \ \ \ \ \ \ \ \ \ \ \ \ \ \ \ \ \ \times\bigg\|\sum_{i=0}^{\infty}\sum_{j}\chi_{I_{k,i,j}}\bigg\|_{p(\cdot)}\gamma_{b}\bigg(2^{(k-k_0)\delta}\bigg\|\sum_{i=0}^{\infty}\sum_{j}\chi_{I_{k,i,j}}\bigg\|_{p(\cdot)}\bigg)\\
		\lesssim&\ \sup_{k_0\in\mathbb{Z}}2^{k_0}\sum_{k=-\infty}^{k_0-1}2^{(k-k_0)(1-\beta)\delta}\bigg\|\sum_{i=0}^{\infty}\sum_{j}\chi_{I_{k,i,j}}\bigg\|_{p(\cdot)}\gamma_{b}\bigg(2^{(k-k_0)\delta}\bigg\|\sum_{i=0}^{\infty}\sum_{j}\chi_{I_{k,i,j}}\bigg\|_{p(\cdot)}\bigg).
	\end{align*}
Inequality (\ref{gs26}) implies
	\begin{align*}
		&\ \sup_{k_0\in\mathbb{Z}}2^{k_0}\|\chi_{\{D_1>2^{k_0}\}}\|_{p(\cdot)}\gamma_{b}(\|\chi_{\{D_1>2^{k_0}\}}\|_{p(\cdot)})\\
		\lesssim&\ \sup_{k_0\in\mathbb{Z}}2^{k_0}\sum_{k=-\infty}^{k_0-1}2^{(k-k_0)(1-\beta)\delta}\bigg\|\sum_{i=0}^{\infty}\sum_{j}\chi_{I_{k,i,j}}\bigg\|_{p(\cdot)}2^{-(k-k_0)\delta z}\gamma_b\bigg(\bigg\|\sum_{i=0}^{\infty}\sum_{j}\chi_{I_{k,i,j}}\bigg\|_{p(\cdot)}\bigg)\\
		=&\ \sup_{k\in\mathbb{Z}}2^{k\delta(1-\beta-z)}\bigg\|\sum_{i=0}^{\infty}\sum_{j}\chi_{I_{k,i,j}}\bigg\|_{p(\cdot)}\gamma_b\bigg(\bigg\|\sum_{i=0}^{\infty}\sum_{j}\chi_{I_{k,i,j}}\bigg\|_{p(\cdot)}\bigg)\sum_{k_0=k+1}^{\infty}2^{k_0(1-\delta+\beta\delta+\delta z)}\\
		\lesssim&\ \sup_{k\in\mathbb{Z}}2^{k}\bigg\|\sum_{i=0}^{\infty}\sum_{j}\chi_{I_{k,i,j}}\bigg\|_{p(\cdot)}\gamma_b\bigg(\bigg\|\sum_{i=0}^{\infty}\sum_{j}\chi_{I_{k,i,j}}\bigg\|_{p(\cdot)}\bigg).
	\end{align*}
	Therefore, we conclude that
	 $$\|D_1\|_{p(\cdot),q,b}\lesssim\Bigg\|\Bigg\{2^k\bigg\|\sum_{i=0}^{\infty}\sum_{j}\chi_{I_{k,i,j}}\bigg\|_{p(\cdot)}\gamma_b\bigg(\bigg\|\sum_{i=0}^{\infty}\sum_{j}\chi_{I_{k,i,j}}\bigg\|_{p(\cdot)}\bigg)\Bigg\}_{k\in\mathbb{Z}}\Bigg\|_{l_q}.$$
	
	Now, we estimate $D_2$. Since $s(a^{k,i,j})=0$ on the set $\bigcup\limits_{i=0}^{\infty}\bigcup\limits_{j}I_{k,i,j}$, we have
	 $$\{D_2>2^{k_0}\}\subset\{D_2>0\}\subset\bigcup_{k=k_0}^{\infty}\{s(a^{k,i,j})>0\}\subset\bigcup_{k=k_0}^{\infty}\bigcup_{i=0}^{\infty}\bigcup_{j}I_{k,i,j}.$$
	Let $0<m<\underline{p}$ and $\varepsilon$ be the same as before. It follows from Lemma \ref{wuqiongbianb} that
	\begin{align*}
		&\ 2^{k_0m\varepsilon}\|\chi_{\{D_2>2^{k_0}\}}\|_{p(\cdot)}^m\gamma_b^m\big(\|\chi_{\{D_2>2^{k_0}\}}\|_{p(\cdot)}\big)\\\notag
		\lesssim&\ 2^{k_0m\varepsilon}\bigg\|\sum_{k=k_0}^{\infty}\sum_{i=0}^{\infty}\sum_{j}\chi_{I_{k,i,j}}\bigg\|_{p(\cdot)}^m\gamma_b^m\bigg(\bigg\|\sum_{k=k_0}^{\infty}\sum_{i=0}^{\infty}\sum_{j}\chi_{I_{k,i,j}}\bigg\|_{p(\cdot)}\bigg)\\\notag
		\lesssim&\ 2^{k_0m\varepsilon}\sum_{k=k_0}^{\infty}\bigg\|\sum_{i=0}^{\infty}\sum_{j}\chi_{I_{k,i,j}}\bigg\|_{p(\cdot)}^m\gamma_b^m\bigg(\bigg\|\sum_{i=0}^{\infty}\sum_{j}\chi_{I_{k,i,j}}\bigg\|_{p(\cdot)}\bigg)\\\notag
		\lesssim&\ \sum_{k=k_0}^{\infty}2^{km\varepsilon}\bigg\|\sum_{i=0}^{\infty}\sum_{j}\chi_{I_{k,i,j}}\bigg\|_{p(\cdot)}^m\gamma_b^m\bigg(\bigg\|\sum_{i=0}^{\infty}\sum_{j}\chi_{I_{k,i,j}}\bigg\|_{p(\cdot)}\bigg).
	\end{align*}
	Obviously, according to Lemma \ref{belong}, we obtain
	 $$\|D_2\|_{p(\cdot),q,b}\lesssim\Bigg\|\Bigg\{2^k\bigg\|\sum_{i=0}^{\infty}\sum_{j}\chi_{I_{k,i,j}}\bigg\|_{p(\cdot)}\gamma_b\bigg(\bigg\|\sum_{i=0}^{\infty}\sum_{j}\chi_{I_{k,i,j}}\bigg\|_{p(\cdot)}\bigg)\Bigg\}_{k\in\mathbb{Z}}\Bigg\|_{l_q}.$$
	Hence, by Lemma \ref{fanshusanjiao}, there is
	\begin{align*}
		\|T(f)\|_{p(\cdot),q,b}
		\lesssim&\ \|D_1\|_{p(\cdot),q,b}+\|D_2\|_{p(\cdot),q,b}\\
		\lesssim&\ \Bigg\|\Bigg\{2^{k+1}\bigg\|\sum_{i=0}^{\infty}\sum_{j}\chi_{I_{k,i,j}}\bigg\|_{p(\cdot)}\gamma_b\bigg(\bigg\|\sum_{i=0}^{\infty}\sum_{j}\chi_{I_{k,i,j}}\bigg\|_{p(\cdot)}\bigg)\Bigg\}_{k\in\mathbb{Z}}\Bigg\|_{l_q}\\
		\lesssim&\ \|f\|_{H_{p(\cdot),q,b}^s}.
	\end{align*}
	The proof is complete now.
\end{proof}

Similarly, with Theorems \ref{ad2} and \ref{ad3}, we get the following results.
\begin{lem}
	Let $p(\cdot)\in\mathcal{P}(\Omega)$ satisfy $(\ref{gs3})$, $0<q\le\infty$ and $b$ be a slowly varying function. Suppose that $\{\mathcal{F}_n\}_{n\ge0}$ is regular and $\max\{p_+,1\}<r<\infty$. If $T:H^S_r\rightarrow L_r \,(\text{resp.}\; T: H^M_r\rightarrow L_r)$ is a bounded $\sigma$-sublinear operator and
	$$\{|T(a)|>0\}\subset I$$
	for every simple $(p(\cdot),r)^S$-atom $(\text{resp.}$ simple $(p(\cdot),r)^M$-atom$)$ $a$ associated with $I$, then for $f\in H_{p(\cdot),q,b}^S$ $(\text{resp.}\ f\in H_{p(\cdot),q,b}^M)$,
	$$\|T(f)\|_{p(\cdot),q,b}\lesssim\|f\|_{H_{p(\cdot),q,b}^S}\ \ \big(\text{resp.}\ \|T(f)\|_{p(\cdot),q,b}\lesssim\|f\|_{H_{p(\cdot),q,b}^M}\big).$$
\end{lem}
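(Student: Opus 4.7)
The plan is to follow the scheme of Lemma \ref{5.1} almost verbatim, with the atomic decomposition in Theorem \ref{ad1} replaced by the one in Theorem \ref{ad2} (which is why the regularity of $\{\mathcal{F}_n\}_{n\geq 0}$ enters). I will treat the two cases in parallel since the argument is structurally identical; only the atoms and the relevant maximal function change.

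First, given $f\in H^S_{p(\cdot),q,b}$ (resp.\ $H^M_{p(\cdot),q,b}$), Theorem \ref{ad2} produces a decomposition $f=\sum_{k\in\mathbb{Z}}\sum_{i=0}^\infty\sum_j \mu_{k,i,j}a^{k,i,j}$ into simple $(p(\cdot),r)^S$-atoms (resp.\ simple $(p(\cdot),r)^M$-atoms) associated with disjoint $I_{k,i,j}\in A(\mathcal F_i)$, with $\mu_{k,i,j}=3\cdot 2^k\|\chi_{I_{k,i,j}}\|_{p(\cdot)}$ and with
\[
\Bigl\|\bigl\{\bigl\|\sum_{i,j}\mu_{k,i,j}\|\chi_{I_{k,i,j}}\|_{p(\cdot)}^{-1}\chi_{I_{k,i,j}}\bigr\|_{p(\cdot)}\gamma_b\bigl(\|\sum_{i,j}\chi_{I_{k,i,j}}\|_{p(\cdot)}\bigr)\bigr\}_k\Bigr\|_{l_q}\lesssim \|f\|_{H^S_{p(\cdot),q,b}}
\]
(resp.\ with $\|f\|_{H^M_{p(\cdot),q,b}}$ on the right). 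By $\sigma$-sublinearity of $T$,
\[
|T(f)|\le \sum_{k\in\mathbb{Z}}\sum_{i=0}^\infty\sum_j \mu_{k,i,j}|T(a^{k,i,j})|.
\]

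Next, for any integer $k_0$ I split this sum as $D_1+D_2$ exactly as in Lemma \ref{5.1}, where $D_1$ collects the indices $k<k_0$ and $D_2$ the indices $k\geq k_0$. The key input for $D_1$ is that the boundedness $T:H^S_r\to L_r$ (resp.\ $T:H^M_r\to L_r$) combined with the atom condition gives
\[
\|T(a^{k,i,j})\|_r\lesssim \|S(a^{k,i,j})\|_r\le \frac{\|\chi_{I_{k,i,j}}\|_r}{\|\chi_{I_{k,i,j}}\|_{p(\cdot)}}
\]
(and analogously with $M$ in place of $S$), which is precisely the hypothesis required by Lemma \ref{3.3}. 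With this in hand, the Hölder, Aoki--Rolewicz and $\|\cdot\|_{p(\cdot)/\varepsilon}$-manipulations carried out for $D_1$ in the proof of Lemma \ref{5.1} apply word for word, yielding
\[
\|D_1\|_{p(\cdot),q,b}\lesssim \Bigl\|\bigl\{2^k\bigl\|\sum_{i,j}\chi_{I_{k,i,j}}\bigr\|_{p(\cdot)}\gamma_b\bigl(\|\sum_{i,j}\chi_{I_{k,i,j}}\|_{p(\cdot)}\bigr)\bigr\}_k\Bigr\|_{l_q}.
\]
For $D_2$ I use the atom-support hypothesis $\{|T(a^{k,i,j})|>0\}\subset I_{k,i,j}$ to control $\{D_2>2^{k_0}\}\subset\bigcup_{k\geq k_0}\bigcup_{i,j}I_{k,i,j}$, then apply Lemma \ref{wuqiongbianb} (with exponent $0<m<\underline p$) followed by Lemma \ref{belong} to obtain the same bound. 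Adding the two pieces via Lemma \ref{fanshusanjiao} gives $\|T(f)\|_{p(\cdot),q,b}\lesssim \|f\|_{H^S_{p(\cdot),q,b}}$ (resp.\ $\|f\|_{H^M_{p(\cdot),q,b}}$).

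Since the entire analytic work (the Hölder-based splitting at $k_0$, the estimate via Lemma \ref{3.3}, and the tail treatment via Lemma \ref{belong}) is independent of whether the atoms come from $s$, $S$ or $M$, there is essentially no new obstacle here: the only potential pitfall is verifying that the support condition $\{|T(a)|>0\}\subset I$ together with $T:H^*_r\to L_r$ yields the required $L_r$-control on atoms, which is immediate, and that Theorem \ref{ad2} (whose proof is where regularity is used) is available. Once those are noted, the argument is a transcription of the proof of Lemma \ref{5.1}.
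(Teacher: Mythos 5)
Your proof is correct and takes exactly the route the paper intends: the paper simply prefaces this lemma with ``Similarly, with Theorems \ref{ad2} and \ref{ad3}, we get the following results,'' i.e.\ swap the atomic decomposition of Theorem \ref{ad1} for that of Theorem \ref{ad2} (which is where regularity is used) and repeat the $D_1$/$D_2$ estimates from the proof of Lemma \ref{5.1} verbatim, with $S$ or $M$ in place of $s$. Your observation that the hypothesis $T\colon H_r^S\to L_r$ together with the atom bound yields $\|T(a^{k,i,j})\|_r\lesssim \|\chi_{I_{k,i,j}}\|_r/\|\chi_{I_{k,i,j}}\|_{p(\cdot)}$, which is precisely the input to Lemma \ref{3.3}, and that the support condition handles $D_2$ via Lemmas \ref{wuqiongbianb} and \ref{belong}, covers all the points the paper leaves implicit.
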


\begin{lem}\label{5.3}
	Let $p(\cdot)\in\mathcal{P}(\Omega)$ satisfy $(\ref{gs3})$, $0<q\le\infty$ and let $b$ be a slowly varying function. Suppose that $\max\{p_+,1\}<r<\infty$. If $T:H^S_r\rightarrow L_r\ (\text{resp.} \; T: H^M_r\rightarrow L_r)$ is a bounded $\sigma$-sublinear operator and
	$$\{|T(a)|>0\}\subset I$$
	for every simple $(p(\cdot),r)^S$-atom $(\text{resp.}$ simple $(p(\cdot),r)^M$-atom$)$ $a$ associated with $I$, then for $f\in \mathcal{Q}_{p(\cdot),q,b}$ $(\text{resp.}\ f\in \mathcal{P}_{p(\cdot),q,b})$,
	 $$\|T(f)\|_{p(\cdot),q,b}\lesssim\|f\|_{\mathcal{Q}_{p(\cdot),q,b}}\quad  \big(\text{resp.}\ \|T(f)\|_{p(\cdot),q,b}\lesssim\|f\|_{\mathcal{P}_{p(\cdot),q,b}}\big).$$
\end{lem}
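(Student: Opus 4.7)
The plan is to mirror the argument of Lemma~\ref{5.1} using the atomic decomposition of $\mathcal{Q}_{p(\cdot),q,b}$ (resp.\ $\mathcal{P}_{p(\cdot),q,b}$) provided by Theorem~\ref{ad3} in place of the one for $H^{s}_{p(\cdot),q,b}$ provided by Theorem~\ref{ad1}. Concretely, for $f\in\mathcal{Q}_{p(\cdot),q,b}$, Theorem~\ref{ad3} yields
\[
f \;=\; \sum_{k\in\mathbb{Z}}\sum_{i=0}^{\infty}\sum_{j} \mu_{k,i,j}\, a^{k,i,j},\qquad \mu_{k,i,j} \;=\; 3\cdot 2^{k}\,\|\chi_{I_{k,i,j}}\|_{p(\cdot)},
\]
where the $a^{k,i,j}$ are simple $(p(\cdot),\infty)^{S}$-atoms supported on disjoint atoms $I_{k,i,j}\in A(\mathcal{F}_i)$, and the associated mixed-norm quantity is controlled by $\|f\|_{\mathcal{Q}_{p(\cdot),q,b}}$; an analogous decomposition with simple $(p(\cdot),\infty)^{M}$-atoms is available for $f\in\mathcal{P}_{p(\cdot),q,b}$.

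The key compatibility observation is that, for any finite $r$ with $\max\{p_{+},1\}<r<\infty$, every simple $(p(\cdot),\infty)^{S}$-atom $a$ associated with $I$ is automatically a simple $(p(\cdot),r)^{S}$-atom: since $\mathrm{supp}(a)\subset I$,
\[
\|S(a)\|_{r} \;\le\; \|S(a)\|_{\infty}\,\mathbb{P}(I)^{1/r} \;\le\; \frac{\mathbb{P}(I)^{1/r}}{\|\chi_I\|_{p(\cdot)}},
\]
and the analogous statement holds for $M$-atoms. Consequently the hypothesis $T\colon H^{S}_{r}\to L_{r}$ (resp.\ $T\colon H^{M}_{r}\to L_{r}$) provides exactly the bound $\|T(a^{k,i,j})\|_{r}\lesssim \|\chi_{I_{k,i,j}}\|_{r}/\|\chi_{I_{k,i,j}}\|_{p(\cdot)}$ demanded by Lemma~\ref{3.3}.

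With this in hand, I would repeat the computation of Lemma~\ref{5.1} verbatim. First, fix an arbitrary integer $k_{0}$; the $\sigma$-sublinearity of $T$ together with the inclusion $\{|T(a^{k,i,j})|>0\}\subset I_{k,i,j}$ yields $|T(f)|\le D_{1}+D_{2}$, where
\[
D_{1} \;=\; \sum_{k<k_{0}}\sum_{i,j}\mu_{k,i,j}|T(a^{k,i,j})|,\qquad D_{2} \;=\; \sum_{k\ge k_{0}}\sum_{i,j}\mu_{k,i,j}|T(a^{k,i,j})|.
\]
The low-frequency piece $D_{1}$ is treated by H\"older's inequality with appropriately chosen exponents, Lemma~\ref{lem2.1}, Remark~\ref{property}(3), and Lemma~\ref{3.3}, leading to a dyadic sum of norms $\bigl\|\sum_{i,j}\chi_{I_{k,i,j}}\bigr\|_{p(\cdot)}$ over $k<k_{0}$. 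The high-frequency piece $D_{2}$ is handled via the inclusion $\{D_{2}>2^{k_{0}}\}\subset\bigcup_{k\ge k_{0}}\bigcup_{i,j}I_{k,i,j}$ combined with Lemmas~\ref{wuqiongbianb} and~\ref{belong}. Summing the resulting $L_{p(\cdot)}$ bounds over the dyadic levels $k_{0}$ (via the Abel summation/H\"older manoeuvre of Lemma~\ref{5.1}) and concluding with Lemma~\ref{fanshusanjiao} produces the desired estimate $\|T(f)\|_{p(\cdot),q,b}\lesssim\|f\|_{\mathcal{Q}_{p(\cdot),q,b}}$; the argument for $\mathcal{P}_{p(\cdot),q,b}$ is identical. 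No genuinely new obstacle arises: the entire argument is a template transfer from Lemma~\ref{5.1}, and the only care required is the simple-atom compatibility above, which makes the machinery of Lemma~\ref{5.1} available unchanged.
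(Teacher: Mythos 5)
The proposal is correct and follows the route the paper implicitly intends: Lemma~\ref{5.3} is stated without proof after Lemma~\ref{5.1}, and the evident argument is exactly the one you give, namely replace Theorem~\ref{ad1} by Theorem~\ref{ad3} and run the Lemma~\ref{5.1} estimates unchanged. Your key observation that a simple $(p(\cdot),\infty)^S$-atom (resp.\ $M$-atom) is automatically a simple $(p(\cdot),r)^S$-atom (resp.\ $M$-atom) for finite $r>\max\{p_+,1\}$ — justified by $S(a)\chi_I=S(a)$ from the remark after Definition~\ref{atom} — is precisely what is needed to make the hypothesis $T\colon H_r^S\to L_r$ (resp.\ $H_r^M\to L_r$) and Lemma~\ref{3.3} applicable, so the template transfer is complete.
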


To get the martingale inequalities for variable martingale Hardy-Lorentz-Karamata spaces, we also require the following lemma.

\begin{lem}[\cite{g,w}]\label{sMS}
	Let $f$ be a martingale. Then
	$$\|M(f)\|_2\le2\|S(f)\|_2=2\|s(f)\|_2\le2\|M(f)\|_2;$$
	$$\|s(f)\|_r\le\sqrt{\frac{r}{2}}\|M(f)\|_r,\ \ \ \ r\ge2;$$
	$$\|s(f)\|_r\le\sqrt{\frac{r}{2}}\|S(f)\|_r,\ \ \ \ r\ge2.$$
	Moreover, if the stochastic basis $\{\mathcal{F}_n\}_{n\ge0}$ is regular, then
	\begin{align*}
		\|M(f)\|_r\approx\|S(f)\|_r\approx\|s(f)\|_r,\ \ \ \ 0<r<\infty.
	\end{align*}
\end{lem}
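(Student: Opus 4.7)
The plan is to prove the four claims in the order stated, drawing on classical martingale arguments as developed in Garsia \cite{g} and Weisz \cite{w}. Since each piece has a different flavor, I would separate the proof into three steps.

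\textbf{Step 1 (the $L_2$ identities).} First I would observe, via Fubini and the orthogonality of martingale differences, that
\[
\|S(f)\|_2^2 = \mathbb{E}\sum_{n\ge 0}|d_nf|^2 = \sum_{n\ge 0}\|d_nf\|_2^2,
\]
while by the tower property $\|s(f)\|_2^2 = \mathbb{E}\sum_{n\ge 0}\mathbb{E}_{n-1}|d_nf|^2$ equals the same sum, giving $\|S(f)\|_2 = \|s(f)\|_2$. The bracketing $\|S(f)\|_2 \le \|M(f)\|_2 \le 2\|S(f)\|_2$ then follows from $\|S(f)\|_2 = \lim_N \|f_N\|_2 \le \|M(f)\|_2$ and from Doob's $L^2$ maximal inequality $\|M(f)\|_2 \le 2\|f\|_2 = 2\|S(f)\|_2$.

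\textbf{Step 2 (the case $r \ge 2$).} I would use that $S(f)^2 - s(f)^2 = \sum_n (|d_nf|^2 - \mathbb{E}_{n-1}|d_nf|^2)$ is a sum of martingale differences. Applying the $L^{r/2}$ Burkholder-Davis-Gundy inequality to this martingale, together with the sharp form of Doob's $L^{r/2}$ inequality, yields the comparison of $\|s(f)\|_r$ with $\|S(f)\|_r$, with the stated constant $\sqrt{r/2}$ coming from the sharp Doob constant on the $L^{r/2}$ scale. The analogous bound with $M(f)$ in place of $S(f)$ can be obtained either by the same method applied to the sum $\sum_n \mathbb{E}_{n-1}|d_nf|^2$ together with the estimate $|d_nf| \le 2M(f)$, or by chaining through Step 1.

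\textbf{Step 3 (the regular case, all $0<r<\infty$).} This is the main obstacle. Here I would invoke the characterization of regularity recalled at \eqref{R}, that $f_n \le \mathcal{R} f_{n-1}$ for every nonnegative martingale. Applied conditionally to $(|d_kf|^2)$, this produces a pointwise-type bound $|d_nf|^2 \lesssim \mathcal{R}\,\mathbb{E}_{n-1}|d_nf|^2$, which after summation gives $S(f) \lesssim s(f)$ with a constant depending only on $\mathcal{R}$; the reverse inequality for $r \ge 2$ is already Step 2, and for $0<r<2$ one combines the pointwise control with Burkholder's good-$\lambda$ argument. Finally $\|M(f)\|_r \approx \|S(f)\|_r$ for all $0<r<\infty$ in the regular setting is the classical Burkholder-Davis-Gundy equivalence, again short-circuited by regularity so as to remove the usual Davis-decomposition detour.

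\textbf{Main difficulty.} The principal technical point is extending the square-function equivalence down to the range $0<r<2$ without any moment restriction. In a general filtration this requires Davis' decomposition, whereas regularity allows the pointwise bound on $|d_nf|$ by its conditional $L^2$ norm to short-circuit the argument; however, one must carefully track the dependence of all implicit constants on the regularity constant $\mathcal{R}$ and on $r$ to obtain a uniform equivalence on $(0,\infty)$.
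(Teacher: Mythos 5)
The paper does not give a proof of this lemma; it is simply cited from Garsia~\cite{g} and Weisz~\cite{w}, so there is no internal argument to compare against. The question is only whether your sketch reproduces those classical arguments correctly.

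Step~1 is fine: orthogonality gives $\|S(f)\|_2=\|s(f)\|_2=\sup_N\|f_N\|_2\le\|M(f)\|_2$, and Doob's $L_2$ inequality supplies the reverse bound.

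Step~2 contains the real gap. The inequalities $\|s(f)\|_r\le\sqrt{r/2}\,\|S(f)\|_r$ and $\|s(f)\|_r\le\sqrt{r/2}\,\|M(f)\|_r$ for $r\ge 2$ do \emph{not} come from applying Burkholder--Davis--Gundy to the martingale $S(f)^2-s(f)^2$. BDG applied to that martingale controls its maximal function by its own square function, whose expression again involves $\mathbb{E}_{n-1}|d_nf|^2$, so the argument is circular; and in any case the constant $\sqrt{r/2}$ is not a Doob constant (Doob's sharp $L_{r/2}$ constant is $\frac{r/2}{r/2-1}$, which is something else entirely). The correct tool is Garsia's potential-comparison lemma in \cite{g}: if $(A_n)$ is predictable, nondecreasing, $A_0=0$, and $\beta\ge 0$ satisfies $\mathbb{E}_n(A_\infty-A_n)\le\mathbb{E}_n(\beta)$ for every $n$, then $\|A_\infty\|_p\le p\,\|\beta\|_p$ for $p\ge 1$. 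Taking $A_n=s_n(f)^2$ and using orthogonality of increments,
\[
\mathbb{E}_n\bigl(s_\infty(f)^2-s_n(f)^2\bigr)=\mathbb{E}_n\sum_{k>n}|d_kf|^2=\mathbb{E}_n\bigl(f_\infty^2\bigr)-f_n^2,
\]
which is bounded by both $\mathbb{E}_n\bigl(S(f)^2\bigr)$ and $\mathbb{E}_n\bigl(M(f)^2\bigr)$; with $p=r/2$ this yields both stated inequalities. Note also that your proposed alternative for the $M(f)$ bound --- inserting $|d_nf|\le 2M(f)$ into the series $\sum_n\mathbb{E}_{n-1}|d_nf|^2$ --- produces a divergent bound, and ``chaining through Step~1'' only covers $r=2$.

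Step~3 is sound in outline, with one point that needs care: $(|d_kf|^2)_k$ is not itself a martingale, so regularity must be applied to the nonnegative martingale $m\mapsto\mathbb{E}_m(|d_nf|^2)$ to extract $|d_nf|^2\le\mathcal{R}\,\mathbb{E}_{n-1}|d_nf|^2$ and hence $S(f)\le\mathcal{R}^{1/2}s(f)$ pointwise; the paper uses exactly this bound in the proof of Theorem~\ref{mi}, citing \cite[p.~33]{w}. Given that pointwise control, the remaining $L_r$-comparisons for $0<r<2$ do indeed follow by good-$\lambda$/stopping-time arguments as in \cite{w}.
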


Now, we show the main conclusion of this section.

\begin{thm}\label{mi}
	Let $p(\cdot)\in\mathcal{P}(\Omega)$ satisfy $(\ref{gs3})$, $0<q\le\infty$ and let $b$ be a slowly varying function. Then
	\begin{align}\label{1}
		\|f\|_{H_{p(\cdot),q,b}^M}\lesssim\|f\|_{H_{p(\cdot),q,b}^s},\ \ \|f\|_{H_{p(\cdot),q,b}^S}\lesssim\|f\|_{H_{p(\cdot),q,b}^s},\ \ \  0<p_-\le p_+<2;
	\end{align}
	\begin{align}\label{2}
		 \|f\|_{H_{p(\cdot),q,b}^M}\lesssim\|f\|_{\mathcal{P}_{p(\cdot),q,b}},\ \ \|f\|_{H_{p(\cdot),q,b}^S}\lesssim\|f\|_{\mathcal{Q}_{p(\cdot),q,b}};
	\end{align}
	\begin{align}\label{3}
		 \|f\|_{H_{p(\cdot),q,b}^S}\lesssim\|f\|_{\mathcal{P}_{p(\cdot),q,b}},\ \ \|f\|_{H_{p(\cdot),q,b}^M}\lesssim\|f\|_{\mathcal{Q}_{p(\cdot),q,b}};
	\end{align}
	\begin{align}\label{4}
		 \|f\|_{H_{p(\cdot),q,b}^s}\lesssim\|f\|_{\mathcal{P}_{p(\cdot),q,b}},\ \ \|f\|_{H_{p(\cdot),q,b}^s}\lesssim\|f\|_{\mathcal{Q}_{p(\cdot),q,b}};
	\end{align}
	\begin{align}\label{5}
		 \|f\|_{\mathcal{P}_{p(\cdot),q,b}}\lesssim\|f\|_{\mathcal{Q}_{p(\cdot),q,b}}\lesssim\|f\|_{\mathcal{P}_{p(\cdot),q,b}}.
	\end{align}
	If $\{\mathcal{F}_n\}_{n\ge0}$ is regular, then
	 $$H_{p(\cdot),q,b}^S=\mathcal{Q}_{p(\cdot),q,b}=\mathcal{P}_{p(\cdot),q,b}=H_{p(\cdot),q,b}^M=H_{p(\cdot),q,b}^s.$$
\end{thm}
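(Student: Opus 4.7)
My plan is to derive each of the estimates (\ref{1})--(\ref{5}) from the $\sigma$-sublinear boundedness principles of Lemmas~\ref{5.1} and \ref{5.3} together with their counterpart for $H^S_{p(\cdot),q,b}$ and $H^M_{p(\cdot),q,b}$, combined with the classical martingale norm comparisons of Lemma~\ref{sMS} and the Burkholder--Davis--Gundy inequalities. The unifying observation is that $M$, $S$ and $s$ are all $\sigma$-sublinear on $\mathcal M$, and that for every atom $a$ associated with $I\in A(\mathcal F_i)$ the corresponding output vanishes off $I$: since $\mathbb E_n(a)=0$ for $n\le i$ and $\chi_I\in\mathcal F_{n-1}$ for $n>i$, one has $d_n a=\chi_I d_n a$ and $\mathbb E_{n-1}|d_n a|^2=\chi_I\mathbb E_{n-1}|d_n a|^2$, yielding in particular the locality hypothesis $\{|T(a)|>0\}\subset I$ required by those lemmas.

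The estimates are then treated in the following order. Inequalities (\ref{2}) are immediate from the definitions, since $|f_n|\le\lambda_{n-1}$ on $\mathcal P$ gives $M(f)\le\lambda_\infty$ and $S_n(f)\le\lambda_{n-1}$ on $\mathcal Q$ gives $S(f)\le\lambda_\infty$, so taking the infimum over admissible $\lambda$ concludes. For (\ref{3}) I invoke Lemma~\ref{5.3} with $T=S$ and $f\in\mathcal P$, using Burkholder's inequality $\|S(\cdot)\|_r\lesssim\|M(\cdot)\|_r$ for some $r>\max\{p_+,1\}$, and similarly with $T=M$ and $f\in\mathcal Q$ via $\|M(\cdot)\|_r\lesssim\|S(\cdot)\|_r$. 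The estimates (\ref{4}) follow from Lemma~\ref{5.3} with $T=s$, where Lemma~\ref{sMS} supplies $\|s(\cdot)\|_r\le\sqrt{r/2}\,\|M(\cdot)\|_r$ and $\|s(\cdot)\|_r\le\sqrt{r/2}\,\|S(\cdot)\|_r$ for any $r\ge 2$, a choice that is compatible with the constraint $r>\max\{p_+,1\}$ after possibly enlarging $r$. Inequalities (\ref{1}) are handled by Lemma~\ref{5.1}: the hypothesis $p_+<2$ permits $r=2$, and Lemma~\ref{sMS} gives $\|M(f)\|_2\le 2\|s(f)\|_2$ and $\|S(f)\|_2=\|s(f)\|_2$, furnishing the $L_2$-boundedness of $M$ and $S$ on $H^s_2$.

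To prove (\ref{5}) I construct dominating sequences by hand, feeding back (\ref{3}). For $f\in\mathcal Q_{p(\cdot),q,b}$ with $S_n(f)\le\lambda_{n-1}$, the sequence $\mu_n:=M_n(f)+\lambda_n$ is adapted, nondecreasing and nonnegative, and satisfies
$$|f_{n+1}|\le|d_{n+1}f|+|f_n|\le S_{n+1}(f)+M_n(f)\le\lambda_n+M_n(f)=\mu_n,$$
while the quasi-triangle inequality of Lemma~\ref{fanshusanjiao} combined with (\ref{3}) gives $\|\mu_\infty\|_{p(\cdot),q,b}\lesssim\|M(f)\|_{p(\cdot),q,b}+\|\lambda_\infty\|_{p(\cdot),q,b}\lesssim\|f\|_{\mathcal Q_{p(\cdot),q,b}}$. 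Symmetrically, for $f\in\mathcal P_{p(\cdot),q,b}$ with $|f_n|\le\mu_{n-1}$ the choice $\lambda_n:=S_n(f)+2\mu_n$ is admissible for $\mathcal Q$ because $S_{n+1}(f)\le S_n(f)+|d_{n+1}f|\le S_n(f)+2\mu_n=\lambda_n$, and $\|\lambda_\infty\|_{p(\cdot),q,b}\lesssim\|f\|_{\mathcal P_{p(\cdot),q,b}}$ again follows from (\ref{3}). In the regular case Lemma~\ref{sMS} delivers $\|M(f)\|_r\approx\|S(f)\|_r\approx\|s(f)\|_r$ for every $0<r<\infty$, so Lemmas~\ref{5.1}, \ref{5.3} and their intermediate companion apply to every configuration of $T\in\{M,S,s\}$ and every source space; combined with (\ref{1})--(\ref{5}) this forces all five spaces to coincide with equivalent quasi-norms.

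The main technical point is the verification of the locality condition $\{|T(a)|>0\}\subset I$ in Lemmas~\ref{5.1}, \ref{5.3} and their companion for each pair (operator $T$, atom type); the case $T=s$ acting on an $M$- or $S$-atom is the most delicate, since the conditional expectation must be pulled past $\chi_I$, which is legitimate only because $\chi_I\in\mathcal F_{n-1}$ for $n>i$. Apart from this, the entire argument is essentially a matching exercise: to each desired inequality one assigns the operator/atom configuration for which Lemma~\ref{sMS} or the Burkholder--Davis--Gundy inequality produces the required $L_r$-boundedness, and the corresponding $\sigma$-sublinear boundedness lemma then delivers the result.
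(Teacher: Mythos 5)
Your treatment of (\ref{1})--(\ref{5}) is correct and follows the paper's route closely: the same boundedness lemmas, the same $L_r$-norm comparisons from Lemma~\ref{sMS} and Burkholder--Davis--Gundy, and the same hand-constructed dominating sequences for (\ref{5}). The locality verification (that $s(a), S(a), M(a)$ vanish off $I$ because $d_na=\chi_I\,d_na$) is also right and corresponds to the remark after Definition~\ref{atom}.

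However, your treatment of the regular case has a genuine gap. The boundedness lemmas (\ref{5.1}, \ref{5.3} and their regular companion) all produce conclusions of the form $\|T(f)\|_{p(\cdot),q,b}\lesssim\|f\|_{X}$ for a $\sigma$-sublinear $T$ and a source Hardy or $\mathcal P/\mathcal Q$ space; they can therefore bound the $s$-, $S$- or $M$-norms of $f$, but they can never bound $\|f\|_{\mathcal P_{p(\cdot),q,b}}$ or $\|f\|_{\mathcal Q_{p(\cdot),q,b}}$ from above, since those quantities are infima over admissible predictable dominating sequences and not of the form $\|T(f)\|_{p(\cdot),q,b}$. Combined with (\ref{2})--(\ref{5}), what you obtain is only $\mathcal P=\mathcal Q\hookrightarrow H^s=H^M=H^S$. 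The reverse containment --- some embedding of $H^s$, $H^M$ or $H^S$ into $\mathcal P$ or $\mathcal Q$ --- is not delivered by any configuration of the sublinear boundedness lemmas, and your argument never supplies it. The paper closes the loop by two facts that your proof never invokes: first, Theorems~\ref{ad2} and \ref{ad3} identify $H^M_{p(\cdot),q,b}=\mathcal P_{p(\cdot),q,b}$ and $H^S_{p(\cdot),q,b}=\mathcal Q_{p(\cdot),q,b}$ (taking $r=\infty$ in the atomic decompositions); second, in the regular case $S_n(f)\le \mathcal R^{1/2}s_n(f)$ with $s_n(f)\in\mathcal F_{n-1}$, so $(\mathcal R^{1/2}s_{n+1}(f))_n$ is an admissible predictable majorant and gives $\|f\|_{\mathcal Q_{p(\cdot),q,b}}\lesssim\|f\|_{H^s_{p(\cdot),q,b}}$. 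Either of these, together with (\ref{4}) and (\ref{5}), completes the cycle; your proposal needs to add at least one of them.
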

\begin{proof}
	It is obvious that the operators $s,M,S$ are $\sigma$-sublinear and
	$$\{s(a_1)>0\}\subset I_1,\ \ \{M(a_2)>0\}\subset I_2\ \ \text{and}\ \{S(a_3)>0\}\subset I_3,$$
	where $a_i\ (i=1,2,3)$ is $(p(\cdot),r)^*$-atom $(*=s,M,S)$ associated with $I_i$ $(i=1,2,3)$, respectively. Lemma \ref{sMS} gives that the operators $M:H_2^s\rightarrow L_2$ and $S:H_2^s\rightarrow L_2$ are bounded. Hence, using Lemma \ref{5.1}, we know that the inequalities in (\ref{1}) hold if $0<p_-\le p_+<2$.
	
 The inequalities in (\ref{2}) follow directly from the definitions of $\mathcal{P}_{p(\cdot),q,b}$ and $\mathcal{Q}_{p(\cdot),q,b}$.
	
	According to Burkholder-Davis-Gundy's inequality (see \cite[Theorem 2.12]{w}), namely,
	$$c_r\|f\|_{H_r^S}\le\|f\|_{H_r^M}\le C_r\|f\|_{H_r^S},\ \ 1\le r<\infty$$
	and Doob's maximal inequality, we obtain that the operators $M:H_r^S\rightarrow L_r$ and $S:H_r^M\rightarrow L_r$ are bounded. Hence, using Lemma \ref{5.3}, the inequalities in (\ref{3}) hold.
	
	Let $\max\{p_+,2\}<r<\infty$. It follows from Lemma \ref{sMS} that $s:H_r^M\rightarrow L_r$ and $s:H_r^S\rightarrow L_r$ are bounded. Then by Lemma \ref{5.3}, we obtain the inequalities in (\ref{4}).
	
	In order to prove (\ref{5}), if we take $f=(f_n)_{n\ge0}\in\mathcal{Q}_{p(\cdot),q,b}$, then there exists $(\lambda_n^{(1)})_{n\ge0}\in\Lambda_{p(\cdot),q,b}$ such that $S_n(f)\le\lambda_{n-1}^{(1)}$ with $\lambda_\infty^{(1)}\in L_{p(\cdot),q,b}$. Since
	$$|f_n|\le|f_n-f_{n-1}|+|f_{n-1}|\le S_n(f)+M_{n-1}(f)\le \lambda_{n-1}^{(1)}+M_{n-1}(f)$$
	and $(\lambda_{n}^{(1)}+M_{n}(f))_{n\ge0}\in\Lambda_{p(\cdot),q,b}$, by the second inequality of (\ref{3}), we get
	 $$\|f\|_{\mathcal{P}_{p(\cdot),q,b}}\le\|\lambda_\infty^{(1)}+M(f)\|_{p(\cdot),q,b}\lesssim\|M(f)\|_{H_{p(\cdot),q,b}}+\|\lambda_\infty^{(1)}\|_{p(\cdot),q,b}\lesssim\|f\|_{\mathcal{Q}_{p(\cdot),q,b}}.$$
	If $f=(f_n)_{n\ge0}\in\mathcal{P}_{p(\cdot),q,b}$, then there exists $(\lambda_n^{(2)})_{n\ge0}\in\Lambda_{p(\cdot),q,b}$ such that $|f_n|\le\lambda_{n-1}^{(2)}$ with $\lambda_\infty^{(2)}\in L_{p(\cdot),q,b}$.
	Since $$S_n(f)\le S_{n-1}(f)+|f_n-f_{n-1}|\le S_{n-1}(f)+2M_n(f)\le S_{n-1}(f)+2\lambda_n^{(2)}$$
	and $(\lambda_{n}^{(2)}+S_{n}(f))_{n\ge0}\in\Lambda_{p(\cdot),q,b}$, by the first inequality of (\ref{3}), we have
	 $$\|f\|_{\mathcal{Q}_{p(\cdot),q,b}}\le\|\lambda_\infty^{(2)}+S(f)\|_{p(\cdot),q,b}\lesssim\|S(f)\|_{H_{p(\cdot),q,b}}+\|\lambda_\infty^{(2)}\|_{p(\cdot),q,b}\lesssim\|f\|_{\mathcal{P}_{p(\cdot),q,b}}.$$
	Hence, the inequality (\ref{5}) holds and this yields that $\mathcal{P}_{p(\cdot),q,b}=\mathcal{Q}_{p(\cdot),q,b}$.
	
	If $\{\mathcal{F}_n\}_{n\ge0}$ is regular, combining Theorem \ref{ad2} with Theorem \ref{ad3}, we conclude that
	$$H_{p(\cdot),q,b}^M=\mathcal{P}_{p(\cdot),q,b}\ \ \text{and}\ \ H_{p(\cdot),q,b}^S=\mathcal{Q}_{p(\cdot),q,b}.$$
	According to \cite[p.33]{w}, there is
	$$S_n(f)\le \mathcal{R}^{\frac{1}{2}}s_n(f).$$
	Then by the definition of $\mathcal{Q}_{p(\cdot),q,b}$, we obtain
	 $$\|f\|_{\mathcal{Q}_{p(\cdot),q,b}}\lesssim\|s(f)\|_{p(\cdot),q,b}=\|f\|_{H_{p(\cdot),q,b}^s},$$
	since $s_n(f)\in\mathcal{F}_{n-1}$.
	Hence, it follows from (\ref{4}) and (\ref{5}) that
	 $$H_{p(\cdot),q,b}^s=\mathcal{Q}_{p(\cdot),q,b}=\mathcal{P}_{p(\cdot),q,b}.$$
	Therefore,
	 $$H_{p(\cdot),q,b}^S=\mathcal{Q}_{p(\cdot),q,b}=H_{p(\cdot),q,b}^s=\mathcal{P}_{p(\cdot),q,b}=H_{p(\cdot),q,b}^M$$
	and the proof of this theorem is complete.
\end{proof}

\begin{rem}
	When $p(\cdot)\equiv p$ is a constant, Theorem $\ref{mi}$ removes the condition that $b$ is nondecreasing in \cite[Theorems $1.2$ and $1.3$]{wzp}. When $b\equiv1$, Theorem $\ref{mi}$ reduces to \cite[Theorem $4.11$]{jwzw}. Moreover, if $p(\cdot)\equiv p$ is a constant and $b\equiv1$, we obtain the relation of five martingale Hardy-Lorentz spaces, see \cite{w}.
\end{rem}

Moreover, combining Theorem \ref{mi} with Theorem \ref{ad11}, we have the following result.
\begin{coro}\label{cc11}
	Let $p(\cdot)\in\mathcal{P}(\Omega)$ satisfy $(\ref{gs3})$, $0<q\le\infty$ and let $b$ be a slowly varying function. If $\{\mathcal{F}_n\}_{n\ge0}$ is regular, then
	 $$H_{p(\cdot),q,b}^S=\mathcal{Q}_{p(\cdot),q,b}=\mathcal{P}_{p(\cdot),q,b}
	 =H_{p(\cdot),q,b}^M=H_{p(\cdot),q,b}^s=H_{p(\cdot),q,b}^{at,\infty,i},$$
	where $i=1,2,3$.
\end{coro}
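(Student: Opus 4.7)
The plan is to simply combine the two main results of the preceding sections, namely Theorem \ref{mi} and Theorem \ref{ad11}, with no additional work required. The essential content of this corollary has already been established; the corollary merely packages these identifications into one statement.

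First I would invoke the regularity hypothesis together with the last assertion of Theorem \ref{mi}, which gives directly the chain of equalities
\begin{equation*}
H_{p(\cdot),q,b}^S = \mathcal{Q}_{p(\cdot),q,b} = \mathcal{P}_{p(\cdot),q,b} = H_{p(\cdot),q,b}^M = H_{p(\cdot),q,b}^s
\end{equation*}
with equivalent (quasi)-norms. This handles the five non-atomic spaces.

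Next I would apply Theorem \ref{ad11}, whose three conclusions state
\begin{equation*}
H_{p(\cdot),q,b}^s = H_{p(\cdot),q,b}^{at,\infty,1}, \qquad \mathcal{Q}_{p(\cdot),q,b} = H_{p(\cdot),q,b}^{at,\infty,2}, \qquad \mathcal{P}_{p(\cdot),q,b} = H_{p(\cdot),q,b}^{at,\infty,3},
\end{equation*}
again with equivalent (quasi)-norms. Appending these three identifications to the preceding chain yields the full equality for $i=1,2,3$, and the proof is complete. There is no main obstacle here, since all the technical work (atomic decomposition via Definition \ref{1defi1}, the regularity argument comparing $M$, $S$, and $s$, and the reduction $H_{p(\cdot),q,b}^s \supseteq \mathcal{Q}_{p(\cdot),q,b}$ via $S_n(f) \le \mathcal{R}^{1/2} s_n(f)$) has already been carried out in Theorems \ref{mi} and \ref{ad11}.
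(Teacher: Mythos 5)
Your proposal is correct and coincides exactly with the paper's own derivation: the paper introduces Corollary \ref{cc11} with the single sentence ``combining Theorem \ref{mi} with Theorem \ref{ad11}, we have the following result,'' which is precisely your two-step argument. No further comment is needed.
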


As another application of Theorem \ref{mi}, we can obtain the boundedness of the martingale transform for variable Lorentz-Karamata spaces. The martingale transform was introduced by Burkholder in \cite{b} and he obtained the boundedness of the martingale transform generated by uniformly bounded sequence on the Lebesgue spaces.
 Jiao et al. \cite{jwzw} extended this result to the variable exponents setting, and studied the boundedness of martingale transform on variable Lebesgue spaces and variable Lorentz spaces. Firstly, we recall the definition of martingale transform.
\begin{defi}
	For any martingale $f\in\mathcal{M}$, the martingale transform of $f$ is defined by
	$$(T_vf)_{n}=\sum_{k=0}^{n}v_{k-1}d_kf,\ \ \ n\in\mathbb{N},$$
	where for each $k$, $v_k$ is $\mathcal{F}_k$-measurable and $\|v_k\|_{\infty}<L$ for some $L>0$.
\end{defi}

\begin{thm}\label{mt}
	Let $p(\cdot)\in\mathcal{P}(\Omega)$ satisfy $(\ref{gs3})$ with $1<p_-\le p_+<\infty$, $0<q\le\infty$ and let $b$ be a slowly varying function. Then we have
	$$\|T_vf\|_{p(\cdot),q,b}\lesssim\|f\|_{p(\cdot),q,b}\quad \text{for any}\ f\in L_{p(\cdot),q,b}.$$
\end{thm}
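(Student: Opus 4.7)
The plan is to prove Theorem \ref{mt} in two stages: first, apply the $\sigma$-sublinear transfer result (Lemma \ref{5.1}) to show that $T_v$ is bounded from $H_{p(\cdot),q,b}^{s}$ into $L_{p(\cdot),q,b}$; second, show that under the hypothesis $1<p_-\le p_+<\infty$ the containment $L_{p(\cdot),q,b}\hookrightarrow H_{p(\cdot),q,b}^{s}$ is continuous, so that the two estimates compose to give the desired bound. Throughout, I will identify $f\in L_{p(\cdot),q,b}$ with the limit of the martingale $f_n=\mathbb{E}_n(f)$ (well-defined since $\|M(f)\|_{p(\cdot),q,b}\lesssim\|f\|_{p(\cdot),q,b}$ by Lemma \ref{Doob}), and read $T_vf$ as the a.e.\ limit of $(T_vf)_n=\sum_{k\le n}v_{k-1}d_kf$.

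The preliminary observations I need are entirely routine. Since $v$ is uniformly bounded by $L$, the martingale differences satisfy $|d_k(T_vf)|=|v_{k-1}d_kf|\le L|d_kf|$, so
\[
S(T_vf)\le L\cdot S(f)\quad\text{and}\quad s(T_vf)\le L\cdot s(f).
\]
Also $T_v$ is linear, hence $\sigma$-sublinear. Next I verify the atomic support condition required by Lemma \ref{5.1}: let $a$ be a simple $(p(\cdot),r)^s$-atom associated with $I\in A(\mathcal{F}_i)$. Because $\mathbb{E}_i(a)=0$, we have $\mathbb{E}_n(a)=0$ for all $n\le i$; for $n>i$, since $\mathrm{supp}(a)\subset I\in\mathcal{F}_i\subset\mathcal{F}_n$, $\mathbb{E}_n(a)=\chi_I\mathbb{E}_n(a)$. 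Hence every $d_na$ is supported in $I$, and so is $d_n(T_va)=v_{n-1}d_na$, giving $\{|T_va|>0\}\subset I$.

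For the first stage, I would use Burkholder's classical square function inequality together with Burkholder-Davis-Gundy to see that, for each $r\in(1,\infty)$,
\[
\|T_vf\|_r\lesssim\|M(T_vf)\|_r\lesssim\|S(T_vf)\|_r\le L\|S(f)\|_r\approx\|s(f)\|_r,
\]
so that $T_v:H_r^s\to L_r$ is bounded. Choosing $r>\max\{p_+,1\}$ and combining with the support observation above, Lemma \ref{5.1} yields
\[
\|T_vf\|_{p(\cdot),q,b}\lesssim\|f\|_{H_{p(\cdot),q,b}^s}\qquad\text{for all }f\in H_{p(\cdot),q,b}^s.
\]

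For the second stage, I need the reverse-type estimate $\|f\|_{H_{p(\cdot),q,b}^s}=\|s(f)\|_{p(\cdot),q,b}\lesssim\|f\|_{p(\cdot),q,b}$ whenever $1<p_-\le p_+<\infty$. The scalar version $\|s(f)\|_r\lesssim\|f\|_r$ for $1<r<\infty$ is the classical Burkholder-Gundy inequality; its variable-Lebesgue counterpart $\|s(f)\|_{p(\cdot)}\lesssim\|f\|_{p(\cdot)}$ is available under the key hypothesis \eqref{gs3} by the Hao-Jiao type arguments underlying Lemma \ref{Doob}. The extension to the variable Lorentz-Karamata scale then proceeds exactly as in the proof of Lemma \ref{Doob}, via the interpolation machinery for $L_{p(\cdot),q,b}$ (cf.\ \cite{hdlw}) applied to the $\sigma$-sublinear operator $f\mapsto s(f)$ instead of $f\mapsto M(f)$. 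Concatenating with the first stage gives the theorem.

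The principal technical obstacle is the second stage: unlike the $p_+<2$ range (where Theorem \ref{mi}(1) yields $\|M(f)\|_{p(\cdot),q,b}\lesssim\|s(f)\|_{p(\cdot),q,b}$ for free but in the \emph{wrong} direction), at general exponents $1<p_-\le p_+<\infty$ and without assuming regularity of $\{\mathcal{F}_n\}_{n\ge 0}$, the martingale inequalities of Section \ref{s4} do not directly deliver $\|s(f)\|_{p(\cdot),q,b}\lesssim\|f\|_{p(\cdot),q,b}$. I therefore expect the bulk of the work to lie in importing the scalar Burkholder-Gundy estimate into the variable Lorentz-Karamata scale; once that is done, the martingale transform bound follows essentially for free from $s(T_vf)\le L\cdot s(f)$.
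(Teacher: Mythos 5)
Your two-stage plan is workable, but it is considerably heavier than the paper's own argument, and both of your stages quietly rely on the regularity of $\{\mathcal{F}_n\}_{n\ge 0}$ — a hypothesis that is also used implicitly in the paper's proof (which invokes Theorem~\ref{mi} to pass between $M$, $S$ and $s$) and that holds in the dyadic setting of Section~\ref{s8} where Theorem~\ref{mt} is actually applied. The paper's proof avoids the atomic machinery entirely: from the pointwise estimate $S(T_vf)\le L\,S(f)$, it chains
\[
\|T_vf\|_{p(\cdot),q,b}\le\|M(T_vf)\|_{p(\cdot),q,b}\lesssim\|S(T_vf)\|_{p(\cdot),q,b}\le L\|S(f)\|_{p(\cdot),q,b}\lesssim\|M(f)\|_{p(\cdot),q,b}\lesssim\|f\|_{p(\cdot),q,b},
\]
where the two middle comparisons come from the equivalence $H^M_{p(\cdot),q,b}=H^S_{p(\cdot),q,b}$ in Theorem~\ref{mi} (under regularity), and the final step is Doob's inequality (Lemma~\ref{Doob}), applicable since $1<p_-\le p_+<\infty$.

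Two specific points in your writeup. In stage 1, the claim $\|S(f)\|_r\approx\|s(f)\|_r$ for all $r\in(1,\infty)$ is false without regularity: Lemma~\ref{sMS} only yields $\|s(f)\|_r\lesssim\|S(f)\|_r$ for $r\ge 2$, and the full equivalence for all $r$ is the regular case of that lemma. So you already need regularity at this step, and once you have it the Lemma~\ref{5.1} detour is redundant — you would be re-deriving $\|M(g)\|_{p(\cdot),q,b}\lesssim\|g\|_{H^s_{p(\cdot),q,b}}$, which Theorem~\ref{mi} supplies directly. In stage 2 your intuition is exactly right that $\|s(f)\|_{p(\cdot),q,b}\lesssim\|f\|_{p(\cdot),q,b}$ is the crux and that regularity is what is missing; but no new interpolation argument is required. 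Under regularity, Theorem~\ref{mi} gives $\|s(f)\|_{p(\cdot),q,b}\approx\|M(f)\|_{p(\cdot),q,b}$ for any martingale, and Lemma~\ref{Doob} then closes the gap. Your atomic support observation $\{|T_va|>0\}\subset I$ is correct and would be fine if one did want to run the Lemma~\ref{5.1} route.
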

\begin{proof}
	Since $\|v_k\|_{\infty}<L$ for each $k$ and some $L>0$, we have
	\begin{align*}
		S(T_vf)=&\ \bigg(\sum_{n=0}^{\infty}\big|d_n(T_vf)\big|^2\bigg)^{\frac{1}{2}}=\bigg(\sum_{n=0}^{\infty}|v_{n-1}d_nf|^2\bigg)^{\frac{1}{2}}\\
		<&\ L\bigg(\sum_{n=0}^{\infty}|d_nf|^2\bigg)^{\frac{1}{2}}=LS(f).
	\end{align*}
	Hence,  by using Theorem \ref{mi} and Lemma \ref{Doob}, we obtain that
	\begin{align*}
		\|T_vf\|_{p(\cdot),q,b}\lesssim&\ \|M(T_vf)\|_{p(\cdot),q,b}\lesssim\|S(T_vf)\|_{p(\cdot),q,b}\\
		\lesssim&\ \|S(f)\|_{p(\cdot),q,b}\lesssim\|M(f)\|_{p(\cdot),q,b}\lesssim\|f\|_{p(\cdot),q,b}.
	\end{align*}
	The proof is complete.
\end{proof}

\section{Dual Results} \label{s5}
In this section, our main objective is to characterize the dual spaces of variable martingale Hardy-Lorentz-Karamata spaces. To this end, we introduce the following generalized $BMO$ martingale spaces.

\begin{defi}\label{B1}
    Let $\alpha(\cdot)+1\in\mathcal{P}(\Omega)$, $1\le r<\infty$ and let $b$ be a slowly varying function. The space $BMO_{r,b}(\alpha(\cdot))$ consists of those functions $f\in L_r$ for which
    $$\|f\|_{BMO_{r,b}(\alpha(\cdot))}=\sup_{n\ge0}\sup_{I\in A(\mathcal{F}_n)}\|\chi_{I}\|_{\frac{1}{\alpha(\cdot)+1}}^{-1}\mathbb{P}(I)^{1-\frac{1}{r}}\gamma_b^{-1}\big(\|\chi_{I}\|_{\frac{1}{\alpha(\cdot)+1}}\big)\|(f-\mathbb{E}_n(f))\chi_I\|_r$$
    is finite.
\end{defi}

\begin{rem}
	If $b\equiv1$, then the above generalized martingale space is the same as $BMO_{r}(\alpha(\cdot))$ in \cite{jwwz}. $BMO_{r,b}(\alpha)$ given by Jiao et al. \cite{jxz} is a special case of Definition $\ref{B1}$ when $\alpha(\cdot)\equiv\alpha>0$ is a constant. Moreover, if $b\equiv1$ and $\alpha(\cdot)\equiv0$, then the generalized martingale space goes back to the classical martingale $BMO_r$ space in \cite{w}.
\end{rem}

Now we introduce another generalization of $BMO$ spaces.

\begin{defi}\label{B2}
	Let $\alpha(\cdot)+1\in\mathcal{P}(\Omega)$, $1\le r<\infty$, $0<q\le\infty$ and let $b$ be a slowly varying function. The generalized martingale space $BMO_{r,q,b}(\alpha(\cdot))$ is defined by
	$$BMO_{r,q,b}(\alpha(\cdot))=\big\{f\in L_r:\|f\|_{BMO_{r,q,b}(\alpha(\cdot))}<\infty\big\},$$
	where
	 $$\|f\|_{BMO_{r,q,b}(\alpha(\cdot))}=\sup\frac{\sum\limits_{k\in\mathbb{Z}}\sum\limits_{i=0}^{\infty}\sum\limits_{j}2^k\mathbb{P}(I_{k,i,j})^{1-\frac{1}{r}}\|(f-\mathbb{E}_i(f))\chi_{I_{k,i,j}}\|_r}{\bigg\|\bigg\{2^{k}\Big\|\sum\limits_{i=0}^{\infty}\sum\limits_{j}\chi_{I_{k,i,j}}\Big\|_{\frac{1}{\alpha(\cdot)+1}}\gamma_b\Big(\Big\|\sum\limits_{i=0}^{\infty}\sum\limits_{j}\chi_{I_{k,i,j}}\Big\|_{\frac{1}{\alpha(\cdot)+1}}\Big)\bigg\}_{k\in\mathbb{Z}}\bigg\|_{l_q}}$$
	and the supremum is taken over all sequences of atoms $(I_{k,i,j})_{k\in\mathbb{Z},i\in\mathbb{N},j}$ such that $I_{k,i,j}$ are disjoint if $k$ is fixed, $I_{k,i,j}$ belong to $\mathcal{F}_i$ and
	 $$\Bigg\{2^{k}\bigg\|\sum\limits_{i=0}^{\infty}\sum\limits_{j}\chi_{I_{k,i,j}}\bigg\|_{\frac{1}{\alpha(\cdot)+1}}\gamma_b\bigg(\bigg\|\sum\limits_{i=0}^{\infty}\sum\limits_{j}\chi_{I_{k,i,j}}\bigg\|_{\frac{1}{\alpha(\cdot)+1}}\bigg)\Bigg\}_{k\in\mathbb{Z}}\in l_q.$$
\end{defi}

\begin{rem}
	If $b\equiv1$, $BMO_{r,q,b}(\alpha(\cdot))$ reduces to $BMO_{r,q}(\alpha(\cdot))$ in \cite{jwwz}. $BMO_{r,q,b}(\alpha)$ defined by Jiao et al. \cite{jxz} is a special case of Definition $\ref{B2}$ when $\alpha(\cdot)\equiv\alpha>0$ is a constant. Moreover, if $b\equiv1$ and $\alpha(\cdot)\equiv0$, then the generalized martingale space goes back to classical martingale  $BMO_{r,q}$ space in \cite{w}.
\end{rem}

The following result gives the connection of $BMO_{r,b}(\alpha(\cdot))$ and $BMO_{r,q,b}(\alpha(\cdot))$.
\begin{pro}\label{prod}
	Let $\alpha(\cdot)+1\in\mathcal{P}(\Omega)$, $1\le r<\infty$, $0<q<\infty$ and let $b$ be a slowly varying function. Then
	\begin{align*}
		 \|f\|_{BMO_{r,b}(\alpha(\cdot))}\le\|f\|_{BMO_{r,q,b}(\alpha(\cdot))}.
	\end{align*}
	If we suppose that $\alpha_->0$ and $0<q\le1$, then
	\begin{align}\label{''}
		 \|f\|_{BMO_{r,b}(\alpha(\cdot))}\approx\|f\|_{BMO_{r,q,b}(\alpha(\cdot))}.
	\end{align}
	In addition, if $\alpha_-\ge0$ and $b$ is nonincreasing, we also have $(\ref{''})$.
\end{pro}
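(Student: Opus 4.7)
The plan is to handle the easy inequality $\|f\|_{BMO_{r,b}(\alpha(\cdot))}\le\|f\|_{BMO_{r,q,b}(\alpha(\cdot))}$ first, and then to obtain the reverse estimate in the two claimed cases by collapsing the sum defining $\|f\|_{BMO_{r,q,b}(\alpha(\cdot))}$ onto a single level via Lemma \ref{wuqiongbianb}, followed by the trivial $\ell_1\hookrightarrow\ell_q$ embedding for $0<q\le1$.

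For the first inequality, fix $n\ge0$ and $I\in A(\mathcal{F}_n)$, and choose an arbitrary $k_0\in\mathbb{Z}$. I would feed into the supremum defining $\|f\|_{BMO_{r,q,b}(\alpha(\cdot))}$ the degenerate sequence whose only nonzero member is $I_{k_0,n,1}=I$. Then the numerator collapses to $2^{k_0}\mathbb{P}(I)^{1-1/r}\|(f-\mathbb{E}_n(f))\chi_I\|_r$ while the $\ell_q$ denominator collapses to $2^{k_0}\|\chi_I\|_{1/(\alpha(\cdot)+1)}\gamma_b(\|\chi_I\|_{1/(\alpha(\cdot)+1)})$. Cancelling $2^{k_0}$ and taking the supremum over all such $(n,I)$ yields exactly $\|f\|_{BMO_{r,b}(\alpha(\cdot))}$, which proves the first inequality.

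For the equivalence, let $(I_{k,i,j})_{k,i,j}$ be any admissible sequence. From the very definition of $BMO_{r,b}(\alpha(\cdot))$ applied to each $I_{k,i,j}\in A(\mathcal{F}_i)$, I have
\[
\mathbb{P}(I_{k,i,j})^{1-1/r}\|(f-\mathbb{E}_i(f))\chi_{I_{k,i,j}}\|_r\le\|f\|_{BMO_{r,b}(\alpha(\cdot))}\,\|\chi_{I_{k,i,j}}\|_{1/(\alpha(\cdot)+1)}\,\gamma_b\bigl(\|\chi_{I_{k,i,j}}\|_{1/(\alpha(\cdot)+1)}\bigr).
\]
Multiplying by $2^k$ and summing over $i,j$, I apply Lemma \ref{wuqiongbianb} (with $\theta=1$, $p(\cdot)=1/(\alpha(\cdot)+1)$, so $p_+=1/(\alpha_-+1)$) to the disjoint family $(I_{k,i,j})_{i,j}$: in the case $\alpha_->0$ we have $p_+<1=\theta$ and disjointness, while in the case $b$ nonincreasing and $\alpha_-\ge0$ we have $p_+\le1=\theta$ with $b$ nonincreasing; either hypothesis gives the inequality $(\ref{///})$, namely
\[
\sum_{i,j}\|\chi_{I_{k,i,j}}\|_{1/(\alpha(\cdot)+1)}\,\gamma_b\bigl(\|\chi_{I_{k,i,j}}\|_{1/(\alpha(\cdot)+1)}\bigr)\lesssim\Bigl\|\sum_{i,j}\chi_{I_{k,i,j}}\Bigr\|_{1/(\alpha(\cdot)+1)}\,\gamma_b\Bigl(\Bigl\|\sum_{i,j}\chi_{I_{k,i,j}}\Bigr\|_{1/(\alpha(\cdot)+1)}\Bigr).
\]

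Finally, summing over $k$, the hypothesis $0<q\le1$ gives $\sum_k a_k\le\bigl(\sum_k a_k^q\bigr)^{1/q}$ for any nonnegative sequence $(a_k)$, so
\[
\sum_k 2^k\Bigl\|\sum_{i,j}\chi_{I_{k,i,j}}\Bigr\|_{1/(\alpha(\cdot)+1)}\,\gamma_b\Bigl(\Bigl\|\sum_{i,j}\chi_{I_{k,i,j}}\Bigr\|_{1/(\alpha(\cdot)+1)}\Bigr)\le\Bigl\|\bigl\{2^k\bigl\|\textstyle\sum_{i,j}\chi_{I_{k,i,j}}\bigr\|_{1/(\alpha(\cdot)+1)}\gamma_b(\cdots)\bigr\}_k\Bigr\|_{\ell_q}.
\]
Chaining the previous two displays, dividing through by this $\ell_q$ quantity, and taking the supremum over admissible sequences gives $\|f\|_{BMO_{r,q,b}(\alpha(\cdot))}\lesssim\|f\|_{BMO_{r,b}(\alpha(\cdot))}$. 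The main (and only) delicate step is the application of Lemma \ref{wuqiongbianb}: both sets of assumptions in the proposition are tailored precisely so that the hypotheses of that lemma are met with $\theta=1$; outside those regimes one would lose the collapse from many disjoint atoms to their union and the equivalence would in general fail.
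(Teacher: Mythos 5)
Your proof is correct and follows essentially the same route as the paper's: the first inequality by testing the supremum on a single atom, and the reverse bound by inserting the $BMO_{r,b}(\alpha(\cdot))$ estimate termwise, collapsing the inner sums with Lemma~\ref{wuqiongbianb} (correctly matching the two hypotheses of that lemma to the two cases $\alpha_->0$ and $\alpha_-\ge0$ with $b$ nonincreasing), and finally using $\ell_1\hookrightarrow\ell_q$ for $0<q\le1$. The only cosmetic difference is the order in which the $\ell_q$ embedding and the Lemma~\ref{wuqiongbianb} collapse are applied, which is immaterial.
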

\begin{proof}
	If we take the supremum in the definition of $BMO_{r,q,b}(\alpha(\cdot))$ only for one atom, then we get back to the definition of $BMO_{r,b}(\alpha(\cdot))$. Hence the first inequality is obvious. For the second inequality, it clearly suffices to verify that $\|f\|_{BMO_{r,q,b}(\alpha(\cdot))}\lesssim\|f\|_{BMO_{r,b}(\alpha(\cdot))}.$
	
	For any $f\in BMO_{r,b}(\alpha(\cdot))$, by the definition of $BMO_{r,b}(\alpha(\cdot))$, there is
	$$\mathbb{P}(I)^{1-\frac{1}{r}}\|(f-\mathbb{E}_i(f))\chi_I\|_r\le \|\chi_{I}\|_{\frac{1}{\alpha(\cdot)+1}}\gamma_b\big(\|\chi_{I}\|_{\frac{1}{\alpha(\cdot)+1}}\big)\|f\|_{BMO_{r,b}(\alpha(\cdot))}$$
	for any $I\in A(\mathcal{F}_i).$
	Since $\alpha_->0$ and $0<q\le1$, then it follows from Lemma \ref{wuqiongbianb} that
	\begin{align*}
		&\ \frac{\sum\limits_{k\in\mathbb{Z}}\sum\limits_{i=0}^{\infty}\sum\limits_{j}2^k\mathbb{P}(I_{k,i,j})^{1-\frac{1}{r}}\|(f-\mathbb{E}_i(f))\chi_{I_{k,i,j}}\|_r}{\bigg(\sum\limits_{k\in\mathbb{Z}}2^{kq}\Big\|\sum\limits_{i=0}^{\infty}\sum\limits_{j}\chi_{I_{k,i,j}}\Big\|_{\frac{1}{\alpha(\cdot)+1}}^q\gamma_b^q\Big(\Big\|\sum\limits_{i=0}^{\infty}\sum\limits_{j}\chi_{I_{k,i,j}}\Big\|_{\frac{1}{\alpha(\cdot)+1}}\Big)\bigg)^\frac{1}{q}}\\
		\le&\ \frac{\sum\limits_{k\in\mathbb{Z}}\sum\limits_{i=0}^{\infty}\sum\limits_{j}2^k\|\chi_{I_{k,i,j}}\|_{\frac{1}{\alpha(\cdot)+1}}\gamma_b\big(\|\chi_{I_{k,i,j}}\|_{\frac{1}{\alpha(\cdot)+1}}\big)\|f\|_{BMO_{r,b}(\alpha(\cdot))}}{\bigg(\sum\limits_{k\in\mathbb{Z}}2^{kq}\Big\|\sum\limits_{i=0}^{\infty}\sum\limits_{j}\chi_{I_{k,i,j}}\Big\|_{\frac{1}{\alpha(\cdot)+1}}^q\gamma_b^q\Big(\Big\|\sum\limits_{i=0}^{\infty}\sum\limits_{j}\chi_{I_{k,i,j}}\Big\|_{\frac{1}{\alpha(\cdot)+1}}\Big)\bigg)^\frac{1}{q}}\\
		\le&\ \|f\|_{BMO_{r,b}(\alpha(\cdot))}\frac{\sum\limits_{k\in\mathbb{Z}}\sum\limits_{i=0}^{\infty}\sum\limits_{j}2^k\|\chi_{I_{k,i,j}}\|_{\frac{1}{\alpha(\cdot)+1}}\gamma_b\big(\|\chi_{I_{k,i,j}}\|_{\frac{1}{\alpha(\cdot)+1}}\big)}{\sum\limits_{k\in\mathbb{Z}}2^{k}\Big\|\sum\limits_{i=0}^{\infty}\sum\limits_{j}\chi_{I_{k,i,j}}\Big\|_{\frac{1}{\alpha(\cdot)+1}}\gamma_b\Big(\Big\|\sum\limits_{i=0}^{\infty}\sum\limits_{j}\chi_{I_{k,i,j}}\Big\|_{\frac{1}{\alpha(\cdot)+1}}\Big)}\\
		\lesssim&\ \|f\|_{BMO_{r,b}(\alpha(\cdot))}\frac{\sum\limits_{k\in\mathbb{Z}}2^{k}\Big\|\sum\limits_{i=0}^{\infty}\sum\limits_{j}\chi_{I_{k,i,j}}\Big\|_{\frac{1}{\alpha(\cdot)+1}}\gamma_b\Big(\Big\|\sum\limits_{i=0}^{\infty}\sum\limits_{j}\chi_{I_{k,i,j}}\Big\|_{\frac{1}{\alpha(\cdot)+1}}\Big)}{\sum\limits_{k\in\mathbb{Z}}2^{k}\Big\|\sum\limits_{i=0}^{\infty}\sum\limits_{j}\chi_{I_{k,i,j}}\Big\|_{\frac{1}{\alpha(\cdot)+1}}\gamma_b\Big(\Big\|\sum\limits_{i=0}^{\infty}\sum\limits_{j}\chi_{I_{k,i,j}}\Big\|_{\frac{1}{\alpha(\cdot)+1}}\Big)}\\
		=&\ \|f\|_{BMO_{r,b}(\alpha(\cdot))}.
	\end{align*}
    Hence, $\|f\|_{BMO_{r,q,b}(\alpha(\cdot))}\lesssim\|f\|_{BMO_{r,b}(\alpha(\cdot))}$ and this completes the proof of \eqref{''}.
    If $\alpha_-\ge0$ and $b$ is nonincreasing, the proof is similar.
\end{proof}

We give the main result of this section.

\begin{thm}\label{dual}
	Let $p(\cdot)\in\mathcal{P}(\Omega)$ satisfy $(\ref{gs3})$ with $0<p_-\le p_+<2$, $0<q<\infty$ and let $b$ be a slowly varying function. Then
	$$\big(H_{p(\cdot),q,b}^s\big)^*=BMO_{2,q,b}(\alpha(\cdot)),\quad \alpha(\cdot)=\frac{1}{p(\cdot)}-1.$$
\end{thm}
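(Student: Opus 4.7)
The strategy is to leverage the simple atomic decomposition from Theorem \ref{ad1} in the case $r=2$, and to exploit the fact that $L_2$ is dense in $H^s_{p(\cdot),q,b}$ (Remark \ref{rem4.2}, since $p_+<2$). The proof splits into two inclusions.

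For the inclusion $BMO_{2,q,b}(\alpha(\cdot))\hookrightarrow(H_{p(\cdot),q,b}^s)^*$, given $g\in BMO_{2,q,b}(\alpha(\cdot))$ I would define $l_g(f):=\mathbb{E}(fg)$ first on $L_2\cap H_{p(\cdot),q,b}^s$. Theorem \ref{ad1} decomposes any such $f$ as $f=\sum_{k,i,j}\mu_{k,i,j}a^{k,i,j}$ with simple $(p(\cdot),2)^s$-atoms supported on disjoint $(I_{k,i,j})\subset A(\mathcal{F}_i)$ and $\mu_{k,i,j}=3\cdot 2^k\|\chi_{I_{k,i,j}}\|_{p(\cdot)}$. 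The cancellation $\mathbb{E}_i(a^{k,i,j})=0$ combined with $\mathrm{supp}(a^{k,i,j})\subset I_{k,i,j}$ and Cauchy--Schwarz, together with $\|a^{k,i,j}\|_2\le\|s(a^{k,i,j})\|_2\le\mathbb{P}(I_{k,i,j})^{1/2}\|\chi_{I_{k,i,j}}\|_{p(\cdot)}^{-1}$, produces the per-atom bound
\[ |\mathbb{E}(a^{k,i,j}g)|=|\mathbb{E}(a^{k,i,j}(g-\mathbb{E}_ig))|\le\frac{\mathbb{P}(I_{k,i,j})^{1/2}}{\|\chi_{I_{k,i,j}}\|_{p(\cdot)}}\|(g-\mathbb{E}_ig)\chi_{I_{k,i,j}}\|_2. \]
Substituting $\mu_{k,i,j}$, summing, noting that $\tfrac{1}{\alpha(\cdot)+1}=p(\cdot)$, and applying the definition of $\|g\|_{BMO_{2,q,b}(\alpha(\cdot))}$ yields $|l_g(f)|\lesssim\|g\|_{BMO_{2,q,b}(\alpha(\cdot))}\|f\|_{H_{p(\cdot),q,b}^s}$, after which density extends $l_g$ to all of $H_{p(\cdot),q,b}^s$.

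For the reverse inclusion $(H_{p(\cdot),q,b}^s)^*\hookrightarrow BMO_{2,q,b}(\alpha(\cdot))$, given $l\in(H_{p(\cdot),q,b}^s)^*$ I would first produce a representing function $g\in L_2$. For every $I\in A(\mathcal{F}_i)$ and every $f\in L_2^0(I):=\{h\in L_2:\mathrm{supp}(h)\subset I,\;\mathbb{E}_ih=0\}$, the rescaling $f\mathbb{P}(I)^{1/2}\|f\|_2^{-1}\|\chi_{I}\|_{p(\cdot)}^{-1}$ is itself a simple $(p(\cdot),2)^s$-atom, so the one-term atomic decomposition and Theorem \ref{ad1} give
\[ \|f\|_{H_{p(\cdot),q,b}^s}\lesssim\|f\|_2\|\chi_I\|_{p(\cdot)}\mathbb{P}(I)^{-1/2}\gamma_b(\|\chi_I\|_{p(\cdot)}). \]
Thus $l$ is $L_2$-continuous on each Hilbert subspace $L_2^0(I)$, and Riesz representation produces a family $\{g_I\}$ compatible under nested atoms, which assemble into a single $g\in L_2$ satisfying $l(f)=\mathbb{E}(fg)$ on a dense subset of $H_{p(\cdot),q,b}^s$. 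To verify $g\in BMO_{2,q,b}(\alpha(\cdot))$, for any admissible family $(I_{k,i,j})$ I would take the testing atoms
\[ a^{k,i,j}:=\frac{(g-\mathbb{E}_ig)\chi_{I_{k,i,j}}}{\|(g-\mathbb{E}_ig)\chi_{I_{k,i,j}}\|_2}\cdot\frac{\mathbb{P}(I_{k,i,j})^{1/2}}{\|\chi_{I_{k,i,j}}\|_{p(\cdot)}} \]
(which are simple $(p(\cdot),2)^s$-atoms) and set $f:=\sum\mu_{k,i,j}a^{k,i,j}$ with $\mu_{k,i,j}=3\cdot 2^k\|\chi_{I_{k,i,j}}\|_{p(\cdot)}$. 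By construction $l(f)=\mathbb{E}(fg)=3\sum 2^k\mathbb{P}(I_{k,i,j})^{1/2}\|(g-\mathbb{E}_ig)\chi_{I_{k,i,j}}\|_2$, while Theorem \ref{ad1} bounds $\|f\|_{H_{p(\cdot),q,b}^s}$ by exactly the $l_q$-quantity in the denominator of the $BMO_{2,q,b}(\alpha(\cdot))$-seminorm, and $|l(f)|\le\|l\|\|f\|_{H_{p(\cdot),q,b}^s}$ delivers $\|g\|_{BMO_{2,q,b}(\alpha(\cdot))}\lesssim\|l\|$.

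The main technical obstacle is the coherent assembly of the local Riesz representers $g_I$ into a single global $g\in L_2$: one must verify that for nested atoms $I'\subset I$ at different levels, the locally defined $g_{I'}$ and $g_I$ are compatible (they agree on $I'$ modulo $\mathcal{F}_{i'}$-measurable additions absorbed by $\mathbb{E}_{i'}$), and that the resulting $g$ truly represents $l$ on a subspace that is dense in $H_{p(\cdot),q,b}^s$. A secondary subtlety is that the supremum in the $BMO_{2,q,b}(\alpha(\cdot))$ seminorm ranges over \emph{families} of atoms rather than single atoms, so the testing-atom construction in the last step must be performed uniformly across the whole family -- which is precisely what the prescription above accomplishes.
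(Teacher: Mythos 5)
Your forward direction matches the paper's argument step for step: decompose $f$ via Theorem~\ref{ad1} with $r=2$, use $\mathbb{E}_i(a^{k,i,j})=0$ to replace $g$ by $g-\mathbb{E}_i(g)$, apply Cauchy--Schwarz on each atom, and sum using the $BMO_{2,q,b}(\alpha(\cdot))$ seminorm; density of $L_2$ then extends $\ell_g$. Likewise your construction of testing atoms in the converse step is exactly the paper's choice of $h_{k,i,j}$.

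The genuine gap is in how you produce the representing $g\in L_2$. You restrict $\ell$ to the subspaces $L_2^0(I)$ of mean-zero $L_2$ functions supported on single atoms $I$, apply Riesz on each, and then try to patch the local representers $g_I$ together, while explicitly flagging the compatibility of $\{g_I\}$ for nested atoms and the density of $\bigcup_I L_2^0(I)$ as an unresolved obstacle. This obstacle is real: the local estimates you derive have constants $c(I)\approx \|\chi_I\|_{p(\cdot)}\mathbb{P}(I)^{-1/2}\gamma_b(\|\chi_I\|_{p(\cdot)})$ that do not give a uniform bound over all of $L_2$, and it is not clear that the family $\{g_I\}$ assembles into a single $L_2$ element without additional work. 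The paper avoids this entirely by invoking the \emph{global} continuous embedding $L_2 = H_2^s\hookrightarrow H^s_{p(\cdot),q,b}$ (valid since $p_+<2$, via Lemma~\ref{3.4.48} and Lemma~\ref{2.4}, and recorded in Remark~\ref{rem4.2}): this makes the restriction of $\ell$ to $L_2$ a bounded functional on the Hilbert space $L_2$ itself, and Riesz representation immediately yields a single global $g\in L_2$ with $\ell(f)=\mathbb{E}(fg)$ for all $f\in L_2$. Once you have that one observation, the rest of your converse argument (building $f=\sum_k\sum_i\sum_j\mu_{k,i,j}h_{k,i,j}$, bounding its $H^s_{p(\cdot),q,b}$ norm by Theorem~\ref{ad1}, and reading off $\|g\|_{BMO_{2,q,b}(\alpha(\cdot))}\lesssim\|\ell\|$) goes through as you describe. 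In short: replace the local-to-global patching by the single global embedding and your proposal becomes the paper's proof.
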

\begin{proof}
    Let $g\in BMO_{2,q,b}(\alpha(\cdot))\subset L_2$. Define the functional $\ell_g$ as
    $$\ell_g(f):=\mathbb{E}(fg),\quad \forall\ f\in L_2.$$
    We shall prove that $\ell_g$ is a bounded linear functional on $H_{p(\cdot),q,b}^s$. By Remark \ref{rem4.2}, we see that $L_2\subset H_{p(\cdot),q,b}^s$. Moreover, $$f=\sum_{k\in\mathbb{Z}}\sum_{i=0}^{\infty}\sum_{j}\mu_{k,i,j}a^{k,i,j}$$
    holds also in $L_2$, where $\mu_{k,i,j}=3\cdot2^{k}\|\chi_{I_{k,i,j}}\|_{p(\cdot)}$ and $(a^{k,i,j})_{k\in\mathbb{Z},i\in\mathbb{N},j}$ is a sequence of simple $(p(\cdot),2)^s$-atoms.
    In addition,
    $$\Bigg(\sum_{ k \in \mathbb{Z} }\bigg\|\sum_{i=0}^{\infty}\sum_{j}\mu_{k,i,j}\|\chi_{I_{k,i,j}}\|_{p(\cdot)}^{-1}\chi_{I_{k,i,j}}
    \bigg\|_{p(\cdot)}^q\gamma_b^q\bigg(\bigg\|\sum_{i=0}^{\infty}\sum_{j}\chi_{I_{k,i,j}}\bigg\|_{p(\cdot)}\bigg)\Bigg)^{\frac{1}{q}}\lesssim\|f\|_{H_{p(\cdot),q,b}^{s}}.$$
    Hence,
    $$\ell_g(f)=\mathbb{E}(fg)=\sum_{k\in\mathbb{Z}}\sum_{i=0}^{\infty}\sum_{j}\mu_{k,i,j}\mathbb{E}(a^{k,i,j}g).$$
    By the definition of $a^{k,i,j}$, there is
    $$\mathbb{E}(a^{k,i,j}g)=\mathbb{E}\big((a^{k,i,j}-\mathbb{E}_i(a^{k,i,j}))g\big)=\mathbb{E}\big(a^{k,i,j}(g-\mathbb{E}_i(g))\big).$$
    According to H\"{o}lder's inequality, we obtain that
    \begin{align*}
    	|\ell_g(f)|=&\ \bigg|\sum_{k\in\mathbb{Z}}\sum_{i=0}^{\infty}\sum_{j}\mu_{k,i,j}\mathbb{E}(a^{k,i,j}(g-\mathbb{E}_i(g)))\bigg|\\
    	\le&\ \sum_{k\in\mathbb{Z}}\sum_{i=0}^{\infty}\sum_{j}\mu_{k,i,j}\|a^{k,i,j}\|_2\|(g-\mathbb{E}_i(g))\chi_{I_{k,i,j}}\|_2\\
    	\lesssim&\ \sum_{k\in\mathbb{Z}}\sum_{i=0}^{\infty}\sum_{j}\mu_{k,i,j}\frac{\mathbb{P}(I_{k,i,j})^{1/2}}{\|\chi_{I_{k,i,j}}\|_{p(\cdot)}}\|(g-\mathbb{E}_i(g))\chi_{I_{k,i,j}}\|_2\\
    	=&\ 3\sum_{k\in\mathbb{Z}}\sum_{i=0}^{\infty}\sum_{j}2^{k}\mathbb{P}(I_{k,i,j})^{1/2}\|(g-\mathbb{E}_i(g))\chi_{I_{k,i,j}}\|_2\\
    	\lesssim&\ \Bigg(\sum_{k\in\mathbb{Z}}2^{kq}\bigg\|\sum_{i=0}^{\infty}\sum_{j}\chi_{I_{k,i,j}}\Big\|_{p(\cdot)}^q\gamma_b^q\bigg(\bigg\|\sum\limits_{i=0}^{\infty}\sum\limits_{j}\chi_{I_{k,i,j}}\bigg\|_{p(\cdot)}\bigg)\Bigg)^\frac{1}{q}\cdot\|g\|_{BMO_{2,q,b}(\alpha(\cdot))}\\
    	\lesssim&\ \|f\|_{H_{p(\cdot),q,b}^s}\cdot\|g\|_{BMO_{2,q,b}(\alpha(\cdot))}.
    \end{align*}
    Since $L_2$ is dense in $H_{p(\cdot),q,b}^s$, the linear functional $\ell_g$ can be uniquely extended to a bounded linear functional on $H_{p(\cdot),q,b}^s$.

    Conversely, let $\ell\in \big(H_{p(\cdot),q,b}^s\big)^*$. We shall show that there exists $g\in BMO_{2,q,b}(\alpha(\cdot))$ such that $\ell=\ell_g$ and $$\|g\|_{BMO_{2,q,b}(\alpha(\cdot))}\lesssim\|\ell\|.$$
    Since $L_2$ can be embedded continuously to $H_{p(\cdot),q,b}^s$, then there exists $g\in L_2$ such that
    $$\ell(f)=\mathbb{E}(fg),\quad \forall\ f\in L_2.$$
    Let $\{I_{k,i,j}\}_{k\in\mathbb{Z},i\in\mathbb{N},j}$ be an arbitrary sequence of atoms such that $I_{k,i,j}$ are disjoint if $k$ is fixed, $I_{k,i,j}$ belong to $\mathcal{F}_i$ and
    $$\Bigg\{2^{k}\bigg\|\sum\limits_{i=0}^{\infty}\sum\limits_{j}\chi_{I_{k,i,j}}\bigg\|_{\frac{1}{\alpha(\cdot)+1}}\gamma_b\bigg(\bigg\|\sum\limits_{i=0}^{\infty}\sum\limits_{j}\chi_{I_{k,i,j}}\bigg\|_{\frac{1}{\alpha(\cdot)+1}}\bigg)\Bigg\}_{k\in\mathbb{Z}}\in l_q.$$
    Set
    $$h_{k,i,j}=\frac{(g-\mathbb{E}_i(g))\chi_{I_{k,i,j}}\|\chi_{I_{k,i,j}}\|_2}{\|(g-\mathbb{E}_i(g))\chi_{I_{k,i,j}}\|_2\cdot\|\chi_{I_{k,i,j}}\|_{\frac{1}{\alpha(\cdot)+1}}}.$$
    Then $h_{k,i,j}$ is a simple $(p(\cdot),2)^s$-atom and
    \begin{align*}
    	 \mathbb{E}\big(h_{k,i,j}(g-\mathbb{E}_i(g))\big)=\frac{\|\chi_{I_{k,i,j}}\|_2\cdot\|(g-\mathbb{E}_i(g))\chi_{I_{k,i,j}}\|_2}{\|\chi_{I_{k,i,j}}\|_{p(\cdot)}}.
    \end{align*}
    By Theorem \ref{ad1}, we find that
    $$f=\sum_{k\in\mathbb{Z}}\sum_{i=0}^{\infty}\sum_{j}3\cdot2^k\|\chi_{I_{k,i,j}}\|_{p(\cdot)}h_{k,i,j}\in H_{p(\cdot),q,b}^s$$
    and
    \begin{align}\label{gs18}
    	 \|f\|_{H_{p(\cdot),q,b}^s}\lesssim\Bigg(\sum_{k\in\mathbb{Z}}2^{kq}\bigg\|\sum_{i=0}^{\infty}\sum_{j}\chi_{I_{k,i,j}}\bigg\|_{p(\cdot)}^q\gamma_b^q\bigg(\bigg\|\sum\limits_{i=0}^{\infty}\sum\limits_{j}\chi_{I_{k,i,j}}\bigg\|_{p(\cdot)}\bigg)\Bigg)^{1/q}.
    \end{align}
    Hence
    $$\ell(f)=\mathbb{E}(fg)=3\sum_{k\in\mathbb{Z}}\sum_{i=0}^{\infty}\sum_{j}2^k\|\chi_{I_{k,i,j}}\|_{p(\cdot)}\mathbb{E}(h_{k,i,j}g).$$
    Furthermore, we have
    \begin{align*}
    	&\ \sum_{k\in\mathbb{Z}}\sum_{i=0}^{\infty}\sum_{j}2^k\mathbb{P}(I_{k,i,j})^\frac{1}{2}\|(g-\mathbb{E}_i(g))\chi_{I_{k,i,j}}\|_2\\
    	=&\ \sum_{k\in\mathbb{Z}}\sum_{i=0}^{\infty}\sum_{j}2^k\|\chi_{I_{k,i,j}}\|_{p(\cdot)}\mathbb{E}\big(h_{k,i,j}(g-\mathbb{E}_i(g))\big)\\
    	=&\ \sum_{k\in\mathbb{Z}}\sum_{i=0}^{\infty}\sum_{j}2^k\|\chi_{I_{k,i,j}}\|_{p(\cdot)}\mathbb{E}(h_{k,i,j}g)\\
    	=&\ \frac{1}{3}\ell(f)\le\frac{1}{3}\|f\|_{H_{p(\cdot),q,b}^s}\cdot\|\ell\|.
    \end{align*}
    Thus, applying (\ref{gs18}) and the definition of $\|\cdot\|_{BMO_{2,q,b}(\alpha(\cdot))}$, we obtain
    $$\|g\|_{BMO_{2,q,b}(\alpha(\cdot))}\lesssim\|\ell\|.$$
\end{proof}

\begin{rem}
	For the dual theory of martingale Hardy spaces, the researchers always split the range of $q$ to $0<q\le1$ and $1<q<\infty$ and discuss these two cases separately, see \cite{hl,hoa,jwwz,jxz,w}. In this paper, we get the duality theorem when $0<q<\infty$, which unifies the two cases.
\end{rem}

As a deduction of Proposition \ref{prod} and Theorem \ref{dual}, we obtain the dual theorem associated with $BMO_{2,b}(\alpha(\cdot))$.
\begin{coro}
	Let $p(\cdot)\in\mathcal{P}(\Omega)$ satisfy $(\ref{gs3})$ with $0<p_-\le p_+<1$, $0<q\le1$ and let $b$ be a slowly varying function. Then
	\begin{align}\label{dd}
		\big(H_{p(\cdot),q,b}^s\big)^*=BMO_{2,b}(\alpha(\cdot)),\quad \alpha(\cdot)=\frac{1}{p(\cdot)}-1.
	\end{align}
Moreover, if $0<p_-\le p_+\le1$ and $b$ is nonincreasing, $(\ref{dd})$ also holds.
\end{coro}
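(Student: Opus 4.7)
The plan is to read off this corollary as a direct consequence of Theorem \ref{dual} combined with the norm equivalence in Proposition \ref{prod}. The idea is: Theorem \ref{dual} identifies $(H_{p(\cdot),q,b}^s)^*$ with $BMO_{2,q,b}(\alpha(\cdot))$ in a much wider range of parameters, and Proposition \ref{prod} tells us exactly when $BMO_{2,q,b}(\alpha(\cdot))$ collapses to the simpler space $BMO_{2,b}(\alpha(\cdot))$; all I need to do is check that both hypotheses are met under the stated assumptions on $p(\cdot)$, $q$, and $b$.

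First I would verify that Theorem \ref{dual} applies. With $\alpha(\cdot)=1/p(\cdot)-1$, the condition $p(\cdot)\in\mathcal{P}(\Omega)$ automatically gives $\alpha(\cdot)+1=1/p(\cdot)\in\mathcal{P}(\Omega)$ (as required by Definitions \ref{B1} and \ref{B2}). Since we assume $p_+<1$ (or $p_+\le 1$) and the stochastic basis satisfies $(\ref{gs3})$, the hypothesis $0<p_-\le p_+<2$ of Theorem \ref{dual} is trivially satisfied (with $r=2$), and the requirement $0<q<\infty$ is weaker than $0<q\le 1$. Therefore
\[
\big(H_{p(\cdot),q,b}^s\big)^*=BMO_{2,q,b}(\alpha(\cdot))
\]
holds with equivalent norms.

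Next I would invoke Proposition \ref{prod} with $r=2$. In the first case, $0<p_-\le p_+<1$ yields $\alpha_-=1/p_+-1>0$, and together with $0<q\le 1$ this is precisely the hypothesis giving the equivalence $\|\cdot\|_{BMO_{2,b}(\alpha(\cdot))}\approx\|\cdot\|_{BMO_{2,q,b}(\alpha(\cdot))}$. In the second case, $0<p_-\le p_+\le 1$ yields $\alpha_-\ge 0$, and together with $b$ nonincreasing this is the alternative hypothesis in Proposition \ref{prod} giving the same norm equivalence. In either case, $BMO_{2,q,b}(\alpha(\cdot))=BMO_{2,b}(\alpha(\cdot))$ with equivalent norms.

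Chaining the two identifications yields $(H_{p(\cdot),q,b}^s)^*=BMO_{2,b}(\alpha(\cdot))$, which is exactly $(\ref{dd})$. There is no real obstacle here; everything substantive has already been done in Theorem \ref{dual} and Proposition \ref{prod}, and this corollary amounts to checking that $\alpha_-$ has the right sign under the stronger assumption $p_+<1$ (or $p_+\le 1$ with $b$ nonincreasing) so that the simpler $BMO_{2,b}(\alpha(\cdot))$ norm can replace the $BMO_{2,q,b}(\alpha(\cdot))$ norm.
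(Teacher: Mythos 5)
Your proposal is correct and is exactly how the paper derives this corollary: immediately after Theorem \ref{dual}, the paper states that the result follows ``as a deduction of Proposition \ref{prod} and Theorem \ref{dual},'' and your verification that $\alpha_-=1/p_+-1>0$ when $p_+<1$ (resp.\ $\alpha_-\ge 0$ when $p_+\le 1$, with $b$ nonincreasing) is precisely the parameter check needed to invoke the equivalence of $BMO_{2,q,b}(\alpha(\cdot))$ and $BMO_{2,b}(\alpha(\cdot))$ from Proposition \ref{prod}.
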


Particularly, Theorem \ref{dual} reduces to the following result when $b\equiv1$.
\begin{coro}\label{coro5}
	Let $p(\cdot)\in\mathcal{P}(\Omega)$ satisfy $(\ref{gs3})$ with $0<p_-\le p_+<2$ and $0<q<\infty$. Then $$\big(H_{p(\cdot),q}^s\big)^*=BMO_{2,q}(\alpha(\cdot)),\quad \alpha(\cdot)=\frac{1}{p(\cdot)}-1.$$
\end{coro}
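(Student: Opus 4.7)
The plan is to derive Corollary \ref{coro5} as an immediate specialization of Theorem \ref{dual}. First I would verify the consistency of the definitions: when $b \equiv 1$, the slowly varying function is constant, so $\gamma_b(t) = b(\max\{t,t^{-1}\}) \equiv 1$ for all $t > 0$. Plugging this into Definition \ref{B2}, the weight factor $\gamma_b(\|\sum_{i,j}\chi_{I_{k,i,j}}\|_{1/(\alpha(\cdot)+1)})$ in the denominator collapses to $1$, and the resulting space $BMO_{2,q,1}(\alpha(\cdot))$ is exactly the space $BMO_{2,q}(\alpha(\cdot))$ introduced by Jiao et al. in \cite{jwwz} (as noted in the remark following Definition \ref{B2}).

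Next I would observe that the hypothesis of Theorem \ref{dual}, namely $p(\cdot) \in \mathcal{P}(\Omega)$ satisfying \eqref{gs3} with $0 < p_- \le p_+ < 2$ and $0 < q < \infty$, is identical to the hypothesis of Corollary \ref{coro5}, and the constant function $b \equiv 1$ is trivially a slowly varying function. Similarly, setting $b \equiv 1$ in Definition \ref{hardy} gives $H^s_{p(\cdot),q,1} = H^s_{p(\cdot),q}$ since $L_{p(\cdot),q,1} = L_{p(\cdot),q}$ (see the remark right after Definition \ref{dingyi}).

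Therefore, applying Theorem \ref{dual} with $b \equiv 1$ yields directly
\[
(H^s_{p(\cdot),q})^* = (H^s_{p(\cdot),q,1})^* = BMO_{2,q,1}(\alpha(\cdot)) = BMO_{2,q}(\alpha(\cdot)),
\]
with $\alpha(\cdot) = 1/p(\cdot) - 1$, which is exactly the conclusion of the corollary. There is no substantive obstacle here; the only thing to check carefully is the agreement of the two definitions of $BMO_{2,q}(\alpha(\cdot))$ in the $b \equiv 1$ case, and this is immediate from the explicit formulas. The whole argument is a one-line specialization of the already-established Theorem \ref{dual}.
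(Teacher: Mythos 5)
Your proposal is correct and is exactly the paper's intended argument: the paper introduces Corollary \ref{coro5} with the words ``Particularly, Theorem \ref{dual} reduces to the following result when $b\equiv1$,'' which is precisely the one-line specialization you carry out, after checking that $\gamma_b\equiv 1$ and hence that the $b$-weighted spaces collapse to the variable Lorentz spaces and variable $BMO$ spaces. Nothing is missing and no further argument is required.
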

\begin{rem}
	Jiao et al. \cite{jwwz} got the dual theorem of $H_{p(\cdot),q}^s$ in the situation that $p(\cdot)\in\mathcal{P}(\Omega)$ satisfies $(\ref{gs3})$ with $0<p_-\le p_+<2$ and $1<q<\infty$. Notice that, the condition of $q$ is $0<q<\infty$ in Corollary $\ref{coro5}$, therefore, our result extends the dual theorem in \cite{jwwz}.
\end{rem}

Specially, if $\alpha(\cdot)\equiv\alpha$ is a constant, then we have the next conclusion.
\begin{coro}\label{coro6}
	Let $0<p<2$, $0<q<\infty$ and $b$ be a slowly varying function. Then $$\big(H_{p,q,b}^s\big)^*=BMO_{2,q,b}(\alpha),\quad \alpha=\frac{1}{p}-1.$$
\end{coro}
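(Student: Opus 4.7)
The plan is to obtain Corollary \ref{coro6} as an immediate specialization of Theorem \ref{dual} to the constant-exponent setting. First, I would verify that taking $p(\cdot) \equiv p$ with $0 < p < 2$ satisfies all hypotheses of Theorem \ref{dual}: one has $p_- = p_+ = p$, so the range condition $0 < p_- \le p_+ < 2$ holds, and condition (\ref{gs3}) becomes trivial since $\mathbb{P}(A)^{p_-(A)-p_+(A)} = \mathbb{P}(A)^0 = 1 \le K_{p(\cdot)}$ for any atom $A$ with the choice $K_{p(\cdot)} = 1$. Hence Theorem \ref{dual} applies and yields $(H_{p(\cdot),q,b}^s)^* = BMO_{2,q,b}(\alpha(\cdot))$ with $\alpha(\cdot) = \frac{1}{p(\cdot)} - 1 \equiv \frac{1}{p} - 1 =: \alpha$.

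Second, I would identify the two sides of this identity with the classical constant-exponent spaces appearing in the statement of Corollary \ref{coro6}. On the Hardy side, the remark following Definition \ref{dingyi} shows that $L_{p(\cdot),q,b}$ reduces to the Lorentz-Karamata space $L_{p,q,b}$ (with the slowly varying function renormalized as in that remark), so that $H_{p(\cdot),q,b}^s = \{f \in \mathcal{M} : \|s(f)\|_{p(\cdot),q,b} < \infty\}$ becomes the classical martingale Hardy-Lorentz-Karamata space $H_{p,q,b}^s$ considered in \cite{jxz,hoa}. On the BMO side, inserting $\alpha(\cdot) \equiv \alpha$ into Definition \ref{B2} replaces the variable quasi-norm $\|\cdot\|_{\frac{1}{\alpha(\cdot)+1}}$ by $\|\cdot\|_p = \mathbb{P}(\cdot)^{1/p}$, recovering exactly the generalized $BMO_{2,q,b}(\alpha)$ of Jiao et al.

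Third, combining these identifications with the conclusion of Theorem \ref{dual} gives $(H_{p,q,b}^s)^* = BMO_{2,q,b}(\alpha)$, which is the claim. No genuine obstacle arises here, since the work of proving the duality was done in Theorem \ref{dual}; the only care required is bookkeeping for the slowly varying function under the identification $L_{p(\cdot),q,b} \leftrightarrow L_{p,q,b_1}$ with $b_1(t) = b(t^{1/p})$, but as $b_1$ is itself slowly varying by Proposition \ref{b}\,(4) and both sides of the duality statement are written in terms of the same $b$, this renormalization only affects the multiplicative constants hidden in the $\lesssim$ symbol and does not alter the equivalence of the stated norms.
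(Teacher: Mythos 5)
Your proposal is correct and matches the paper's own approach: the paper presents Corollary~\ref{coro6} without a separate proof, as an immediate specialization of Theorem~\ref{dual} to constant $\alpha(\cdot)\equiv\alpha$ (equivalently $p(\cdot)\equiv p$), exactly as you argue. The only inessential difference is that within the paper's own framework $H_{p,q,b}^s$ already denotes $H_{p(\cdot),q,b}^s$ with $p(\cdot)\equiv p$, so the $b\mapsto b_1(t)=b(t^{1/p})$ renormalization you discuss in your third paragraph is only relevant when comparing to the classical spaces of \cite{jxz,hoa} and is not needed to deduce the corollary from Theorem~\ref{dual}.
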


\begin{rem}
	We refer to Jiao et al. \cite{jxz} for the duality of $H_{p,q,b}^s$ if $0<p\le1$, $1<q<\infty$ and $b$ is a nondecreasing slowly varying function. We extend the range of $p$ and $q$ to $0<p<2$ and $0<q<\infty$, respectively. Moreover, the slowly varying function $b$ is not necessarily nondecreasing in Corollary $\ref{coro6}$. Hence, Corollary $\ref{coro6}$ improves the dual theorem in \cite{jxz}.
\end{rem}

Next we consider $q=\infty$. This case is different to $0<q<\infty$, since $L_p$ is not dense in $L_{p,\infty}$ for $0<p<\infty$. We refer to \cite{ww} for this fact. From Remark \ref{huaH} and Theorem \ref{ad1}, we know that $L_2$ is dense in $\mathcal{H}_{p(\cdot),\infty,b}^s$.
Hence, similarly to the proof of Theorem \ref{dual}, we get the dual space when $q=\infty$.
\begin{thm}
	Let $p(\cdot)\in\mathcal{P}(\Omega)$ satisfy $(\ref{gs3})$ with $0<p_-\le p_+<2$ and $b$ be a slowly varying function. Then
	 $$\big(\mathcal{H}_{p(\cdot),\infty,b}^s\big)^*=BMO_{2,\infty,b}(\alpha(\cdot)),\quad \alpha(\cdot)=\frac{1}{p(\cdot)}-1.$$
\end{thm}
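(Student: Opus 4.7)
The plan is to mimic the proof of Theorem \ref{dual}, replacing the role of $H^s_{p(\cdot),q,b}$ (with $0<q<\infty$) by the closed subspace $\mathcal{H}^s_{p(\cdot),\infty,b}$ of $H^s_{p(\cdot),\infty,b}$. The crucial point, already observed in Remark \ref{rem4.2} and at the end of Section 2, is that although $L_2$ fails to be dense in $L_{p(\cdot),\infty,b}$, one does have $L_2\subset\mathcal{H}^s_{p(\cdot),\infty,b}$ densely (for $p_+<2$), and the partial sums of the atomic decomposition produced by Theorem \ref{ad1} converge in the quasi-norm of $\mathcal{H}^s_{p(\cdot),\infty,b}$. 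This density is what makes the duality argument go through at the endpoint $q=\infty$.

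For the first inclusion, fix $g\in BMO_{2,\infty,b}(\alpha(\cdot))\subset L_2$ and define $\ell_g(f):=\mathbb{E}(fg)$ for $f\in L_2$. I will decompose any such $f$ by Theorem \ref{ad1} with $r=2$ as $f=\sum_{k,i,j}\mu_{k,i,j}a^{k,i,j}$, where $\mu_{k,i,j}=3\cdot 2^k\|\chi_{I_{k,i,j}}\|_{p(\cdot)}$ and the $a^{k,i,j}$ are simple $(p(\cdot),2)^s$-atoms; this decomposition converges in $L_2$ as well, so I can commute it with the expectation. Then, exploiting $\mathbb{E}_i(a^{k,i,j})=0$, H\"older's inequality and $\|a^{k,i,j}\|_2\lesssim\mathbb{P}(I_{k,i,j})^{1/2}\|\chi_{I_{k,i,j}}\|_{p(\cdot)}^{-1}$, I arrive at
\begin{align*}
|\ell_g(f)|\lesssim\sum_{k\in\mathbb{Z}}\sum_{i,j}2^k\mathbb{P}(I_{k,i,j})^{1/2}\|(g-\mathbb{E}_i(g))\chi_{I_{k,i,j}}\|_2.
\end{align*}
By the very definition of $\|\cdot\|_{BMO_{2,\infty,b}(\alpha(\cdot))}$ (applied to the atomic family $\{I_{k,i,j}\}$), the right-hand side is bounded by
$\|g\|_{BMO_{2,\infty,b}(\alpha(\cdot))}$ times
$$\sup_{k\in\mathbb{Z}}2^k\Big\|\sum_{i,j}\chi_{I_{k,i,j}}\Big\|_{\frac{1}{\alpha(\cdot)+1}}\gamma_b\Big(\Big\|\sum_{i,j}\chi_{I_{k,i,j}}\Big\|_{\frac{1}{\alpha(\cdot)+1}}\Big),$$
which, since $\frac{1}{\alpha(\cdot)+1}=p(\cdot)$, is comparable to $\|f\|_{H^s_{p(\cdot),\infty,b}}$ via Theorem \ref{ad1}. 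Density of $L_2$ in $\mathcal{H}^s_{p(\cdot),\infty,b}$ then yields a unique bounded extension of $\ell_g$.

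For the reverse inclusion, take $\ell\in(\mathcal{H}^s_{p(\cdot),\infty,b})^*$. Since $L_2\hookrightarrow \mathcal{H}^s_{p(\cdot),\infty,b}$ continuously (by Lemma \ref{baohan} / direct comparison of quasi-norms for $p_+<2$), the restriction of $\ell$ to $L_2$ is represented by some $g\in L_2$, i.e.\ $\ell(f)=\mathbb{E}(fg)$ for all $f\in L_2$. To show $g\in BMO_{2,\infty,b}(\alpha(\cdot))$, I fix an arbitrary admissible family of atoms $\{I_{k,i,j}\}$ with the required $l_\infty$-summability and mimic the test-function construction from Theorem \ref{dual}: set
$$h_{k,i,j}:=\frac{(g-\mathbb{E}_i(g))\chi_{I_{k,i,j}}\,\|\chi_{I_{k,i,j}}\|_2}{\|(g-\mathbb{E}_i(g))\chi_{I_{k,i,j}}\|_2\,\|\chi_{I_{k,i,j}}\|_{p(\cdot)}},$$
which is a simple $(p(\cdot),2)^s$-atom. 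The atomically-built test function $f=\sum_{k,i,j}3\cdot 2^k\|\chi_{I_{k,i,j}}\|_{p(\cdot)}h_{k,i,j}$ belongs to $H^s_{p(\cdot),\infty,b}$ by Theorem \ref{ad1}, with norm controlled by the supremum appearing in the denominator of $\|g\|_{BMO_{2,\infty,b}(\alpha(\cdot))}$; moreover, using that only finitely many nonzero terms occur in truncations and that these truncations converge in $\mathcal{H}^s_{p(\cdot),\infty,b}$ (compare Remark \ref{rem4.2}), the function $f$ in fact lies in $\mathcal{H}^s_{p(\cdot),\infty,b}$. Pairing with $\ell$ and computing $\mathbb{E}(h_{k,i,j}g)=\mathbb{E}\!\left(h_{k,i,j}(g-\mathbb{E}_i g)\right)$ yields exactly the sum defining the numerator in $\|g\|_{BMO_{2,\infty,b}(\alpha(\cdot))}$, bounded by $\|\ell\|\cdot\|f\|_{\mathcal{H}^s_{p(\cdot),\infty,b}}$. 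Taking the supremum over all admissible $\{I_{k,i,j}\}$ gives $\|g\|_{BMO_{2,\infty,b}(\alpha(\cdot))}\lesssim\|\ell\|$.

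The single genuinely delicate point is verifying that the atomic test function $f$ above lies in the subspace $\mathcal{H}^s_{p(\cdot),\infty,b}$ (absolutely continuous quasi-norm), rather than merely in $H^s_{p(\cdot),\infty,b}$; this is where the $q=\infty$ case differs substantively from $0<q<\infty$. I expect to handle it by truncating the defining atomic sum to a finite collection of $I_{k,i,j}$, noting that each finite sum is in $L_2\subset\mathcal{H}^s_{p(\cdot),\infty,b}$, and using the uniform $l_\infty$ control together with the closedness of $\mathcal{H}^s_{p(\cdot),\infty,b}$ to pass to the limit. The rest of the proof is a straightforward adaptation of Theorem \ref{dual}, with $l_q$-norms replaced throughout by $l_\infty$-suprema.
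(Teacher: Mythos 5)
Your forward inclusion ($g\in BMO_{2,\infty,b}(\alpha(\cdot))\Rightarrow\ell_g$ bounded on $\mathcal{H}^s_{p(\cdot),\infty,b}$) is correct and runs along the same lines the paper implicitly uses: decompose $f\in L_2$ atomically, estimate with H\"older and the $BMO$ quasi-norm, and extend by density of $L_2$ in $\mathcal{H}^s_{p(\cdot),\infty,b}$.

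The converse direction has a genuine gap. You claim the atomic test function $f=\sum_{k,i,j}\mu_{k,i,j}h_{k,i,j}$ lies in the closed subspace $\mathcal{H}^s_{p(\cdot),\infty,b}$, arguing via ``uniform $l_\infty$ control together with the closedness of $\mathcal{H}^s_{p(\cdot),\infty,b}$.'' Closedness only helps if the truncations form a Cauchy (equivalently convergent) sequence in the quasi-norm. For $q<\infty$ the tail $\sum_{|k|>N}$ vanishes in $l_q$, which gives convergence; for $q=\infty$ the tail $\sup_{|k|>N}$ need not tend to $0$, so the truncations are uniformly bounded but in general not Cauchy, and $f$ need not belong to $\mathcal{H}^s_{p(\cdot),\infty,b}$ at all. (This is the same phenomenon as the failure of $L_p$-density in $L_{p,\infty}$ that forced the introduction of $\mathcal{H}^s_{p(\cdot),\infty,b}$ in the first place.) Your reference to Remark \ref{rem4.2} does not repair this: that remark establishes convergence only for the canonical atoms produced by the stopping-time construction applied to a given martingale already in $\mathcal{H}^s_{p(\cdot),\infty,b}$, not for an arbitrary admissible atomic family built from $g$.

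Fortunately the claim you are trying to prove is unnecessary. You never need $f$ itself in the domain of $\ell$; it suffices to pair $\ell$ with the truncations $(f)^N=\sum_{k=-N}^{N}\sum_{i,j}\mu_{k,i,j}h_{k,i,j}$, each of which lies in $L_2\subset\mathcal{H}^s_{p(\cdot),\infty,b}$ (disjoint supports for fixed $k$ give $\big\|\sum_{i,j}\mu_{k,i,j}h_{k,i,j}\big\|_2\lesssim 2^k$). Then $\ell\big((f)^N\big)=\mathbb{E}\big((f)^N g\big)=3\sum_{k=-N}^N\sum_{i,j}2^k\mathbb{P}(I_{k,i,j})^{1/2}\|(g-\mathbb{E}_i g)\chi_{I_{k,i,j}}\|_2$, and $\big\|(f)^N\big\|_{\mathcal{H}^s_{p(\cdot),\infty,b}}$ is bounded, uniformly in $N$, by the $l_\infty$-supremum appearing in the denominator of the $BMO_{2,\infty,b}(\alpha(\cdot))$ norm (this is exactly the converse estimate in Theorem \ref{ad1}). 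Since all summands are nonnegative, letting $N\to\infty$ in the resulting inequality yields $\|g\|_{BMO_{2,\infty,b}(\alpha(\cdot))}\lesssim\|\ell\|$ directly, with no need to discuss membership of $f$ in $\mathcal{H}^s_{p(\cdot),\infty,b}$. This is precisely the truncation device the paper itself employs in the analogous endpoint situation of Theorem \ref{jn2}, and it is the intended reading of the paper's one-line ``similarly to the proof of Theorem \ref{dual}.''
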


When $b\equiv1$, we obtain the dual spaces of $\mathcal{H}_{p(\cdot),\infty}^s$ as follows.
\begin{coro}\label{coro7}
	Let $p(\cdot)\in\mathcal{P}(\Omega)$ satisfy $(\ref{gs3})$ with $0<p_-\le p_+<2$. Then
	 $$\big(\mathcal{H}_{p(\cdot),\infty}^s\big)^*=BMO_{2,\infty}(\alpha(\cdot)),\quad \alpha(\cdot)=\frac{1}{p(\cdot)}-1.$$
\end{coro}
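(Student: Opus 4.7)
The plan is to derive this corollary as an immediate specialization of the theorem that precedes it. The strategy is to verify that when $b\equiv 1$, every object appearing in that theorem collapses to the corresponding object in the corollary, so no new argument is needed.

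First, I would observe that the constant function $b\equiv 1$ is a slowly varying function in the sense of the definition following Proposition~\ref{b}: for any $\varepsilon>0$, $t^{\varepsilon}\cdot 1$ is nondecreasing and $t^{-\varepsilon}\cdot 1$ is nonincreasing on $[1,\infty)$. Consequently, the extension $\gamma_b(t)=b(\max\{t,t^{-1}\})\equiv 1$ on $(0,\infty)$. Plugging this into Definition~\ref{dingyi} gives $\|f\|_{p(\cdot),\infty,b}=\sup_{t>0}t\,\|\chi_{\{|f|>t\}}\|_{p(\cdot)}=\|f\|_{p(\cdot),\infty}$, so that $L_{p(\cdot),\infty,b}=L_{p(\cdot),\infty}$ with identical quasi-norms. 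By the same substitution, $\mathcal{L}_{p(\cdot),\infty,b}=\mathcal{L}_{p(\cdot),\infty}$, and therefore $\mathcal{H}_{p(\cdot),\infty,b}^s=\mathcal{H}_{p(\cdot),\infty}^s$ with identical quasi-norms.

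Next, I would carry out the same reduction on the dual side. In Definition~\ref{B2}, with $b\equiv 1$ one has $\gamma_b\equiv 1$, so the denominator in the supremum defining $\|f\|_{BMO_{2,\infty,b}(\alpha(\cdot))}$ becomes $\sup_{k\in\mathbb{Z}}2^{k}\big\|\sum_{i,j}\chi_{I_{k,i,j}}\big\|_{\frac{1}{\alpha(\cdot)+1}}$, which is exactly the denominator defining $\|f\|_{BMO_{2,\infty}(\alpha(\cdot))}$ (that is, the $BMO_{2,\infty}(\alpha(\cdot))$ of \cite{jwwz}). Thus $BMO_{2,\infty,b}(\alpha(\cdot))=BMO_{2,\infty}(\alpha(\cdot))$ as normed spaces.

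Finally, I would invoke the preceding theorem directly: since $p(\cdot)\in\mathcal{P}(\Omega)$ satisfies (\ref{gs3}) with $0<p_-\le p_+<2$ and $b\equiv 1$ is a slowly varying function, that theorem yields the isomorphism $\big(\mathcal{H}_{p(\cdot),\infty,b}^s\big)^*=BMO_{2,\infty,b}(\alpha(\cdot))$ with $\alpha(\cdot)=\tfrac{1}{p(\cdot)}-1$. Combining with the two identifications above gives the claim. There is no genuine obstacle here; the only thing worth verifying carefully is that the norm identification preserves the pairing $\ell_g(f)=\mathbb{E}(fg)$ used in the proof of the $b$-weighted theorem, which is automatic because that pairing does not involve $b$ at all.
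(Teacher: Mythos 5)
Your proposal is correct and is exactly the paper's intended argument: the paper states the preceding theorem for a general slowly varying $b$ and then derives this corollary by the observation ``When $b\equiv1$.'' You simply spell out the elementary checks (that $b\equiv 1$ is slowly varying, that $\gamma_b\equiv 1$, and hence that $\mathcal{H}^s_{p(\cdot),\infty,b}$ and $BMO_{2,\infty,b}(\alpha(\cdot))$ collapse to $\mathcal{H}^s_{p(\cdot),\infty}$ and $BMO_{2,\infty}(\alpha(\cdot))$) that the paper leaves implicit, so there is no gap and no deviation.
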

\begin{rem}
	Obviously, Corollary $\ref{coro7}$ can reduce to the result in \cite{jwwz}.
\end{rem}

If $\alpha(\cdot)\equiv\alpha$ is a constant, we know that the next result holds.
\begin{coro}\label{coro8}
	Let $0<p<2$ and $b$ be a slowly varying function. Then
	 $$\big(\mathcal{H}_{p,\infty,b}^s\big)^*=BMO_{2,\infty,b}(\alpha),\quad \alpha=\frac{1}{p}-1.$$
\end{coro}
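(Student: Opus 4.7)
The plan is to derive Corollary 5.8 as a direct specialization of the immediately preceding theorem, which establishes $(\mathcal{H}_{p(\cdot),\infty,b}^s)^{*}=BMO_{2,\infty,b}(\alpha(\cdot))$ with $\alpha(\cdot)=1/p(\cdot)-1$ whenever $p(\cdot)\in\mathcal{P}(\Omega)$ satisfies $(\ref{gs3})$ with $0<p_-\le p_+<2$. I would set $p(\cdot)\equiv p$ (constant) in that theorem and check that all hypotheses are met. Condition $(\ref{gs3})$ is trivially satisfied: every atom $A\in\bigcup_{n\ge0}A(\mathcal{F}_n)$ has $p_-(A)=p_+(A)=p$, so $\mathbb{P}(A)^{p_-(A)-p_+(A)}=1$ and one may take $K_{p(\cdot)}=1$. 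The constraint $0<p_-\le p_+<2$ becomes exactly the hypothesis $0<p<2$ of the corollary, and $\alpha(\cdot)=1/p(\cdot)-1$ collapses to the constant $\alpha=1/p-1$.

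Next, I would verify that the spaces appearing in the variable-exponent theorem specialize to the spaces named in the corollary. By the remark following Definition 2.12, for constant $p(\cdot)\equiv p$ one has $\|f\|_{p(\cdot),\infty,b}=\|f\|_{p,\infty,b_1}$ with $b_1(t)=b(t^{1/p})$, which is again a slowly varying function by Proposition 2.7(4). Consequently the subspace $\mathcal{L}_{p(\cdot),\infty,b}$ coincides with $\mathcal{L}_{p,\infty,b_1}$, and hence $\mathcal{H}_{p(\cdot),\infty,b}^{s}$ becomes the martingale space $\mathcal{H}_{p,\infty,b_1}^{s}$ defined through the classical Lorentz-Karamata quasi-norm. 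An entirely analogous computation identifies $BMO_{2,\infty,b}(\alpha(\cdot))$ in Definition 5.2 with $BMO_{2,\infty,b_1}(\alpha)$, since the denominators in the defining supremum involve $\|\cdot\|_{1/(\alpha(\cdot)+1)}=\|\cdot\|_{p}$ together with the same slowly varying reparameterization.

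The concluding step is then to invoke the variable-exponent theorem and translate back. Since the corollary as stated uses the symbol $b$ uniformly on both sides, the cosmetic relabeling $b\leftrightarrow b_1$ is consistent and disappears from the final identification. The only aspect requiring any care is checking that density of $L_2$ in $\mathcal{H}_{p,\infty,b}^{s}$ and the continuous embedding $L_2\hookrightarrow \mathcal{H}_{p,\infty,b}^{s}$ (which are used implicitly in the parent theorem via Remark 3.2 and the embedding lemmas of Section 2) remain valid in the constant-exponent case; both follow immediately since the argument in Remark 3.2 used $p_+<2$ only, which still holds.

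In short, I do not anticipate any genuine obstacle: the corollary is a routine transcription of the variable-exponent duality to the constant-exponent setting, with the only bookkeeping concerning the passage between $b$ and $b_1(t)=b(t^{1/p})$, both of which are slowly varying so that the classical martingale Hardy-Lorentz-Karamata and $BMO$ spaces involved are well-defined and quasi-Banach.
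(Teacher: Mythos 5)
Your proposal is correct and coincides with the paper's intended derivation: Corollary 5.9 is indeed just Theorem 5.8 specialized to the constant exponent $p(\cdot)\equiv p$, for which condition $(\ref{gs3})$ holds trivially with $K_{p(\cdot)}=1$, the constraint $0<p_-\le p_+<2$ reduces to $0<p<2$, and $\alpha(\cdot)=1/p(\cdot)-1$ collapses to $\alpha=1/p-1$. Your bookkeeping remarks about $b$ versus $b_1(t)=b(t^{1/p})$ are accurate (and invariant since the reparameterization is a bijection on slowly varying functions, by Proposition \ref{b}(4)), but are not strictly needed when the corollary's spaces are read in the paper's own notation as the variable-exponent spaces with constant $p(\cdot)$.
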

\begin{rem}
	The duality of $\mathcal{H}_{p,\infty,b}^s$ was proved by Liu and Zhou in \cite{lz} if $0<p\le1$ and $b$ is a nondecreasing slowly varying function. Notice that Corollary $\ref{coro8}$ improves the range of $p$ to $0<p<2$ and shows that the nondecreasing condition of $b$ is unnecessary. Hence, Corollary $\ref{coro8}$ extends the dual theorem in \cite{lz}.
\end{rem}

\section{John-Nirenberg Theorem} \label{s6}

In this section, we prove the John-Nirenberg theorem for variable Lorentz-Karamata spaces when the stochastic basis $\{\mathcal{F}_n\}_{n\geq0}$ is regular.


\begin{thm}\label{jn1}
Let $p(\cdot)\in\mathcal{P}(\Omega)$ satisfy $(\ref{gs3})$ with $0<p_-\le p_+\le1$, $0<q<\infty$, $1<r< \infty$ and let $ b$ be a slowly varying function.
If the stochastic basis $\{\mathcal{F}_n\}_{n\geq0}$ is regular, then
$$\big(H^s_{p(\cdot),q,b}\big)^*=BMO_{r,q,b}(\alpha(\cdot)),\quad \alpha(\cdot)=\frac{1}{p(\cdot)}-1.$$
\end{thm}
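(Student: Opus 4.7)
The plan is to mimic the duality argument of Theorem~\ref{dual} verbatim, replacing the self-conjugate pair $(2,2)$ by the H\"older pair $(r,r')$ with $r'=r/(r-1)$. The requisite atomic decomposition with simple $(p(\cdot),r')^s$-atoms is available via Theorem~\ref{ad1}, since under $p_+\le 1$ and $1<r<\infty$ one has $\max\{p_+,1\}=1<r'\le\infty$. The regularity hypothesis enters only to replace the trivial identity $\|a\|_2=\|s(a)\|_2$ (used when $r'=2$) by the Burkholder--Gundy equivalence $\|a\|_{r'}\approx\|s(a)\|_{r'}$, valid for all $0<r'<\infty$ under regularity (Lemma~\ref{sMS}).

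For the inclusion $BMO_{r,q,b}(\alpha(\cdot))\subseteq(H^s_{p(\cdot),q,b})^*$, fix $g\in BMO_{r,q,b}(\alpha(\cdot))\subset L_r$ and define $\ell_g(f):=\mathbb{E}(fg)$ on the dense subspace $L_2\subset H^s_{p(\cdot),q,b}$ (cf.\ Remark~\ref{rem4.2}, valid since $p_+\le 1<2$). Expanding $f=\sum_{k,i,j}\mu_{k,i,j}a^{k,i,j}$ into $(p(\cdot),r')^s$-atoms with $\mu_{k,i,j}=3\cdot 2^k\|\chi_{I_{k,i,j}}\|_{p(\cdot)}$, invoking $\mathbb{E}_i(a^{k,i,j})=0$ together with H\"older's inequality in the exponents $(r,r')$ on each pairing $\mathbb{E}(a^{k,i,j}(g-\mathbb{E}_i g))$, and bounding $\|a^{k,i,j}\|_{r'}\lesssim\|s(a^{k,i,j})\|_{r'}\le\mathbb{P}(I_{k,i,j})^{1/r'}/\|\chi_{I_{k,i,j}}\|_{p(\cdot)}$ via regularity, the factor $\mathbb{P}(I_{k,i,j})^{1-1/r}$ emerges from the exponent arithmetic, and one recognizes the defining numerator of $\|g\|_{BMO_{r,q,b}(\alpha(\cdot))}$. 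Summation then yields $|\ell_g(f)|\lesssim\|g\|_{BMO_{r,q,b}(\alpha(\cdot))}\|f\|_{H^s_{p(\cdot),q,b}}$.

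For the reverse inclusion, given $\ell\in(H^s_{p(\cdot),q,b})^*$, restrict to $L_2$ to realize $\ell$ as $f\mapsto\mathbb{E}(fg)$ for some $g\in L_2$, and then bound $\|g\|_{BMO_{r,q,b}(\alpha(\cdot))}$ by $\|\ell\|$. Fix an admissible sequence of atoms $(I_{k,i,j})$, use the H\"older-dual ansatz $\tilde h_{k,i,j}:=\operatorname{sgn}(g-\mathbb{E}_i g)\,|g-\mathbb{E}_i g|^{r-1}\chi_{I_{k,i,j}}$, subtract a constant multiple of $\chi_{I_{k,i,j}}$ to enforce $\mathbb{E}_i(h_{k,i,j})=0$, and normalize so that $h_{k,i,j}$ becomes a simple $(p(\cdot),r')^s$-atom. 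The H\"older estimate $\int_{I_{k,i,j}}|g-\mathbb{E}_i g|^{r-1}\le\mathbb{P}(I_{k,i,j})^{1/r}\|(g-\mathbb{E}_i g)\chi_{I_{k,i,j}}\|_r^{r-1}$ shows the correction inflates $\|h_{k,i,j}\|_{r'}$ by only a multiplicative constant, which regularity absorbs when converting to $\|s(h_{k,i,j})\|_{r'}$. Then $f:=\sum 3\cdot 2^k\|\chi_{I_{k,i,j}}\|_{p(\cdot)}h_{k,i,j}$ lies in $H^s_{p(\cdot),q,b}$ by Theorem~\ref{ad1}, $\ell(f)$ reproduces the numerator of the $BMO_{r,q,b}(\alpha(\cdot))$-quotient up to a constant, and $|\ell(f)|\le\|\ell\|\|f\|_{H^s_{p(\cdot),q,b}}$ delivers $\|g\|_{BMO_{r,q,b}(\alpha(\cdot))}\lesssim\|\ell\|$.

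The principal difficulty is the construction of the extremal atoms $h_{k,i,j}$: for $r\ne 2$, the H\"older-dual candidate $|g-\mathbb{E}_i g|^{r-1}\operatorname{sgn}(\cdot)$ fails to have zero $\mathcal{F}_i$-conditional mean on $I_{k,i,j}$, forcing a corrective $\chi_{I_{k,i,j}}$-shift whose $L_{r'}$-cost must be controlled; the subsequent passage from the $L_{r'}$-bound on $h_{k,i,j}$ to the $s$-atom normalization is precisely where regularity enters in a nontrivial way, whereas in the proof of Theorem~\ref{dual} the case $r=2$ made both steps automatic. Apart from this, the argument is a mechanical translation of Theorem~\ref{dual} from $r=2$ to the full range $1<r<\infty$, and it is this extension that upgrades the duality from a single exponent to the entire John-Nirenberg family of equivalent $BMO$ characterizations.
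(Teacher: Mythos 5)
Your argument follows the paper's proof of Theorem~\ref{jn1} in all essentials: embed an $L_\rho$ space continuously into $H^s_{p(\cdot),q,b}$ by combining Doob's inequality with the regular Burkholder--Gundy equivalences of Lemma~\ref{sMS}, decompose via Theorem~\ref{ad1} into simple $s$-atoms, pair against $g$ with H\"older in the exponents $(r',r)$, and for the reverse inclusion build the extremizer from the H\"older dual $|g-\mathbb{E}_i(g)|^{r-1}\operatorname{sgn}(g-\mathbb{E}_i(g))\chi_{I_{k,i,j}}$. Two remarks. First, your insistence on subtracting $\mathbb{E}_i(\cdot)$ from that candidate before normalizing is the right move: for $r\neq 2$ this is genuinely needed to satisfy condition~(3) of Definition~\ref{atom}, and the paper's displayed $h_{k,i,j}$ omits it (whereas Theorem~\ref{jn2}, the $r=1$ case, includes it); you are also right that the pairing identity is unchanged, since $\mathbb{E}\big(\mathbb{E}_i\psi\cdot(g-\mathbb{E}_i g)\big)=0$, and that the $L_{r'}$-norm is merely doubled, so after renormalization and the regular-basis conversion to $\|s(\cdot)\|_{r'}$ one obtains a simple $(p(\cdot),r')^s$-atom (the paper's label $(p(\cdot),r)^s$ is a slip: the construction controls $L_{r'}$, not $L_r$). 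Second, and this is a genuine gap, you work throughout on the dense subspace $L_2$, but the natural subspace is $L_{r'}$. In the forward direction, for $1<r<2$ one has $r'>2$, so $L_2\not\subset L_{r'}$ and $\mathbb{E}(fg)$ with $f\in L_2$, $g\in L_r$ need not converge. In the reverse direction, for $r>2$ restricting $\ell$ to $L_2$ only produces $g\in L_2$, which is not enough to even state $\|g\|_{BMO_{r,q,b}(\alpha(\cdot))}$ (the definition requires $g\in L_r$, and the extremizer $|g-\mathbb{E}_i g|^{r-1}$ requires $g\in L_r$ locally). The paper fixes both at once by establishing $L_{r'}\hookrightarrow H^s_{p(\cdot),q,b}$ continuously (using the very Burkholder--Gundy equivalence you already invoke) and then dualizing to land $g\in (L_{r'})^*=L_r$; with $L_2$ replaced by $L_{r'}$ (and density of $L_{r'}$ following since the atoms produced by Theorem~\ref{ad1} are $L_\infty$-bounded under regularity), your argument is complete.
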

\begin{proof}
The proof is very similar to that of Theorem \ref{dual}, so we only sketch it.
Let $r'$ be the conjugate number of $r$. Firstly we claim that
$L_{r'} \subset H^s_{p(\cdot),q,b}$. Indeed, for any $f\in L_{r'}$, it follows from Lemma \ref{3.4.48} and Theorem \ref{mi} that
$$\|f\|_{H^s_{p(\cdot),q,b}}=\|s(f)\|_{p(\cdot),q,b}\lesssim \|s(f)\|_{1,q,b} \lesssim
\|s(f)\|_{r',r',1}=\|s(f)\|_{r'}\approx\|f\|_{r'}
.$$
Hence, for any $f\in L_{r'}$, on the basis of Theorem \ref{ad1}, there exist a sequence of simple $(p(\cdot),r)^s$-atoms $(a^{k,i,j})_{k\in\mathbb{Z},i\in\mathbb{N},j}$ and $\mu_{k,i,j}=3\cdot2^k\|\chi_{I_{k,i,j}}\|_{p(\cdot)}$ such that
\begin{align*}
  f =\sum_{k\in \mathbb{Z}}\sum_{i=0}^{\infty}\sum_j \mu_{k,i,j}a^{k,i,j}
\end{align*}
in $L_{r'}$ and
\begin{align*}
	\Bigg(\sum_{ k \in \mathbb{Z} }\bigg\|\sum_{i=0}^{\infty}\sum_{j}\mu_{k,i,j}\|\chi_{I_{k,i,j}}\|_{p(\cdot)}^{-1}\chi_{I_{k,i,j}}
	 \bigg\|_{p(\cdot)}^q\gamma_b^q\bigg(\bigg\|\sum_{i=0}^{\infty}\sum_{j}\chi_{I_{k,i,j}}\bigg\|_{p(\cdot)}\bigg)\Bigg)^{\frac{1}{q}}\lesssim\|f\|_{H_{p(\cdot),q,b}^{s}}.
\end{align*}
For any given $g\in BMO_{r,q,b}\subset L_r$, define
$$\ell_g(f):=\mathbb{E}(fg),\quad\forall\ f\in L_{r'}.$$
In the same way as in the proof of Theorem \ref{dual}, we get that $\ell_g$ can be uniquely extended to a bounded linear functional on $H_{p(\cdot),q,b}^s$.

To prove the converse, let $\ell\in \big( H_{p(\cdot),q,b}^s\big)^*$. Since
$$L_{r'}=H_{r'}^s\subset H_{p(\cdot),q,b}^s = H_{p(\cdot),q,b}^{s-at,\infty,1} \subset H_{p(\cdot),q,b}^{s-at, r', 1},$$
we have
$$\big(H_{p(\cdot),q,b}^{s-at, r', 1}\big)^* \subset \big( H_{p(\cdot),q,b}^s\big)^*\subset L_r.$$
Hence
there exists $g \in L_r$ such that
$$\ell(f)=\ell_g(f)=\mathbb{E}(fg),\quad\quad\forall \;f\in L_{r'}.$$
Let $\{I_{k,i,j}\}_{k\in\mathbb{Z},i\in\mathbb{N},j}$ be an arbitrary sequence of atoms such that $I_{k,i,j}$ are disjoint if $k$ is fixed, $I_{k,i,j}$ belong to $\mathcal{F}_i$ and
$$\Bigg\{2^{k}\bigg\|\sum\limits_{i=0}^{\infty}\sum\limits_{j}\chi_{I_{k,i,j}}\bigg\|_{\frac{1}{\alpha(\cdot)+1}}\gamma_b\bigg(\bigg\|\sum\limits_{i=0}^{\infty}\sum\limits_{j}\chi_{I_{k,i,j}}\bigg\|_{\frac{1}{\alpha(\cdot)+1}}\bigg)\Bigg\}_{k\in\mathbb{Z}}\in l_q.$$
Set
$$h_{k,i,j}=\frac{|g-\mathbb{E}_i(g)|^{r-1}\operatorname{sgn}(g-\mathbb{E}_i(g))\chi_{I_{k,i,j}}\|\chi_{I_{k,i,j}}\|_{r'}}{\|(g-\mathbb{E}_i(g))\chi_{I_{k,i,j}}\|_r^{r-1}\cdot\|\chi_{I_{k,i,j}}\|_{p(\cdot)}}.$$
Then $h_{k,i,j}$ is a simple $(p(\cdot),r)^s$-atom and
\begin{align*}
	 \mathbb{E}\big(h_{k,i,j}(g-\mathbb{E}_i(g))\big)=\frac{\|\chi_{I_{k,i,j}}\|_{r'}\cdot\|(g-\mathbb{E}_i(g))\chi_{I_{k,i,j}}\|_r}{\|\chi_{I_{k,i,j}}\|_{p(\cdot)}}.
\end{align*}
By replacing $h_{k,i,j}$ in the proof of Theorem \ref{dual} by this new definition, we can obtain the conclusion and the proof is complete.
\end{proof}

For the case of $r=1$, we need some new insight. Let the dual space of $\mathcal{P}_{p(\cdot),q,b}$ be $\mathcal{P}_{p(\cdot),q,b}^*$. Denote by $\big(\mathcal{P}_{p(\cdot),q,b}^*\big)_1$ those elements $\ell\in\mathcal{P}_{p(\cdot),q,b}^*$ for which there exists $g\in L_1$ such that $\ell(f)=\mathbb{E}(fg)$, $f\in L_\infty$. That is,
$$\big(\mathcal{P}_{p(\cdot),q,b}^*\big)_1:=\big\{\ell\in \mathcal{P}_{p(\cdot),q,b}^*:\ \exists\ g\in L_1\ \text{s.t.}\ \ell(f)=\mathbb{E}(fg),\ \forall\ f\in L_\infty\big\}.$$

\begin{thm}\label{jn2}
	Let $p(\cdot)\in\mathcal{P}(\Omega)$ satisfy $(\ref{gs3})$ with $0<p_-\le p_+\le1$, $0<q< \infty$ and let $ b$ be a slowly varying function. Then $$\big(\mathcal{P}_{p(\cdot),q,b}^*\big)_1=BMO_{1,q,b}(\alpha(\cdot)),\quad \alpha(\cdot)=\frac{1}{p(\cdot)}-1.$$
\end{thm}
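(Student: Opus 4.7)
The plan is to mimic the pattern of Theorems \ref{dual} and \ref{jn1}, working with the atomic decomposition of $\mathcal{P}_{p(\cdot),q,b}$ via simple $(p(\cdot),\infty)^M$-atoms (Theorem \ref{ad3}), and with the $L_\infty$--$L_1$ H\"older pairing appropriate to the endpoint $r=1$. Note that $\frac{1}{\alpha(\cdot)+1}=p(\cdot)$, so the $\|\cdot\|_{1/(\alpha(\cdot)+1)}$-quantities in Definition \ref{B2} match those produced by Theorem \ref{ad3}.

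For the inclusion $BMO_{1,q,b}(\alpha(\cdot))\hookrightarrow(\mathcal{P}_{p(\cdot),q,b}^*)_1$, I would take $g\in BMO_{1,q,b}(\alpha(\cdot))\subset L_1$ and set $\ell_g(f):=\mathbb{E}(fg)$ for $f\in L_\infty$. For $f\in L_\infty\cap\mathcal{P}_{p(\cdot),q,b}$, Theorem \ref{ad3} produces a decomposition $f=\sum_{k,i,j}\mu_{k,i,j}a^{k,i,j}$ with simple $(p(\cdot),\infty)^M$-atoms satisfying $\mathbb{E}_i(a^{k,i,j})=0$ and $\|a^{k,i,j}\|_\infty\le\|M(a^{k,i,j})\|_\infty\le\|\chi_{I_{k,i,j}}\|_{p(\cdot)}^{-1}$. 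Then
$$|\mathbb{E}(a^{k,i,j}g)|=|\mathbb{E}(a^{k,i,j}(g-\mathbb{E}_i(g)))|\le \|\chi_{I_{k,i,j}}\|_{p(\cdot)}^{-1}\|(g-\mathbb{E}_i(g))\chi_{I_{k,i,j}}\|_1,$$
so with $\mu_{k,i,j}=3\cdot 2^k\|\chi_{I_{k,i,j}}\|_{p(\cdot)}$ we obtain $|\ell_g(f)|\le 3\sum_{k,i,j}2^k\|(g-\mathbb{E}_i(g))\chi_{I_{k,i,j}}\|_1$, and Definition \ref{B2} (with $r=1$) combined with the atomic-norm bound from Theorem \ref{ad3} yields $|\ell_g(f)|\lesssim\|g\|_{BMO_{1,q,b}(\alpha(\cdot))}\|f\|_{\mathcal{P}_{p(\cdot),q,b}}$. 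A standard density/Hahn--Banach argument then extends $\ell_g$ to a bounded functional on $\mathcal{P}_{p(\cdot),q,b}$ lying in $(\mathcal{P}_{p(\cdot),q,b}^*)_1$.

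For the reverse inclusion, let $\ell\in(\mathcal{P}_{p(\cdot),q,b}^*)_1$ be represented on $L_\infty$ by $g\in L_1$. Given any admissible family $(I_{k,i,j})_{k,i,j}$ from Definition \ref{B2}, I would construct \emph{dual atoms}
$$s_{k,i,j}:=\mathrm{sgn}(g-\mathbb{E}_i(g))\chi_{I_{k,i,j}},\qquad h_{k,i,j}:=\frac{s_{k,i,j}-\mathbb{E}_i(s_{k,i,j})}{2\|\chi_{I_{k,i,j}}\|_{p(\cdot)}}.$$
Because $I_{k,i,j}\in A(\mathcal{F}_i)$, $\mathbb{E}_i(s_{k,i,j})$ vanishes on every atom of $\mathcal{F}_i$ other than $I_{k,i,j}$ and is constant on $I_{k,i,j}$; consequently $h_{k,i,j}$ is supported in $I_{k,i,j}$, has $\mathbb{E}_i(h_{k,i,j})=0$, and $\|h_{k,i,j}\|_\infty\le\|\chi_{I_{k,i,j}}\|_{p(\cdot)}^{-1}$, making it a legitimate simple $(p(\cdot),\infty)^M$-atom. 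Using the conditional orthogonality $\mathbb{E}(\mathbb{E}_i(s_{k,i,j})(g-\mathbb{E}_i(g)))=0$, a direct computation gives
$$\mathbb{E}(h_{k,i,j}g)=\frac{\|(g-\mathbb{E}_i(g))\chi_{I_{k,i,j}}\|_1}{2\|\chi_{I_{k,i,j}}\|_{p(\cdot)}}.$$
For every finite truncation $F_N$ of the index set I would assemble $f^{(N)}:=\sum_{(k,i,j)\in F_N}\mu_{k,i,j}h_{k,i,j}\in L_\infty$ with $\mu_{k,i,j}=3\cdot2^k\|\chi_{I_{k,i,j}}\|_{p(\cdot)}$. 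Theorem \ref{ad3} bounds $\|f^{(N)}\|_{\mathcal{P}_{p(\cdot),q,b}}$ by the $l_q$-quantity that appears in the denominator of Definition \ref{B2}, while $\ell(f^{(N)})=\mathbb{E}(f^{(N)}g)=\frac{3}{2}\sum_{(k,i,j)\in F_N}2^k\|(g-\mathbb{E}_i(g))\chi_{I_{k,i,j}}\|_1$. Dividing by $\|f^{(N)}\|_{\mathcal{P}_{p(\cdot),q,b}}$, using $|\ell(f^{(N)})|\le\|\ell\|\,\|f^{(N)}\|_{\mathcal{P}_{p(\cdot),q,b}}$, and letting $N\to\infty$ yields $\|g\|_{BMO_{1,q,b}(\alpha(\cdot))}\lesssim\|\ell\|$.

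The main obstacle is the construction of the dual atoms $h_{k,i,j}$: because every atom entering the decomposition of Theorem \ref{ad3} must have $\mathbb{E}_i$-mean zero, one cannot simply take $h_{k,i,j}$ proportional to $s_{k,i,j}$, so the symmetrization by $-\mathbb{E}_i(s_{k,i,j})$ is unavoidable. What makes the argument succeed is the pair of facts that this correction stays supported in $I_{k,i,j}$ (by atomicity of $I_{k,i,j}$ in $\mathcal{F}_i$, so that $|h_{k,i,j}|\le\|\chi_{I_{k,i,j}}\|_{p(\cdot)}^{-1}$ still holds), and that it is conditionally orthogonal to $g-\mathbb{E}_i(g)$, so the pairing with $g$ reproduces exactly $\|(g-\mathbb{E}_i(g))\chi_{I_{k,i,j}}\|_1$. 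This is the analogue at $r=1$ of the explicit test function used in the proof of Theorem \ref{dual}, with the $L_2$--$L_2$ H\"older pairing replaced by the $L_\infty$--$L_1$ one and the additional symmetrization step forced by the $L_\infty$ atomic bound.
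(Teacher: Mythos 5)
Your proposal is correct and follows essentially the same path as the paper's proof: the forward embedding via the $L_\infty$--$L_1$ pairing and the $r=1$ atomic norm bound from Theorem~\ref{ad3}, and the converse via the symmetrized dual atom $h_{k,i,j}=(\varphi_{k,i,j}-\mathbb{E}_i(\varphi_{k,i,j}))/(2\|\chi_{I_{k,i,j}}\|_{p(\cdot)})$ with $\varphi_{k,i,j}=\operatorname{sgn}(g-\mathbb{E}_i(g))\chi_{I_{k,i,j}}$, exactly as the paper does, including the observation that the $\mathbb{E}_i$-correction stays supported in $I_{k,i,j}$ and is conditionally orthogonal to $g-\mathbb{E}_i(g)$, followed by finite truncation (the paper truncates in $k$ over $[-N,N]$ while you take an arbitrary finite subset of indices, a cosmetic difference) and passage to the supremum.
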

\begin{proof}
	Let $g\in BMO_{1,q,b}(\alpha(\cdot))\subset L_1$. Define the functional $\ell_g$ as $$\ell_g(f):=\mathbb{E}(fg),\quad \forall\ f\in L_{\infty}.$$
	By Theorem \ref{ad2}, there exist a sequence of simple $(p(\cdot),\infty)^M$-atoms $(a^{k,i,j})_{k\in\mathbb{Z},i\in\mathbb{N},j}$ and $\mu_{k,i,j}=3\cdot2^k\|\chi_{I_{k,i,j}}\|_{p(\cdot)}$ such that $$f=\sum\limits_{k\in\mathbb{Z}}\sum\limits_{i=0}^{\infty}\sum\limits_{j}\mu_{k,i,j}a^{k,i,j}$$
	and $$\Bigg(\sum\limits_{k\in\mathbb{Z}}\bigg\|\sum\limits_{i=0}^{\infty}\sum\limits_{j}\mu_{k,i,j}\|\chi_{I_{k,i,j}}\|_{p(\cdot)}^{-1}\chi_{I_{k,i,j}}\bigg\|_{p(\cdot)}^{q}\gamma_b^q\bigg(\bigg\|\sum\limits_{i=0}^{\infty}\sum\limits_{j}\chi_{I_{k,i,j}}\bigg\|_{p(\cdot)}\bigg)\Bigg)^\frac{1}{q}\lesssim \|f\|_{\mathcal{P}_{p(\cdot),q,b}}.$$
	Hence, it follows from dominated convergence theorem and H\"{o}lder's inequality that
	\begin{align*}
		|\ell_g(f)|&\leq \sum\limits_{k\in\mathbb{Z}}\sum\limits_{i=0}^{\infty}\sum\limits_{j}\mu_{k,i,j}\mathbb{E}(a^{k,i,j}g)
		\\
		&\leq
		 \sum\limits_{k\in\mathbb{Z}}\sum\limits_{i=0}^{\infty}\sum\limits_{j}\mu_{k,i,j}\|a^{k,i,j}\|_{\infty}\|(g-\mathbb{E}_i(g))\chi_{I_{k,i,j}}\|_1
		\\&=
		 3\sum\limits_{k\in\mathbb{Z}}\sum\limits_{i=0}^{\infty}\sum\limits_{j}2^k\|(g-\mathbb{E}_i(g))\chi_{I_{k,i,j}}\|_1
		\\&\lesssim
		 \Bigg(\sum\limits_{k\in\mathbb{Z}}2^{kq}\bigg\|\sum\limits_{i=0}^{\infty}\sum\limits_{j}\chi_{I_{k,i,j}}\bigg\|_{p(\cdot)}^q\gamma_b^q\bigg(\bigg\|\sum\limits_{i=0}^{\infty}\sum\limits_{j}\chi_{I_{k,i,j}}\bigg\|_{p(\cdot)}\bigg)\Bigg)^{\frac{1}{q}}\|g\|_{BMO_{1,q,b}(\alpha(\cdot))}
		\\&\lesssim
		 \|f\|_{\mathcal{P}_{p(\cdot),q,b}}\|g\|_{BMO_{1,q,b}(\alpha(\cdot))}.
	\end{align*}
	Then we get that $\ell_g$ can be extended to a bounded linear functional on $\mathcal{P}_{p(\cdot),q,b}$ and $\ell_g\in \big(\mathcal{P}_{p(\cdot),q,b}^{*}\big)_1$.
	
	Conversely, let $\ell\in\big(\mathcal{P}_{p(\cdot),q,b}^{*}\big)_1$. Then there exists $g\in L_1$ such that
	$$\ell(f)=\mathbb{E}(fg),\quad \forall\ f\in L_{\infty}.$$
	Let $(I_{k,i,j})_{k\in\mathbb{Z},i\in\mathbb{N},j}$ be an arbitrary sequence of atoms such that $I_{k,i,j}$ are disjoint if $k$ is fixed, $I_{k,i,j}$ belong to $\mathcal{F}_i$ and
	\begin{align*}
		 \Bigg\{2^k\bigg\|\sum\limits_{i=0}^{\infty}\sum\limits_{j}\chi_{I_{k,i,j}}\bigg\|_{p(\cdot)}\gamma_b\bigg(\bigg\|\sum\limits_{i=0}^{\infty}\sum\limits_{j}\chi_{I_{k,i,j}}\bigg\|_{p(\cdot)}\bigg)\Bigg\}_{k\in\mathbb{Z}}\in l_q.
	\end{align*}
	Let
	\[
		\varphi_{k,i,j} = \mbox{sign} (g-\mathbb{E}_i(g)) \chi_{I_{k,i,j}}
	\]
	and
	$$
	 h_{k,i,j}=\frac{\varphi_{k,i,j}-E_i(\varphi_{k,i,j})}{2\|\chi_{I_{k,i,j}}\|_{p(\cdot)}}.
	$$
	Then $h_{k,i,j}$ is a simple $(p(\cdot),\infty)^M$-atom and
    $$
    \mathbb{E}\big(h_{k,i,j}(g-\mathbb{E}_i(g))\big)=\frac{\|(g-\mathbb{E}_i(g))\chi_{I_{k,i,j}}\|_1}{2\|\chi_{I_{k,i,j}}\|_{p(\cdot)}}.
    $$
    Setting $\mu_{k,i,j}=3\cdot2^k\|\chi_{I_{k,i,j}}\|_{p(\cdot)}$, it follows from Theorem \ref{ad3} that
	 $$f=\sum\limits_{k\in\mathbb{Z}}\sum\limits_{i=0}^{\infty}\sum\limits_{j}\mu_{k,i,j}h_{k,i,j}\in\mathcal{P}_{p(\cdot),q,b}$$
    and $$\|f\|_{\mathcal{P}_{p(\cdot),q,b}}\lesssim\Bigg(\sum\limits_{k\in\mathbb{Z}}2^{kq}\bigg\|\sum\limits_{i=0}^{\infty}\sum\limits_{j}\chi_{I_{k,i,j}}\bigg\|_{p(\cdot)}^q\gamma_b^q\bigg(\bigg\|\sum\limits_{i=0}^{\infty}\sum\limits_{j}\chi_{I_{k,i,j}}\bigg\|_{p(\cdot)}\bigg)\Bigg)^{\frac{1}{q}}.$$
    For an arbitrary positive integer $N$, set
    $$(f)^N=\sum_{k=-N}^{N}\sum\limits_{i=0}^{\infty}\sum\limits_{j}\mu_{k,i,j}h_{k,i,j}.$$
    Then
    \begin{align*}
    	&\ \sum_{k=-N}^{N}\sum\limits_{i=0}^{\infty}\sum\limits_{j}2^k\|(g-\mathbb{E}_i(g))\chi_{I_{k,i,j}}\|_1\\
    	=&\ 2 \sum_{k=-N}^{N}\sum\limits_{i=0}^{\infty}\sum\limits_{j}2^k\|\chi_{I_{k,i,j}}\|_{p(\cdot)}\mathbb{E}\big(h_{k,i,j}(g-\mathbb{E}_i(g))\big)\\
    	=&\ 2\sum_{k=-N}^{N}\sum\limits_{i=0}^{\infty}\sum\limits_{j}2^k\|\chi_{I_{k,i,j}}\|_{p(\cdot)}\mathbb{E}(h_{k,i,j}g)\\
    	=&\ \frac{2}{3}\mathbb{E}((f)^Ng)= \frac{2}{3} \ell((f)^N)\le \frac{2}{3} \|\ell\|\|(f)^N\|_{\mathcal{P}_{p(\cdot),q,b}}.
    \end{align*}
	By the definition of $BMO_{1,q,b}(\alpha(\cdot))$ and by taking the supremum in $N$, we get that $$\|g\|_{BMO_{1,q,b}(\alpha(\cdot))}\lesssim \|\ell\|.$$
	The proof is finished now.
\end{proof}

\begin{thm}\label{jn3}
	Let $p(\cdot)\in\mathcal{P}(\Omega)$ satisfy {\rm(\ref{gs3})} with $0<p_-\le p_{+}\le1$, $0<q<\infty$ and let $b$ be a slowly varying function. If the stochastic basic $\{\mathcal{F}_n\}_{n\geq0}$ is regular, then $$\big(\mathcal{P}_{p(\cdot),q,b}^{*}\big)_1=\mathcal{P}_{p(\cdot),q,b}^{*}.$$
\end{thm}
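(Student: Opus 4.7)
The plan is to deduce Theorem \ref{jn3} by combining the equivalence of the five variable martingale Hardy-Lorentz-Karamata spaces in the regular case (Theorem \ref{mi}) with the dual characterization of Theorem \ref{jn1}. The inclusion $(\mathcal{P}_{p(\cdot),q,b}^{*})_{1}\subseteq \mathcal{P}_{p(\cdot),q,b}^{*}$ is immediate from the definition, so only the reverse inclusion needs argument.

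First, I would verify that $L_{\infty}$ embeds continuously into $\mathcal{P}_{p(\cdot),q,b}$. Indeed, for $f=(f_n)_{n\ge 0}\in L_\infty$ the constant sequence $\lambda_n\equiv\|f\|_\infty$ lies in $\Lambda_{p(\cdot),q,b}$ and dominates $|f_n|$; by Lemma \ref{fanshu}(4) together with $p_+<\infty$ and the finiteness of $\gamma_b$ on $(0,\infty)$, the constant $\lambda_\infty=\|f\|_\infty$ belongs to $L_{p(\cdot),q,b}$, so $\|f\|_{\mathcal{P}_{p(\cdot),q,b}}\lesssim \|f\|_\infty$.

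Now fix $\ell\in \mathcal{P}_{p(\cdot),q,b}^{*}$. By Theorem \ref{mi}, regularity of $\{\mathcal{F}_n\}_{n\ge 0}$ yields $\mathcal{P}_{p(\cdot),q,b}=H_{p(\cdot),q,b}^{s}$ with equivalent quasi-norms, so $\ell$ may be viewed as a bounded linear functional on $H_{p(\cdot),q,b}^{s}$. Applying Theorem \ref{jn1} with, say, $r=2$, there exists $g\in BMO_{2,q,b}(\alpha(\cdot))\subset L_{2}\subset L_{1}$ (the last inclusion is because $(\Omega,\mathcal{F},\mathbb{P})$ is a probability space) such that the representation $\ell(f)=\mathbb{E}(fg)$ holds for every $f\in L_{2}$, which is the dense subspace used to construct $g$ in the proof of Theorem \ref{jn1}. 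Since $L_{\infty}\subset L_{2}$, this identity in particular holds for all $f\in L_{\infty}$, which shows $\ell\in(\mathcal{P}_{p(\cdot),q,b}^{*})_{1}$ and finishes the proof.

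There is no serious obstacle beyond the bookkeeping above; the key point is to choose a finite exponent $r>1$ in Theorem \ref{jn1} so that its representing function $g$ automatically sits in $L_{1}$ on the probability space, while $L_\infty$ is simultaneously contained in the dense subspace $L_{r'}$ on which the pairing $\ell(f)=\mathbb{E}(fg)$ is verified pointwise before passing to the closure.
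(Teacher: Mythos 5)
Your argument is correct and rests on the same essential fact as the paper's proof: regularity plus $p_+\le1<2$ gives the dense continuous embedding $L_2\hookrightarrow \mathcal{P}_{p(\cdot),q,b}=H^s_{p(\cdot),q,b}$, so every $\ell\in\mathcal{P}_{p(\cdot),q,b}^{*}$ restricts to a bounded functional on $L_2$ and is therefore represented by some $g\in L_2\subset L_1$, and restricting $\ell(f)=\mathbb{E}(fg)$ to $f\in L_\infty$ gives $\ell\in(\mathcal{P}_{p(\cdot),q,b}^{*})_1$. The paper uses this $L_2$ duality directly rather than routing through Theorem \ref{jn1} (which itself is proved this way), and your preliminary check that $L_\infty$ embeds in $\mathcal{P}_{p(\cdot),q,b}$ is redundant since $L_\infty\subset L_2\hookrightarrow\mathcal{P}_{p(\cdot),q,b}$ is already in hand, but these are only cosmetic differences.
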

\begin{proof}
	Since $\{\mathcal{F}_n\}_{n\geq0}$ is regular, we have $\mathcal{P}_{p(\cdot),q,b}\approx H_{p(\cdot),q,b}^s$.
	Thus $L_2$ can also be embedded continuously in $\mathcal{P}_{p(\cdot),q,b}$.
	Then $\mathcal{P}_{p(\cdot),q,b}^{*}\subset L_2^*=L_2$. For $\ell\in\mathcal{P}_{p(\cdot),q,b}^{*}$, there exists $g\in L_2\subset L_1$ such that $\ell=\ell_g$. It is obvious that $\ell\in \big(\mathcal{P}_{p(\cdot),q,b}^{*}\big)_1$, which means $\mathcal{P}_{p(\cdot),q,b}^{*}\subset \big(\mathcal{P}_{p(\cdot),q,b}^{*}\big)_1$. By the definition of $\big(\mathcal{P}_{p(\cdot),q,b}^{*}\big)_1$, we have $\big(\mathcal{P}_{p(\cdot),q,b}^{*}\big)_1\subset \mathcal{P}_{p(\cdot),q,b}^{*}$. Hence $\big(\mathcal{P}_{p(\cdot),q,b}^{*}\big)_1=\mathcal{P}_{p(\cdot),q,b}^{*}$.
\end{proof}

Combining Theorem $\ref{jn2}$ with Theorem $\ref{jn3}$, we have the dual theorem of $\mathcal{P}_{p(\cdot),q,b}$.
\begin{coro}\label{;}
	 Let $p(\cdot)\in\mathcal{P}(\Omega)$ satisfy $(\ref{gs3})$ with $0<p_-\le p_+\le1$, $0<q< \infty$ and let $ b$ be a slowly varying function. If the stochastic basic $\{\mathcal{F}_n\}_{n\geq0}$ is regular, then $$\mathcal{P}_{p(\cdot),q,b}^*=BMO_{1,q,b}(\alpha(\cdot)),\quad \alpha(\cdot)=\frac{1}{p(\cdot)}-1.$$
\end{coro}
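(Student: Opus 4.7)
The statement is essentially an immediate amalgamation of the two preceding theorems, so my plan is simply to chain them together. By Theorem \ref{jn2}, under the hypotheses that $p(\cdot)\in\mathcal{P}(\Omega)$ satisfies \eqref{gs3} with $0<p_-\le p_+\le 1$, $0<q<\infty$ and $b$ is slowly varying, we already have the identification
\[
\big(\mathcal{P}_{p(\cdot),q,b}^*\big)_1 = BMO_{1,q,b}(\alpha(\cdot)),\quad \alpha(\cdot)=\frac{1}{p(\cdot)}-1,
\]
with no regularity needed on the stochastic basis. So the only missing ingredient is to remove the subscript $1$, i.e.\ to show that every bounded linear functional on $\mathcal{P}_{p(\cdot),q,b}$ actually comes from integration against an $L_1$ element. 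This is precisely what Theorem \ref{jn3} provides under the regularity assumption on $\{\mathcal{F}_n\}_{n\ge 0}$: we have $\big(\mathcal{P}_{p(\cdot),q,b}^*\big)_1=\mathcal{P}_{p(\cdot),q,b}^*$.

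Therefore the plan is a one-line deduction: under the stated hypotheses,
\[
\mathcal{P}_{p(\cdot),q,b}^*\;\overset{\text{Thm }\ref{jn3}}{=}\;\big(\mathcal{P}_{p(\cdot),q,b}^*\big)_1\;\overset{\text{Thm }\ref{jn2}}{=}\;BMO_{1,q,b}(\alpha(\cdot)),
\]
with $\alpha(\cdot)=1/p(\cdot)-1$, which is exactly the claimed duality. There is no genuine obstacle here — the real work was done in Theorems \ref{jn2} and \ref{jn3}. If one wanted to be slightly more explicit, one would note that Theorem \ref{jn3} relies crucially on regularity (to invoke the equivalence $\mathcal{P}_{p(\cdot),q,b}\approx H^s_{p(\cdot),q,b}$ from Theorem \ref{mi} and hence the continuous embedding $L_2\hookrightarrow\mathcal{P}_{p(\cdot),q,b}$), while Theorem \ref{jn2} does not; this is why regularity appears in the hypotheses of the corollary but not in those of Theorem \ref{jn2}.
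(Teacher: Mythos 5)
Your proof is correct and is exactly the paper's approach: the corollary is stated immediately after Theorems \ref{jn2} and \ref{jn3} with the remark that it follows by combining them, which is precisely the one-line chain you give. Your additional observation that regularity enters only through Theorem \ref{jn3} (via the equivalence $\mathcal{P}_{p(\cdot),q,b}\approx H^s_{p(\cdot),q,b}$ and the resulting embedding $L_2\hookrightarrow\mathcal{P}_{p(\cdot),q,b}$) is accurate and a helpful clarification.
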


Now, we state the John-Nirenberg theorem for variable Lorentz-Karamata spaces.
\begin{thm}
    Let $\alpha(\cdot)+1\in\mathcal{P}(\Omega)$ satisfy $(\ref{gs3})$ with $\alpha_-\ge0$, $0<q<\infty$ and let $b$ be a slowly varying function.
    If the stochastic basis $\{\mathcal{F}_n\}_{n\geq0}$ is regular, then
    $$BMO_{r,q,b}(\alpha(\cdot))=BMO_{2,q,b}(\alpha(\cdot))$$
    with equivalent norms for all $1\le r<\infty$.
\end{thm}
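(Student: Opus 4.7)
The plan is to derive the equivalence from the duality theorems of Section \ref{s5}. Under the regularity hypothesis, the preceding results identify every $BMO_{r,q,b}(\alpha(\cdot))$ with $r\in[1,\infty)$ as the dual of the \emph{same} martingale Hardy space, so the equivalence of norms follows by comparing these identifications.

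Setting $p(\cdot):=1/(\alpha(\cdot)+1)$, we have $p(\cdot)\in\mathcal{P}(\Omega)$ with $0<p_-\le p_+\le 1$ (since $\alpha_-\ge 0$), and $p(\cdot)$ inherits the condition (\ref{gs3}) from $\alpha(\cdot)+1$. The easy direction of the claimed equivalence is immediate from H\"older's inequality: if $r_1\ge r_2$, then for every atom $I\in A(\mathcal{F}_n)$,
$$\mathbb{P}(I)^{1-1/r_2}\|(f-\mathbb{E}_n f)\chi_I\|_{r_2}\le \mathbb{P}(I)^{1-1/r_1}\|(f-\mathbb{E}_n f)\chi_I\|_{r_1},$$
so that $BMO_{r_1,q,b}(\alpha(\cdot))\hookrightarrow BMO_{r_2,q,b}(\alpha(\cdot))$ with continuous embedding. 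This alone produces one of the two inclusions in the theorem (whether $r\ge 2$ or $r\le 2$).

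For the reverse inclusion I invoke duality. Since $\{\mathcal{F}_n\}_{n\ge 0}$ is regular, Theorem \ref{mi} gives $\mathcal{P}_{p(\cdot),q,b}=H^s_{p(\cdot),q,b}$ with equivalent quasi-norms. Then Corollary \ref{;}, Theorem \ref{dual} and Theorem \ref{jn1} together assert that for every $r\in[1,\infty)$ the pairing $\ell_g(f):=\mathbb{E}(fg)$ realizes an isomorphism
$$BMO_{r,q,b}(\alpha(\cdot))\cong \big(H^s_{p(\cdot),q,b}\big)^{*}\quad\text{with}\quad\|\ell_g\|_{(H^s_{p(\cdot),q,b})^{*}}\approx\|g\|_{BMO_{r,q,b}(\alpha(\cdot))}.$$
Given any $g\in BMO_{r,q,b}(\alpha(\cdot))\subset L_r\subset L_1$, the functional $\ell_g$ is also represented by some $\tilde g\in BMO_{2,q,b}(\alpha(\cdot))\subset L_1$ with $\|\tilde g\|_{BMO_{2,q,b}(\alpha(\cdot))}\approx\|\ell_g\|$. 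Since $\mathbb{E}((g-\tilde g)f)=0$ for every $f\in L_\infty$, and $L_\infty\subset L_2$ is dense in $H^s_{p(\cdot),q,b}$ by Remark \ref{rem4.2}, standard $L_1$--$L_\infty$ duality forces $g=\tilde g$ a.e., whence $\|g\|_{BMO_{2,q,b}(\alpha(\cdot))}\approx\|g\|_{BMO_{r,q,b}(\alpha(\cdot))}$.

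The main delicate point, and the only conceptual obstacle, is the \emph{consistency} of the duality pairings: one must verify that the function representing $\ell_g$ supplied by Theorem \ref{dual} is literally the same $g$ modulo null sets, rather than merely a distinct element inducing the same functional on $H^s_{p(\cdot),q,b}$. This is handled by the two observations used above, namely that $BMO_{r,q,b}(\alpha(\cdot))\hookrightarrow L_1$ on a probability space and that $L_\infty\cap \mathcal{M}$ is a common dense subspace of $H^s_{p(\cdot),q,b}$. Once this consistency is in place, no further estimate beyond the results of Sections \ref{s3}--\ref{s5} is required.
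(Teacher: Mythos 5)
Your proof is correct and takes essentially the same route as the paper: both identify $BMO_{r,q,b}(\alpha(\cdot))$ for $1<r<\infty$ with $\bigl(H^s_{p(\cdot),q,b}\bigr)^*$ via Theorems \ref{dual} and \ref{jn1}, and handle $r=1$ via regularity and Corollary \ref{;}. You are somewhat more explicit than the paper about the consistency of the various duality pairings (observing that the representing functions all agree on $L_\infty$, hence coincide by $L_1$--$L_\infty$ duality), and you add a quick H\"older observation to get one inclusion directly; these are useful clarifications but do not constitute a different argument.
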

\begin{proof}
	When $1<r<\infty$, it follows from Theorems \ref{dual} and \ref{jn1} that
	$$BMO_{r,q,b}(\alpha(\cdot))=BMO_{2,q,b}(\alpha(\cdot)).$$
	Next we consider the case of $r=1$. Since the stochastic basis $\{\mathcal{F}_n\}_{n\geq0}$ is regular, we have $\mathcal{P}_{p(\cdot),q,b}\approx H_{p(\cdot),q,b}^s$, where $p(\cdot)=\frac{1}{\alpha(\cdot)+1}$. Hence, according to Theorem \ref{dual} and Corollary \ref{;}, we obtain that
	$$BMO_{1,q,b}(\alpha(\cdot))=BMO_{2,q,b}(\alpha(\cdot)).$$
\end{proof}

When $\alpha(\cdot)\equiv\alpha$ is a constant, we have the following result.
\begin{coro}\label{coro0}
	Let $\alpha\ge0$, $0<q<\infty$ and let $b$ be a slowly varying function.
	If the stochastic basis $\{\mathcal{F}_n\}_{n\geq0}$ is regular, then
	$$BMO_{r,q,b}(\alpha)=BMO_{2,q,b}(\alpha)$$
	with equivalent norms for all $1\le r<\infty$.
\end{coro}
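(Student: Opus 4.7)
The plan is to obtain Corollary \ref{coro0} as an immediate specialization of the preceding theorem. First I would verify that the constant exponent $\alpha(\cdot)\equiv\alpha$ satisfies the hypotheses of that theorem: since $\alpha$ is constant, we have $p(\cdot)=\frac{1}{\alpha+1}$ constant as well, so in particular $p_-(A)=p_+(A)$ for every $A$, which means condition $(\ref{gs3})$ holds trivially with $K_{p(\cdot)}=1$. Also $\alpha_-=\alpha\ge 0$ by the hypothesis of the corollary, and $\alpha(\cdot)+1\in\mathcal{P}(\Omega)$ is automatic. Thus all hypotheses of the preceding theorem are met.

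Once this is checked, the preceding theorem applied to $\alpha(\cdot)\equiv\alpha$ gives $BMO_{r,q,b}(\alpha(\cdot))=BMO_{2,q,b}(\alpha(\cdot))$ with equivalent norms for every $1\le r<\infty$, and since $BMO_{r,q,b}(\alpha(\cdot))=BMO_{r,q,b}(\alpha)$ when $\alpha(\cdot)\equiv\alpha$ by the very definition (Definition \ref{B2}), the desired identification $BMO_{r,q,b}(\alpha)=BMO_{2,q,b}(\alpha)$ follows.

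There is essentially no obstacle here, since the work has already been done at the level of variable exponents; the corollary is purely a matter of observing that the constant case is covered. If one preferred a self-contained argument for the constant case, it would proceed exactly as in the proof of the preceding theorem: split into $1<r<\infty$ (use Theorem \ref{dual} together with Theorem \ref{jn1}, both with $\alpha(\cdot)\equiv\alpha$, to identify each $BMO_{r,q,b}(\alpha)$ with the dual of $H_{p,q,b}^s$ for $p=\frac{1}{\alpha+1}$), and the $r=1$ case (use Corollary \ref{;} together with the regularity assumption so that $\mathcal{P}_{p,q,b}\approx H_{p,q,b}^s$, giving $BMO_{1,q,b}(\alpha)=BMO_{2,q,b}(\alpha)$ via the two dual characterizations). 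The only technical point worth double-checking is that when $\alpha=0$, so $p=1$, the hypothesis $0<p_-\le p_+\le 1$ in Theorem \ref{jn1} and Corollary \ref{;} remains satisfied, which it does with equality.
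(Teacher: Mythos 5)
Your proposal is correct and matches the paper's own treatment: the paper states Corollary \ref{coro0} without proof, precisely because it is the immediate specialization of the preceding John--Nirenberg theorem to the constant exponent $\alpha(\cdot)\equiv\alpha$, and your verification that the hypotheses are satisfied (in particular that $(\ref{gs3})$ holds trivially with $K_{p(\cdot)}=1$ for a constant exponent, that $\alpha(\cdot)+1\equiv\alpha+1\in\mathcal{P}(\Omega)$, and that $\alpha_-=\alpha\ge0$) is exactly the observation needed. The optional self-contained route you sketch and the remark about the boundary case $\alpha=0$ (so $p=1$) are both accurate as well.
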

\begin{rem}
    In $2015$, Jiao et al.  \cite{jxz} have showed the John-Nirenberg theorem on Lorentz-Karamata spaces. However, they need the condition that the slowly varying function $b$ is nondecreasing. Hence, Corollary $\ref{coro0}$ extends \cite[Theorem 1.6]{jxz}.
\end{rem}

When $b\equiv1$, we obtain the John-Nirenberg theorem associated with $BMO_{r,q}(\alpha(\cdot))$.
\begin{coro}
	Let $\alpha(\cdot)+1\in\mathcal{P}(\Omega)$ satisfy $(\ref{gs3})$ with $\alpha_-\ge0$ and $0<q<\infty$.
	If the stochastic basis $\{\mathcal{F}_n\}_{n\geq0}$ is regular, then
	$$BMO_{r,q}(\alpha(\cdot))=BMO_{2,q}(\alpha(\cdot))$$
	with equivalent norms for all $1\le r<\infty$.
\end{coro}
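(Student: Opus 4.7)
\begin{proof}
The plan is to deduce this corollary directly from the preceding John--Nirenberg theorem by specializing the slowly varying function. Since the constant function $b \equiv 1$ is trivially a slowly varying function (see the examples after Definition 2.7), all hypotheses of the previous theorem are satisfied by the pair $(\alpha(\cdot), b)$ with $b \equiv 1$: namely $\alpha(\cdot)+1 \in \mathcal{P}(\Omega)$ satisfies $(\ref{gs3})$ with $\alpha_- \ge 0$, $0 < q < \infty$, and the stochastic basis $\{\mathcal{F}_n\}_{n\ge 0}$ is regular.

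By the remark following Definition \ref{B2}, the generalized martingale space $BMO_{r,q,b}(\alpha(\cdot))$ reduces to $BMO_{r,q}(\alpha(\cdot))$ when $b \equiv 1$; indeed, inspection of the definition shows that with $b \equiv 1$ one has $\gamma_b \equiv 1$, and the quotient defining $\|\cdot\|_{BMO_{r,q,b}(\alpha(\cdot))}$ collapses to the quotient defining $\|\cdot\|_{BMO_{r,q}(\alpha(\cdot))}$. Consequently, applying the preceding theorem with $b \equiv 1$ yields
$$BMO_{r,q}(\alpha(\cdot)) = BMO_{r,q,1}(\alpha(\cdot)) = BMO_{2,q,1}(\alpha(\cdot)) = BMO_{2,q}(\alpha(\cdot))$$
with equivalent norms, for every $1 \le r < \infty$. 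This completes the proof.
\end{proof}

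There is essentially no obstacle in this step: the work has already been done in the preceding theorem, which itself relies on the duality results (Theorem \ref{dual} and Corollary \ref{;}) together with the regularity-driven identification $\mathcal{P}_{p(\cdot),q,b} \approx H_{p(\cdot),q,b}^s$ provided by Theorem \ref{mi}. The only thing to verify is the compatibility of the $b \equiv 1$ case of the $BMO_{r,q,b}(\alpha(\cdot))$ definition with the classical $BMO_{r,q}(\alpha(\cdot))$, which is immediate from the formula $\gamma_1 \equiv 1$.
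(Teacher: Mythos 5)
Your proof is correct and matches the paper's (implicit) intent: the corollary is stated without a separate proof precisely because it is the specialization $b\equiv 1$ of the preceding John--Nirenberg theorem, and your observation that $\gamma_b\equiv 1$ makes $BMO_{r,q,b}(\alpha(\cdot))$ coincide with $BMO_{r,q}(\alpha(\cdot))$ is exactly the required verification.
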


\section{Boundedness of fractional integrals} \label{s7}
In this section, without loss of generality, we always suppose that the constant in (\ref{R}) satisfies $\mathcal{R}\ge2$. Firstly, we introduce the definition of the fractional integral and give some necessary results.

\begin{defi}[\cite{nsm,sf}]
	For $f=(f_n)_{n\ge0}\in\mathcal{M}$ and $\alpha>0$, the fractional integral $I_\alpha f=\big((I_\alpha f)_n\big)_{n\ge0}$ of $f$ is defined as
	$$(I_\alpha f)_n=\sum_{k=1}^{n}b_{k-1}^\alpha d_kf,$$
	where $b_k$ is an $\mathcal{F}_k$-measurable function such that for all $B\in A(\mathcal{F}_k)$ and $\omega\in B$, $b_k(\omega)=\mathbb{P}(B)$.
\end{defi}

\begin{rem}
	We point out that $I_\alpha f$ is a martingale and $(I_\alpha f)_n$ is a martingale transform introduced by Burkholder. Especially, if $\Omega=[0,1]$ and the $\sigma$-algebra $\mathcal{F}_n$ is generated by the dyadic intervals of $[0,1]$, then $I_\alpha f$ is closely related to a class of multiplier transformations of Walsh-Fourier series, see \cite{wwwww}. We refer to \cite{skm} for the classical fractional integrals.
\end{rem}

\begin{lem}[\cite{hj,jzhc}]\label{4.2}
	Let $p(\cdot),q(\cdot)\in\mathcal{P}(\Omega)$ satisfy $(\ref{gs3})$. For any set $A\in \bigcup\limits_{n\ge0}A(\mathcal{F}_n)$, we have
	 $$\|\chi_A\|_{r(\cdot)}\approx\|\chi_A\|_{p(\cdot)}\|\chi_A\|_{q(\cdot)},$$
	where
	$$\frac{1}{r(\omega)}=\frac{1}{p(\omega)}+\frac{1}{q(\omega)},\quad \forall\  \omega\in\Omega.$$
\end{lem}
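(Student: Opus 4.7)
The plan is to reduce the lemma to a precise computation of $\|\chi_A\|_{p(\cdot)}$ on a single atom, and then to combine three such computations via the pointwise relation $1/r(\omega)=1/p(\omega)+1/q(\omega)$.

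First, for any fixed $p(\cdot)\in\mathcal{P}(\Omega)$ satisfying \eqref{gs3} and any $A\in\bigcup_{n\ge0}A(\mathcal{F}_n)$ with $\mathbb{P}(A)<1$, I would compute
\[
\rho(\chi_A/\lambda)=\int_A\lambda^{-p(\omega)}\,d\mathbb{P}
\]
for $0<\lambda<1$. Since $p_-(A)\le p(\omega)\le p_+(A)$ essentially on $A$, this yields
\[
\lambda^{-p_-(A)}\mathbb{P}(A)\le\rho(\chi_A/\lambda)\le\lambda^{-p_+(A)}\mathbb{P}(A).
\]
Setting each endpoint to $1$ and using the fact that $\lambda\mapsto\rho(\chi_A/\lambda)$ is decreasing, one obtains
\[
\mathbb{P}(A)^{1/p_-(A)}\le\|\chi_A\|_{p(\cdot)}\le\mathbb{P}(A)^{1/p_+(A)}.
\]
The key step is then to observe that condition \eqref{gs3} makes the two endpoints comparable: writing
\[
\mathbb{P}(A)^{1/p_-(A)-1/p_+(A)}=\bigl(\mathbb{P}(A)^{p_-(A)-p_+(A)}\bigr)^{-1/(p_-(A)p_+(A))},
\]
the inner factor lies in $[1,K_{p(\cdot)}]$ by \eqref{gs3}, hence the whole expression is pinched between $K_{p(\cdot)}^{-1/p_-^2}$ and $1$. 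Consequently
\[
\|\chi_A\|_{p(\cdot)}\approx\mathbb{P}(A)^{1/p_-(A)}\approx\mathbb{P}(A)^{1/p_+(A)}.
\]
(The case $\mathbb{P}(A)=1$ is trivial since all three norms equal $1$.)

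Applying this asymptotic to $p(\cdot)$, $q(\cdot)$ and $r(\cdot)$ (all of which satisfy \eqref{gs3} thanks to the hypotheses), I would then compare the exponents on $A$. From $1/r(\omega)=1/p(\omega)+1/q(\omega)$ and the pointwise bounds $1/p_+(A)\le 1/p(\omega)\le 1/p_-(A)$ (and analogously for $q$), one gets
\[
\frac{1}{p_+(A)}+\frac{1}{q_+(A)}\le\frac{1}{r_-(A)}\le\frac{1}{r_+(A)}\le\frac{1}{p_-(A)}+\frac{1}{q_-(A)}.
\]
Choosing the $-$ endpoint throughout, the ratio
\[
\frac{\|\chi_A\|_{r(\cdot)}}{\|\chi_A\|_{p(\cdot)}\|\chi_A\|_{q(\cdot)}}\approx\mathbb{P}(A)^{\,1/r_-(A)-1/p_-(A)-1/q_-(A)}
\]
has an exponent that is $\le 0$ (giving the bound $\ge 1$ automatically) and, by the displayed inequalities, is $\ge -(1/p_-(A)-1/p_+(A))-(1/q_-(A)-1/q_+(A))$, so the corresponding power of $\mathbb{P}(A)\le 1$ is controlled by $K_{p(\cdot)}$ and $K_{q(\cdot)}$ exactly as in the single-exponent argument.

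The main technical obstacle is the careful bookkeeping in the first step: ensuring that condition \eqref{gs3} really does force $\mathbb{P}(A)^{1/p_-(A)}$ and $\mathbb{P}(A)^{1/p_+(A)}$ to be equivalent uniformly in the atom $A$. Once that is established, the rest is a direct exponent comparison, and the whole argument requires no structure beyond the modular definition of $\|\cdot\|_{p(\cdot)}$ and the atom condition.
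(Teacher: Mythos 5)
Your first step---deriving
\[
\mathbb{P}(A)^{1/p_-(A)}\le\|\chi_A\|_{p(\cdot)}\le\mathbb{P}(A)^{1/p_+(A)}
\]
from the modular and then using condition \eqref{gs3} to make the two endpoints comparable (with constant controlled by $K_{p(\cdot)}^{1/p_-^2}$)---is correct and is the natural starting point. The paper simply cites this lemma to \cite{hj,jzhc} without proof, so there is no in-paper argument to compare against.

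There is, however, a genuine gap in the second half. You assert that $p(\cdot)$, $q(\cdot)$ \emph{and} $r(\cdot)$ all satisfy \eqref{gs3} ``thanks to the hypotheses'', and you rely on $\|\chi_A\|_{r(\cdot)}\approx\mathbb{P}(A)^{1/r_-(A)}$, which requires \eqref{gs3} for $r(\cdot)$. But the hypotheses give \eqref{gs3} only for $p(\cdot)$ and $q(\cdot)$; that $r(\cdot)$ inherits it is true but not automatic, and you do not prove it. (Also, your displayed chain has the middle relation reversed: since $r_-(A)\le r_+(A)$ one has $1/r_+(A)\le 1/r_-(A)$; this is harmless, since the argument only uses the two outer bounds.) The cleanest fix is to bypass \eqref{gs3} for $r(\cdot)$ entirely: your first step already gives the two-sided bound $\mathbb{P}(A)^{1/r_-(A)}\le\|\chi_A\|_{r(\cdot)}\le\mathbb{P}(A)^{1/r_+(A)}$ with no hypothesis on $r(\cdot)$ beyond $r(\cdot)\in\mathcal{P}(\Omega)$, so combined with the (corrected) chain and $\mathbb{P}(A)\le 1$ you get
\[
\mathbb{P}(A)^{1/p_-(A)+1/q_-(A)}\le\mathbb{P}(A)^{1/r_-(A)}\le\|\chi_A\|_{r(\cdot)}\le\mathbb{P}(A)^{1/r_+(A)}\le\mathbb{P}(A)^{1/p_+(A)+1/q_+(A)},
\]
and both extremes are $\approx\|\chi_A\|_{p(\cdot)}\|\chi_A\|_{q(\cdot)}$ by \eqref{gs3} applied to $p(\cdot)$ and $q(\cdot)$ alone. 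Replacing your $\approx$-step for $\|\chi_A\|_{r(\cdot)}$ by this sandwich closes the gap. (Alternatively, one can prove \eqref{gs3} for $r(\cdot)$ from your chain, using $r_+\le\min\{p_+,q_+\}<\infty$ to pass from the oscillation of $1/r$ to that of $r$, but this is extra work you do not need.)
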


\begin{lem}[\cite{sf}]\label{4.1}
	Let $\{\mathcal{F}_n\}_{n\ge0}$ be regular, $f\in\mathcal{M}$, $\alpha>0$ and let $\mathcal{R}$ be the constant in $(\ref{R})$. If there exists $B\in\mathcal{F}$ such that $M(f)\le\chi_B$, then there exists a positive constant $C_\alpha=2+\frac{\mathcal{R}+1}{1-(1+\frac{1}{\mathcal{R}})^{\alpha-1}}$ independent of $f$ and $B$ such that
	$$M(I_\alpha f)\le C_\alpha\mathbb{P}(B)^\alpha\chi_B.$$
\end{lem}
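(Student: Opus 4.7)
The plan is to obtain a pointwise bound by exploiting the atom structure of the filtration. The first step is to observe that, since $f_n$ is constant on each atom $A\in A(\mathcal{F}_n)$ while $|f_n|\le\chi_B$, one must have $f_n\equiv 0$ on every atom of $\mathcal{F}_n$ that is not contained in $B$. In particular $f_n$ vanishes off $B$, so each $d_k f$ is supported on atoms contained in $B$ and therefore $(I_\alpha f)_n$ vanishes off $B$, making the desired inequality trivial there.

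For $\omega\in B$ I would let $B_k(\omega)\in A(\mathcal{F}_k)$ denote the atom containing $\omega$, so $b_k(\omega)=\mathbb{P}(B_k(\omega))$, and set
\[
n_0(\omega):=\inf\{k\ge 0:B_k(\omega)\subset B\}.
\]
The observation above gives $f_k(\omega)=0$ for $k<n_0(\omega)$, while $|f_k(\omega)|\le 1$ for $k\ge n_0(\omega)$ because then $\omega\in B_k(\omega)\subset B$. (If $n_0(\omega)=\infty$ there is nothing to prove at $\omega$.)

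The main step is a summation by parts. Starting from
\[
(I_\alpha f)_n(\omega)=\sum_{k=n_0}^n b_{k-1}(\omega)^\alpha\bigl(f_k(\omega)-f_{k-1}(\omega)\bigr),
\]
Abel's transformation together with the boundary condition $f_{n_0-1}(\omega)=0$ rewrites this as $b_{n-1}^\alpha f_n$ minus a telescoping sum $\sum_k(b_k^\alpha-b_{k-1}^\alpha)f_k$. Because atoms shrink, $b_k\le b_{k-1}$, so for $\alpha>0$ all telescoping increments have the same sign; inserting $|f_k(\omega)|\le 1$ collapses the whole expression to a bound of the form $b_{n_0-1}(\omega)^\alpha$. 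The regularity hypothesis then gives $b_{n_0-1}(\omega)\le\mathcal{R}\,b_{n_0}(\omega)\le\mathcal{R}\,\mathbb{P}(B)$, which closes the estimate at level $\mathcal{R}^\alpha\mathbb{P}(B)^\alpha$.

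The hard part is matching the precise constant $C_\alpha=2+(\mathcal{R}+1)/(1-(1+1/\mathcal{R})^{\alpha-1})$ displayed in the statement; the clean telescoping above yields only the weaker value $\mathcal{R}^\alpha$. Recovering the stated form would require abandoning Abel summation in favour of an iterative estimate: bound $|d_{n_0}f|\le 1$ and $|d_k f|\le 2$ separately for $k>n_0$, and then control the tail $\sum_{k>n_0}b_{k-1}^\alpha$ via the observation that at each strict refinement $B_k\subsetneq B_{k-1}$ the regularity condition forces a sibling atom of measure $\ge b_{k-1}/\mathcal{R}$, producing a geometric decay of the atom measures that can be summed explicitly.
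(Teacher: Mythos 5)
The paper quotes this lemma from Sadasue \cite{sf} without reproducing a proof, so there is no internal argument to match it against; what you have supplied is a self-contained replacement. Your Abel-summation route is correct and in fact sharper than the cited statement. Once you observe that $f_k(\omega)=0$ for $k<n_0(\omega)$ (because $f_k$ is a.e.~constant on $B_k(\omega)$ and vanishes a.e.~on $B_k(\omega)\setminus B$) and $|f_k(\omega)|\le 1$ for $\omega\in B$, summation by parts with the boundary term $f_{n_0-1}(\omega)=0$ together with the monotonicity $b_k\le b_{k-1}$ collapses the partial sum to the bound $b_{n_0-1}(\omega)^\alpha$, and regularity gives $b_{n_0-1}(\omega)\le\mathcal{R}\, b_{n_0}(\omega)\le\mathcal{R}\,\mathbb{P}(B)$, hence $M(I_\alpha f)\le\mathcal{R}^\alpha\mathbb{P}(B)^\alpha\chi_B$. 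You call $\mathcal{R}^\alpha$ ``weaker,'' but that is only in the sense that it does not reproduce the literal formula $C_\alpha=2+(\mathcal{R}+1)/(1-(1+1/\mathcal{R})^{\alpha-1})$; for $0<\alpha<1$ --- the only range in which that formula is even positive --- one has $C_\alpha>2+(\mathcal{R}+1)^2>\mathcal{R}\ge\mathcal{R}^\alpha$, so your constant is strictly smaller, and your argument also avoids the bookkeeping of strict-refinement times that the term-by-term estimate sketched at the end of your proposal would require (bound each $|d_kf|\le 2$, then sum a geometric series of atom measures). Two small points of hygiene: ``$A$ not contained in $B$'' must be read as $\mathbb{P}(A\setminus B)>0$, because $B\in\mathcal{F}$ need not lie in $\mathcal{F}_n$ and a single point of $A$ outside $B$ does not force the constant value of $f_n$ on $A$ to vanish; and when $n_0(\omega)=0$ the sum defining $I_\alpha f$ starts at $k=1$ with boundary term $f_0=0$, giving the bound $b_0(\omega)^\alpha\le\mathbb{P}(B)^\alpha$, which is subsumed by $\mathcal{R}^\alpha\mathbb{P}(B)^\alpha$ since $\mathcal{R}\ge 1$.
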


\begin{lem}\label{6.3}
	Let $p_1(\cdot),p_2(\cdot)\in\mathcal{P}(\Omega)$ satisfy $(\ref{gs3})$, $\alpha>0$, $\{\mathcal{F}_n\}_{n\ge0}$ be regular, $b$ be a nondecreasing slowly varying function and let $\mathcal{R}$ be the constant in $(\ref{R})$. If $0<p_1(\cdot)<p_2(\cdot)<\infty$, $\alpha\ge\sup\limits_{\omega\in\Omega}\big(\frac{1}{p_1(\omega)}-\frac{1}{p_2(\omega)}\big)$, $0<q\le\infty$ and $a$ is a simple $(p_1(\cdot),\infty)^M$-atom, then
	$$\|I_\alpha a\|_{H_{p_2(\cdot),q,b}^M}\lesssim C_\alpha\gamma_b(\|\chi_{I}\|_{p_1(\cdot)}),$$
	where $C_\alpha$ is the same as in Lemma $\ref{4.1}$ and $I\in A(\mathcal{F}_n)$ is associated with $a$.
\end{lem}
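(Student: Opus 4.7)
The plan is to reduce the estimate to a pointwise majorization of $M(I_\alpha a)$ by a multiple of $\chi_I$ and then to evaluate the $L_{p_2(\cdot),q,b}$-norm of that indicator by the explicit atomic formula. Since $a$ is a simple $(p_1(\cdot),\infty)^M$-atom associated with $I\in A(\mathcal{F}_n)$, Definition \ref{atom} and the remark after it give $M(a)=M(a)\chi_I$ and $\|M(a)\|_\infty\le\|\chi_I\|_{p_1(\cdot)}^{-1}$. Therefore the rescaled martingale $\tilde a:=\|\chi_I\|_{p_1(\cdot)}\,a$ satisfies $M(\tilde a)\le\chi_I$, and Lemma \ref{4.1} applied with $B=I$ together with the linearity of $I_\alpha$ (which is a martingale transform in the sense of Section \ref{s4}) yields
$$\|\chi_I\|_{p_1(\cdot)}\,M(I_\alpha a)=M(I_\alpha\tilde a)\le C_\alpha\mathbb{P}(I)^\alpha\chi_I,$$
so that $M(I_\alpha a)\le C_\alpha\mathbb{P}(I)^\alpha\|\chi_I\|_{p_1(\cdot)}^{-1}\chi_I$ pointwise.

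Next, monotonicity of $\|\cdot\|_{p_2(\cdot),q,b}$ with respect to the absolute value of the input (immediate from the layer-cake definition) combined with Lemma \ref{fanshu}(4), which identifies $\|\chi_I\|_{p_2(\cdot),q,b}$ up to a constant with $\|\chi_I\|_{p_2(\cdot)}\gamma_b(\|\chi_I\|_{p_2(\cdot)})$, gives
$$\|I_\alpha a\|_{H_{p_2(\cdot),q,b}^M}=\|M(I_\alpha a)\|_{p_2(\cdot),q,b}\lesssim C_\alpha\mathbb{P}(I)^\alpha\|\chi_I\|_{p_1(\cdot)}^{-1}\|\chi_I\|_{p_2(\cdot)}\gamma_b(\|\chi_I\|_{p_2(\cdot)}).$$
Introduce $p_3(\cdot)$ by $1/p_3(\omega)=1/p_1(\omega)-1/p_2(\omega)$. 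Lemma \ref{4.2} then gives $\|\chi_I\|_{p_1(\cdot)}\approx\|\chi_I\|_{p_2(\cdot)}\|\chi_I\|_{p_3(\cdot)}$, so the algebraic prefactor collapses to $\mathbb{P}(I)^\alpha\|\chi_I\|_{p_3(\cdot)}^{-1}$. On the atom $I$ one has the standard pointwise bound $\|\chi_I\|_{p_3(\cdot)}\ge\mathbb{P}(I)^{1/(p_3)_-(I)}$, and the hypothesis $\alpha\ge\sup_\omega(1/p_1(\omega)-1/p_2(\omega))\ge1/(p_3)_-(I)$ together with $\mathbb{P}(I)\le1$ yields
$$\mathbb{P}(I)^\alpha\|\chi_I\|_{p_3(\cdot)}^{-1}\le\mathbb{P}(I)^{\alpha-1/(p_3)_-(I)}\le1.$$

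It remains to replace $\gamma_b(\|\chi_I\|_{p_2(\cdot)})$ by $\gamma_b(\|\chi_I\|_{p_1(\cdot)})$. Since $p_1(\cdot)\le p_2(\cdot)$, Lemma \ref{2.4} provides $\|\chi_I\|_{p_1(\cdot)}\le2\|\chi_I\|_{p_2(\cdot)}$; the hypothesis that $b$ is nondecreasing then makes $\gamma_b$ nonincreasing on $(0,1]$ by Proposition \ref{b}(1), so $\gamma_b(\|\chi_I\|_{p_2(\cdot)})\le\gamma_b(\|\chi_I\|_{p_1(\cdot)}/2)$, and Proposition \ref{b}(3) absorbs the factor $1/2$ to give $\gamma_b(\|\chi_I\|_{p_1(\cdot)}/2)\lesssim\gamma_b(\|\chi_I\|_{p_1(\cdot)})$. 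Chaining these estimates yields the claimed bound. I expect this last step to be the only subtle one, since it is precisely where the monotonicity hypothesis on $b$ enters: without it, the comparison between $\gamma_b$ evaluated at $\|\chi_I\|_{p_1(\cdot)}$ and at the possibly larger $\|\chi_I\|_{p_2(\cdot)}$ would not follow from the slowly varying properties alone.
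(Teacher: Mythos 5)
Your proof is correct and follows the same strategy as the paper's: rescale the atom so that Lemma \ref{4.1} gives the pointwise bound $M(I_\alpha a)\le C_\alpha\mathbb{P}(I)^\alpha\|\chi_I\|_{p_1(\cdot)}^{-1}\chi_I$, factor the norms via Lemma \ref{4.2}, evaluate $\|\chi_I\|_{p_2(\cdot),q,b}$ by Lemma \ref{fanshu}(4), and use the nondecreasing hypothesis on $b$ together with Lemma \ref{2.4} and Proposition \ref{b}(3) to pass from $\gamma_b(\|\chi_I\|_{p_2(\cdot)})$ to $\gamma_b(\|\chi_I\|_{p_1(\cdot)})$. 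The only superficial difference is how the prefactor $\mathbb{P}(I)^\alpha\|\chi_I\|_{p_3(\cdot)}^{-1}\lesssim1$ is justified: you use the elementary modular estimate $\|\chi_I\|_{p_3(\cdot)}\ge\mathbb{P}(I)^{1/(p_3)_-(I)}$, while the paper compares $\|\chi_I\|_{p_3(\cdot)}$ with the constant-exponent norm $\|\chi_I\|_{1/\alpha}=\mathbb{P}(I)^\alpha$ via Lemma \ref{2.4}; both follow directly from $\alpha\ge\sup_\omega1/p_3(\omega)$.
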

\begin{proof}
	Let $I\in A(\mathcal{F}_n)$ be associated with the simple $(p_1(\cdot),\infty)^M$-atom $a$. We have $M(a)\le\|\chi_I\|_{p_1(\cdot)}^{-1}\chi_I$ and
	 $$M\big(\|\chi_I\|_{p_1(\cdot)}a\big)=\|\chi_I\|_{p_1(\cdot)}M(a)\le\chi_I.$$
	It follows from Lemma \ref{4.1} that
	$$M\big(I_\alpha(\|\chi_I\|_{p_1(\cdot)}a)\big)\le C_\alpha\mathbb{P}(I)^\alpha\chi_I.$$
	For $p_1(\cdot),p_2(\cdot)\in\mathcal{P}(\Omega)$ with $p_1(\cdot)<p_2(\cdot)$, we can find a variable exponent $r(\cdot)\in\mathcal{P}(\Omega)$ such that
	 $$\frac{1}{r(\omega)}=\frac{1}{p_1(\omega)}-\frac{1}{p_2(\omega)},\quad \forall\ \omega\in\Omega.$$
	Then $\sup\limits_{\omega\in\Omega}\frac{1}{r(\omega)}\le\alpha$, which means
	$r_-(\Omega)\ge\frac{1}{\alpha}$.
	By applying Lemma \ref{4.2}, we have
	\begin{align*}
		 \|\chi_I\|_{p_1(\cdot)}\approx\|\chi_I\|_{p_2(\cdot)}\|\chi_I\|_{r(\cdot)}\ge\|\chi_I\|_{p_2(\cdot)}\|\chi_I\|_{\frac{1}{\alpha}}=\|\chi_I\|_{p_2(\cdot)}\mathbb{P}(I)^\alpha.
	\end{align*}
    Hence there is
    \begin{align*}
    	M(I_\alpha a)\le C_\alpha\mathbb{P}(I)^\alpha\|\chi_I\|_{p_1(\cdot)}^{-1}\chi_I\lesssim C_\alpha\|\chi_I\|_{p_2(\cdot)}^{-1}\chi_I.
    \end{align*}
    Since $b$ is nondecreasing, we know that $\gamma_b$ is nonincreasing on $(0,1]$. Then it follows from Lemma \ref{2.4} that, for $p_1(\cdot)<p_2(\cdot)$,
    $$\gamma_b\big(\|\chi_{I}\|_{p_2(\cdot)}\big)\lesssim\gamma_b\big(\|\chi_{I}\|_{p_1(\cdot)}\big).$$
    It yields that
    \begin{align*}
    	\|I_\alpha a\|_{H_{p_2(\cdot),q,b}^M}=&\ \|M(I_\alpha a)\|_{p_2(\cdot),q,b}\\
    	\lesssim&\ C_\alpha\|\chi_I\|_{p_2(\cdot)}^{-1}\|\chi_I\|_{p_2(\cdot),q,b}\\
    	\approx&\ C_\alpha\|\chi_I\|_{p_2(\cdot)}^{-1}\|\chi_I\|_{p_2(\cdot)}\gamma_b\big(\|\chi_I\|_{p_2(\cdot)}\big)\\
    	\lesssim&\ C_\alpha\gamma_b\big(\|\chi_I\|_{p_1(\cdot)}\big).
    \end{align*}
    Therefore, the proof is complete.
\end{proof}

Now, we show the main conclusion of this section.

\begin{thm}\label{thmfi}
Let $\{\mathcal{F}_n\}_{n\ge0}$ be regular, $p(\cdot),q(\cdot)\in\mathcal{P}(\Omega)$ satisfy $(\ref{gs3})$, $0<s\le\infty$, $\alpha>0$ and $b_1,b_2$ be slowly varying functions. Assume that $\mathcal{N}$ is the constant in Remark $\ref{art}$. If $0<p(\cdot)<q(\cdot)<\infty$, $\alpha\ge\sup\limits_{\omega\in\Omega}\big(\frac{1}{p(\omega)}-\frac{1}{q(\omega)}\big)$, $b_1$ is nondecreasing and $\sup\limits_{1\le t<\infty}\frac{b_2(t)}{b_1(t)}<\infty$, then for any constant $r$ with $p_+<r\le\min\{s,\mathcal{N}\}$, there is
	$$
	\|I_\alpha f\|_{H_{q(\cdot),s,b_2}^M}\lesssim\|f\|_{H_{p(\cdot),r,b_1}^M}
	$$
	for all $f\in{H_{p(\cdot),r,b_1}^M}$.
\end{thm}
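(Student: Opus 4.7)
The plan is to combine the atomic decomposition of $f$ in $H^M_{p(\cdot),r,b_1}$ with a pointwise estimate of the fractional integral on atoms, and then assemble the target norm via Aoki--Rolewicz. Since $\{\mathcal{F}_n\}_{n\ge0}$ is regular and $p(\cdot)$ satisfies \eqref{gs3}, Theorem \ref{ad2} provides a decomposition
\[
f_n=\sum_{k\in\mathbb{Z}}\sum_{i,j}\mu_{k,i,j}\,\mathbb{E}_n(a^{k,i,j})
\]
where each $a^{k,i,j}$ is a simple $(p(\cdot),\infty)^M$-atom supported in $I_{k,i,j}\in A(\mathcal{F}_i)$ (disjoint in $i,j$ for every fixed $k$), $\mu_{k,i,j}=3\cdot 2^k\|\chi_{I_{k,i,j}}\|_{p(\cdot)}$, and $\bigl(\sum_k 2^{kr}\|\chi_{E_k}\|_{p(\cdot)}^{r}\gamma_{b_1}^{r}(\|\chi_{E_k}\|_{p(\cdot)})\bigr)^{1/r}\approx\|f\|_{H^M_{p(\cdot),r,b_1}}$, with $E_k=\bigcup_{i,j}I_{k,i,j}$. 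Linearity of $I_\alpha$ and sublinearity of $M$ give $M(I_\alpha f)\le\sum_{k,i,j}\mu_{k,i,j}M(I_\alpha a^{k,i,j})$, and since $\|\chi_{I_{k,i,j}}\|_{p(\cdot)}M(a^{k,i,j})\le\chi_{I_{k,i,j}}$, Lemma \ref{4.1} applied exactly as in the proof of Lemma \ref{6.3} yields $\mu_{k,i,j}M(I_\alpha a^{k,i,j})\lesssim 2^k\mathbb{P}(I_{k,i,j})^{\alpha}\chi_{I_{k,i,j}}=:g^{k,i,j}$.

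The central step is the single-atom estimate
\[
\|g^{k,i,j}\|_{q(\cdot),s,b_2}\lesssim 2^k\|\chi_{I_{k,i,j}}\|_{p(\cdot)}\gamma_{b_1}(\|\chi_{I_{k,i,j}}\|_{p(\cdot)}),
\]
which converts target-norm quantities into source-norm quantities. I would establish it in two moves. First, Lemma \ref{fanshu}(4) reduces $\|\chi_{I_{k,i,j}}\|_{q(\cdot),s,b_2}$ to $\|\chi_{I_{k,i,j}}\|_{q(\cdot)}\gamma_{b_2}(\|\chi_{I_{k,i,j}}\|_{q(\cdot)})$; then $\gamma_{b_2}\lesssim\gamma_{b_1}$ (from $\sup b_2/b_1<\infty$), the monotonicity of $b_1$ (Proposition \ref{b}(1)), and $\|\chi_I\|_{q(\cdot)}\ge\|\chi_I\|_{p(\cdot)}/2$ (Lemma \ref{2.4}), together with Proposition \ref{b}(3), replace $\gamma_{b_1}(\|\chi_I\|_{q(\cdot)})$ by $\gamma_{b_1}(\|\chi_I\|_{p(\cdot)})$. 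Second, introducing the auxiliary variable exponent $u(\cdot)$ via $1/p(\cdot)=1/q(\cdot)+1/u(\cdot)$, the hypothesis $\alpha\ge\sup(1/p-1/q)=1/u_-$ together with Lemma \ref{2.4} gives $\|\chi_I\|_{u(\cdot)}\ge\mathbb{P}(I)^{\alpha}/2$, while Lemma \ref{4.2} gives $\|\chi_I\|_{p(\cdot)}\approx\|\chi_I\|_{q(\cdot)}\|\chi_I\|_{u(\cdot)}$; multiplying these absorbs $\mathbb{P}(I)^{\alpha}\|\chi_I\|_{q(\cdot)}$ into $\|\chi_I\|_{p(\cdot)}$.

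To finish, I would apply the Aoki--Rolewicz inequality in $L_{q(\cdot),s,b_2}$ with exponent $r\le\mathcal{N}$ (Remark \ref{art}) to the nonnegative sum $\sum g^{k,i,j}$, extending from finite to infinite sums by monotone convergence of the partial sums. Plugging in the single-atom estimate and using Lemma \ref{wuqiongbianb} in its $p_+<\theta$ branch (applicable with $\theta=r$ since the $I_{k,i,j}$ are disjoint for fixed $k$) sums the atom-level terms to $\|\chi_{E_k}\|_{p(\cdot)}^{r}\gamma_{b_1}^{r}(\|\chi_{E_k}\|_{p(\cdot)})$, and the resulting geometric sum over $k$ is equivalent to $\|f\|_{H^M_{p(\cdot),r,b_1}}^{r}$; taking $r$-th roots is the conclusion. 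The hard part will be the single-atom conversion, because in the variable-exponent setting $\|\chi_I\|_{q(\cdot)}$ can exceed $\|\chi_I\|_{p(\cdot)}$ by a factor of order $\mathbb{P}(I)^{-\alpha}$; the resolution is precisely that the fractional-integral gain $\mathbb{P}(I)^{\alpha}$ from Lemma \ref{4.1}, combined with Lemma \ref{4.2} and the hypothesis on $\alpha$, produces a matching cancellation.
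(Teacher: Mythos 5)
Your proposal is correct and follows essentially the same route as the paper's proof: atomic decomposition via Theorem \ref{ad2}, the pointwise bound from Lemma \ref{4.1}, Aoki--Rolewicz (Remark \ref{art}) applied with exponent $r\le\mathcal{N}$, and Lemma \ref{wuqiongbianb} in its $p_+<\theta=r$ branch to re-assemble the level sets. The only cosmetic difference is that you inline the content of Lemma \ref{6.3} (the single-atom estimate built from Lemmas \ref{4.1}, \ref{4.2}, \ref{2.4}, Lemma \ref{fanshu}(4) and Proposition \ref{b}), whereas the paper first passes from $\|\cdot\|_{q(\cdot),s,b_2}$ to $\|\cdot\|_{q(\cdot),l,b_1}$ via the embedding Lemma \ref{baohan} and then cites Lemma \ref{6.3} as a black box; the two routes deliver the identical bound $\|M(I_\alpha a^{k,i,j})\|_{q(\cdot),s,b_2}\lesssim\gamma_{b_1}(\|\chi_{I_{k,i,j}}\|_{p(\cdot)})$.
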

\begin{proof}
	Let $f\in H_{p(\cdot),r,b_1}^M$. Since $\{\mathcal{F}_n\}_{n\ge0}$ is regular,  there exist a sequence of simple $(p(\cdot),\infty)^M$-atoms $(a^{k,i,j})_{k\in\mathbb{Z},i\in\mathbb{N},j}$ and $\mu_{k,i,j}=3\cdot2^k\|\chi_{I_{k,i,j}}\|_{p(\cdot)}$ such that for all $n\in\mathbb{N}$,
	 $$f=\sum_{k\in\mathbb{Z}}\sum_{i=0}^{\infty}\sum_{j}\mu_{k,i,j}a^{k,i,j}\qquad a.e.$$
	and
	 $$\Bigg\|\Bigg\{\bigg\|\sum_{i=0}^{\infty}\sum_{j}\mu_{k,i,j}\|\chi_{I_{k,i,j}}\|_{p(\cdot)}^{-1}\chi_{I_{k,i,j}}\bigg\|_{p(\cdot)}\gamma_{b_1}\bigg(\bigg\|\sum_{i=0}^{\infty}\sum_{j}\chi_{I_{k,i,j}}\bigg\|_{p(\cdot)}\bigg)\Bigg\}_{k\in\mathbb{Z}}\Bigg\|_{l_{r}}\lesssim\|f\|_{H_{p(\cdot),r,b_1}^M}.$$
	According to the sublinearity of $M$, we have
	\begin{align*}
		\|I_\alpha f\|_{H_{q(\cdot),s,b_2}^M}=&\ \|M(I_\alpha f)\|_{q(\cdot),s,b_2}=\Bigg\|M\Bigg(I_\alpha\bigg(\sum_{k\in\mathbb{Z}}\sum_{i=0}^{\infty}\sum_{j}\mu_{k,i,j}a^{k,i,j}\bigg)\Bigg)\Bigg\|_{q(\cdot),s,b_2}\\
		\le&\ \bigg\|\sum_{k\in\mathbb{Z}}\sum_{i=0}^{\infty}\sum_{j}|\mu_{k,i,j}|M\big(I_\alpha(a^{k,i,j})\big)\bigg\|_{q(\cdot),s,b_2}.
	\end{align*}
	It follows from Remark \ref{art} that for any constant $r$ with $p_+<r\le\min\{s,\mathcal{N}\}$,
	 $$\bigg\|\sum_{k\in\mathbb{Z}}\sum_{i=0}^{\infty}\sum_{j}|\mu_{k,i,j}|M\big(I_\alpha(a^{k,i,j})\big)\bigg\|_{q(\cdot),s,b_2}^{r}\le4^{\frac{r}{\mathcal{N}}}\sum_{k\in\mathbb{Z}}\sum_{i=0}^{\infty}\sum_{j}\big\||\mu_{k,i,j}|M\big(I_\alpha(a^{k,i,j})\big)
    \big\|_{q(\cdot),s,b_2}^{r}.$$
	Hence, according to Lemmas \ref{baohan}, \ref{6.3} and \ref{wuqiongbianb}, we have
    \begin{align*}
		\|I_\alpha f\|_{H_{q(\cdot),s,b_2}^M}^{r}\le&\ 4\sum_{k\in\mathbb{Z}}\sum_{i=0}^{\infty}\sum_{j}|\mu_{k,i,j}|^{r}\big\|M\big(I_\alpha(a^{k,i,j})\big)\big\|_{q(\cdot),s,b_2}^{r}\\
		\lesssim&\ 4\sum_{k\in\mathbb{Z}}\sum_{i=0}^{\infty}\sum_{j}|\mu_{k,i,j}|^{r}\big\|M\big(I_\alpha(a^{k,i,j})\big)\big\|_{q(\cdot),l,b_1}^{r}\\
		\lesssim&\ C_\alpha^r\sum_{k\in\mathbb{Z}}\sum_{i=0}^{\infty}\sum_{j}|\mu_{k,i,j}|^{r}\gamma_{b_1}^{r}(\|\chi_{I_{k,i,j}}\|_{p(\cdot)})\\
		=&\ C_\alpha^r\sum_{k\in\mathbb{Z}}3^r\cdot2^{kr}\sum_{i=0}^{\infty}\sum_{j}\|\chi_{I_{k,i,j}}\|_{p(\cdot)}^{r}\gamma_{b_1}^{r}(\|\chi_{I_{k,i,j}}\|_{p(\cdot)})\\
		\lesssim&\ \sum_{k\in\mathbb{Z}}3^r\cdot2^{kr}\bigg\|\sum_{i=0}^{\infty}\sum_{j}\chi_{I_{k,i,j}}\bigg\|_{p(\cdot)}^{r}\gamma_{b_1}^{r}\bigg(\bigg\|\sum_{i=0}^{\infty}\sum_{j}\chi_{I_{k,i,j}}\bigg\|_{p(\cdot)}\bigg)\\
		=&\ \Bigg\|\Bigg\{\bigg\|\sum_{i=0}^{\infty}\sum_{j}\mu_{k,i,j}\|\chi_{I_{k,i,j}}\|_{p(\cdot)}^{-1}\chi_{I_{k,i,j}}\bigg\|_{p(\cdot)}\gamma_{b_1}\bigg(\bigg\|\sum_{i=0}^{\infty}\sum_{j}\chi_{I_{k,i,j}}\bigg\|_{p(\cdot)}\bigg)\Bigg\}_{k\in\mathbb{Z}}\Bigg\|_{l_{r}}^{r}\\
		\lesssim&\ \|f\|_{H_{p(\cdot),r,b_1}^M}^{r}.
	\end{align*}
    The proof of this theorem is complete now.
\end{proof}

When $p(\cdot)\equiv p$ and $q(\cdot)\equiv q$, Theorem \ref{thmfi} reduces to \cite[Theorem 4.4]{lz}.
When $b_1\equiv b_2\equiv1$, we obtain the boundedness of fractional integral on variable Hardy-Lorentz spaces.
\begin{coro}
	Let $\{\mathcal{F}_n\}_{n\ge0}$ be regular, $p_1(\cdot),p_2(\cdot)\in\mathcal{P}(\Omega)$ satisfy $(\ref{gs3})$, $0<q_2\le\infty$ and $\alpha>0$. Suppose that $L_{p_2(\cdot),q_2}$ is an $\mathcal{N}$-normed space. If $0<p_1(\cdot)<p_2(\cdot)<\infty$ and $\alpha\ge\sup\limits_{\omega\in\Omega}\big(\frac{1}{p_1(\omega)}-\frac{1}{p_2(\omega)}\big)$, then for any constant $q_1$ with $(p_{1})_+<q_1\le\min\{q_2,\mathcal{N}\}$, there is
	$$\|I_\alpha f\|_{H_{p_2(\cdot),q_2}^M}\lesssim\|f\|_{H_{p_1(\cdot),q_1}^M}
	$$
	for all $f\in{H_{p_1(\cdot),q_1}^M}$.
\end{coro}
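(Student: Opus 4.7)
The plan is to deduce this corollary directly from Theorem~\ref{thmfi} by specializing to the trivial slowly varying functions $b_1\equiv b_2\equiv 1$. First I would observe that the constant function $1$ is indeed a slowly varying function (as noted in the example following the definition of slowly varying functions), that $\gamma_{b_i}\equiv 1$ on $(0,\infty)$, and hence $\|\cdot\|_{p_i(\cdot),q,b_i}=\|\cdot\|_{p_i(\cdot),q}$, so $H_{p_i(\cdot),q,b_i}^M=H_{p_i(\cdot),q}^M$ with identical quasi-norms.

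Next I would verify the remaining hypotheses of Theorem~\ref{thmfi} with this choice. Regularity of $\{\mathcal{F}_n\}_{n\ge 0}$, the condition~(\ref{gs3}) for $p_1(\cdot)$ and $p_2(\cdot)$, the bound $0<p_1(\cdot)<p_2(\cdot)<\infty$, the inequality $\alpha\ge\sup_{\omega\in\Omega}\bigl(\tfrac{1}{p_1(\omega)}-\tfrac{1}{p_2(\omega)}\bigr)$, and the range $0<q_2\le\infty$ are all assumed. The function $b_1\equiv 1$ is trivially nondecreasing, and
\[
\sup_{1\le t<\infty}\frac{b_2(t)}{b_1(t)}=1<\infty,
\]
so the compatibility hypothesis between $b_1$ and $b_2$ holds. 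Since $L_{p_2(\cdot),q_2}$ is assumed to be $\mathcal{N}$-normed, the Aoki--Rolewicz constant $\mathcal{N}$ appearing in Theorem~\ref{thmfi} coincides with the one in the hypothesis of the corollary.

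Finally, for any constant $q_1$ with $(p_1)_+<q_1\le\min\{q_2,\mathcal{N}\}$, Theorem~\ref{thmfi} applied with $s=q_2$, $r=q_1$, $p(\cdot)=p_1(\cdot)$, $q(\cdot)=p_2(\cdot)$, $b_1\equiv b_2\equiv 1$ yields
\[
\|I_\alpha f\|_{H_{p_2(\cdot),q_2,b_2}^M}\lesssim\|f\|_{H_{p_1(\cdot),q_1,b_1}^M},
\]
which, by the identification of the norms in the first step, is exactly the desired estimate. There is no real obstacle here: once Theorem~\ref{thmfi} is in hand, the corollary is only a matter of checking that the specialization $b_1\equiv b_2\equiv 1$ is permitted and recognizing the resulting spaces. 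The only point worth a line in the write-up is confirming that the $\mathcal{N}$ in the hypothesis of the corollary is the same constant as in Remark~\ref{art} for the specific space $L_{p_2(\cdot),q_2}=L_{p_2(\cdot),q_2,1}$.
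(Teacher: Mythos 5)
Your proof is correct and follows exactly the route the paper intends: the corollary is the specialization $b_1\equiv b_2\equiv 1$ of Theorem~\ref{thmfi}, and you verify all the hypotheses carefully (that $1$ is slowly varying and nondecreasing, that $\gamma_1\equiv1$ so $L_{p(\cdot),q,1}=L_{p(\cdot),q}$, and that the Aoki--Rolewicz constant $\mathcal{N}$ of Remark~\ref{art} for $L_{p_2(\cdot),q_2}$ is the one intended). Nothing further is needed.
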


\section{Applications in Fourier Analysis} \label{s8}
As an application of the previous results in Fourier analysis, we will investigate the boundedness of the maximal Fej\'{e}r operator on variable Hardy-Lorentz-Karamata spaces.
We consider the probability space $([0,1),\mathcal{F},dx)$, where $\mathcal{F}$ denotes the Lebesgue measurable sets. A dyadic interval means an interval of the form $[k2^{-n},(k+1)2^{-n})$ for some $k,n\in\mathbb{N}$, $0\le k<2^{n}$. Given any $n\in\mathbb{N}$ and $x\in[0,1)$, denote $I_n(x)$ the dyadic interval of length $2^{-n}$ which contains $x$. The $\sigma$-algebra $\mathcal{F}_n$ generated by the dyadic intervals $\{I_n(x):x\in[0,1)\}$ is called the $n$th dyadic $\sigma$-algebra. In this section, we always assume that $\{\mathcal{F}_n\}_{n\ge0}$ is a sequence of dyadic $\sigma$-algebras. Obviously, $\{\mathcal{F}_n\}_{n\ge0}$ is regular. According to Theorem $\ref{mi}$, the five variable martingale Hardy-Lorentz-Karamata spaces are equivalent. Hence, denote by $H_{p(\cdot),q,b}$ one of them.

\subsection{Walsh system and partial sums}\label{9.1}

For any $n\in\mathbb{N}$ and $x\in[0,1)$, let
$$r_n(x):=\text{sgn}\sin(2^n\pi x).$$
The set of $\{r_n(x)\}_{n\in\mathbb{N}}$ is called to be the system of Rademacher functions. The product system generated by the Rademacher functions is the Walsh system:
$$w_n:=\mathop{\prod}\limits_{k=0}^{\infty}r_k^{n_k},\quad n\in\mathbb{N},$$
where
\begin{eqnarray}\label{8.1}
	n=\sum_{k=0}^{\infty}n_k2^k,\quad n_k=0,1.
\end{eqnarray}

If $f\in L_1$, then $\widehat{f}(n):=\mathbb{E}(fw_n)\ (n\in\mathbb{N})$ is said to be the $n$-th Walsh-Fourier coefficient of $f$. We remember that $\mathop{\lim}\limits_{k\rightarrow\infty}\mathbb{E}_k(f)=f$ in the $L_1$-norm. Hence,
$$\widehat{f}(n)=\lim_{k\rightarrow\infty}\mathbb{E}\big((\mathbb{E}_kf)w_n\big),\quad n\in\mathbb{N}.$$
If $f=(f_k)_{k\ge0}$ is a martingale, then the Walsh-Fourier coefficients of $f$ are defined by
$$\widehat{f}(n):=\lim_{k\rightarrow\infty}\mathbb{E}(f_kw_n),\quad n\in\mathbb{N}.$$
Since $w_n$ is $\mathcal{F}_k$-measurable for $n<2^k$, it is easy to see that this limit does exist. Moreover, the Walsh-Fourier coefficients of $f\in L_1$ are the same as those of the martingale $\big(\mathbb{E}_k(f)\big)_{k\ge0}$ obtained from $f$.

From \cite{f}, we recall that the Walsh-Dirichlet kernels
$$D_n:=\sum\limits_{k=0}^{n-1}w_k,\quad n\in\mathbb{N}
$$
satisfy
$$D_{2^n}(x)=\left\{
\begin{aligned}
	&2^n,\ &\mathrm{if}\ x\in[0,2^{-n}),\\
	&0,\ &\mathrm{if}\ x\in[2^{-n},1),
\end{aligned}\right.\quad n\in\mathbb{N}.$$

Denote by $s_nf$ the $n$-th partial sum of the Walsh-Fourier series of a martingale $f$, that is
$$s_nf:=\mathop{\sum}\limits_{k=0}^{n-1}\widehat{f}(k)w_k.$$
If $f\in L_1$, then
$$s_nf=\int_{0}^{1}f(t)D_n(x\dot{+}t)dt,\quad n\in\mathbb{N},$$
where $\dot{+}$ denotes the dyadic addition (see \cite{swsp} or \cite{ws}). We get immediately that
$$s_{2^n}f=f_n,\quad n\in\mathbb{N}.$$
By martingale results, when $f\in L_p$ with $1\le p<\infty$, then
$$\lim_{n\rightarrow\infty}s_{2^n}f=f\quad \mathrm{in\ the}\ L_p\mbox{-}\mathrm{norm}.$$
This result was extended by Schipp et al. \cite{swsp} to the partial sums $s_nf$: when $f\in L_p$ with $1<p<\infty$, then
$$\lim_{n\rightarrow\infty}s_{n}f=f\quad \mathrm{in\ the}\ L_p\mbox{-}\mathrm{norm}.$$
Recently, Jiao et al. \cite{jwzw} generalized this result to variable Lebesgue spaces and variable Lorentz spaces. In this subsection, we extend these conclusions to variable Lorentz-Karamata spaces.
\begin{thm}
	Let $p(\cdot)\in\mathcal{P}(\Omega)$ satisfy $(\ref{gs3})$ with $1<p_-\le p_+<\infty$, $0<q\le\infty$ and let $b$ be a slowly varying function. If $f\in L_{p(\cdot),q,b}$, then
	 $$\sup_{n\in\mathbb{N}}\|s_nf\|_{p(\cdot),q,b}\lesssim\|f\|_{p(\cdot),q,b}.$$
\end{thm}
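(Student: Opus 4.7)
The plan is to express each Walsh partial-sum operator $s_n$ in a form that is bounded via the two principal tools of the paper: Doob's maximal inequality on $L_{p(\cdot),q,b}$ (Lemma \ref{Doob}) and the martingale-transform boundedness on $L_{p(\cdot),q,b}$ (Theorem \ref{mt}). Writing $n \in \mathbb{N}$ in binary as $n = \sum_{\ell=0}^{r} 2^{N_\ell}$ with $N_0 > N_1 > \cdots > N_r$, and setting $W_0 := 1$ and $W_\ell := \prod_{j<\ell} r_{N_j}$ for $\ell \ge 1$, iteration of the well-known factorization $D_{2^N + m} = D_{2^N} + r_N D_m$ of the Walsh--Dirichlet kernels (valid for $0 \le m < 2^N$) yields the Paley-type decomposition
\begin{equation*}
s_n f \;=\; \sum_{\ell = 0}^{r} W_\ell \, \mathbb{E}_{N_\ell}\!\left( W_\ell f \right).
\end{equation*}
The naive bound $|s_n f| \le (r+1) M(f)$ produces an unwanted factor $\log_2 n$ and is therefore insufficient for a uniform estimate.

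The next step is to set $g := w_n f$ with $w_n := \prod_{j=0}^{r} r_{N_j}$, noting that $w_n W_\ell = \prod_{j \ge \ell} r_{N_j}$ is $\mathcal{F}_{N_\ell + 1}$-measurable, and to rearrange the Paley sum into the form
\begin{equation*}
s_n f \;=\; w_n\, \mathbb{E}_0(g) \;+\; w_n\, T_{v^{(n)}}(g),
\end{equation*}
where $T_{v^{(n)}}$ is a martingale transform in the sense of Section \ref{s4}, with $\mathcal{F}_{k-1}$-measurable multipliers $v_k^{(n)}$ satisfying $|v_k^{(n)}| \le 1$ uniformly in $k$ and $n$. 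The essential combinatorial step is to telescope the successive conditional expectations $\mathbb{E}_{N_\ell}$ against the martingale differences $d_k g$ and to match the Rademacher factors produced in this process with those building up $w_n$; since each $r_{N_j}$ is $\mathcal{F}_{N_j+1}$-measurable, the resulting multipliers are predictable and of modulus at most $1$.

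Since $|w_n(x)| \equiv 1$, multiplication by $w_n$ is an isometry of $L_{p(\cdot),q,b}$, so $\|g\|_{p(\cdot),q,b} = \|f\|_{p(\cdot),q,b}$. Applying Theorem \ref{mt} to $T_{v^{(n)}}$ and Lemma \ref{Doob} to control $|\mathbb{E}_0(g)| \le M(g)$, together with the quasi-triangle inequality (Lemma \ref{fanshusanjiao}), yields
\begin{equation*}
\|s_n f\|_{p(\cdot),q,b} \;\lesssim\; \|g\|_{p(\cdot),q,b} \;=\; \|f\|_{p(\cdot),q,b},
\end{equation*}
with constants independent of $n$. Taking the supremum over $n \in \mathbb{N}$ then gives the theorem.

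The main obstacle lies in the reorganization described in the second paragraph: the number of terms in the Paley decomposition grows like $\log_2 n$, so a termwise triangle inequality can only give a bound with an unacceptable factor. One must exploit the interplay between the Rademacher factors $W_\ell$, which live in the ``upper'' part of the dyadic filtration, and the conditional expectations $\mathbb{E}_{N_\ell}$, which act on the ``lower'' part, so as to fold the entire Paley sum into a single martingale transform with predictable multipliers of modulus at most $1$. This is the critical input from Walsh--Fourier analysis (going back to Paley, Schipp, Wade, Simon and P\'{a}l) that is paired with the variable-exponent machinery developed in Sections \ref{s2}--\ref{s4} to produce the desired norm inequality.
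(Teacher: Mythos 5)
Your proposal is correct and takes essentially the same route as the paper: conjugate by $w_n$, realize $s_n$ as a martingale transform, and invoke the martingale-transform boundedness (Theorem \ref{mt}) together with $|w_n|\equiv 1$. The paper simply cites the exact identity $s_nf = w_n T_0(w_nf)$ with $T_0g=\sum_{k\ge 1}n_{k-1}d_kg$ from Schipp--Wade--Simon--P\'al/Weisz, and since on dyadic atoms one has $W_\ell\,\mathbb{E}_{N_\ell}(W_\ell f)=w_n\,d_{N_\ell+1}(w_nf)$, the Paley sum telescopes cleanly into $w_nT_0(w_nf)$ with no leftover $w_n\mathbb{E}_0(g)$ term, so your appeal to Doob's inequality for that extra term is not actually needed.
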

\begin{proof}
	Set $$T_0f:=\sum_{k=1}^{\infty}n_{k-1}d_kf,$$
	where the binary coefficients $n_k$ are defined in (\ref{8.1}). It follows from \cite{swsp} or \cite{w} that
	$$s_nf=w_nT_0(fw_n).$$
	Obviously, $T_0$ is a martingale transform and $S(T_0f)\le S(f)$. Since $\{\mathcal{F}_n\}_{n\ge0}$ is regular, it follows from Theorem \ref{mt} that
	 $$\|s_nf\|_{p(\cdot),q,b}=\|T_0f\|_{p(\cdot),q,b}\lesssim\|f\|_{p(\cdot),q,b}.$$
\end{proof}

\begin{coro}
	Let $p(\cdot)\in\mathcal{P}(\Omega)$ satisfy $(\ref{gs3})$ with $1<p_-\le p_+<\infty$, $0<q<\infty$ and let $b$ be a slowly varying function. If $f\in L_{p(\cdot),q,b}$, then
	$$\lim_{n\rightarrow\infty}s_{n}f=f\quad in\ the\ L_{p(\cdot),q,b}\mbox{-}norm.$$
\end{coro}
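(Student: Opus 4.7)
The plan is to combine the uniform boundedness of the partial-sum operators (just established in the previous theorem) with a density argument coming from the dominated convergence theorem in $L_{p(\cdot),q,b}$ (Lemma \ref{dct}) and the relation $s_{2^n}f = f_n = \mathbb{E}_n(f)$ recalled in Section \ref{9.1}. The key observation is that if $f \in L_{p(\cdot),q,b}$, then the truncations $f_N := s_{2^N}f = \mathbb{E}_N(f)$ are themselves Walsh polynomials of order strictly less than $2^N$, and hence are fixed by $s_n$ for all $n \ge 2^N$, i.e.\ $s_n f_N = f_N$.

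First I would show that $f_N \to f$ in $L_{p(\cdot),q,b}$ as $N \to \infty$. Since $1 < p_- \le p_+ < \infty$, Lemma \ref{Doob} gives $\|M(f)\|_{p(\cdot),q,b} \lesssim \|f\|_{p(\cdot),q,b}$, so $M(f) \in L_{p(\cdot),q,b}$. Because $|f_N| \le M(f)$ and $f_N \to f$ almost everywhere (by the classical martingale convergence theorem, since $f \in L_{p(\cdot),q,b} \subset L_{p_-}$ via Lemma \ref{3.4.48}), the dominated convergence theorem in $L_{p(\cdot),q,b}$ (Lemma \ref{dct}, which uses $0 < q < \infty$) yields $\|f - f_N\|_{p(\cdot),q,b} \to 0$.

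Next I would combine this with the uniform bound $\sup_{n} \|s_n g\|_{p(\cdot),q,b} \le C \|g\|_{p(\cdot),q,b}$ from the preceding theorem. Fix $\varepsilon > 0$ and choose $N$ so large that $\|f - f_N\|_{p(\cdot),q,b} < \varepsilon$. For every $n \ge 2^N$ we have $s_n f_N = f_N$, so by the quasi-triangle inequality of Lemma \ref{fanshusanjiao} (with some constant $K$ depending only on $p(\cdot), q, b$),
\begin{align*}
\|s_n f - f\|_{p(\cdot),q,b}
&\le K\bigl(\|s_n(f - f_N)\|_{p(\cdot),q,b} + \|f_N - f\|_{p(\cdot),q,b}\bigr) \\
&\le K(C+1)\,\|f - f_N\|_{p(\cdot),q,b}
\le K(C+1)\,\varepsilon.
\end{align*}
Letting $n \to \infty$ and then $\varepsilon \to 0$ gives the claimed norm convergence.

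The only subtle point is the density step, i.e.\ ensuring that $\mathbb{E}_N(f) \to f$ in the quasi-norm $\|\cdot\|_{p(\cdot),q,b}$, and this is the place where the hypothesis $q < \infty$ is essential (otherwise $L_{p(\cdot),\infty,b}$ would not have absolutely continuous quasi-norm and one would have to restrict to the subspace $\mathcal{L}_{p(\cdot),\infty,b}$). Once density is in hand, the rest is a routine $\varepsilon/2$-type argument made slightly bumpy by the quasi-norm constant $K$, which is harmless here since we only need one application of the quasi-triangle inequality.
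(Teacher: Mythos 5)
Your proposal is correct and follows essentially the same route the paper takes: the paper invokes density of Walsh polynomials via the dominated convergence theorem (Lemma \ref{dct}) plus the uniform boundedness of $s_n$ from the preceding theorem, and you have simply spelled out that density argument explicitly by taking $f_N = s_{2^N}f = \mathbb{E}_N(f)$ as the approximating polynomials and using $|f_N|\le M(f)\in L_{p(\cdot),q,b}$ (by Lemma \ref{Doob}) together with Lemma \ref{dct}. The only tiny imprecision is the embedding step: Lemma \ref{3.4.48} concerns constant exponents, so to conclude $f\in L_1$ one should first pass from $\|\cdot\|_{p(\cdot),q,b}$ to $\|\cdot\|_{p_-,q,b}$ (using $\|\chi_A\|_{p_-}\lesssim\|\chi_A\|_{p(\cdot)}$ from Lemma \ref{2.4} and the monotonicity in Proposition \ref{b}(2)) and then apply Lemma \ref{3.4.48}; this is implicit in the paper's completeness proof and does not affect the validity of your argument.
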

\begin{proof}
	Since the Walsh polynomials are dense in $L_{p(\cdot),q,b}$ by Lemma \ref{dct}, the proof follows from the usual density argument.
\end{proof}

Similarly, we have the following result.
\begin{coro}
	Let $p(\cdot)\in\mathcal{P}(\Omega)$ satisfy $(\ref{gs3})$ with $1<p_-\le p_+<\infty$ and let $b$ be a slowly varying function. If $f\in \mathcal{L}_{p(\cdot),\infty,b}$, then
	$$\lim_{n\rightarrow\infty}s_{n}f=f\quad in\ the\ \mathcal{L}_{p(\cdot),\infty,b}\mbox{-}\mathrm{norm}.$$
\end{coro}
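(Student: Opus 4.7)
The plan is to mirror the density argument used in the preceding corollary (for $0<q<\infty$), substituting in the subspace $\mathcal{L}_{p(\cdot),\infty,b}$, where the obstruction ``$L_p$ is not dense in $L_{p,\infty}$'' is circumvented by the absolute continuity of the quasi-norm. The two ingredients are: the uniform boundedness $\sup_{n}\|s_nf\|_{p(\cdot),\infty,b}\lesssim\|f\|_{p(\cdot),\infty,b}$ from the previous theorem, and the density of Walsh polynomials in $\mathcal{L}_{p(\cdot),\infty,b}$.

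First I would establish density. Given $f\in\mathcal{L}_{p(\cdot),\infty,b}$, set $f_N:=f\chi_{\{|f|\le N\}}$. Since $\mathbb{P}(\{|f|>N\})\to 0$ as $N\to\infty$ and $f$ has absolutely continuous quasi-norm, $\|f-f_N\|_{p(\cdot),\infty,b}=\|f\chi_{\{|f|>N\}}\|_{p(\cdot),\infty,b}\to 0$. Next, for bounded $f_N$, the martingale convergence theorem yields $\mathbb{E}_n(f_N)\to f_N$ a.e., with $|\mathbb{E}_n(f_N)|\le N$. The constant $N$ belongs to $\mathcal{L}_{p(\cdot),\infty,b}$, because $\|\chi_{A_m}\|_{p(\cdot),\infty,b}\to 0$ whenever $\mathbb{P}(A_m)\to 0$ (by Lemma \ref{fanshu}(4) and the usual estimate on $\|\chi_{A_m}\|_{p(\cdot)}$). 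Applying the dominated convergence theorem for $\mathcal{L}_{p(\cdot),\infty,b}$ stated at the end of Section \ref{s2}, we obtain $\|f_N-\mathbb{E}_n(f_N)\|_{p(\cdot),\infty,b}\to 0$. Since $\mathbb{E}_n(f_N)$ is $\mathcal{F}_n$-measurable, it is a step function on the dyadic intervals of length $2^{-n}$, hence a Walsh polynomial of order at most $2^n$. Combining these two approximations shows that Walsh polynomials are dense in $\mathcal{L}_{p(\cdot),\infty,b}$.

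With density in hand, the convergence argument is a standard three-epsilon estimate. Let $C$ denote the constant in Lemma \ref{fanshusanjiao} and $C'$ the uniform bound of $s_n$. Given $\varepsilon>0$, choose a Walsh polynomial $P$ (of degree $N_0$, say) with $\|f-P\|_{p(\cdot),\infty,b}<\varepsilon/(C(C'+1))$. For every $n\ge N_0$ we have $s_nP=P$, and hence by the quasi-triangle inequality,
\begin{align*}
\|s_nf-f\|_{p(\cdot),\infty,b}
&\le C\bigl(\|s_n(f-P)\|_{p(\cdot),\infty,b}+\|s_nP-P\|_{p(\cdot),\infty,b}+\|P-f\|_{p(\cdot),\infty,b}\bigr)\\
&\le C(C'+1)\|f-P\|_{p(\cdot),\infty,b}<\varepsilon.
\end{align*}
Since $s_nf-f\in L_{p(\cdot),\infty,b}$ and the limit $f\in\mathcal{L}_{p(\cdot),\infty,b}$, and $\mathcal{L}_{p(\cdot),\infty,b}$ is closed in $L_{p(\cdot),\infty,b}$, the convergence indeed takes place in the $\mathcal{L}_{p(\cdot),\infty,b}$-norm.

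The only slightly delicate step is the density argument, and in particular verifying that the dominated convergence theorem for $\mathcal{L}_{p(\cdot),\infty,b}$ applies with the constant dominator $N$. Once that is in place, the rest is a routine transplant of the $0<q<\infty$ proof, with Lemma \ref{fanshusanjiao} replacing the triangle inequality and the uniform boundedness from the previous theorem doing the essential work.
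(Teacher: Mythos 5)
Your proposal is correct and follows the same route the paper implies: the preceding corollary establishes the case $0<q<\infty$ via density of Walsh polynomials (using Lemma \ref{dct}), and the paper dispatches the $q=\infty$ case with ``Similarly,'' i.e., the identical density argument transplanted to $\mathcal{L}_{p(\cdot),\infty,b}$ with the $q=\infty$ dominated convergence theorem replacing Lemma \ref{dct}. You have merely filled in the details the paper suppresses.

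One small observation: the final step invoking ``$\mathcal{L}_{p(\cdot),\infty,b}$ is closed'' is unnecessary. Since $s_nf$ is a Walsh polynomial, hence bounded, hence in $\mathcal{L}_{p(\cdot),\infty,b}$, and $f\in\mathcal{L}_{p(\cdot),\infty,b}$ by hypothesis, the convergence $\|s_nf-f\|_{p(\cdot),\infty,b}\to 0$ already takes place between members of the subspace, and the subspace carries the induced quasi-norm; closedness plays no role. The claim that the constant dominator $N$ lies in $\mathcal{L}_{p(\cdot),\infty,b}$ is the genuinely nonobvious point and you correctly flag it; it works because $\|\chi_{A_m}\|_{p(\cdot)}\le\mathbb{P}(A_m)^{1/p_+}\to 0$ and, by Proposition \ref{b}(2) with some $0<\varepsilon<1$, $t\gamma_b(t)\lesssim t^{1-\varepsilon}\to 0$ as $t\to 0^+$, so $\|\chi_{A_m}\|_{p(\cdot),\infty,b}\to 0$.
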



\subsection{The maximal Fej\'{e}r operator $\sigma_*$}
Notice that for $p_-=1$, the results in Subsection \ref{9.1} are not true anymore. To be able to extend these theorems, we introduce the Fej\'{e}r summability method. The Fej\'{e}r mean of order $n\in\mathbb{N}$ of the Walsh-Fourier series of a martingale $f$ is given by
$$\sigma_nf:=\frac{1}{n}\sum_{k=1}^{n}s_kf.$$
Compared to $s_kf$, $\sigma_nf$ has better convergence properties.
The maximal operator $\sigma_*$ is defined by
$$\sigma_*f:=\sup_{n\in\mathbb{N}}|\sigma_nf|.$$
In this part, we mainly discuss the boundedness of $\sigma_*$ from variable Hardy-Lorentz-Karamata spaces to variable Lorentz-Karamata spaces. For classical Hardy spaces, Fujii \cite{fa} found that $\sigma_*$ is bounded from $H_1$ to $L_1$, see also \cite{ss}. In 1996, Weisz \cite{wc} generalized the above result and proved that $\sigma_*$ is bounded from $H_p$ to $L_p$ with $\frac{1}{2}<p<\infty$ and from $H_{p,q}$ to $L_{p,q}$ with $\frac{1}{2}<p<\infty$, $0<q\le\infty$. Recently, Jiao et al. \cite{jwzw} considered the maximal Fej\'{e}r operator on $H_{p(\cdot)}$ and $H_{p(\cdot),q}$.
We refer to \cite{fa,gg,ss,s,sw,wc} for more details.

\begin{thm}\label{T2}
	Let $p(\cdot)\in\mathcal{P}(\Omega)$ satisfy $(\ref{gs3})$, $0<q\le\infty$, $b$ be a slowly varying function and let $\max\{p_+,1\}<r\le\infty$. Suppose that $T:L_r\rightarrow L_r$ is a bounded $\sigma$-sublinear operator and for some $0<\beta<1$ and every $(p(\cdot),\infty)^M$-atom $a$ associated with stopping time $\tau$, there is
	\begin{align}\label{gs8.6}
		 \big\||T(a)|^\beta\chi_{\{\tau=\infty\}}\big\|_{p(\cdot)}\lesssim\|\chi_{\{\tau<\infty\}}\|_{p(\cdot)}^{1-\beta}.
	\end{align}
	Then for $f\in H_{p(\cdot),q,b}$,
	$$\|T(f)\|_{p(\cdot),q,b}\lesssim\|f\|_{H_{p(\cdot),q,b}}.$$
\end{thm}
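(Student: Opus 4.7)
The plan is to reduce the claim to an atomic estimate via Theorem \ref{ad11}. Since $\{\mathcal{F}_n\}_{n\ge 0}$ is regular, Corollary \ref{cc11} identifies $H_{p(\cdot),q,b}$ with $H_{p(\cdot),q,b}^{at,\infty,3}$, so any $f \in H_{p(\cdot),q,b}$ admits a decomposition $f = \sum_{k\in\mathbb{Z}} \mu_k a^k$, where each $a^k$ is a $(p(\cdot),\infty)^M$-atom with stopping time $\tau_k$, $\mu_k = 3\cdot 2^k \|\chi_{A_k}\|_{p(\cdot)}$ with $A_k:=\{\tau_k<\infty\}$, and $\big\|\{\mu_k\gamma_b(\|\chi_{A_k}\|_{p(\cdot)})\}_{k}\big\|_{\ell_q} \lesssim \|f\|_{H_{p(\cdot),q,b}}$. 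By $\sigma$-sublinearity,
\begin{equation*}
|T(f)| \le \sum_{k} \mu_k |T(a^k)|\chi_{A_k} + \sum_{k} \mu_k |T(a^k)|\chi_{A_k^c} =: F_1 + F_2.
\end{equation*}
Using the dyadic characterization in Lemma \ref{fanshu}~(3), it suffices to bound $\|\chi_{\{F_i > 2^{k_0}\}}\|_{p(\cdot)}$ for every $k_0\in\mathbb{Z}$ and then take $\ell_q$-norms.

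For $F_1$, I would follow the template of Lemma \ref{5.1}. Observing that $|T(a^k)|\chi_{A_k}$ behaves exactly like a simple $(p(\cdot),r)^M$-atom up to a constant (the $L_r$-boundedness of $T$ gives $\|T(a^k)\chi_{A_k}\|_r \lesssim \|a^k\|_r \lesssim \|\chi_{A_k}\|_r/\|\chi_{A_k}\|_{p(\cdot)}$ via $\|M(a^k)\|_\infty \le \|\chi_{A_k}\|_{p(\cdot)}^{-1}$), one splits $F_1$ at $k_0$ into a low part and a high part. On the low part, Hölder's inequality with auxiliary parameters $0<\varepsilon<\min\{\underline{p},q\}$, $1<L<\min\{r/p_+,1/\varepsilon\}$ and $0<\sigma<1-1/L$, combined with Lemma \ref{3.3} (applied to $|T(a^k)|\chi_{A_k}$), reduces the estimate to the same inner quantity $\big\|\sum_k \chi_{A_k}\big\|_{p(\cdot)/\varepsilon}$ that appears in the proof of Lemma \ref{5.1}. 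On the high part, the inclusion $\{F_1^{(2)}>0\} \subset \bigcup_{k\ge k_0} A_k$, Lemma \ref{wuqiongbianb} and Lemma \ref{belong} yield the bound in terms of $\|\{\mu_k\gamma_b(\|\chi_{A_k}\|_{p(\cdot)})\}_k\|_{\ell_q}$.

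For $F_2$, the new ingredient is the hypothesis (\ref{gs8.6}), which by Lemma \ref{lem2.1} rewrites as
\begin{equation*}
\big\||T(a^k)|\chi_{A_k^c}\big\|_{p(\cdot)/\beta}^{\beta} \lesssim \|\chi_{A_k}\|_{p(\cdot)}^{1-\beta}.
\end{equation*}
I would again split $F_2$ at $k_0$. On the low part $\sum_{k<k_0}\mu_k|T(a^k)|\chi_{A_k^c}$, Hölder in $k$ with a geometric weight (as in the $F_1$ step) reduces matters to controlling $\big\|(\sum_k w_k^L (|T(a^k)|\chi_{A_k^c})^L)^\varepsilon \big\|_{p(\cdot)/\varepsilon}$, where the exponent pair $(L,\varepsilon)$ is chosen so that $L\varepsilon\le p(\cdot)/\beta$, enabling passage to $L_{p(\cdot)/(L\varepsilon\beta)}$ and invocation of the rewritten hypothesis; this produces the same type of bound $\|\chi_{A_k}\|_{p(\cdot)}^\varepsilon$ modulo harmless constants. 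On the high part $\sum_{k\ge k_0}$, the hypothesis feeds directly into Lemma \ref{belong}, yielding the claimed quasi-norm control. Summing the estimates and applying the $\ell_q$ summation arguments identical to those in the proofs of Theorem \ref{ad1} and Lemma \ref{5.1} (Abel's transform, Proposition \ref{b}~(2) to shift $\gamma_b$, and Lemma \ref{wuqiongheb}), one concludes $\|T(f)\|_{p(\cdot),q,b} \lesssim \|f\|_{H_{p(\cdot),q,b}}$; the case $q=\infty$ follows by the same scheme with suprema replacing sums.

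The main obstacle will be calibrating the exponent $\beta$ in (\ref{gs8.6}) against the parameters $\sigma, L, \varepsilon, \delta, \theta, z$ so that all Hölder and Lemma \ref{belong} applications on the $F_2$ side remain consistent with the ambient variable Lorentz-Karamata norm. In particular, passing through $L_{p(\cdot)/\beta}$ rather than $L_{p(\cdot)}$ forces one to track the slowly varying function $\gamma_b$ across a rescaling, and to verify that the resulting gain $\|\chi_{A_k}\|_{p(\cdot)}^{(1-\beta)/\beta}$ still integrates against the dyadic $\ell_q$-summation weight $2^k$ coming from $\mu_k$. This is the technical heart of the argument, and once it is in place, the geometric series in $k_0 - k$ closes exactly as in Section \ref{s4}.
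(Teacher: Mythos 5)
Your overall strategy — decompose via Theorem \ref{ad11}/Corollary \ref{cc11} into $(p(\cdot),\infty)^M$-atoms $a^k$ with stopping times $\tau_k$, split the sum by $k_0$ and by the support $A_k=\{\tau_k<\infty\}$, use the $L_r$-boundedness for the inside part and hypothesis (\ref{gs8.6}) for the outside part — matches the paper's architecture. But your ordering of the two splits introduces genuine gaps.

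The paper splits at $k_0$ \emph{first}: $D_1=\sum_{k<k_0}\mu_k|T(a^k)|$, $D_2=\sum_{k\ge k_0}\mu_k|T(a^k)|$, and only decomposes $D_2$ into $X_1$ (restricted to $A_k$) and $X_2$ (restricted to $A_k^c$). For the main case $r=\infty$, the low part $D_1$ is then dispatched in one line: $\|T(a^k)\|_\infty\lesssim\|M(a^k)\|_\infty\le\|\chi_{A_k}\|_{p(\cdot)}^{-1}$ gives $\mu_k|T(a^k)|\le 3\cdot2^k$ pointwise, hence $\|D_1\|_\infty\lesssim2^{k_0}$, which is exactly the $\psi_{k_0}$ needed by Lemma \ref{belong}. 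By splitting $F_1$ and $F_2$ each at $k_0$, you discard this shortcut and are forced to produce separate arguments for $F_1^{\mathrm{low}}$ and $F_2^{\mathrm{low}}$; the $L_\infty$ observation would have covered both at once.

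The arguments you do propose for the low pieces don't go through as stated. You cite Lemma \ref{3.3} for $|T(a^k)|\chi_{A_k}$, but that lemma is formulated for \emph{simple} $(p(\cdot),r)^*$-atoms associated with a single $\mathcal{F}_i$-atom $I_{k,i,j}$, not for $(p(\cdot),\infty)^M$-atoms whose support $A_k=\{\tau_k<\infty\}$ is a union of atoms across all levels. You would first have to pass from the stopping-time decomposition to a simple-atom decomposition, and then the restriction $\chi_{A_k}$ (rather than $\chi_{I_{k,i,j}}$) does not localize $T(a^k\chi_{I_{k,i,j}})$ in the way Lemma \ref{3.3} needs. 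For $F_2^{\mathrm{low}}$ you suggest a Hölder-in-$k$ argument passing through $L_{p(\cdot)/(L\varepsilon\beta)}$; hypothesis (\ref{gs8.6}) provides control in $L_{p(\cdot)/\beta}$ only, and the further rescaling and $\gamma_b$-tracking you invoke is neither carried out nor present in the paper. The paper never needs such a step: for the high outside part $X_2$ it does a single Chebyshev estimate $\|\chi_{\{X_2>2^{k_0}\}}\|_{p(\cdot)}\le2^{-k_0\beta}\||X_2|^\beta\|_{p(\cdot)}$, then the $\xi$-power quasi-triangle inequality in $L_{p(\cdot)}$ with $0<\xi<\min\{\underline{p},q\}$, then (\ref{gs8.6}) term by term, and finally Hölder with an intermediate exponent $\beta<\alpha<1$, $\gamma_b$-shifting via Proposition \ref{b}(2), Lemma \ref{wuqiongheb}, and Abel's transformation. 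To close your version you would need to either revert to the paper's split (so that the low part is covered by the $L_\infty$ bound when $r=\infty$) or supply the missing simple-atom reduction and exponent bookkeeping; as written the low-index estimates are unsupported.
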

\begin{proof}
	Let $f\in H_{p(\cdot),q,b}$. According to Theorem \ref{ad11} and Corollary \ref{cc11}, there exist a sequence $(a^k)_{k\in\mathbb{Z}}$ of $(p(\cdot),\infty)^M$-atoms associated with the stopping times $(\tau_k)_{k\in\mathbb{Z}}$ and a sequence $(\mu_k)_{k\in\mathbb{Z}}=\big(3\cdot 2^k\|\chi_{\{\tau_k<\infty\}}\|_{p(\cdot)}\big)_{k\in\mathbb{Z}}$ of positive numbers such that
	$$f=\sum_{k\in\mathbb{Z}}\mu_{k}a^{k}.$$
	For an arbitrary integer $k_0$, set
	$$D_1:=\sum_{k=-\infty}^{k_0-1}\mu_{k}|T(a^{k})|\quad \text{and}\quad D_2:=\sum_{k=k_0}^{\infty}\mu_{k}|T(a^{k})|.$$
	Then by the $\sigma$-sublinearity of $T$, there is
	$$|T(f)|\le\sum_{k\in\mathbb{Z}}\mu_{k}|T(a^{k})|=D_1+D_2.$$
	
	Firstly, we consider the case of $r=\infty$. Since $T:L_\infty\rightarrow L_\infty$ is bounded, we have
	\begin{align*}
		\|D_1\|_{\infty}\le&\ \sum_{k=-\infty}^{k_0-1}\mu_{k}\|T(a^{k})\|_\infty\le\sum_{k=-\infty}^{k_0-1}\mu_{k}\|M(a^{k})\|_\infty\\
		\le&\ \sum_{k=-\infty}^{k_0-1}\mu_{k}\|\chi_{\{\tau_k<\infty\}}\|_{p(\cdot)}^{-1}\le3\cdot2^{k_0}.
	\end{align*}
 We decompose $D_2$ into $X_1+X_2$, where
    $$X_1:=\sum_{k=k_0}^{\infty}\mu_{k}|T(a^{k})|\chi_{\{\tau_k<\infty\}}\ \ \text{and}\ \ X_2:=\sum_{k=k_0}^{\infty}\mu_{k}|T(a^{k})|\chi_{\{\tau_k=\infty\}}.$$
    It is obvious that
    $$\{|X_1|>2^{k_0}\}\subset\{|X_1|>0\}\subset\bigcup_{k=k_0}^{\infty}\{\tau_k<\infty\}.$$
    Hence we have
    $$\|\chi_{\{|X_1|>2^{k_0}\}}\|_{p(\cdot)}\le\bigg\|\sum_{k=k_0}^{\infty}\chi_{\{\tau_k<\infty\}}\bigg\|_{p(\cdot)}.$$
    Let $0<m<\underline{p}$ and $0<\varepsilon<1$. It follows from Lemma \ref{wuqiongbianb} that
    \begin{align*}
    	&\ 2^{k_0m\varepsilon}\|\chi_{\{|X_1|>2^{k_0}\}}\|_{p(\cdot)}^m\gamma_b^m\big(\|\chi_{\{|X_1|>2^{k_0}\}}\|_{p(\cdot)}\big)\\\notag
    	\lesssim&\ 2^{k_0m\varepsilon}\bigg\|\sum_{k=k_0}^{\infty}\chi_{\{\tau_k<\infty\}}\bigg\|_{p(\cdot)}^m\gamma_b^m\bigg(\bigg\|\sum_{k=k_0}^{\infty}\chi_{\{\tau_k<\infty\}}\bigg\|_{p(\cdot)}\bigg)\\\notag
    	\lesssim&\ 2^{k_0m\varepsilon}\sum_{k=k_0}^{\infty}\|\chi_{\{\tau_k<\infty\}}\|_{p(\cdot)}^m\gamma_b^m\big(\|\chi_{\{\tau_k<\infty\}}\|_{p(\cdot)}\big)\\\notag
    	\lesssim&\ \sum_{k=k_0}^{\infty}2^{km\varepsilon}\|\chi_{\{\tau_k<\infty\}}\|_{p(\cdot)}^m\gamma_b^m\big(\|\chi_{\{\tau_k<\infty\}}\|_{p(\cdot)}\big).
    \end{align*}
    Obviously, according to Lemma \ref{belong} and Corollary \ref{cc11}, we obtain
    \begin{align*}
    	\|D_1+X_1\|_{p(\cdot),q,b}\lesssim&\ \Big\|\big\{2^k\|\chi_{\{\tau_k<\infty\}}\|_{p(\cdot)}\gamma_b\big(\|\chi_{\{\tau_k<\infty\}}\|_{p(\cdot)}\big)\big\}_{k\in\mathbb{Z}}\Big\|_{l_q}\lesssim\|f\|_{H_{p(\cdot),q,b}}.
    \end{align*}
    Next, we investigate $X_2$.
    Let $0<\xi<\min\{\underline{p},q\}$. It follows from (\ref{gs8.6}) that for any $0<\beta<1$,
    \begin{align}\label{e200}
    	\|\chi_{\{|X_2|>2^{k_0}\}}\|_{p(\cdot)}\le&\ \frac{1}{2^{\beta k_0}}\big\||X_2|^{\beta}\big\|_{p(\cdot)}\\\notag
    	\lesssim&\ 2^{-k_0\beta}\bigg\|\sum_{k=k_0}^{\infty}\mu_{k}^{\beta}|T(a^{k})|^{\beta}\chi_{\{\tau_k=\infty\}}\bigg\|_{p(\cdot)}^{\xi\frac{1}{\xi}}\\\notag
    	\lesssim&\ 2^{-k_0\beta}\bigg(\sum_{k=k_0}^{\infty}\mu_{k}^{\beta\xi}\big\||T(a^{k})|^{\beta}\chi_{\{\tau_k=\infty\}}\big\|_{p(\cdot)}^\xi\bigg)^{\frac{1}{\xi}}\\\notag
    	\lesssim&\ 2^{-k_0\beta}\bigg(\sum_{k=k_0}^{\infty}\mu_{k}^{\beta\xi}\|\chi_{\{\tau_k<\infty\}}\|_{p(\cdot)}^{\xi-\beta\xi}\bigg)^{\frac{1}{\xi}}\\\notag
    	\approx&\ 2^{-k_0\beta}\bigg(\sum_{k=k_0}^{\infty}2^{k\beta\xi}\|\chi_{\{\tau_k<\infty\}}\|_{p(\cdot)}^{\xi}\bigg)^{\frac{1}{\xi}}.\notag
    \end{align}
    Firstly, we consider the case of $0<q<\infty$. Let $\beta<\alpha<1$. According to H\"{o}lder's inequality, we obtain
    \begin{align*}
    	\|\chi_{\{|X_2|>2^{k_0}\}}\|_{p(\cdot)}\lesssim&\ 2^{-k_0\beta}
    	 \bigg(\sum_{k=k_0}^{\infty}2^{k(\beta-\alpha)\xi\frac{q}{q-\xi}}\bigg)^{\frac{q-\xi}{q\xi}}\bigg(\sum_{k=k_0}^{\infty}2^{kq\alpha}\|\chi_{\{\tau_k<\infty\}}\|_{p(\cdot)}^q\bigg)^{\frac{1}{q}}\\
    	\lesssim&\ 2^{-k_0\alpha}\bigg(\sum_{k=k_0}^{\infty}2^{kq\alpha}\|\chi_{\{\tau_k<\infty\}}\|_{p(\cdot)}^q\bigg)^{\frac{1}{q}}.
    \end{align*}
    Then we get
    \begin{align*}
    	&\ \|X_2\|_{p(\cdot),q,b}^q\approx\sum_{k_0\in\mathbb{Z}}2^{k_0q}\|\chi_{\{|X_2|>2^{k_0}\}}\|_{p(\cdot)}^q\gamma_{b}^q\big(\|\chi_{\{|X_2|>2^{k_0}\}}\|_{p(\cdot)}\big)\\\notag
    	\lesssim&\ \sum_{k_0\in\mathbb{Z}}2^{k_0q}\sum_{k=k_0}^{\infty}2^{(k-k_0)q\alpha}\|\chi_{\{\tau_k<\infty\}}\|_{p(\cdot)}^q\gamma_{b}^q\bigg[\bigg(\sum_{k=k_0}^{\infty}2^{(k-k_0)q\alpha}\|\chi_{\{\tau_k<\infty\}}\|_{p(\cdot)}^q\bigg)^{\frac{1}{q}}\bigg].\notag
    \end{align*}
    Define $b_1(t)=b\big(t^{\frac{1}{q}}\big)$ for $t\in[1,\infty)$. Set $0<\theta<1$, it follows from Lemma \ref{wuqiongheb} that
    \begin{align*}
    	&\ \|X_2\|_{p(\cdot),q,b}^q\\\notag
    	\lesssim&\ \sum_{k_0\in\mathbb{Z}}2^{k_0q}\sum_{k=k_0}^{\infty}2^{(k-k_0)\alpha q}\|\chi_{\{\tau_k<\infty\}}\|_{p(\cdot)}^{q}\gamma_{b_1^{q}}\bigg(\sum_{k=k_0}^{\infty}2^{(k-k_0)\alpha q}\|\chi_{\{\tau_k<\infty\}}\|_{p(\cdot)}^{q}\bigg)\\
    	=&\ \sum_{k_0\in\mathbb{Z}}2^{k_0q}\Bigg[\bigg(\sum_{k=k_0}^{\infty}2^{(k-k_0)\alpha q}\|\chi_{\{\tau_k<\infty\}}\|_{p(\cdot)}^{q}\bigg)^{\theta}\gamma_{b_1^{\theta q}}\bigg(\sum_{k=k_0}^{\infty}2^{(k-k_0)\alpha q}\|\chi_{\{\tau_k<\infty\}}\|_{p(\cdot)}^{q}\bigg)\Bigg]^{\frac{1}{\theta}}\\
    	\lesssim&\ \sum_{k_0\in\mathbb{Z}}2^{k_0q}\Bigg(\sum_{k=k_0}^{\infty}2^{(k-k_0)\alpha \theta q}\|\chi_{\{\tau_k<\infty\}}\|_{p(\cdot)}^{\theta q}\gamma_{b_1^{\theta q}}\big(2^{(k-k_0)\alpha q}\|\chi_{\{\tau_k<\infty\}}\|_{p(\cdot)}^{q}\big)\Bigg)^{\frac{1}{\theta}}\\
    	=&\ \sum_{k_0\in\mathbb{Z}}2^{k_0q}\Bigg(\sum_{k=k_0}^{\infty}2^{(k-k_0)\alpha \theta q}\|\chi_{\{\tau_k<\infty\}}\|_{p(\cdot)}^{\theta q}\gamma_{b}^{\theta q}\big(2^{(k-k_0)\alpha}\|\chi_{\{\tau_k<\infty\}}\|_{p(\cdot)}\big)\Bigg)^{\frac{1}{\theta}}.
    \end{align*}
    Let $0<z<\frac{1-\alpha}{\alpha}$. By the same method as inequality (\ref{gs26}), we obtain that for $k\ge k_0$,
    $$\gamma_{b}\big(2^{(k-k_0)\alpha}\|\chi_{\{\tau_k<\infty\}}\|_{p(\cdot)}\big)\lesssim2^{(k-k_0)\alpha z}\gamma_b\big(\|\chi_{\{\tau_k<\infty\}}\|_{p(\cdot)}\big).$$
    Hence, there is
    \begin{align*}
    	 \|X_2\|_{p(\cdot),q,b}^q\lesssim\sum_{k_0\in\mathbb{Z}}2^{k_0q}\Bigg[\sum_{k=k_0}^{\infty}2^{(k-k_0)\alpha \theta q(1+z)}\|\chi_{\{\tau_k<\infty\}}\|_{p(\cdot)}^{\theta q}\gamma_b^{\theta q}\big(\|\chi_{\{\tau_k<\infty\}}\|_{p(\cdot)}\big)\Bigg]^{\frac{1}{\theta}}.
    \end{align*}
    Set $0<\zeta<\frac{1-\alpha-\alpha z}{\alpha}$. By applying H\"{o}lder's inequality with $1-\theta+\theta=1$ again, we have
    \begin{align*}
    	&\ \|X_2\|_{p(\cdot),q,b}^q\\
    	\lesssim&\ \sum_{k_0\in\mathbb{Z}}2^{k_0q}\Bigg[\sum_{k=k_0}^{\infty}2^{-(k-k_0)\alpha \theta q\zeta}2^{(k-k_0)\alpha\theta q(1+z+\zeta)}\|\chi_{\{\tau_k<\infty\}}\|_{p(\cdot)}^{\theta q}\gamma_b^{\theta q}\big(\|\chi_{\{\tau_k<\infty\}}\|_{p(\cdot)}\big)\Bigg]^{\frac{1}{\theta}}\\
    	\le&\ \sum_{k_0\in\mathbb{Z}}2^{k_0q}\bigg(\sum_{k=k_0}^{\infty}2^{-(k-k_0)\alpha \theta q\zeta/(1-\theta)}\bigg)^{\frac{1-\theta}{\theta}}\sum_{k=k_0}^{\infty}2^{(k-k_0)\alpha q(1+z+\zeta)}\|\chi_{\{\tau_k<\infty\}}\|_{p(\cdot)}^{ q}\gamma_b^{ q}\big(\|\chi_{\{\tau_k<\infty\}}\|_{p(\cdot)}\big)\\
    	\lesssim&\ \sum_{k_0\in\mathbb{Z}}2^{k_0q}\sum_{k=k_0}^{\infty}2^{(k-k_0)\alpha q(1+z+\zeta)}\|\chi_{\{\tau_k<\infty\}}\|_{p(\cdot)}^{ q}\gamma_b^{ q}\big(\|\chi_{\{\tau_k<\infty\}}\|_{p(\cdot)}\big).
    \end{align*}
    Hence, it follows from Abel's transformation and Theorem \ref{ad11} that
    \begin{align*}
    	\|X_2\|_{p(\cdot),q,b}^q \approx&\ \sum_{k_0\in\mathbb{Z}}2^{k_0q}\|\chi_{\{X_2>2^{k_0}\}}\|_{p(\cdot)}^{q}\gamma_b^{q}\big(\|\chi_{\{X_2>2^{k_0}\}}\|_{p(\cdot)}\big)\\
    	=&\ \sum_{k\in\mathbb{Z}}2^{k\alpha q(1+z+\zeta)}\|\chi_{\{\tau_k<\infty\}}\|_{p(\cdot)}^{ q}\gamma_b^{ q}\big(\|\chi_{\{\tau_k<\infty\}}\|_{p(\cdot)}\big)\sum_{k_0=-\infty}^{k}2^{k_0q[1-\alpha(1+z+\zeta)]}\\
    	\lesssim&\ \sum_{k\in\mathbb{Z}}2^{kq}\|\chi_{\{\tau_k<\infty\}}\|_{p(\cdot)}^{ q}\gamma_b^{ q}\big(\|\chi_{\{\tau_k<\infty\}}\|_{p(\cdot)}\big)\\
    	\lesssim&\ \|f\|_{H_{p(\cdot),q,b}}^q.
    \end{align*}

    Next, we discuss the case of $q=\infty$. According to (\ref{e200}) and H\"{o}lder's inequality, we have
    \begin{align*}
    	\|\chi_{\{|X_2|>2^{k_0}\}}\|_{p(\cdot)}
    	\lesssim&\ 2^{-k_0\beta}\bigg(\sum_{k=k_0}^{\infty}2^{k(\beta-\alpha)\varepsilon}2^{k\varepsilon\alpha}\|\chi_{\{\tau_k<\infty\}}\|_{p(\cdot)}^\varepsilon\bigg)^{1/\varepsilon}\\
        \le&\ 2^{-k_0\beta}\bigg(\sum_{k=k_0}^{\infty}2^{k(\beta-\alpha)\varepsilon}\bigg)^{1/\varepsilon}\sup_{k\ge k_0}2^{k\alpha}\|\chi_{\{\tau_k<\infty\}}\|_{p(\cdot)}\\
    	=&\ \sup_{k\ge k_0}2^{(k-k_0)\alpha}\|\chi_{\{\tau_k<\infty\}}\|_{p(\cdot)},
    \end{align*}
    where $0<\beta<\alpha<1$.
    Since $t^{-\nu}\gamma_{b}(t)$ is equivalent to a nonincreasing function for $\nu>0$ and $t\in(0,\infty)$, we have that for any $l$ satisfying $l\ge k_0$,
    \begin{align*}
    	&\ \|\chi_{\{|X_2|>2^{k_0}\}}\|_{p(\cdot)}\gamma_{b}\big(\|\chi_{\{|X_2|>2^{k_0}\}}\|_{p(\cdot)}\big)\\
    	\lesssim&\ \sup_{k\ge k_0}2^{(k-k_0)\alpha}\|\chi_{\{\tau_k<\infty\}}\|_{p(\cdot)}\gamma_{b}\Big(\sup_{k\ge k_0}2^{(k-k_0)\alpha}\|\chi_{\{\tau_k<\infty\}}\|_{p(\cdot)}\Big)\\
    	=&\ \Big(\sup_{l\ge k_0}2^{(l-k_0)\alpha}\|\chi_{\{\tau_l<\infty\}}\|_{p(\cdot)}\Big)^{1+\nu}\\
    	&\ \ \ \ \ \ \ \ \ \ \ \ \ \ \ \ \ \times\Big(\sup_{k\ge k_0}2^{(k-k_0)\alpha}\|\chi_{\{\tau_k<\infty\}}\|_{p(\cdot)}\Big)^{-\nu}\gamma_{b}\Big(\sup_{k\ge k_0}2^{(k-k_0)\alpha}\|\chi_{\{\tau_k<\infty\}}\|_{p(\cdot)}\Big)\\
    	\lesssim&\ \sup_{l\ge k_0}2^{(1+\nu)(l-k_0)\alpha}\|\chi_{\{\tau_l<\infty\}}\|_{p(\cdot)}^{1+\nu}\cdot2^{-(l-k_0)\alpha\nu}\|\chi_{\{\tau_l<\infty\}}\|_{p(\cdot)}^{-\nu}\gamma_{b}\big(2^{(l-k_0)\alpha}\|\chi_{\{\tau_l<\infty\}}\|_{p(\cdot)}\big)\\
    	=&\ \sup_{l\ge k_0}2^{(l-k_0)\alpha}\|\chi_{\{\tau_l<\infty\}}\|_{p(\cdot)}\gamma_{b}\big(2^{(l-k_0)\alpha}\|\chi_{\{\tau_l<\infty\}}\|_{p(\cdot)}\big).
    \end{align*}
    With the help of Corollary \ref{cc11}, we obtain that
    \begin{align*}
    	 \|X_2\|_{p(\cdot),\infty,b}\approx&\sup_{k_0\in\mathbb{Z}}2^{k_0}\|\chi_{\{|X_2|>2^{k_0}\}}\|_{p(\cdot)}\gamma_{b}\big(\|\chi_{\{|X_2|>2^{k_0}\}}\|_{p(\cdot)}\big)\\
    	\lesssim&\ \sup_{k_0\in\mathbb{Z}}2^{k_0}\cdot\sup_{l\ge k_0}2^{(l-k_0)\alpha}\|\chi_{\{\tau_l<\infty\}}\|_{p(\cdot)}\gamma_{b}\big(2^{(l-k_0)\alpha}\|\chi_{\{\tau_l<\infty\}}\|_{p(\cdot)}\big)\\
    	\lesssim&\ \sup_{k_0\in\mathbb{Z}}2^{k_0}\cdot\sup_{l\ge k_0}2^{(l-k_0)\alpha(1+z)}\|\chi_{\{\tau_l<\infty\}}\|_{p(\cdot)}\gamma_b\big(\|\chi_{\{\tau_l<\infty\}}\|_{p(\cdot)}\big)\\
    	=&\ \sup_{l\in\mathbb{Z}}2^{l\alpha(1+z)}\|\chi_{\{\tau_l<\infty\}}\|_{p(\cdot)}\gamma_b\big(\|\chi_{\{\tau_l<\infty\}}\|_{p(\cdot)}\big)\sup_{k_0\le l}2^{k_0(1-\alpha(1+z))}\\
    	\lesssim&\ \sup_{l\in\mathbb{Z}}2^{l}\|\chi_{\{\tau_l<\infty\}}\|_{p(\cdot)}\gamma_b\big(\|\chi_{\{\tau_l<\infty\}}\|_{p(\cdot)}\big)\\
    	\lesssim&\ \|f\|_{H_{p(\cdot),\infty,b}}.
    \end{align*}
    Combining the above inequalities, we conclude that
    $$\|T(f)\|_{p(\cdot),q,b}\lesssim\|D_1+X_1\|_{p(\cdot),q,b}+\|X_2\|_{p(\cdot),q,b}\lesssim\|f\|_{H_{p(\cdot),q,b}}.$$

    Now consider $\max{\{p_+,1\}}<r<\infty$. The estimation of $D_2$ is the same as above. For $D_1$, similarly to the proof of atomic decomposition theorem, we obtain
    $$\|D_1\|_{p(\cdot),q,b}\lesssim\|f\|_{H_{p(\cdot),q,b}}.$$
    Hence, the proof of this theorem is complete.
\end{proof}

Theorem 5.39 in \cite{jwzw} shows that the operator $\sigma_*$ satisfies the condition of the above theorem. More exactly,

\begin{lem}
	Let $p(\cdot)\in\mathcal{P}(\Omega)$ satisfy $(\ref{gs3})$.
	If $\frac{1}{2}<p_-\le p_+<\infty$ and
	\begin{align}\label{,}
		\frac{1}{p_-}-\frac{1}{p_+}<1.
	\end{align}
	For some $0<\beta<1$ and every $(p(\cdot),\infty)^M$-atom $a$ associated with stopping time $\tau$, there is
	\begin{align*}
		 \big\||\sigma_*a|^\beta\chi_{\{\tau=\infty\}}\big\|_{p(\cdot)}\lesssim\|\chi_{\{\tau<\infty\}}\|_{p(\cdot)}^{1-\beta}.
	\end{align*}
\end{lem}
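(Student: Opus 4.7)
The plan is to combine a pointwise estimate for the Walsh-Fej\'er kernel with the dyadic structure of the atom's support, and then pass to the variable Lebesgue norm via the quasi-triangle inequality together with condition \eqref{gs3}.

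First, I would set $F := \{\tau < \infty\}$, noting that $\operatorname{supp}(a) \subset F$ and $\|a\|_\infty \le \|M(a)\|_\infty \le \|\chi_F\|_{p(\cdot)}^{-1}$. Because $\{\mathcal{F}_n\}_{n\ge 0}$ is the dyadic stochastic basis on $[0,1)$ and $\tau$ is a stopping time, the set $F$ is a disjoint union of dyadic intervals $I_j = [k_j 2^{-n_j}, (k_j+1)2^{-n_j})$. For $x \in \{\tau = \infty\}$, let $k(x) \in \mathbb{N}$ be the largest integer such that the dyadic interval $I_{k(x)}(x)$ of length $2^{-k(x)}$ containing $x$ satisfies $I_{k(x)}(x) \cap F \ne \emptyset$; equivalently, $2^{-k(x)-1}$ is the dyadic distance from $x$ to $F$.

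Second, the key analytic input will be the classical pointwise estimate for the maximal Walsh-Fej\'er operator on dyadic martingales (see Schipp-Wade-Simon-P\'al and Weisz \cite{ws,s}): for a martingale $a$ supported on $F$ and for $x \notin F$, one has
\begin{equation*}
|\sigma_* a(x)| \lesssim \|a\|_\infty \cdot 2^{k(x)} \cdot \mathbb{P}\bigl(F \cap I_{k(x)}(x)\bigr).
\end{equation*}
Using this, I would partition $\{\tau = \infty\}$ into the level sets $\Omega_k := \{x : k(x) = k\}$ and estimate on each $\Omega_k$ the ratio $|\sigma_* a(x)|/\|\chi_F\|_{p(\cdot)}^{-1}$ in terms of a dyadic maximal function of $\chi_F$. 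After raising to the power $\beta$, this reduces the problem to controlling
\begin{equation*}
\Bigl\| \bigl( M_{\text{dy}}(\chi_F) \bigr)^{\gamma} \Bigr\|_{p(\cdot)/\beta}
\end{equation*}
for a suitable exponent $\gamma > 0$, where $M_{\text{dy}}$ is the dyadic maximal function.

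Third, I would apply Doob's maximal inequality on variable Lebesgue spaces (Lemma~\ref{Doob} with constant $q$) at exponent $p(\cdot)/\beta$. This requires $(p(\cdot)/\beta)_- > 1$, which determines the admissible range of $\beta$; since $p_- > 1/2$, one can choose $\beta$ with $2\beta < p_-$, hence $(p(\cdot)/\beta)_- > 2 > 1$. Combining with Lemma~\ref{lem2.1} and the relation $\||\chi_F|^\gamma\|_{p(\cdot)/\beta} = \|\chi_F\|_{\gamma p(\cdot)/\beta}^\gamma$, one obtains the bound $\|\chi_F\|_{p(\cdot)}^{1-\beta}$ on the right-hand side, as required.

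The main obstacle, and the reason the hypothesis $\frac{1}{p_-} - \frac{1}{p_+} < 1$ appears, is the geometric-series sum over the dyadic scales $k$ in the $\Omega_k$ decomposition. The kernel decay gives a factor $2^{-ck}$, while the variable-exponent norm of $\chi_{\Omega_k}$ can grow or shrink differently depending on whether one uses $p_-$ or $p_+$; condition \eqref{gs3} lets us interchange these at the cost of a uniform constant, but the oscillation of $p(\cdot)$ must be small enough (precisely, $\frac{1}{p_-} - \frac{1}{p_+} < 1$) so that the resulting geometric series converges. Controlling this sum carefully, using \eqref{gs3} together with Lemma~\ref{wuqiongbianb}, is the technical heart of the argument.
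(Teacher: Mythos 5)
The paper does not prove this lemma; it quotes it verbatim from Theorem~5.39 of \cite{jwzw} as a black box to feed into Theorem~\ref{T2}, so your task was to reconstruct that external argument. Your skeleton is the right one: decompose $\{\tau=\infty\}$ by dyadic distance to $F:=\{\tau<\infty\}$, bound $\sigma_*a$ pointwise by a Walsh--Fej\'er kernel estimate, dominate by a power of the Doob/dyadic maximal function $M(\chi_F)$, and close with Lemma~\ref{Doob} and Lemma~\ref{lem2.1}.

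The gap is your pointwise estimate. Test $|\sigma_* a(x)| \lesssim \|a\|_\infty\, 2^{k(x)}\, \mathbb{P}(F \cap I_{k(x)}(x))$ on a single dyadic interval $I$ of length $2^{-K}$ and a point $x$ at dyadic distance $\approx 2^{-k}$ ($k<K$): then $I_{k(x)}(x)\supset I$ and the right side is $\approx \|a\|_\infty\, 2^{k-K}$, i.e.\ only \emph{linear} decay, equivalently $|\sigma_*a|\lesssim\|a\|_\infty M(\chi_F)$ off the support. With that, $\||\sigma_*a|^\beta\chi_{\{\tau=\infty\}}\|_{p(\cdot)}\lesssim\|a\|_\infty^\beta\|M(\chi_F)\|_{\beta p(\cdot)}^\beta$, and Lemma~\ref{Doob} at exponent $\beta p(\cdot)$ requires $\beta p_->1$, forcing $p_->1$ since $\beta<1$. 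The kernel estimate actually established in \cite{ws,s} (and used in \cite{jwzw}) has \emph{quadratic} decay, $\sigma_*a(x)\lesssim\|a\|_\infty\,2^{2(k-K)}$, equivalently $\sigma_*a\lesssim\|a\|_\infty M(\chi_F)^2$ off the support. Then the bookkeeping becomes $\||\sigma_*a|^\beta\chi_{\{\tau=\infty\}}\|_{p(\cdot)}\lesssim\|a\|_\infty^\beta\|M(\chi_F)\|_{2\beta p(\cdot)}^{2\beta}$, Doob at exponent $2\beta p(\cdot)$ needs only $2\beta p_->1$, which a choice $\beta<1$ permits precisely because $p_->1/2$, and then $\|\chi_F\|_{2\beta p(\cdot)}^{2\beta}=\|\chi_F\|_{p(\cdot)}$ by Lemma~\ref{lem2.1} together with $\|a\|_\infty\le\|\chi_F\|_{p(\cdot)}^{-1}$ gives $\|\chi_F\|_{p(\cdot)}^{1-\beta}$. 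Your stated choice of Doob exponent $p(\cdot)/\beta$ with $2\beta<p_-$ does not fit this arithmetic in either version. Your remark on where~\eqref{,} enters --- comparing $\|\chi_{\Omega_k}\|_{p(\cdot)}$ across annuli that are not atoms of any $\mathcal{F}_n$, so~\eqref{gs3} is not directly available --- points in the right direction, but the decisive problem is the missing square in the kernel decay: as written, your argument can only reach $p_->1$, not the claimed $p_->1/2$.
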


Now, the boundedness of $\sigma_*$ from variable Hardy-Lorentz-Karamata spaces to variable Lorentz-Karamata spaces follows from Theorem \ref{T2}.

\begin{coro}\label{sig}
	Let $p(\cdot)\in\mathcal{P}(\Omega)$ satisfy $(\ref{gs3})$ and $(\ref{,})$, $0<q\le\infty$ and let $b$ be a slowly varying function. If $\frac{1}{2}<p_-\le p_+<\infty$, then
	$$\|\sigma_*f\|_{p(\cdot),q,b}\lesssim\|f\|_{H_{p(\cdot),q,b}}.$$
\end{coro}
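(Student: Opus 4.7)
The plan is to deduce Corollary \ref{sig} as an immediate consequence of Theorem \ref{T2} applied to the operator $T=\sigma_*$. Three ingredients need to be checked: the $\sigma$-sublinearity of $\sigma_*$, the boundedness of $\sigma_*$ on $L_r$ for some $r$ with $\max\{p_+,1\}<r\le\infty$, and the atomic condition \eqref{gs8.6}.

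First I would observe that $\sigma_*$ is $\sigma$-sublinear. Indeed, $\sigma_n$ is linear in the martingale, so for any sequence $(f^{(j)})_{j\ge1}$ of martingales and any $n$, $|\sigma_n(\sum_j f^{(j)})|\le \sum_j |\sigma_n(f^{(j)})|\le \sum_j \sigma_*(f^{(j)})$, and taking the supremum over $n$ on the left gives $\sigma_*(\sum_j f^{(j)})\le\sum_j \sigma_*(f^{(j)})$; absolute homogeneity is clear. Next, the classical boundedness of the maximal Fej\'er operator on $L_r$ for $1<r\le\infty$ (see, e.g., \cite{swsp,ws}) provides the required $L_r$ boundedness: since $p_+<\infty$ we may pick any $r$ with $\max\{p_+,1\}<r<\infty$, and then $\sigma_*:L_r\to L_r$ is bounded.

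The atomic estimate
\[
\bigl\||\sigma_*a|^\beta\chi_{\{\tau=\infty\}}\bigr\|_{p(\cdot)}\lesssim\|\chi_{\{\tau<\infty\}}\|_{p(\cdot)}^{1-\beta}
\]
for some $0<\beta<1$ and every $(p(\cdot),\infty)^M$-atom $a$ associated with the stopping time $\tau$ is exactly the content of the lemma stated immediately before Corollary \ref{sig}, which is valid under the hypotheses $\tfrac12<p_-\le p_+<\infty$ and \eqref{,}. Thus both hypotheses of Theorem \ref{T2} hold for $T=\sigma_*$, and Theorem \ref{T2} yields
\[
\|\sigma_*f\|_{p(\cdot),q,b}\lesssim\|f\|_{H_{p(\cdot),q,b}}
\]
for every $f\in H_{p(\cdot),q,b}$, which is the desired conclusion.

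There is no real obstacle in the argument; the entire technical difficulty has already been absorbed into Theorem \ref{T2} (via the atomic decomposition through Theorem \ref{ad11} and Corollary \ref{cc11}) and into the cited atomic estimate for $\sigma_*$. The only minor point worth stressing in the writeup is that the choice of $r$ is available because $p_+<\infty$, and that $\beta$ may be taken arbitrarily in $(0,1)$ as in the preceding lemma; thus no restriction on $q$ beyond $0<q\le\infty$ is introduced and the whole range claimed in the corollary is covered.
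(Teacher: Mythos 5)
Your proposal is correct and matches the paper's argument exactly: the paper states the atomic estimate for $\sigma_*$ as the preceding lemma (imported from \cite{jwzw}) and then remarks that the corollary "follows from Theorem \ref{T2}," which is precisely the deduction you spell out. Your elaboration of the $\sigma$-sublinearity and the $L_r$-boundedness hypotheses is a reasonable filling-in of what the paper leaves implicit.
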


\begin{rem}
   	$(1)$ If $b\equiv1$, then $(\ref{,})$ is also a sufficient condition $($see \cite{jwzw}$)$.
   	
   	$(2)$ When $b\equiv1$, for $p(\cdot)\equiv p$ with $p\le\frac{1}{2}$ or with $p=q\le\frac{1}{2}$, Theorem $\ref{sig}$ does not hold any more, see \cite{gg,s,sw}.
\end{rem}

By virtue of Theorem \ref{sig}, we consider the convergence of $\sigma_nf$. The proofs are omitted, since they are similar to \cite{jwzw}.

\begin{coro}\label{c11}
	Let $p(\cdot)\in\mathcal{P}(\Omega)$ satisfy $(\ref{gs3})$ and $(\ref{,})$, $0<q\le\infty$ and let $b$ be a slowly varying function. If $\frac{1}{2}<p_-\le p_+<\infty$ and $f\in H_{p(\cdot),q,b}$, then $\sigma_nf$ converges almost everywhere on $[0.1)$ and in the $L_{p(\cdot),q,b}$-norm.
\end{coro}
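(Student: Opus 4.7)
The plan is to deduce both assertions from the maximal inequality of Corollary \ref{sig} by the classical ``maximal inequality plus density'' scheme, with the dominated convergence theorem (Lemma \ref{dct}) handling the norm statement. I would first establish the density of Walsh polynomials in the appropriate Hardy space: in $H_{p(\cdot),q,b}$ when $0<q<\infty$, and in the closed subspace $\mathcal{H}_{p(\cdot),\infty,b}$ when $q=\infty$. This follows from Remark \ref{rem4.2} and its analogues: the atomic sum converges in the Hardy quasi-norm, each atom is $L_2$-bounded, and for an $L_2$-bounded martingale $a$ the dyadic partial sums $s_{2^n}a=\mathbb{E}_n(a)$ are Walsh polynomials converging to $a$ in $L_2$, which embeds continuously into the Hardy space under our assumptions.

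For the almost everywhere convergence, note that for every Walsh polynomial $P$ one has $\sigma_n P=P$ once $n$ exceeds the degree of $P$, so $\sigma_n P\to P$ uniformly. Hence for any $f\in H_{p(\cdot),q,b}$ and any Walsh polynomial $P$,
\begin{equation*}
\limsup_{n\to\infty}|\sigma_n f-f|\le \sigma_*(f-P)+|f-P|\quad \text{a.e.}
\end{equation*}
Applying Corollary \ref{sig} together with the elementary estimate $\lambda\|\chi_{\{|g|>\lambda\}}\|_{p(\cdot)}\gamma_b(\|\chi_{\{|g|>\lambda\}}\|_{p(\cdot)})\lesssim \|g\|_{p(\cdot),q,b}$ (which follows from Lemma \ref{baohan} and Lemma \ref{fanshu}) to both terms yields
\begin{equation*}
\lambda\,\|\chi_{\{\limsup_n|\sigma_n f-f|>\lambda\}}\|_{p(\cdot)}\gamma_b(\cdot)\lesssim \|f-P\|_{H_{p(\cdot),q,b}}+\|f-P\|_{p(\cdot),q,b}
\end{equation*}
for every $\lambda>0$. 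Choosing $P$ so that the right-hand side is arbitrarily small, one concludes $\mathbb{P}(\limsup_n|\sigma_n f-f|>\lambda)=0$ for every $\lambda>0$, hence $\sigma_n f\to f$ a.e.

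For the norm convergence, the pointwise bound $|\sigma_n f-f|\le \sigma_* f+M(f)$ supplies a dominating function. By Corollary \ref{sig} and the very definition of $H_{p(\cdot),q,b}=H_{p(\cdot),q,b}^M$, both $\sigma_* f$ and $M(f)$ lie in $L_{p(\cdot),q,b}$, so Lemma \ref{fanshusanjiao} places the dominator there as well. When $0<q<\infty$, combining the a.e.\ convergence from the previous step with Lemma \ref{dct} immediately gives $\|\sigma_n f-f\|_{p(\cdot),q,b}\to 0$. When $q=\infty$, one instead invokes the last lemma of Section \ref{s2} (the dominated convergence theorem in $\mathcal{L}_{p(\cdot),\infty,b}$), for which the dominator must additionally have absolutely continuous quasi-norm; this is guaranteed provided the statement is interpreted over the closed subspace $\mathcal{H}_{p(\cdot),\infty,b}$, where $M(f)\in\mathcal{L}_{p(\cdot),\infty,b}$ and the analogue of Corollary \ref{sig} refines $\sigma_* f\in\mathcal{L}_{p(\cdot),\infty,b}$.

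The main obstacle is the case $q=\infty$: since $L_2$ (and hence the Walsh polynomials) is not dense in $H_{p(\cdot),\infty,b}$, norm convergence cannot hold for arbitrary elements of this space, and one must pass to the subspace $\mathcal{H}_{p(\cdot),\infty,b}$. Verifying the density step and ensuring the dominator actually lies in $\mathcal{L}_{p(\cdot),\infty,b}$ there is the delicate point; the $0<q<\infty$ case is routine once Corollary \ref{sig} and Lemma \ref{dct} are in hand.
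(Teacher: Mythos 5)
Your proof is correct and follows the standard scheme — the maximal‐operator estimate of Corollary \ref{sig} together with density of Walsh polynomials in $H_{p(\cdot),q,b}$ (Remark \ref{rem4.2}) for the a.e.\ convergence, and the dominated convergence theorem (Lemma \ref{dct}) with dominator $\sigma_*f+M(f)\in L_{p(\cdot),q,b}$ for the norm convergence. The paper offers no proof here (it simply defers to \cite{jwzw}), and the argument there is the same.

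Your discussion of the $q=\infty$ case is a genuine and correct refinement of the statement as printed: density of $L_2$ fails in $H_{p(\cdot),\infty,b}$, so the approximation step in the a.e.\ argument and the dominated‐convergence step in the norm argument both only go through on the closed subspace $\mathcal{H}_{p(\cdot),\infty,b}$, with target $\mathcal{L}_{p(\cdot),\infty,b}$; this is exactly what the paper records, somewhat tersely, in the remark following Corollary \ref{c55}. The one point you state without justification is that $\sigma_*f\in\mathcal{L}_{p(\cdot),\infty,b}$ for $f\in\mathcal{H}_{p(\cdot),\infty,b}$. This does need an argument: take $g_m\in L_2\cap\mathcal{H}_{p(\cdot),\infty,b}$ with $\|f-g_m\|_{\mathcal{H}_{p(\cdot),\infty,b}}\to0$; then $|\sigma_*f-\sigma_*g_m|\le\sigma_*(f-g_m)\to 0$ in $L_{p(\cdot),\infty,b}$ by Corollary \ref{sig}, so it suffices that $\sigma_*g_m\in\mathcal{L}_{p(\cdot),\infty,b}$ and that $\mathcal{L}_{p(\cdot),\infty,b}$ is closed (the latter is proved in Section \ref{s2}). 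Membership of $\sigma_*g_m$ in $\mathcal{L}_{p(\cdot),\infty,b}$ in turn follows from running the decomposition $D_1+X_1+X_2$ in the proof of Theorem \ref{T2} and observing that the $l_\infty$ bounds on $\{2^k\|\chi_{\{\tau_k<\infty\}}\|_{p(\cdot)}\gamma_b(\cdot)\}_k$ become $c_0$-type decay when $M(g_m)$ has absolutely continuous quasi-norm; this is worth spelling out but poses no obstruction. For $0<q<\infty$ everything in your write-up is airtight.
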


Let $I$ be an atom of $\mathcal{F}_k$. Define the restriction of a martingale $f$ to the atom $I$ by
$$f\chi_I:=(\mathbb{E}_n(f)\chi_I,n\ge k).$$

\begin{coro}\label{c22}
	Let $p(\cdot)\in\mathcal{P}(\Omega)$ satisfy $(\ref{gs3})$ and $(\ref{,})$, $0<q\le\infty$ and let $b$ be a slowly varying function. Suppose that $\frac{1}{2}<p_-\le p_+<\infty$ and $f\in H_{p(\cdot),q,b}$. If there exists a dyadic interval $I$ such that $f\chi_I\in L_1(I)$, then $\sigma_nf$ converges to $f$ almost everywhere on $I$ and in the $L_{p(\cdot),q,b}(I)$-norm.
\end{coro}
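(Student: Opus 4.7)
The plan is to split the corollary into two tasks: (1) establishing almost everywhere convergence $\sigma_n f \to f$ on $I$, and (2) upgrading this to $L_{p(\cdot),q,b}(I)$-norm convergence.

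For task (1), I would start from Corollary \ref{c11}, which already guarantees that $\sigma_n f$ converges almost everywhere on $[0,1)$ to some measurable function $\tilde f$; only the identification $\tilde f = f$ on $I$ remains. Writing $I \in A(\mathcal F_k)$, the hypothesis $f\chi_I \in L_1(I)$ ensures that the martingale $(f_n\chi_I)_{n \ge k}$ is $L_1$-bounded, so by Doob's convergence theorem it converges almost everywhere and in $L_1(I)$ to a function $\phi \in L_1(I)$ that coincides with $f$ on $I$ in the pointwise-limit sense. After extending $\phi$ by zero outside $I$, the classical Walsh--Fej\'er theorem for $L_1$-functions yields $\sigma_n\phi \to \phi$ almost everywhere on $[0,1)$, and in particular $\sigma_n\phi \to f$ almost everywhere on $I$. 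It then suffices to prove $\sigma_n(f - \phi) \to 0$ almost everywhere on $I$, where $f - \phi$ denotes the difference of the two martingales. This reduces to a localization property of the Walsh--Fej\'er kernel: for $x \in I$, the action of $\sigma_n$ on the ``far'' part, supported outside $I$, tends to zero as $n \to \infty$, via a direct kernel estimate exploiting the dyadic group structure of $\dot+$.

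For task (2), I would invoke Corollary \ref{sig} to obtain $\sigma_* f \in L_{p(\cdot),q,b}$, which serves as a uniform majorant: $|\sigma_n f| \le \sigma_* f$ for all $n$, and also $|f\chi_I| \le \sigma_* f$ on $I$ (since $f$ is the a.e.\ limit of $\sigma_n f$ on $I$ by task (1)). Combined with the a.e.\ convergence from task (1), Lemma \ref{dct} (the dominated convergence theorem for variable Lorentz-Karamata spaces) immediately gives $\|\sigma_n f - f\|_{L_{p(\cdot),q,b}(I)} \to 0$ when $0 < q < \infty$; the $q = \infty$ case is handled by the corresponding dominated convergence result, provided $\sigma_* f$ belongs to the closed subspace $\mathcal L_{p(\cdot),\infty,b}$ on which the quasinorm is absolutely continuous.

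The principal obstacle is the kernel-localization estimate in task (1): while Corollary \ref{c11} already supplies an a.e.\ limit and the classical $L_1$-Fej\'er theorem disposes of $\phi$, pinning the limit down to $f$ on $I$ requires showing that $\sigma_n(f - \phi) \to 0$ almost everywhere on $I$. All other inputs are off-the-shelf: Corollary \ref{c11} for the existence of the a.e.\ limit, Corollary \ref{sig} for the $L_{p(\cdot),q,b}$-majorant, and the dominated convergence theorems of Section \ref{s2} for the passage from pointwise to norm convergence.
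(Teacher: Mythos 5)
Your high-level shape is right---pin down the a.e.\ limit on $I$ first, then upgrade to norm convergence via $\sigma_*f$ as a dominating function and Lemma~\ref{dct}---and task~(2) is sound (modulo the $q=\infty$ caveat you flag). But task~(1) has a genuine gap: the ``direct kernel estimate'' you propose for $\sigma_n(f-\phi)\to 0$ on $I$ cannot work. For $x\in I$ and $2^{m-1}<n\le 2^m$ one has $\sigma_n(f-\phi)(x)=\sigma_n\bigl((f-\phi)_m\bigr)(x)=\int_{I^c}f_m(t)K_n(x\dot+t)\,dt$, and the Walsh--Fej\'er kernel $K_n(\cdot)$ restricted to $[2^{-k},1)$ is bounded uniformly in $n$ but does \emph{not} tend to zero; since $f$ need not be integrable (the hypothesis allows $p_-<1$), the $L_1$-norms $\|f_m\chi_{I^c}\|_1$ can grow unboundedly in $m$, so no uniform kernel bound gives convergence. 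The reduction to an $L_1$-localization principle fails because $f-\phi$ is simply not an $L_1$-function.

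The argument the paper points to (via~\cite{jwzw}) avoids the kernel entirely. Take Walsh polynomials $P_j\to f$ in the $H^M_{p(\cdot),q,b}$-norm (available for $0<q<\infty$ by Remark~\ref{rem4.2} and Theorem~\ref{mi}). By Corollary~\ref{sig}, $\limsup_n|\sigma_nf-P_j|\le\sigma_*(f-P_j)\to0$ in $L_{p(\cdot),q,b}$-norm, hence a.e.\ along a subsequence; combined with the existence of the a.e.\ limit $F:=\lim_n\sigma_nf$ from Corollary~\ref{c11}, this gives $P_j\to F$ a.e. Separately, on $I$ one has $f_n\to f$ a.e.\ by martingale convergence (this is exactly where $f\chi_I\in L_1(I)$ enters), while $\sup_n|\mathbb{E}_n(P_j)-f_n|=M(P_j-f)\to0$ a.e.\ along a subsequence, so $P_j\to f$ a.e.\ on $I$. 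Hence $F=f$ a.e.\ on $I$. No kernel localization is used; the ``localization'' is hidden in the local martingale-convergence step. For $q=\infty$ the Walsh polynomials are not dense in $H^M_{p(\cdot),\infty,b}$, so one must pass to $\mathcal H_{p(\cdot),\infty,b}$ (as the closing remark in Section~\ref{s8} indicates); you raise this but leave it open, and your proposal offers no substitute density argument there.
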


Moreover, for the case of $1\le p_-<\infty$, we have the next result.

\begin{coro}\label{c33}
	Let $p(\cdot)\in\mathcal{P}(\Omega)$ satisfy $(\ref{gs3})$ and $(\ref{,})$, $0<q\le\infty$ and let $b$ be a slowly varying function. If $1\le p_-\le p_+<\infty$ and $f\in H_{p(\cdot),q,b}$, then $\sigma_nf$ converges almost everywhere on $[0,1)$ and in the $L_{p(\cdot),q,b}$-norm.
\end{coro}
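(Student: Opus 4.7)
The plan is to reduce this statement directly to Corollary \ref{c22} by taking the dyadic interval $I$ to be the whole probability space $[0,1)$, which is an atom of the trivial $\sigma$-algebra $\mathcal{F}_0$. For this reduction to work it suffices to establish the embedding
\[
H_{p(\cdot),q,b} \hookrightarrow L_1,
\]
which is the only new ingredient needed on top of the previous corollary. Once this embedding is in hand, any martingale $f \in H_{p(\cdot),q,b}$ is $L_1$-bounded, and hence by Doob's martingale convergence theorem it can be identified with its $L_1$-limit $f_\infty \in L_1 = L_1([0,1))$; then $f\chi_I = f_\infty \in L_1(I)$ with $I=[0,1)$, and Corollary \ref{c22} yields almost everywhere convergence of $\sigma_n f$ to $f$ on $[0,1)$ together with convergence in the $L_{p(\cdot),q,b}$-norm.

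To obtain the embedding $H_{p(\cdot),q,b} \hookrightarrow L_1$, I first recall that since $([0,1),\mathcal{F},dx)$ is a probability space, Lemma \ref{baohan} (combined with an application of the constant-exponent embedding Lemma \ref{3.4.48} to majorize by a constant exponent $p_- \ge 1$) gives
\[
\|g\|_1 \lesssim \|g\|_{p(\cdot),q,b}
\qquad\text{for all } g\in L_{p(\cdot),q,b}.
\]
Applying this to $g=M(f)$ and using $|f_n|\le M(f)$ yields $\sup_{n}\|f_n\|_1 \lesssim \|f\|_{H^M_{p(\cdot),q,b}}$, which is the desired $L_1$-boundedness. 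By Theorem \ref{mi}, the five variable martingale Hardy-Lorentz-Karamata spaces coincide in the regular setting, so this embedding is valid for the single space $H_{p(\cdot),q,b}$.

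With the embedding established, the argument is short: given $f \in H_{p(\cdot),q,b}$, identify $f$ with its $L_1$-limit $f_\infty$, take $I=[0,1)\in A(\mathcal{F}_0)$, note that $f\chi_I=f_\infty\in L_1(I)$, and invoke Corollary \ref{c22} to conclude. The main conceptual point (and the only place where $p_-\ge 1$ is used) is exactly the continuous inclusion $L_{p(\cdot),q,b} \hookrightarrow L_1$ on the probability space; for $p_-<1$ this inclusion fails in general, which is precisely why Corollary \ref{c33} is a genuine strengthening of Corollary \ref{c22} rather than a mere restatement. No additional difficulty is anticipated, since all the heavy analytic work (the boundedness of $\sigma_*$ and the passage from maximal estimates to almost everywhere and norm convergence) is already contained in Theorem \ref{sig} and Corollaries \ref{c11}--\ref{c22}.
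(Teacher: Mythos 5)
Your reduction to Corollary \ref{c22} hinges entirely on the embedding $H_{p(\cdot),q,b}\hookrightarrow L_1$, and that step is not justified by the lemmas you cite; moreover it genuinely fails at the endpoint $p_-=1$. Lemma \ref{baohan} keeps $p(\cdot)$ fixed and only varies $q$ and $b$, so it cannot ``majorize by a constant exponent'' -- to pass from variable $p(\cdot)$ to a constant one you need Lemma \ref{2.4} (which gives $\|\chi_A\|_{p'}\le 2\|\chi_A\|_{p(\cdot)}$ when $p'\le p(\cdot)$) together with the equivalence of $t\gamma_b(t)$ to a nondecreasing function from Proposition \ref{b}(2). Even after fixing that, Lemma \ref{3.4.48} requires a \emph{strict} inequality $p_2<p_1$, so the chain $\|g\|_1\lesssim\|g\|_{p',p',1}\lesssim\|g\|_{p_-,q,\tilde b}\lesssim\|g\|_{p(\cdot),q,b}$ is only available when one can choose $1\le p'<p_-$, i.e.\ when $p_->1$. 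When $p_-=1$ you would need $L_{1,q,\tilde b}\subset L_1$, which is false in general: already for $p(\cdot)\equiv 1$, $b\equiv 1$ one has $L_{1,q}\not\subset L_1$ for any $q>1$ (in particular $L_{1,\infty}\not\subset L_1$), and correspondingly $H_{1,q}^M\not\subset L_1$. So your proof covers only $p_->1$, not the full stated range $1\le p_-$.

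A secondary point worth noticing: as literally written, Corollary \ref{c33} asserts only that $\sigma_nf$ \emph{converges} a.e.\ and in norm, with no identification of the limit, and $1\le p_-$ implies $\frac12<p_-$; so as stated the corollary is already a special case of Corollary \ref{c11} and needs no new argument at all. Your route via \ref{c22} is aimed at the stronger conclusion $\sigma_nf\to f$ (with $f\in L_1$), which is presumably what the authors intend; but then the $L_1$-embedding is exactly the load-bearing step and must be either restricted to $p_->1$ or supplemented by an argument that does not pass through $L_1$ when $p_-=1$.
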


For the operator $\sigma_{2^n}$, we do not need the condition of $\frac{1}{2}<p_-\le p_+<\infty$.

\begin{lem}[\cite{jwzw}]
	Let $p(\cdot)\in\mathcal{P}(\Omega)$ satisfy $(\ref{gs3})$ and $(\ref{,})$.
	Then for some $0<\beta<1$ and every $(p(\cdot),\infty)^M$-atom $a$ associated with stopping time $\tau$, there is
	\begin{align*}
		 \big\|\sup_{n\in\mathbb{N}}|\sigma_{2^n}a|^\beta\chi_{\{\tau=\infty\}}\big\|_{p(\cdot)}\lesssim\|\chi_{\{\tau<\infty\}}\|_{p(\cdot)}^{1-\beta}.
	\end{align*}
\end{lem}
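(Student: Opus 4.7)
The plan is to adapt the approach used in \cite{jwzw} (as cited) to control $\sup_{n}|\sigma_{2^n}a|$ outside the support of the atom, using the special structure of the Walsh--Fej\'er kernel at dyadic indices. Recall that $\sigma_{2^n}f(x)=\int_0^1 f(t)K_{2^n}(x\dot{+}t)\,dt$, where $K_{2^n}=\tfrac{1}{2^n}\sum_{k=1}^{2^n}D_k$ admits explicit pointwise estimates on dyadic subintervals of $[0,1)$; in particular, if $x$ and $t$ lie in distinct atoms $I_m(x)\neq I_m(t)$ of $\mathcal{F}_m$, then $K_{2^n}(x\dot+t)$ can be bounded by $2^{m-n}$ times the indicator of a specific dyadic block, and vanishes altogether once the indices are mismatched at a sufficiently high level.

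First, I would let $F=\{\tau<\infty\}$ and decompose $F$ into the disjoint union $F=\bigsqcup_i I_i$ of the maximal atoms of $\mathcal{F}_\tau$ on which $\tau$ takes a constant finite value $\tau_i$. The atom property $\mathbb{E}_n(a)=0$ for all $n\leq\tau$ implies $\int_{I_i}a\,dt=0$ for every $i$, and $\|M(a)\|_\infty\le\|\chi_F\|_{p(\cdot)}^{-1}$ gives $|a|\le\|\chi_F\|_{p(\cdot)}^{-1}\chi_F$ almost everywhere. For $x\notin F$, the cancellation $\int_{I_i}a=0$ combined with the fact that $K_{2^n}(x\dot+\cdot)$ is constant on atoms of $\mathcal{F}_{\tau_i}$ contained inside $I_i$ whenever $n\le\tau_i$ forces the corresponding contribution to vanish. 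For $n>\tau_i$, the Walsh--Paley localization gives a pointwise bound of the form $|\sigma_{2^n}(a\chi_{I_i})(x)|\lesssim\|\chi_F\|_{p(\cdot)}^{-1}|I_i|\,\widetilde K(x,I_i)$, where $\widetilde K(x,I_i)$ is a geometric kernel depending on the dyadic distance between $x$ and $I_i$.

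Next, I would sum over $i$ and take the supremum over $n$ to obtain a pointwise estimate of the shape
\begin{equation*}
\sup_{n\in\mathbb N}|\sigma_{2^n}a(x)|\chi_{\{\tau=\infty\}}(x)\lesssim\|\chi_F\|_{p(\cdot)}^{-1}\cdot\mathcal G(x),
\end{equation*}
where $\mathcal G$ is a dyadic-type maximal function of the indicator $\chi_F$. The final step raises both sides to the power $\beta\in(0,1)$, takes the $\|\cdot\|_{p(\cdot)}$ norm, and uses the boundedness of dyadic maximal-type operators on the scale $L_{p(\cdot)/\beta}$ (which in turn rests on condition \eqref{gs3} and the log-H\"older-type requirement \eqref{,}) to dominate $\|\mathcal G^\beta\|_{p(\cdot)}$ by $\|\chi_F\|_{p(\cdot)}$. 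Combining these pieces yields $\|\sup_n|\sigma_{2^n}a|^\beta\chi_{\{\tau=\infty\}}\|_{p(\cdot)}\lesssim\|\chi_F\|_{p(\cdot)}^{-\beta}\|\chi_F\|_{p(\cdot)}=\|\chi_F\|_{p(\cdot)}^{1-\beta}$, as desired.

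The main obstacle is the pointwise kernel estimate in step~2: the Walsh--Fej\'er kernel at arbitrary $n$ has oscillatory behavior that typically forces additional factors (as in the proof for the full $\sigma_*$), but restricting to $n=2^m$ eliminates the \emph{quasi-local} part of the kernel (the second Gat decomposition), which is precisely the hardest term to control in the full maximal operator. Choosing $\beta$ close to $1$ is delicate since we must have $\beta p_-<1$-like conditions for the geometric series arising in summing $|I_i|\,\widetilde K(x,I_i)$ to converge in $L_{p(\cdot)/\beta}$; the condition $\frac{1}{p_-}-\frac{1}{p_+}<1$ and the lower bound $p_->1/2$ (available in the intended application through Corollary~\ref{sig}, although not explicitly assumed here) are exactly what make this feasible.
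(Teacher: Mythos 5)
The paper does not prove this lemma; it is cited verbatim from \cite{jwzw}. Your sketch captures the standard outline one would expect to find there: decompose $\{\tau<\infty\}$ into atoms of $\mathcal{F}_\tau$, exploit $\mathbb{E}_n(a)=0$ for $n\le\tau$ to kill the low-frequency contributions via the measurability of $K_{2^n}(x\dot+\cdot)$ on the relevant atoms, use the explicit dyadic structure of $K_{2^n}$ (Fine's formula) to get pointwise geometric decay, and close with the boundedness of dyadic maximal-type operators on $L_{p(\cdot)/\beta}$ under conditions (\ref{gs3}) and (\ref{,}). Your observation that restricting to $n=2^m$ removes the quasi-local part of the kernel, which is precisely the piece that forces $p_->1/2$ for the full operator $\sigma_*$, is the right conceptual reason why this lemma has a weaker hypothesis than the one preceding it.

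There are, however, two gaps. The first is internal inconsistency: having just explained that the dyadic restriction eliminates the term requiring $p_->1/2$, your final sentence turns around and invokes ``the lower bound $p_->1/2$ (available in the intended application through Corollary~\ref{sig}, although not explicitly assumed here)'' as part of what makes the geometric series close. The lemma deliberately does \emph{not} assume $p_->1/2$, and the proof must not rely on it; you need to check directly that after cancellation the summation of $|I_i|\,\widetilde K(x,I_i)$ converges in $L_{p(\cdot)/\beta}$ for \emph{all} $p_-\in(0,\infty)$ with a suitable choice of $\beta$ depending on $p_-$, which is exactly what the simpler dyadic kernel buys you. The second gap is that the central pointwise bound for $K_{2^n}$ outside the atoms is asserted rather than derived: to make this rigorous you need to write down the known decomposition $2^n K_{2^n}=\tfrac12\sum_{j=0}^{n-1}2^j\big(D_{2^j}+w_{2^j}D_{2^j}\big)+\tfrac12 D_{2^n}$ (or equivalent) and carry out the dyadic distance estimate, including the cancellation step for $n\le\tau_i$. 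Without that, the middle of the argument is only a plausible gesture.
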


Similarly, we deduce the boundedness of $\sigma_{2^n}$ and some properties of convergence can be showed as follows.

\begin{coro}
	Let $p(\cdot)\in\mathcal{P}(\Omega)$ satisfy $(\ref{gs3})$ and $(\ref{,})$, $0<q\le\infty$ and let $b$ be a slowly varying function. Then for $f\in H_{p(\cdot),q,b}$,
	 $$\|\sup_{n\in\mathbb{N}}|\sigma_{2^n}f|\|_{p(\cdot),q,b}\lesssim\|f\|_{H_{p(\cdot),q,b}}.$$
\end{coro}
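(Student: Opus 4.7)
The plan is to apply Theorem \ref{T2} with the $\sigma$-sublinear operator $T f := \sup_{n \in \mathbb{N}} |\sigma_{2^n} f|$. Two things need to be checked: that $T$ is bounded on $L_r$ for some $r$ with $\max\{p_+,1\} < r \le \infty$, and that $T$ satisfies the atomic estimate \eqref{gs8.6}. The second of these is exactly the content of the lemma immediately preceding the corollary, so no new work is required there; it is the job of this proof to assemble those pieces and check the $L_r$-boundedness.

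First I would verify $\sigma$-sublinearity: because $\sigma_{2^n}$ is a linear operator on $\mathcal{M}$ for each fixed $n$, the pointwise supremum $\sup_{n}|\sigma_{2^n}(\cdot)|$ inherits sublinearity, and the analogous property against countable sums follows from the triangle inequality applied inside the supremum together with monotone convergence. Next I would argue the $L_r$-boundedness. Since $\{\mathcal{F}_n\}_{n\ge 0}$ is the dyadic stochastic basis on $[0,1)$, a standard computation with the Walsh--Fej\'er kernels (see \cite{swsp,ws}) gives the pointwise domination $\sup_{n\in\mathbb{N}}|\sigma_{2^n}f|\lesssim M(f)$, where $M$ is the Doob maximal operator. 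By Doob's inequality, $M$ is bounded on $L_r$ for every $1<r<\infty$. Therefore, choosing any $r$ with $\max\{p_+,1\}<r<\infty$, the operator $T$ is bounded from $L_r$ into $L_r$.

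With these two facts in hand, Theorem \ref{T2} applies to $T$ and yields
\[
 \bigl\|\sup_{n\in\mathbb{N}}|\sigma_{2^n}f|\bigr\|_{p(\cdot),q,b} = \|T f\|_{p(\cdot),q,b} \lesssim \|f\|_{H_{p(\cdot),q,b}}
\]
for every $f \in H_{p(\cdot),q,b}$, which is the desired estimate.

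The only genuinely delicate point is the hypothesis of Theorem \ref{T2}, namely the atomic control \eqref{gs8.6} off the support of the stopping time, but this is precisely what the preceding lemma provides (and that lemma is cited from \cite{jwzw}, so we may invoke it directly). The remaining arguments---$\sigma$-sublinearity of a supremum of linear operators and the $L_r$-boundedness through the Doob maximal inequality---are routine. Note in particular that no lower restriction on $p_-$ is needed (unlike Corollary \ref{sig} which requires $p_- > 1/2$), since the atomic estimate for $\sup_n|\sigma_{2^n}\cdot|$ survives under only \eqref{gs3} and \eqref{,}; this is why the conclusion of the final corollary holds for the full range $0<p_-\le p_+<\infty$ allowed by $\mathcal{P}(\Omega)$ combined with \eqref{,}.
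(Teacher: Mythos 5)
Your proposal is correct and follows the same strategy the paper intends: apply Theorem~\ref{T2} to $T f := \sup_{n}|\sigma_{2^n}f|$, with the atomic estimate~\eqref{gs8.6} supplied by the lemma cited from \cite{jwzw} directly preceding the corollary. The only place you add detail is the $L_r$-boundedness of $T$; the paper would more naturally take $r=\infty$ there (Yano's lemma $\sup_n\|K_{2^n}\|_1\le 2$ gives $\|\sup_n|\sigma_{2^n}f|\|_\infty\lesssim\|f\|_\infty$ immediately), whereas you pass through the pointwise domination $\sup_n|\sigma_{2^n}f|\lesssim M(f)$ and Doob's inequality for a finite $r$; both choices satisfy the hypothesis of Theorem~\ref{T2} and yield the same conclusion.
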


\begin{coro}\label{c44}
	Let $p(\cdot)\in\mathcal{P}(\Omega)$ satisfy $(\ref{gs3})$ and $(\ref{,})$, $0<q\le\infty$ and let $b$ be a slowly varying function. If $f\in H_{p(\cdot),q,b}$, then $\sigma_{2^n}f$ converges almost everywhere on $[0,1)$ and in the $L_{p(\cdot),q,b}$-norm.
\end{coro}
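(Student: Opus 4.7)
The plan is the classical three-step scheme: maximal inequality + pointwise convergence on a dense subset + an approximation argument. The maximal bound
\begin{align*}
\Big\|\sup_{n\in\mathbb{N}}|\sigma_{2^n}f|\Big\|_{p(\cdot),q,b} \lesssim \|f\|_{H_{p(\cdot),q,b}}
\end{align*}
proved in the preceding corollary is the engine; everything else is standard soft analysis.

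First I would establish that the Walsh polynomials form a dense subset of $H_{p(\cdot),q,b}$ when $0<q<\infty$ (and of $\mathcal{H}_{p(\cdot),\infty,b}$ when $q=\infty$, which is the natural subspace to work in since $L_2$ is not dense in $H_{p(\cdot),\infty,b}$ in general). For $0<q<\infty$ and $p_+<2$ this is already recorded in Remark \ref{rem4.2}: the partial sums of the atomic decomposition converge in $H_{p(\cdot),q,b}^s = H_{p(\cdot),q,b}$, and each atom is $L_2$-bounded; truncating $\mathbb{E}_m(\cdot)$ for large $m$ then replaces an $L_2$-martingale by a Walsh polynomial. For general $p_+$ I would approximate $f$ by its stopped martingales $f^{\tau_k}$ (with $\tau_k$ from the proof of Theorem \ref{ad1}), invoke the dominated convergence theorem (Lemma \ref{dct}), and then approximate each bounded $f^{\tau_k}$ by its dyadic projections $\mathbb{E}_m(f^{\tau_k})$, which are honest Walsh polynomials.

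Second, for any Walsh polynomial $P$ of degree strictly less than $2^N$ one has $s_kP = P$ for every $k\ge 2^N$, so
\begin{align*}
\sigma_{2^n}P - P = \frac{1}{2^n}\sum_{k=1}^{2^N-1}(s_kP - P), \qquad n\ge N,
\end{align*}
which tends to $0$ uniformly on $[0,1)$. Since $\|\chi_\Omega\|_{p(\cdot),q,b}<\infty$, uniform convergence implies convergence in the $L_{p(\cdot),q,b}$-norm, and clearly a.e.\ convergence too. Given $f\in H_{p(\cdot),q,b}$ and $\varepsilon>0$, choose a Walsh polynomial $P$ with $\|f-P\|_{H_{p(\cdot),q,b}}<\varepsilon$. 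Using Lemma \ref{fanshusanjiao}, the previous maximal inequality, and Step 2,
\begin{align*}
\|\sigma_{2^n}f - f\|_{p(\cdot),q,b} \lesssim \|\sigma_{2^n}(f-P)\|_{p(\cdot),q,b} + \|\sigma_{2^n}P - P\|_{p(\cdot),q,b} + \|P-f\|_{p(\cdot),q,b}
\end{align*}
can be made $\lesssim \varepsilon$ for $n$ large, giving norm convergence. For almost-everywhere convergence I would use
\begin{align*}
\limsup_{n\to\infty}|\sigma_{2^n}f - f| \le \sup_{n}|\sigma_{2^n}(f-P)| + |f-P|,
\end{align*}
so for every $\lambda>0$ the set $\{\limsup_n|\sigma_{2^n}f - f|>\lambda\}$ is contained in $\{\sup_n|\sigma_{2^n}(f-P)|>\lambda/2\}\cup\{|f-P|>\lambda/2\}$; Chebyshev together with the maximal inequality bounds the $\|\chi_{\cdot}\|_{p(\cdot)}$-measure of this set by a constant multiple of $\varepsilon/\lambda$, and letting $\varepsilon\to 0$ shows the exceptional set is null.

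The main subtlety I anticipate is the density step when $p_+$ is large or when $q=\infty$, because Remark \ref{rem4.2} only gives density of $L_2$ for $p_+<2$; the stopping-time/truncation argument sketched above bypasses this for $0<q<\infty$, but for $q=\infty$ the statement should be read inside $\mathcal{H}_{p(\cdot),\infty,b}$ (paralleling the situation of Corollary \ref{c11}), since otherwise the norm convergence fails for the same reason $L_p$ is not dense in $L_{p,\infty}$. Once that is pinned down, everything else is routine.
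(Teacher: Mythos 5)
Your proof is correct and follows exactly the scheme the paper has in mind: the paper omits the proof of Corollary \ref{c44} entirely, stating only that it is ``similar to \cite{jwzw}'', and the argument there is the same three-step pattern you use (maximal inequality from the preceding corollary, density of Walsh polynomials via stopping/truncation, then the standard quasi-triangle and Chebyshev approximation for norm and a.e.\ convergence respectively). You also correctly flag the one genuine subtlety in the statement — that for $q=\infty$ the norm convergence can only be expected inside $\mathcal{L}_{p(\cdot),\infty,b}$, with $f$ taken in $\mathcal{H}_{p(\cdot),\infty,b}$, since Walsh polynomials are not dense in $L_{p(\cdot),\infty,b}$ — which is precisely what the paper acknowledges in the remark following Corollary \ref{c55}.
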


\begin{coro}\label{c55}
	Let $p(\cdot)\in\mathcal{P}(\Omega)$ satisfy $(\ref{gs3})$ and $(\ref{,})$, $0<q\le\infty$ and let $b$ be a slowly varying function. If $f\in H_{p(\cdot),q,b}$ and there exists a dyadic interval $I$ such that $f\chi_I\in L_1(I)$, then $\sigma_{2^n}f$ converges to $f$ almost everywhere on $I$ and in the $L_{p(\cdot),q,b}(I)$-norm.
\end{coro}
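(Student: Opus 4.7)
The approach is to combine the (almost everywhere and $L_{p(\cdot),q,b}$-norm) convergence of $\sigma_{2^n}f$ on $[0,1)$ furnished by Corollary~\ref{c44} with an identification of the limit on $I$ via an $L_1(I)$-approximant obtained from martingale convergence, exploiting the classical Walsh--Fej\'er pointwise convergence theorem in $L_1$.

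First, I would invoke Corollary~\ref{c44} to obtain some $g \in L_{p(\cdot),q,b}$ with $\sigma_{2^n}f \to g$ both almost everywhere on $[0,1)$ and in the $L_{p(\cdot),q,b}$-norm. It then suffices to show $g = f$ almost everywhere on $I$, since the $L_{p(\cdot),q,b}(I)$-norm convergence follows by restricting the global norm convergence. Second, I would exploit the hypothesis $f\chi_I \in L_1(I)$. Writing $k$ for the dyadic level of $I$, the sequence $(f_n\chi_I)_{n\ge k}$ is an $L_1(I)$-bounded martingale, so by martingale convergence there exists $\widetilde f \in L_1(I)$ with $f_n\chi_I \to \widetilde f$ both a.e.\ on $I$ and in $L_1(I)$. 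Extending $\widetilde f$ by zero outside $I$ gives $\widetilde f \in L_1([0,1))$, and the classical Fine theorem on a.e.\ convergence of Walsh--Fej\'er means in $L_1$ yields $\sigma_{2^n}\widetilde f \to \widetilde f$ a.e.\ on $[0,1)$, so in particular $\sigma_{2^n}\widetilde f \to f$ a.e.\ on $I$.

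The decisive step is to show that $\sigma_{2^n}f - \sigma_{2^n}\widetilde f \to 0$ almost everywhere on $I$. Setting $h_n := f_n - \mathbb{E}_n\widetilde f$, one has $\sigma_{2^n}f - \sigma_{2^n}\widetilde f = \sigma_{2^n}(h_n)$, because both Fej\'er means depend only on the Walsh--Fourier coefficients of index strictly less than $2^n$. By construction and martingale convergence, $h_n\chi_I \to 0$ both a.e.\ on $I$ and in $L_1(I)$. Using the integral representation $\sigma_{2^n}(h_n)(x) = \int_0^1 h_n(y)K_{2^n}(x\dot{+}y)\,dy$, where $K_{2^n}$ is the Walsh--Fej\'er kernel, I split the domain into $I$ and $I^c$: the contribution from $I$ is handled by dominated convergence (via the $L_1(I)$-decay of $h_n\chi_I$ and dyadic bounds on $K_{2^n}$ restricted to small pieces), while the contribution from $I^c$ is controlled by the dyadic locality of $K_{2^n}$ at the index $2^n$, a classical fact in Walsh--Fourier analysis (see Schipp--Wade--Simon~\cite{swsp}).

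The main obstacle will be this localization estimate for $\int_{I^c} h_n(y)K_{2^n}(x\dot{+}y)\,dy$ with $x \in I$. It relies on the sharp dyadic locality of the Walsh--Fej\'er kernel at $2^n$ and on a maximal control which, in view of Corollary~\ref{sig}, can be transferred from $H_{p(\cdot),q,b}$ to $L_{p(\cdot),q,b}$ to ensure uniform integrability of the tails. Once this localization is in place, comparing the two sequences $\sigma_{2^n}f$ and $\sigma_{2^n}\widetilde f$ identifies $g$ with $f$ a.e.\ on $I$, and both forms of convergence stated in Corollary~\ref{c55} follow at once.
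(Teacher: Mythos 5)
Your high-level plan --- invoke Corollary~\ref{c44} to get a limit $g$ on $[0,1)$, then identify $g$ with $f$ on $I$ by comparing with $\widetilde f := (f\chi_I)^{\sim} \in L_1$ and using Fine's theorem for $\sigma_{2^n}\widetilde f$ --- is a reasonable outline, and the identity $\sigma_{2^n}f - \sigma_{2^n}\widetilde f = \sigma_{2^n}(h_n)$ with $h_n = f_n - \mathbb E_n\widetilde f$ is correct. However, the decisive step, showing $\sigma_{2^n}(h_n)(x)\to 0$ for a.e.\ $x\in I$ via the integral representation, has a genuine gap. Two remarks first: if $I\in\mathcal F_K$, then $h_n\chi_I\equiv 0$ for all $n\ge K$ (not merely $\to 0$), so there is nothing to do on $I$ --- your dominated-convergence step there is superfluous. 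On $I^c$ one has $h_n\chi_{I^c} = f_n\chi_{I^c}$, and that is precisely where the argument breaks down.

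The problem with the appeal to ``dyadic locality of $K_{2^n}$'' is that the Walsh--Fej\'er kernel is \emph{not} pointwise small away from zero: on the dyadic shell $[2^{-j-1},2^{-j})$ its magnitude is of order $2^j$. What \emph{is} true at dyadic indices is that $K_{2^n}\ge 0$ and $\int_{2^{-K}}^{1}K_{2^n}(z)\,dz \lesssim 2^{K-n}$, so the kernel's \emph{mass} outside $[0,2^{-K})$ decays geometrically. But to exploit this for $\int_{I^c} h_n(y)K_{2^n}(x\dot{+}y)\,dy$ you need $h_n\chi_{I^c} = f_n\chi_{I^c}$ to be uniformly controlled in $L_\infty$ or $L_1$, and for a general $f\in H_{p(\cdot),q,b}$ (with $p_-<1$ allowed) the sequence $(f_n\chi_{I^c})_n$ need not be bounded in $L_1(I^c)$, let alone $L_\infty$. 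The phrase ``uniform integrability of the tails'' obtained from Corollary~\ref{sig} is not an argument; the maximal inequality controls $\sigma_{2^n}f$ in $L_{p(\cdot),q,b}$, not $\int_{I^c}|h_n|$.

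The argument that actually closes this gap (and is the one the paper has in mind, following \cite{jwzw}) bypasses the Fej\'er kernel entirely. Let $g$ be the a.e.\ limit of $\sigma_{2^n}f$. For each $m$, since $f_m$ is a Walsh polynomial, $\sigma_{2^n}f_m\to f_m$ everywhere, so
$$|g - f_m| \le \sup_{n}\big|\sigma_{2^n}(f - f_m)\big| \qquad \text{a.e.}$$
The maximal inequality gives $\big\|\sup_n|\sigma_{2^n}(f-f_m)|\big\|_{p(\cdot),q,b}\lesssim \|f-f_m\|_{H_{p(\cdot),q,b}}\to 0$ as $m\to\infty$ (using density of $L_2$ in $H_{p(\cdot),q,b}$, resp.\ in $\mathcal H_{p(\cdot),\infty,b}$). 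Passing to a subsequence along which $\sup_n|\sigma_{2^n}(f-f_m)|\to 0$ a.e., and using the hypothesis $f\chi_I\in L_1(I)$ to get $f_m\chi_I\to f\chi_I$ a.e.\ on $I$ by martingale convergence, yields $g=f$ a.e.\ on $I$. The $L_{p(\cdot),q,b}(I)$-norm convergence then follows by restricting the global norm convergence from Corollary~\ref{c44}. Your Fine-theorem decomposition through $\widetilde f$ is not wrong, but to make the ``$I^c$ contribution'' small you would in any case have to cut $h_n$ at a finite level $m$ and invoke the maximal bound on the remainder, which reduces to the density argument above; the kernel representation then adds nothing.
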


\begin{rem}
	Note that the convergence results in Corollaries $\ref{c11}$, $\ref{c22}$, $\ref{c33}$, $\ref{c44}$ and $\ref{c55}$ hold also for $\mathcal{H}_{p,\infty,b}$ and to $\mathcal{L}_{p,\infty,b}$.
\end{rem}

\begin{rem}
	The corresponding conclusions of the above theorems and lemmas for variable Hardy spaces and variable Hardy-Lorentz spaces can be found in Jiao et al. \cite{jwzw}, while they are new for Hardy-Lorentz-Karamata spaces.
\end{rem}

\section*{Declarations} 
\label{sec:declarations}


\noindent {\bf Ethical approval} Not applicable. \\

\noindent {\bf Competing interests} The authors declare that there is no competing interests. \\

\noindent {\bf Author Contributions} All authors wrote the main manuscript text and also reviewed the manuscript. \\

\noindent {\bf Funding} Zhiwei Hao is supported by the NSFC (No. 11801001) and Hunan Provincial Natural Science
Foundation (No. 2022JJ40145), Libo Li is supported by the NSFC (No. 12101223) and Hunan
Provincial Natural Science Foundation (No. 2022JJ40146). \\

\noindent {\bf Availability of data and materials} Data sharing not applicable to this article as no datasets were generated or analysed during the current study.\\

\end{document}